\DeclareMathOperator{\Aut}{Aut}
\DeclareMathOperator{\Auteq}{Aut}
\DeclareMathOperator{\Ext}{Ext}
\newcommand{\Spec}{\mathrm{Spec}}
\newcommand{\Sym}{\mathrm{Sym}}
\newcommand{\st}{\mathrm{st}}
\newcommand{\ct}{\mathrm{ct}}
\newcommand{\scrM}{\EuScript{M}}
\DeclareMathOperator{\Diff}{Diff}
\DeclareMathOperator{\Symp}{Symp}
\DeclareMathOperator{\Stab}{Stab}
\def\P{\mathcal{P}}
\newcommand{\NS}{\operatorname{NS}}
\def\bP{\mathbb{P}}
\def\bA{\mathbb{A}}
\def\bZ{\mathbb{Z}}
\def\bc{\mathsf{bc}}
\def\Dpinf{\EuD}
\def\Db{\mathrm{D}}
\def\Dbdg{\EuD}
\def\val{\mathrm{val}}
\def\pol{\Delta}
\def\fk{\mathsf{k}}
\def\fK{\mathsf{K}}
\def\Ch{\mathsf{Ch}}
\def\amp{0}
\def\qa{Q1}
\def\qb{Q2}
\def\qc{Q3}
\def\qd{Q4}
\def\qe{Q5}
\def\qf{Q6}
\def\qg{Q7}
\def\cM{\mathcal{M}}
\def\cL{\mathcal{L}}
\def\bL{\mathbb{N}}
\def\Po{\mathrm{P}^\circ}
\renewcommand{\rk}{\operatorname{rk}}
\renewcommand{\cong}{\simeq}
\newcommand{\simxleftrightarrow}[1]{\underset{\mathrel{\raisebox{2pt}{$\sim$}}}{\xleftrightarrow{#1}}}
\begin{document}

\title{Symplectic topology of $K3$ surfaces via mirror symmetry}

\author[Sheridan and Smith]{Nick Sheridan and Ivan Smith}

\address{Nick Sheridan, School of Mathematics, University of Edinburgh, Edinburgh EH9 3FD, U.K.}
\address{Ivan Smith, Centre for Mathematical Sciences, University of Cambridge, Wilberforce Road, Cambridge CB3 0WB, U.K.}

\begin{abstract} {\sc Abstract:} We study the symplectic topology of certain $K3$ surfaces (including the ``mirror quartic'' and ``mirror double plane''), 
equipped with certain K\"ahler forms.  In particular, we prove that the symplectic Torelli group may be infinitely generated, and derive new constraints on Lagrangian tori. The key input, via homological mirror symmetry, is a result of Bayer and Bridgeland on the autoequivalence group of the derived category of an algebraic $K3$ surface of Picard rank one.
\end{abstract}
\maketitle

\section{Introduction}

Let $(X,\omega)$ be a simply-connected closed symplectic 4-manifold. 
Elucidating the structure of the symplectic mapping class group $\pi_0\Symp(X,\omega)$, and of its natural map to $\pi_0\Diff(X)$, are central aims in four-dimensional topology. 
This paper brings to bear insight from homological mirror symmetry, in the specific case of (certain symplectic forms on) $K3$ surfaces.

\subsection{Context and questions}

Many aspects of $\pi_0\Diff(X)$ seem \emph{a priori} out of reach, since we have no knowledge of $\pi_0\Diff_{\ct}(\R^4)$ -- there is therefore no smooth closed four-manifold for which the smooth mapping class group is known --  but in the symplectic setting we have Gromov's theorem $\Symp_{\ct}(\C^2,\omega_{\st}) \simeq \{1\}$, via his spectacular determination that $\Symp(\C\bP^2,\omega_{\st}) \simeq \bP U(3)$ and $\Symp(\bP^1 \times \bP^1, \omega \oplus \omega) \simeq (SO(3) \times SO(3)) \rtimes \Z/2$. 

Away from the setting of rational or ruled surfaces, much less is known. 
We restrict our attention to the symplectic mapping class group and its variants:
\begin{align*}
G(X,\omega) & \coloneqq \pi_0\Symp(X,\omega)\quad\text{(symplectic mapping class group)} \\
I(X,\omega) & \coloneqq \ker\, [G(X,\omega) \rightarrow \aut(H^*(X;\mathbb{Z}))]\quad \text{(symplectic Torelli group)} \\
K(X,\omega) & \coloneqq \ker \, [G(X,\omega) \to \pi_0\Diff(X)] \quad\text{(smoothly trivial symplectic mapping class group).}
\end{align*}
Results of Seidel \cite{Seidel:knotted} and Tonkonog \cite{Tonkonog} show that $K(X,\omega)$ is infinite for ``most'' even-dimensional hypersurfaces in projective space, and Seidel \cite{Seidel:LecturesDehn} has shown that for certain symplectic $K3$ surfaces $K(X,\omega)$ contains a subgroup isomorphic to the pure braid group $PB_m$ for $m \le 15$.

However, very little is known about the structure of $G(X,\omega)$ itself.
For instance, the following questions\footnote{These attributions are folkloric.} remain open in almost all cases:
\begin{enumerate}
\item[(\qa)] (Seidel) Is $G(X,\omega)$ finitely generated?
\item[(\qb)] (Fukaya) Does $G(X,\omega)$ have finitely many orbits on the set of Lagrangian spheres in $X$?
\item[(\qc)] (Donaldson) Is $I(X,\omega)$ generated by squared Dehn twists in Lagrangian spheres?
\end{enumerate}
Contrast with the (symplectic) mapping class group $\Gamma_g$ of a closed oriented surface $\Sigma_g$ of genus $g$, which is finitely presented, generated by Dehn twists, and acts with finitely many orbits on the infinite set of isotopy classes of simple closed curves on $\Sigma_g$, or the classical Torelli group, which is infinitely generated when $g=2$ and finitely generated when $g>2$.  

One also has long-standing questions concerning the structure of Lagrangian submanifolds in a complex projective manifold, for example:
\begin{enumerate}
\item[(\qd)] (Donaldson \cite[Question 4]{Donaldson2000}) Do all Lagrangian spheres in a complex projective manifold arise from the vanishing cycles of deformations to singular varieties? 
\end{enumerate}
A negative answer for certain rigid Calabi-Yau threefolds was very recently given in \cite{Greer}. 
More specific to a symplectic Calabi--Yau surface $(X,\omega)$, we have: 
\begin{enumerate}
\item[(\qe)] \label{it:torusnonvan} (Seidel) Does every Lagrangian torus $T^2 \subset X$ of Maslov class zero represent a non-zero class in $H_2(X,\Z)$?
\end{enumerate}

The corresponding result for the four-dimensional torus (with the standard flat K\"ahler form)  has a positive answer, see \cite[Corollary 9.2]{AbouzaidSmith}.

\subsection{Sample results}

We are able to give partial answers to some of these questions, in particular we prove:

\begin{thm}\label{Thm:Torelli}
There is a symplectic $K3$ surface $(X,\omega)$ for which the symplectic Torelli group $I(X,\omega)$ surjects onto a free group of (countably) infinite rank, and in particular is infinitely generated.
\end{thm}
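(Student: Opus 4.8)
The plan is to combine homological mirror symmetry with the Bayer--Bridgeland description of the derived autoequivalence group.

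First I would take $(X,\omega)$ to be one of the mirror $K3$ surfaces for which homological mirror symmetry is known --- say the mirror quartic, with the K\"ahler class chosen so that the mirror is a \emph{generic} quartic surface $Y\subset\bP^{3}$, hence $\operatorname{Pic}(Y)\simeq\mathbb{Z}$. Homological mirror symmetry then supplies an equivalence $D^{\pi}\mathcal{F}(X,\omega)\simeq D^{b}(Y)$ between the split-closed derived Fukaya category and the bounded derived category of coherent sheaves, together with --- the point that demands care --- an identification $H^{*}(X;\mathbb{Z})\simeq\widetilde{H}(Y;\mathbb{Z})$ of $H^{*}(X)$ with the Mukai lattice of $Y$ intertwining the action of $\pi_{0}\Symp(X,\omega)$ on the left with that of $\Aut D^{b}(Y)$ on the right. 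This compatibility is forced by naturality of the open--closed map: a symplectomorphism acts on $\mathcal{F}(X,\omega)$, hence on its Hochschild homology, compatibly with the action of the mirror autoequivalence on that of $D^{b}(Y)$, and under the standard isomorphisms these recover the actions on $H^{*}(X)$ and $\widetilde{H}(Y)$. Consequently a symplectic mapping class lies in the Torelli group exactly when the induced autoequivalence acts trivially on $\widetilde{H}(Y;\mathbb{Z})$; and since the action of a symplectomorphism on the $\mathbb{Z}$-graded Fukaya category is canonical only up to the shift functor $[2]$, this produces a homomorphism
\[
\Phi\colon\ I(X,\omega)\ \longrightarrow\ \Aut^{0}(D^{b}(Y))/\langle[2]\rangle ,
\]
where $\Aut^{0}$ is the subgroup of autoequivalences acting trivially on the Mukai lattice.

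I would then invoke the theorem of Bayer and Bridgeland. For $\operatorname{Pic}(Y)\simeq\mathbb{Z}$ it establishes Bridgeland's conjecture that the distinguished component $\operatorname{Stab}^{\dagger}(Y)$ of the space of stability conditions is contractible. Quotienting by the free, proper action of the contractible four-real-dimensional group $\widetilde{\mathrm{GL}}^{+}(2,\mathbb{R})$ --- which also absorbs the shift functors --- leaves a contractible real surface $\Sigma$ on which $\Aut D^{b}(Y)$ acts; the subgroup $\Aut^{0}(D^{b}(Y))/\langle[2]\rangle$, consisting of the autoequivalences trivial on cohomology, acts freely on $\Sigma$ with non-compact quotient (a fundamental domain is a union of infinitely many copies of the non-compact geometric chamber), and is therefore free of countably infinite rank, with free generators including the squared spherical twists $T_{\mathcal{E}_{\delta}}^{2}$ indexed by the spherical classes $\delta$ ($\delta\cdot\delta=-2$); here $T_{\mathcal{E}_{\delta}}$ acts on $\widetilde{H}(Y)$ as the reflection in $\delta$, so its square acts trivially. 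The Picard-rank-one hypothesis is precisely what makes this work: $\dim_{\mathbb{R}}\operatorname{Stab}^{\dagger}(Y)=2\operatorname{rk}\operatorname{Pic}(Y)+4$, so the quotient by $\widetilde{\mathrm{GL}}^{+}(2,\mathbb{R})$ is a surface --- whose fundamental group is free --- exactly when $\operatorname{rk}\operatorname{Pic}(Y)=1$, while in higher Picard rank one obtains the fundamental group of a higher-dimensional arrangement complement, with no reason to be free.

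It remains --- and this is the step I expect to be the main obstacle --- to show that the image of $\Phi$ contains infinitely many of the free generators $T_{\mathcal{E}_{\delta}}^{2}$; granting that, since a free group retracts onto any subset of a free basis, composing $\Phi$ with such a retraction yields the required surjection $I(X,\omega)\twoheadrightarrow F_{\aleph_{0}}$. For this I would produce, for an infinite pairwise-inequivalent family of spherical classes $\delta_{i}$, honest Lagrangian spheres $S_{\delta_{i}}\subset(X,\omega)$ whose images under homological mirror symmetry are quasi-isomorphic to the spherical objects $\mathcal{E}_{\delta_{i}}$ --- for instance realising them as vanishing cycles in degenerations of $(X,\omega)$ mirror to nodal degenerations of the quartic, and then identifying the resulting objects of $D^{b}(Y)$. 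Because a Dehn twist is mirror to the corresponding spherical twist, and because the squared Dehn twist $\tau_{S_{\delta_{i}}}^{2}$ acts on cohomology by the square of a Picard--Lefschetz reflection, hence trivially, so that $\tau_{S_{\delta_{i}}}^{2}\in I(X,\omega)$, one obtains $\Phi(\tau_{S_{\delta_{i}}}^{2})=T_{\mathcal{E}_{\delta_{i}}}^{2}$, as needed. The genuinely hard work is thus the construction and mirror identification of these infinitely many Lagrangian spheres; the rest is formal once homological mirror symmetry, with its lattice-level refinement, and the Bayer--Bridgeland theorem are in hand.
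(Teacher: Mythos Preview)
Your proposal is correct and follows essentially the same route as the paper. Two refinements are worth noting. First, rather than producing individual squared Dehn twists, the paper packages them into a symplectic monodromy homomorphism $\pi_1(\Omega^+_0(X,\omega)) \to I(X,\omega)$, where $\Omega^+_0(X,\omega)$ is the upper half-plane with the countably many points $\delta^\perp$ removed; it then shows that composing with your $\Phi$ and the Bayer--Bridgeland identification $\Auteq^0 \Db(Y)/[2] \cong \pi_1(\Omega^+_0)$ yields an isomorphism. This gives the stronger conclusion $I(X,\omega) \cong Z(X,\omega) \rtimes \pi_1(\Omega^+_0)$, where $Z$ is the Floer-theoretically trivial subgroup. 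Second, you overstate the difficulty of the step you flag as hardest: no explicit mirror identification of objects is required. Bayer--Bridgeland prove that $\Auteq^0 \Db(Y)$ acts transitively on spherical objects with a fixed Mukai vector, so \emph{every} squared spherical twist in an object of Mukai vector $\epsilon$ corresponds to a loop around the puncture $\epsilon^\perp$. Hence it suffices to know only the Mukai vector of the mirror of each vanishing cycle, i.e.\ to check that HMS carries homology classes of vanishing cycles bijectively onto $\Delta(\N(Y))$. That vanishing cycles realize every class in $\Delta(\bL(X,\omega))$ is classical (the universal family of marked K\"ahler $K3$s with K\"ahler class $\kappa$ acquires a node along each hyperplane $\delta^\perp$), and the bijection with $\Delta(\N(Y))$ then follows from a short lattice argument using the open--closed map and the assumed isometry $\bL(X,\omega)\cong \N(Y)$.
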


To be more explicit, the result holds for ambient-irrational K\"ahler forms on either of the following two $K3$ surfaces:
\begin{enumerate}
\item $X$ is the ``mirror quartic'', i.e. the crepant resolution of a quotient of the Fermat quartic hypersurface in $\C\bP^3$ by $(\Z/4)^2$;
\item $X$ is the ``mirror double plane'', i.e. the crepant resolution of a quotient of the Fermat sextic hypersurface in $\C\bP(1,1,1,3)$ by the group $\Z/6 \times \Z/2$.
\end{enumerate}
In both cases, $X \subset Y$ is the proper transform of an anticanonical hypersurface $\bar{X} \subset \bar{Y}$ in the toric variety $\bar{Y}$ associated to an appropriate reflexive simplex $\pol$, where $Y \to \bar{Y}$ is a toric resolution of singularities that is chosen in such a way that $X$ is a crepant resolution of $\bar{X}$. 
A K\"ahler form on $X$ is ``ambient'' if it is restricted from $Y$, and ``ambient-irrational'' if it is ambient and $[\omega]^{\perp} \cap H^2(X,\Z)$ is as small as possible amongst ambient K\"ahler forms.

\begin{rmk} \label{rmk:level_of_generality}
Although we only establish Theorem \ref{Thm:Torelli} in these rather specific cases, the underlying strategy -- which extracts information on symplectic mapping class groups from homological mirror symmetry -- is of interest in itself, and has much broader potential.  As we elaborate in Section \ref{Sec:Outlines} below, this paper essentially shows that Theorem \ref{Thm:Torelli} holds whenever one has proved homological mirror symmetry for $(X,\omega)$, and has proved Bridgeland's conjecture on the autoequivalences of the derived category of its mirror algebraic $K3$ surface $X^{\circ}$.  In particular, it seems likely that ongoing developments will give the same conclusion in much greater generality, cf. for instance Remark \ref{rmk:cubic_fourfold}.  \end{rmk}

\begin{rmk}
\label{rmk:space_of_symp}
In fact we prove a stronger result: the surjection from $I(X,\omega)$ onto an infinite-rank free group remains surjective when restricted to the subgroup $K(X,\omega) \le I(X,\omega)$ (cf. Corollary \ref{cor:smoothly_trivial_inf_generated}).  This has the following consequence, suggested to us by Dietmar Salamon.  Following  \cite{Kronheimer:families}, if $\Diff_0(X)$ denotes the group of diffeomorphisms which are smoothly isotopic to the identity, then $\Diff_0(X)$ acts on 
\[
\Omega = \{a\in \Omega^2(X) \, | \, a \ \textrm{is symplectic and cohomologous to} \ \omega \}
\]
via the map $f \mapsto (f^{-1})^*\omega$. Moser's theorem shows the action is transitive on the connected component containing $\omega$, with stabiliser $\Symp_0(X) = \Symp(X,\omega) \cap \Diff_0(X)$. The long exact sequence of homotopy groups for the associated Serre fibration yields a surjective homomorphism 
\[\pi_1(\Omega,\omega) \twoheadrightarrow K(X,\omega).\]
It follows that the fundamental group of the space of symplectic forms  on $X$ isotopic to $\omega$ surjects onto an infinite-rank free group. 
See also Lemma \ref{lem:symp_forms}.
\end{rmk}

Seidel's question (\qe) above, in the case of a symplectic $K3$ surface $(X,\omega)$, was motivated by the following beautiful line of thought, explained to us by Seidel: suppose that $L \subset (X,\omega)$ is a Maslov-zero Lagrangian torus with vanishing homology class, and $X^\circ$ is a $K3$ mirror to $(X,\omega)$. 
There exist non-commutative deformations of $\Db(X^\circ)$ which destroy all point-like objects (i.e., for which all point-like objects become obstructed); therefore the corresponding symplectic deformations of $(X,\omega)$ should destroy the point-like object $L$. 
However if $[L] = 0$, then we can deform $L$ to remain Lagrangian under any deformation of the symplectic form by a Moser-type argument: a contradiction, so such $L$ could not have existed.
We use a variation on this idea to prove:

\begin{thm} \label{Thm:Lag_torus}
Let $X$ be a mirror quartic or mirror double plane, and $\omega$ any ambient K\"ahler form on $X$. Then every Maslov-zero Lagrangian torus $L\subset (X,\omega)$ has non-trivial homology class.
\end{thm}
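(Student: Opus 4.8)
The plan is to make rigorous the heuristic recalled just before the statement, with Bayer--Bridgeland's determination of $\Aut(\Db(X^\circ))$ supplying the ``destruction of point-like objects''. Suppose, for contradiction, that $L\subset(X,\omega)$ is a Maslov-zero Lagrangian torus with $[L]=0$. First I would reduce to the case in which $\omega$ is ambient-irrational --- so that the mirror $X^\circ$ has Picard rank one and both homological mirror symmetry and the Bayer--Bridgeland theorem apply: since $[L]=0$ we have $\int_L\omega'=\langle[\omega'],[L]\rangle=0$ for \emph{every} class $[\omega']$, so along a short path of ambient K\"ahler forms joining $\omega$ to a nearby ambient-irrational one there is no cohomological obstruction to keeping $L$ Lagrangian, and a relative Moser argument produces a Maslov-zero, null-homologous Lagrangian torus for the new form. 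Next, as the Maslov class of $L$ vanishes and $c_1(X)=0$, every holomorphic disc bounded by $L$ has Maslov index zero; in particular $L$ bounds no index-two discs, the obstruction cochain $\mathfrak{m}_0$ vanishes, and $L$ (with a spin structure and the trivial flat line bundle) defines an honest object $\mathcal{L}$ of the Fukaya category $\mathcal{F}(X,\omega)$, with $CF^*(\mathcal{L},\mathcal{L})=C^*(T^2)$ concentrated in degrees $0,1,2$ and the identity a non-trivial class in $HF^0$.

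Next I would transport this to the mirror. Homological mirror symmetry for the mirror quartic / mirror double plane identifies the split-closed derived Fukaya category of $(X,\omega)$ with $\Db(X^\circ)$, $X^\circ$ an algebraic $K3$ surface of Picard rank one, compatibly with the open--closed structures: there is a Hodge isometry $\psi$ from the Mukai lattice $\widetilde H(X;\mathbb{Z})=H^0\oplus H^2\oplus H^4$, equipped with the weight-two Hodge structure whose $(2,0)$-line is $\mathbb{C}\exp(B+i[\omega])$, onto the Mukai lattice of $X^\circ$, sending the class of a Lagrangian brane to the Mukai vector of the corresponding object. Let $E\in\Db(X^\circ)$ correspond to $\mathcal{L}$. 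Then $\Ext^*(E,E)\cong HF^*(\mathcal{L},\mathcal{L})$ is supported in degrees $0,1,2$, carries $\mathrm{id}_E\neq 0$ in degree $0$, and has Euler characteristic $\chi(E,E)=\chi(T^2)=0$; with Serre duality on the $K3$ surface $X^\circ$ this forces $\Ext^*(E,E)\cong H^*(T^2)$, so $E$ is a non-zero point-like object and, by Riemann--Roch, $v(E)^2=-\chi(E,E)=0$. Since $E$ lies in the untwisted derived category, $v(E)$ is algebraic, hence $[\mathcal{L}]:=\psi^{-1}(v(E))$ is of type $(1,1)$, i.e.\ $\langle[\mathcal{L}],\exp(B+i[\omega])\rangle=0$.

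The heart of the argument is then a short lattice computation. The $H^2$-component of $[\mathcal{L}]$ is the leading term of the brane's Chern character, namely $\mathrm{PD}[L]$, so the hypothesis $[L]=0$ gives $[\mathcal{L}]=(r,0,k)\in H^0\oplus H^2\oplus H^4$ with $r,k\in\mathbb{Z}$. Separating real and imaginary parts of the $(1,1)$-relation and using $\int_L\omega=0$, then feeding in $\langle[\mathcal{L}],[\mathcal{L}]\rangle=-2rk$ together with $v(E)^2=0$ and $[\omega]^2>0$, forces $r=k=0$; thus $[\mathcal{L}]=0$ and $v(E)=0$. But by Bayer--Bridgeland's description of $\Aut(\Db(X^\circ))$ --- Bridgeland's conjecture for a Picard-rank-one algebraic $K3$ surface --- every point-like object of $\Db(X^\circ)$ is carried to a skyscraper sheaf by an autoequivalence, so its Mukai vector is the image of $(0,0,1)$ under a Hodge isometry of the Mukai lattice, in particular a primitive isotropic class, and non-zero. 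This contradiction would complete the proof.

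The hard part will be the step I have compressed most: the compatibility of the mirror equivalence with the Mukai lattices and their Hodge structures, and in particular the identification of the $H^2$-component of $[\mathcal{L}]$ with $\mathrm{PD}[L]$. This relies on the precise homological mirror symmetry statements for these $K3$ surfaces together with a computation of the closed-string mirror map through the (cyclic) open--closed map, and it is also where one must keep careful track of the $B$-field and exploit the irrationality hypothesis; the remaining ingredients are either standard or used as black boxes (homological mirror symmetry itself, and Bayer--Bridgeland). An alternative to the last step follows Seidel's original formulation directly: deform the mirror to a generic gerby $K3$ surface $(X^\circ,\alpha)$, which possesses \emph{no} point-like objects --- its objects have rank everywhere divisible by the order of $\alpha$ --- and transport $\mathcal{L}$ along the corresponding deformation of $(X,\omega)$ by a $B$-field, which is legitimate precisely because $[L]=0$; this is the route whose broad generalisations are anticipated in Remark~\ref{rmk:level_of_generality}.
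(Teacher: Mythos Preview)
Your overall architecture matches the paper's: Moser-deform $L$ to a nearby K\"ahler form whose homological mirror $X^\circ$ has Picard rank one, observe that the mirror object $E$ is point-like, show its Mukai vector vanishes, and derive a contradiction from Bayer--Bridgeland. The difference lies in how you extract $v(E)=0$ and in how much machinery you invoke.

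You route the argument through a Hodge isometry between a symplectic-side Mukai lattice $H^0\oplus H^2\oplus H^4$ (with $(2,0)$-line $\C\exp(B+i[\omega])$) and the algebraic Mukai lattice of $X^\circ$, then perform a lattice computation to kill the $H^0$- and $H^4$-components of $[\mathcal{L}]$. As you yourself flag, establishing this integral Hodge isometry compatible with the mirror equivalence is the hard part, and the paper does \emph{not} establish it (indeed, Remark~\ref{rmk:knum_not_quite} explicitly records that even the identification of $K_{num}(\Dpinf\EuF(X))$ with $\bL(X,\omega)$ is not proved here). The paper avoids this entirely by working at the level of $HH_0$: the noncommutative Chern character lands in $HH_0(\Dpinf\EuF(X))\cong H^2(X;\Lambda)$ via the open--closed map, and for a Lagrangian brane one has $\mathcal{OC}\circ\Ch(L)=PD[L]$ (Lemma~\ref{lem:oc_geom}). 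There are no $H^0$ or $H^4$ components to eliminate; $[L]=0$ gives $\Ch(L)=0$ immediately, and hence $\Ch(E)=0$ in $HH_0(X^\circ)$ under the $HH_*$-isomorphism induced by mirror symmetry. The contradiction then comes from Lemma~\ref{lem:pointsnonvan}, which uses only that point-like objects are quasi-stable for some stability condition (a consequence of \cite[Lemma 6.3, Proposition 3.15]{BB}), rather than the stronger statement that they are autoequivalent to skyscrapers. Note also that $X^\circ$ lives over the Novikov field, so the Bayer--Bridgeland input is transported via a complex model and the Lefschetz principle (Section~\ref{Sec:Lefschetz}); your formulation via Hodge structures would need the same manoeuvre. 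In short: your plan is correct in outline, but the paper's path through $HH_0$ is both shorter and bypasses precisely the compatibility you identify as the crux.
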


The Maslov class hypothesis is obviously necessary, since there are Lagrangian tori in a Darboux chart. Under stronger hypotheses one can say more; in particular, if the ambient K\"ahler form is ambient-irrational, then Maslov-zero Lagrangian tori are homologically primitive.

Some of our results do not concern the groups $G(X,\omega)$ and $I(X,\omega)$ themselves, but rather their homological algebraic cousin $\Auteq \Dpinf\EuF(X)$, where $\Dpinf\EuF(X)$ is the split-closed derived Fukaya category.  For instance, we show that for the particular symplectic $K3$ surfaces considered in Theorem \ref{Thm:Torelli},  the corresponding Torelli-type group $\ker\,[\Auteq \Dpinf\EuF(X) \to \Aut HH_*(\Dpinf\EuF(X))]$ is indeed generated by squared Dehn twists, whilst $\Auteq \Dpinf\EuF(X)$ itself is finitely presented, but not generated by Dehn twists, cf. Corollary \ref{cor:not_generated_by_dehn_twists}.

In the same vein, we give positive answers to weakened versions of questions (\qb) and (\qd) in certain circumstances:

\begin{thm}
\label{Thm:spheres}
Let $X$ be a mirror quartic or mirror double plane, and $\omega$ an ambient-irrational K\"ahler form on $X$. Then:
\begin{enumerate}
\item Every Lagrangian sphere is Fukaya-isomorphic to a vanishing cycle.
\item $G(X,\omega)$ acts transitively on the set of Fukaya-isomorphism classes of Lagrangian spheres.
\end{enumerate}
\end{thm}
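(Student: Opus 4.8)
The plan is to push both assertions across homological mirror symmetry into the derived category $\Db(X^\circ)$ of the mirror algebraic $K3$ surface and then feed in the Bayer--Bridgeland description of $\Aut\Db(X^\circ)$; the role of the hypothesis is precisely that, for $\omega$ ambient-irrational, $X^\circ$ has Picard rank one, which is the setting their theorem covers. Under the equivalence $\Dpinf\EuF(X)\simeq\Db(X^\circ)$ a Lagrangian sphere $L$ corresponds to a spherical object $\mathcal{E}_L$ (since $HF^*(L,L)\cong H^*(S^2;\bK)$), the Dehn twist $\tau_L$ corresponds to the spherical twist $T_{\mathcal{E}_L}$, and $\Auteq\Dpinf\EuF(X)$ is identified with $\Aut\Db(X^\circ)$ compatibly with the tautological homomorphism out of $G(X,\omega)=\pi_0\Symp(X,\omega)$. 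Using that a symplectomorphism carries a vanishing cycle to a vanishing cycle (of the correspondingly deformed degeneration), and that $(X,\omega)$ admits at least one degeneration to a nodal variety and hence at least one vanishing-cycle sphere $V$, both parts of the theorem follow from the single claim
\[(\ast)\qquad\text{the image of }G(X,\omega)\text{ in }\Aut\Db(X^\circ)\text{ acts transitively on spherical objects up to shift:}\]
indeed, given $(\ast)$, an arbitrary Lagrangian sphere $L$ satisfies $\mathcal{E}_{\psi(V)}\cong\mathcal{E}_L$ up to shift for some $\psi\in G(X,\omega)$, so $L$ is Fukaya-isomorphic to the vanishing cycle $\psi(V)$, which proves (1), while applying $(\ast)$ to two Lagrangian spheres proves (2).

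To get at $(\ast)$ I would first classify the spherical objects in $\Db(X^\circ)$. Every spherical object has as its Mukai vector a $(-2)$-class in the algebraic Mukai lattice $\widetilde{H}_{\mathrm{alg}}(X^\circ)$; since $X^\circ$ has Picard rank one this lattice is isometric to $U\oplus\langle 2d\rangle$ of signature $(2,1)$ (with $2d$ the degree of the polarisation), and by rigidity of spherical objects on a Picard-rank-one $K3$ surface the object is determined up to shift by its Mukai vector. A short lattice computation shows that every $(-2)$-class of $U\oplus\langle 2d\rangle$ is primitive of divisibility one, so by Eichler's criterion the isometry group acts transitively on $(-2)$-classes; and the Bayer--Bridgeland theorem produces a surjection from $\Aut\Db(X^\circ)$ onto the orientation-preserving Hodge isometry group of $\widetilde{H}(X^\circ)$ — which acts transitively on the $(-2)$-classes of $\widetilde{H}_{\mathrm{alg}}(X^\circ)$ — with kernel generated by the shift $[2]$ and by squared spherical twists. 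Together these show that $\Aut\Db(X^\circ)$ alone acts transitively on spherical objects up to shift.

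The hard part is then $(\ast)$ itself: promoting this abstract transitivity so that it is witnessed by the geometrically realised subgroup $G(X,\omega)$. The spherical twists $T_{\mathcal{E}_L}$ are Dehn twists, so the image of $G(X,\omega)$ already contains the reflection group generated by all reflections in $(-2)$-classes; what remains is to realise the other generators of the isometry group by symplectomorphisms. On the algebraic side these are the parabolic autoequivalence $\mathcal{E}\mapsto\mathcal{E}\otimes\mathcal{O}_{X^\circ}(1)$ and the finitely many automorphisms of $X^\circ$, which under the mirror correspondence should match a Lagrangian-section translation and the torus and permutation symmetries inherited from the ambient toric variety $Y$, all of which are symplectomorphisms of $(X,\omega)$. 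Pinning down these identifications — by tracing the ``standard'' autoequivalences through the explicit Landau--Ginzburg/toric model underlying the mirror equivalence, and checking that they together with the reflections still generate a group acting transitively on $(-2)$-classes — is the delicate step, and I expect it to be where most of the work lies. Granted that, a $(-2)$-class determines its spherical object up to shift, so $(\ast)$ holds; and the recurring ``up to shift'' ambiguity is harmless, since a Lagrangian sphere and its grading shifts have the same underlying submanifold.
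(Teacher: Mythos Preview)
Your reduction of both parts to the single claim $(\ast)$ is exactly right, and it is essentially how the paper proceeds. But your proposed proof of $(\ast)$ contains a genuine error, and your plan for the ``hard part'' diverges from what the paper does in a way that would leave you stuck.

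The error is the assertion that ``by rigidity of spherical objects on a Picard-rank-one $K3$ surface the object is determined up to shift by its Mukai vector.'' This is false. Rigidity ($\Ext^1(S,S)=0$) tells you $S$ has no infinitesimal deformations, but it does not say there is only one spherical object with a given Mukai vector. Indeed, Bayer--Bridgeland prove that $\Auteq^0\Db(X^\circ)/[2]$ is a free group of countably infinite rank, generated by squared spherical twists; these act trivially on the Mukai lattice but nontrivially on objects, so they produce infinitely many pairwise non-isomorphic spherical objects sharing the same Mukai vector. What is true (and what the paper uses) is \cite[Remark 6.10]{BB}: $\Auteq^0\Db(X^\circ)$ acts \emph{transitively} on spherical objects with fixed Mukai vector. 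That is a statement about the autoequivalence group, not about uniqueness, and it is exactly the missing ingredient. A smaller point: Eichler's criterion requires two copies of $U$, so it does not apply to the rank-three lattice $U\oplus\langle 2d\rangle$; the paper establishes transitivity of $\Gamma^+$ on $(-2)$-classes for $d=2,4$ by a direct argument (Lemma \ref{lem:oneorbit}), reading it off from the fact that the moduli space $\cM_0$ has a single nodal cusp in those cases.

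For the ``hard part'' --- showing the image of $G(X,\omega)$ in $\Auteq\Db(X^\circ)$ is large enough --- the paper does not try to match $-\otimes\mathcal{O}(1)$ or $\Aut(X^\circ)$ with explicit symplectomorphisms as you propose. Instead it builds a symplectic monodromy map $\pi_1(\cM_{\mathrm{cpx}}(X,\omega))\to G(X,\omega)$ and proves (Proposition \ref{prop:sescompat}) that the composite
\[
\pi_1(\cM_{\mathrm{cpx}})\to G(X,\omega)\to \Auteq_{CY}\Db(X^\circ)/[2]\cong\pi_1(\cM_{\mathrm{K\"ah}})
\]
is an \emph{isomorphism}. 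The proof compares generators: on one side, loops around the holes $\delta^\perp$ in $\Omega^+_0$ map to squared Dehn twists in vanishing cycles; on the other, squared spherical twists correspond to the same loops by Bayer--Bridgeland. Once this is known, the image of $G(X,\omega)$ contains all of $\Auteq_{CY}\Db(X^\circ)/[2]$, and in particular all of $\Auteq^0/[2]$; combined with the transitivity result above and with Lemma \ref{lem:sph_cl_vc} (every $(-2)$-class is realised by a vanishing cycle), this yields $(\ast)$. Your suggested route via Lagrangian-section translations and toric symmetries might be made to work, but it would require a matching of specific autoequivalences across the mirror equivalence that the paper never attempts and that is not available in the literature for these examples.
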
  

\begin{rmk}
In the body of this paper we will only consider four-dimensional symplectic Calabi--Yau manifolds, for which a complete construction of the Fukaya category can be carried out using classical pseudoholomorphic curve theory. It is interesting to note that our results would have implications in higher dimensions, once the relevant foundational theory is established. For instance, let $(X,\omega)$ be a symplectic $K3$ surface as in Theorem \ref{Thm:Torelli}. Consider the product $Z = (X\times S^2, \omega \oplus \omega_{FS})$. Conjecturally, 
\[
\Dpinf\EuF(Z) \simeq \Dpinf\EuF(X) \oplus \Dpinf\EuF(X)\]
splits as a direct product of two copies of $\Dpinf\EuF(X)$, compare to \cite[Example 3.23]{Seidel:Flux}.  Given this, one infers that the symplectic Torelli group $I(Z)$ surjects onto an infinite-rank free group. By contrast, for simply-connected manifolds of dimension $\geq 5$, Sullivan  showed \cite[Theorem 10.3]{Sullivan} that the differentiable Torelli group $\ker \, [\pi_0\Diff(Z) \to \Aut H^*(Z)]$ is commensurable with an arithmetic group, in particular is finitely presented. Thus, such a stabilised version of Theorem \ref{Thm:Torelli} would be intrinsically symplectic in nature. (Our lack of knowledge of four-dimensional smooth mapping class groups makes it hard to draw precisely the same contrast in the setting of Theorem \ref{Thm:Torelli} itself.)  
\end{rmk}

\subsection{Outlines\label{Sec:Outlines}}

The results above are obtained by combining two main ingredients:
\begin{itemize}
\item the proof of homological mirror symmetry for Greene--Plesser mirrors, cf. \cite{SS};
\item Bayer and Bridgeland's proof \cite{BB} of Bridgeland's conjecture \cite{Bridgeland-K3} on the group of autoequivalences of the derived category $\Db(X^{\circ})$ of a $K3$ surface $X^{\circ}$, in the case that $X^\circ$ has Picard rank one.
\end{itemize}

\begin{rmk}
An important point is that Bayer and Bridgeland prove Bridgeland's conjecture for Picard rank one $K3$ surfaces, but the higher-rank case remains open. 
In particular, their proof does not apply to the mirror of the quartic surface (which has Picard rank $19$). 
That is why we cannot use Seidel's proof of homological mirror symmetry in that case  \cite{Seidel:HMSquartic} as input for our results, but must instead use the more general results proved in \cite{SS}.  
\end{rmk}

\begin{rmk}
Another slight  delicacy is that the mirror to a symplectic $K3$ surface $(X,\omega)$, in the sense of homological mirror symmetry, is an algebraic $K3$ surface over the Novikov field $\Lambda$, whilst the work of Bayer and Bridgeland, at least as written, is for $K3$ surfaces over $\C$. We circumvent this using standard ideas around the ``Lefschetz principle'', i.e. the fact that any algebraic $K3$ surface over a field of characteristic zero is in fact defined over a finitely generated extension field of the rationals, and such fields admit embeddings in $\C$. 
\end{rmk}

We now explain how we combine these two ingredients to obtain information about symplectic mapping class groups. 
We study $G(X,\omega)$ via its action on the Fukaya category. 
It turns out always to act by ``Calabi--Yau'' autoequivalences, and the action is only well-defined up to the action of even shifts, so that we have a homomorphism
\[ G(X,\omega) \to \Auteq_{CY} \Dpinf\EuF(X,\omega)/[2].\]
If $X^\circ$ is homologically mirror to $(X,\omega)$, then the autoequivalence group of $\Dpinf\EuF(X,\omega)$ can be identified with the autoequivalence group of the derived category $\Db(X^\circ)$. 
The Bayer--Bridgeland theorem identifies the subgroup of Calabi--Yau autoequivalences of $\Db(X^\circ)$, modulo even shifts, with $\pi_1(\cM)$, where $\cM = \cM_{\text{K\"ah}}(X^\circ)$ is a version of the ``K\"ahler moduli space'' of $X^\circ$ defined via Bridgeland's stability conditions.

We identify $\cM_{\text{K\"ah}}(X^\circ)$ with the ``complex moduli space'' $\cM_{\text{cpx}}(X,\omega)$, and construct a symplectic monodromy homomorphism $\pi_1(\cM_{\text{cpx}}(X,\omega)) \to G(X,\omega)$. 
We show that the composition
\begin{equation}
\label{eqn:compid1} 
\pi_1(\cM) \to G(X,\omega) \to \Auteq_{CY} \Dpinf\EuF(X,\omega)/[2] \cong \Auteq_{CY} \Db(X^\circ)/[2] \cong \pi_1(\cM) 
\end{equation}
is an isomorphism.
Considering the respective Torelli subgroups, we have a similar composition
\begin{equation}
\label{eqn:compid2} 
\pi_1\left(\tilde{\cM}\right) \to I(X,\omega) \to \Auteq^0 \Dpinf\EuF(X,\omega)/[2] \cong \Auteq^0 \Db(X^\circ)/[2] \cong \pi_1\left(\tilde{\cM}\right)
\end{equation}
which is also an isomorphism, where $\tilde{\cM}$ is a certain cover of $\cM$ (denoted $\Omega^+_0(X,\omega)$, or alternatively $\Omega^+_0(X^\circ)$, in the body of the paper).

It follows, in particular, that $I(X,\omega)$ surjects onto $\pi_1(\tilde{\cM})$. 
Theorem \ref{Thm:Torelli} then follows from the fact that $\tilde{\cM}$ is the complement of a countably infinite discrete set of points in the upper half plane, and in particular its fundamental group is a free group of countably infinite rank.

\subsection{Further questions}

The fact that the compositions \eqref{eqn:compid1}, \eqref{eqn:compid2} are isomorphisms implies that
\begin{align}
\label{eqn:GZ1} G(X,\omega) & \cong  Z(X,\omega)\rtimes\pi_1(\cM)  \quad \text{and}\\
\label{eqn:GZ2} I(X,\omega) & \cong  Z(X,\omega) \rtimes \pi_1(\tilde{\cM}) ,
\end{align}
where
\[Z(X,\omega) \coloneqq  \ker \, \left[G(X,\omega) \to \Auteq\Dpinf\EuF(X,\omega)/[2]\right]\]
denotes the ``Floer-theoretically trivial'' subgroup of the symplectic mapping class group (it is contained in $I(X,\omega)$). 
Thus, we have essentially reduced the problem of computing $G(X,\omega)$ and $I(X,\omega)$ to the problem of computing $Z(X,\omega)$. 
This raises the following:

\begin{itemize}
\item[(\qf)]
Let $(X,\omega)$ be a symplectic $K3$ whose Fukaya category is non-degenerate (i.e., the open-closed map hits the unit). Can $Z(X,\omega)$ be non-trivial?
\end{itemize}

If the answer to (\qf) is negative for ambient-irrational K\"ahler forms on the mirror quartic or mirror double plane, then our results show that the answers to (\qa) and (\qc) are positive in those cases. 

\begin{rmk}
The caveat of non-degeneracy of the Fukaya category is added to (\qf) to avoid a negative answer due to the symplectic manifold ``not having enough Lagrangians to detect symplectomorphisms''.
\end{rmk}

In a similar vein, Theorem \ref{Thm:spheres} reduces questions (\qb) and (\qd) to the following general:

\begin{itemize}
\item[(\qg)] Can there exist Fukaya-isomorphic but non-Hamiltonian-isotopic Lagrangian spheres in a symplectic $K3$?
\end{itemize}

\paragraph{Acknowledgements}  I.S. is indebted to Daniel Huybrechts for  patient explanations of the material of Section \ref{Sec:Lefschetz}.  He is furthermore grateful to Arend Bayer, Tom Bridgeland,  Aurel Page, Oscar Randal-Williams,  Tony Scholl and Richard Thomas for  helpful conversations and correspondence.  Both authors are grateful to the anonymous referees for their queries and suggestions, which have greatly improved the exposition.

N.S. was partially supported by a Sloan Research Fellowship, a Royal Society University Research Fellowship, and by the National Science Foundation through Grant number DMS-1310604 and under agreement number DMS-1128155.
Any opinions, findings and conclusions or recommendations expressed in this material are those of the authors and do not necessarily reflect the views of the National Science Foundation. 
N.S. also acknowledges support from Princeton University and the Institute for Advanced Study.
I.S. was partially supported by a Fellowship from EPSRC.

\section{Preliminaries}

In this section we define some of the moduli spaces that will arise in this paper, following the notation of \cite{BB} closely. The moduli spaces will appear in different ways on the two sides of mirror symmetry, so here we simply give the abstract definitions.

\subsection{Lattices}

In this paper, a \emph{lattice} is a free $\Z$-module of finite rank equipped with a non-degenerate integral symmetric bilinear 
pairing $(\cdot,\cdot)$. 
We will sometimes write $x^2$ for $(x,x)$. We denote the set of $(-2)$-classes in a lattice $L$ by
\begin{equation}
\label{eq:deltaL}
\Delta(L) \coloneqq \{ \delta \in L \, | \, (\delta,\delta) = -2\}.
\end{equation}
If $L$ is a lattice, we denote by $L^-$ the lattice with the same underlying $\Z$-module but the sign of the pairing reversed. 

If $A$ is an abelian group, we denote $L_A \coloneqq L \otimes_\Z A$. 
If $A$ is a $\Z$-algebra, then the pairing on $L$ induces a symmetric bilinear pairing on $L_A$. 
Given $A \subset B$ and a subset $M \subset L_A$, we define 
\[M^\perp_B \coloneqq \{n \in L_B\,|\, (n,m) = 0 \text{ for all $m \in M$}\}.\]
For $m \in L_A$ we will denote $m^\perp_B \coloneqq \{m\}^\perp_B$ (and if $A=B$ we will drop the subscript `$B$').  In particular, for $\delta \in \Delta(L)$, we will write $\delta^{\perp}_{\R}$ and $\delta^{\perp}_{\C}$ for the orthogonals to $\delta$ in the corresponding real or complex vector spaces.

For the rest of the paper, let $L = U^{\oplus 3} \oplus E_8^{\oplus 2}$ be the ``K3 lattice'' (isomorphic to $H^2(X,\Z)$ equipped with the cup product pairing, for any complex K3 surface $X$; here $U$ is the hyperbolic lattice, with matrix $\begin{psmallmatrix} 0 & 1 \\ 1 & 0 \end{psmallmatrix}$ in an appropriate basis).  $L$ has signature $(3,19)$. 

\subsection{Abstract moduli space}

 The \emph{period domain} associated to a lattice $N$ is\footnote{
Our convention is that if $V$ is a complex vector space, then its \emph{projectivization} $\bP(V)$ is the set of complex \emph{lines} in $V$.}
\begin{equation} \label{eqn:period_abstract}
\Omega(N) \ \coloneqq \ \{ \Omega \in \bP(N_{\C}) \, | \, (\Omega,\Omega) = 0, \, (\Omega, \overline{\Omega}) > 0 \}.
\end{equation}
$\Omega(N)$ is an open complex submanifold of a quadric hypersurface in $\bP(N_\C)$. 
The group of lattice automorphisms $\Aut(N)$ acts on $\Omega(N)$ on the left. 

For the rest of this section, let $N$ be a lattice of signature $(2,t)$. 
Observe that $\Omega(N)$ has two connected components, distinguished by the orientation of the positive-definite two-plane spanned by the real and imaginary parts of $\Omega$, and interchanged by complex conjugation. 
We denote them by $\Omega^\pm(N)$.

We define 
\[ \Omega^\pm_\amp(N) \coloneqq \Omega^\pm(N) \setminus \bigcup_{\delta \in \Delta(N)} \bP(\delta^\perp_\C).\]

We define $\Gamma(N) \subset \Aut(N)$ to be the subgroup of isometries acting trivially on the discriminant group $N^*/N$. 
We define $\Gamma^+(N) \subset \Gamma(N)$ to be the subgroup preserving $\Omega^+(N)$. (It may have index one or two, cf. \cite[Proposition 5.6]{Dolgachev}.)

The significance of $\Gamma(N)$ is explained by the following, which is an immediate consequence of \cite[Corollary 1.5.2]{Nikulin} (cf. \cite[Proposition 3.3]{Dolgachev}):

\begin{lem}\label{lem:nikulin}
Suppose that $N$ is a primitive sublattice of the unimodular lattice $M$. Then the image of the natural embedding
\begin{equation}
\label{eqn:gammanperp}
\left\{\sigma \in \Aut(M): \sigma|_{N^\perp} = \id\right\} \hookrightarrow \Aut(N) 
\end{equation}
is $\Gamma(N)$.
\end{lem}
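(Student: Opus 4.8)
The plan is to deduce the lemma directly from Nikulin's results on primitive embeddings of lattices into unimodular lattices, as indicated in the paper. Recall the setup: $N \hookrightarrow M$ is a primitive embedding of a lattice $N$ into a unimodular lattice $M$, so $N^\perp$ is also primitive in $M$, and there is a canonical anti-isometry of discriminant forms $\gamma_N \colon N^*/N \xrightarrow{\sim} (N^\perp)^*/N^\perp$ (this is the content of \cite[Corollary 1.5.2]{Nikulin}, which in fact says that such primitive embeddings into a unimodular lattice of a fixed signature correspond, up to isometry, to pairs consisting of the orthogonal complement lattice together with such a gluing isometry). Under the identification $M \supset N \oplus N^\perp$ with $M / (N \oplus N^\perp)$ being the graph of $\gamma_N$, an element $\sigma \in \Aut(M)$ with $\sigma|_{N^\perp} = \id$ automatically preserves $N = (N^\perp)^\perp$, hence restricts to an isometry $\sigma|_N \in \Aut(N)$; this gives the embedding \eqref{eqn:gammanperp}, which is visibly injective (an automorphism of $M$ is determined by its restrictions to $N$ and $N^\perp$, since $N \oplus N^\perp$ has finite index in $M$).

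The substantive point is to identify the image with $\Gamma(N)$. First I would show the image lies in $\Gamma(N)$: given such a $\sigma$, the induced maps on discriminant groups must be compatible with the gluing, i.e. $\gamma_N \circ (\sigma|_N)^* = (\sigma|_{N^\perp})^* \circ \gamma_N$ as maps $N^*/N \to (N^\perp)^*/N^\perp$; since $\sigma|_{N^\perp} = \id$ induces the identity on $(N^\perp)^*/N^\perp$, we get $\gamma_N \circ (\sigma|_N)^* = \gamma_N$, and as $\gamma_N$ is an isomorphism this forces $(\sigma|_N)^* = \id$ on $N^*/N$, i.e. $\sigma|_N \in \Gamma(N)$. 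Conversely, given $\tau \in \Gamma(N)$, I would build $\sigma \in \Aut(M)$ extending $\tau \oplus \id_{N^\perp}$: the pair $(\tau, \id_{N^\perp})$ acts on $N \oplus N^\perp$ and I must check it preserves the overlattice $M$. Concretely, $M$ is generated over $N \oplus N^\perp$ by elements of the form $x + \gamma_N(\bar x)$ for $x$ running over lifts of classes $\bar x \in N^*/N$ (interpreting $\gamma_N(\bar x)$ as a lift in $(N^\perp)^* \subset N^\perp \otimes \Q$); applying $\tau \oplus \id$ sends this to $\tau(x) + \gamma_N(\bar x)$, and since $\tau$ acts trivially on $N^*/N$ we have $\overline{\tau(x)} = \bar x$, so $\tau(x) + \gamma_N(\bar x) = \tau(x) - x + (x + \gamma_N(\bar x)) \in N + M = M$. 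Hence $\tau \oplus \id$ preserves $M$, defining the required $\sigma$, whose restriction to $N$ is $\tau$. This shows surjectivity onto $\Gamma(N)$.

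The main obstacle, and the step requiring the most care, is pinning down the exact form of the correspondence between overlattices $M \supset N \oplus N^\perp$ and gluing isometries of discriminant forms, together with the precise statement of \cite[Corollary 1.5.2]{Nikulin} being invoked — in particular making sure one has the right compatibility (anti-isometry versus isometry of discriminant forms, and the signature bookkeeping that guarantees $N^\perp$ exists with the predicted discriminant form inside the unimodular $M$). Everything else is formal lattice algebra. An alternative, essentially equivalent route would be to cite \cite[Proposition 3.3]{Dolgachev} more directly, which already packages exactly this statement for sublattices of the K3 lattice; but since the lemma is stated for a general unimodular $M$, I would present the argument via Nikulin as above, remarking that it is purely an unwinding of the structure of primitive embeddings into unimodular lattices and needs no hypothesis beyond non-degeneracy of $N$.
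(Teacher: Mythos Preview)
Your argument is correct and is precisely the unwinding of \cite[Corollary 1.5.2]{Nikulin} that the paper has in mind; the paper itself simply cites that result (and \cite[Proposition 3.3]{Dolgachev}) without spelling out the gluing-map computation. So you have supplied the details behind the one-line citation, and no further work is needed.
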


We define the topological stack
\[ \cM(N) \coloneqq [\Omega^+(N)/\Gamma^+(N)],\] 
and similarly $\cM_\amp(N)$. (The curious reader may consult \cite{Noohi} on the general theory of topological stacks and their fundamental groups -- however the only topological stacks in this paper will be global quotient stacks, and they will only appear via their stacky fundamental groups, whose elementary definition we now give.)

The \emph{stacky fundamental group} $\pi_1(\cM(N),[p])$, by definition, consists of all pairs $(\gamma,y)$ where $\gamma \in \Gamma^+(N)$ and $y$ is a homotopy class of paths from $p$ to $\gamma \cdot p$. 
The composition $(\gamma_1,y_1) \cdot (\gamma_2,y_2)$ is defined to be $(\gamma_1 \cdot \gamma_2, (\gamma_2 \cdot y_1) \# y_2)$ where `$\#$' denotes concatenation of paths. 
Thus there is a short exact sequence
\[\xymatrix@R=0em{1 \ar[r] &\pi_1(\Omega^+_\amp(N),p) \ar[r] & \pi_1(\cM_\amp(N),[p]) \ar[r] & \Gamma^+(N) \ar[r] & 1\\
& y \ar@{|->}[r] &(1,y) && \\
&&(\gamma,y) \ar@{|->}[r] & \gamma. &}\] 

If $\Gamma^+(N)$ acts properly discontinuously then this stacky fundamental group coincides with the fundamental group of the manifold $\Omega^+(N)/\Gamma^+(N)$; more generally, if $\cM(N)$ is an orbifold (as will be the case in this paper) then the stacky fundamental group coincides with the orbifold fundamental group. 

\subsection{K\"ahler moduli space of a complex $K3$}\label{sec:mkahk3}

A complex $K3$ surface is a simply connected compact complex surface $X$ with trivial canonical sheaf $K_X \cong \mathcal{O}_X$. 
General references on $K3$ surfaces include \cite{BHPV, Huybrechts:K3book}. 

The Picard lattice of a complex $K3$ is $\Pic(X) = \NS(X) = H^2(X,\Z)\cap H^{1,1}(X,\C)$, equipped with the cup-product pairing; this is a lattice of rank $0 \leq \rho(X) \leq 20$. If $X$ is algebraic it has signature $(1,\rho(X) - 1)$ (non-algebraic $K3$ surfaces may have Picard rank zero). 
The cohomology $H^*(X,\Z)$ carries a polarized weight-two Hodge structure, whose algebraic part is
\[
\N(X) \coloneqq H^0(X, \Z) \oplus \Pic(X) \oplus H^4(X, \Z), 
\]
and whose polarization is given by the Mukai symmetric form
\[\langle (r_1,D_1,s_1),(r_2,D_2,s_2)\rangle =D_1\cdot D_2-r_1s_2 -r_2s_1.\] 
If $X$ is algebraic, then $\N(X) \cong U \oplus \Pic(X)$ has signature $(2,\rho(X))$. 

\begin{rmk}\label{rmk:Nrelevance}
We briefly explain how the lattice $\N(X)$ is related to the bounded derived category $\Db(X)$. Any object $E$ of $\Db(X)$ has a Mukai vector $v(E) = \mathrm{ch}(E) \sqrt{\mathrm{td} (X)}\in \N(X)$. 
Riemann--Roch takes the form
\[\chi(E,F)\coloneqq\big.\sum_{i} (-1)^i \dim \Hom^i(E,F)=-(v(E),v(F)).\]
As a consequence, any spherical object\footnote{Recall that an object $E$ is called \emph{spherical} if $\Hom^i(E,E)$ is isomorphic to the $i$th cohomology of the 2-sphere: i.e., it has rank one if $i=0,2$ and zero otherwise.} $E$ has Mukai vector $v(E) \in \Delta(\N(X^\circ))$.
\end{rmk}

\begin{defn}
Let $X$ be an algebraic $K3$ surface. Define
\begin{align*}
\Omega^+_0(X) &\coloneqq \Omega^+_0(\N(X)), \\
\Gamma^+(X) & \coloneqq \Gamma^+(\N(X)), \text{ and} \\
\cM_{\text{K\"ah}}(X) & \coloneqq \cM_\amp(\N(X)).
\end{align*}
We call the latter the \emph{K\"ahler moduli space of $X$}.
\end{defn}

\subsection{Complex moduli space of a symplectic $K3$}\label{sec:mcpx}

A \emph{symplectic $K3$} is a symplectic manifold $(X,\omega)$ which is symplectomorphic to a complex $K3$ surface equipped with a K\"ahler form. 
Given a symplectic $K3$, we define the lattice
\[ \bL(X,\omega) \coloneqq [\omega]^\perp \cap H^2(X,\Z),\]
equipped with the intersection pairing. 

\begin{rmk}\label{rmk:Lrelevance}
We briefly explain how the lattice $\bL(X,\omega)$ is related to the Fukaya category. 
Any oriented Lagrangian submanifold of $X$ (and in particular, any object of the Fukaya category) has a homology class $[L] \in \bL(X,\omega)$. 
If $K$ and $L$ are objects of the Fukaya category, we have
\begin{equation}\label{eq:euler_fuk}
 \chi(K,L) \coloneqq \sum_i (-1)^i \dim \Hom^i(K,L) = -[K]\cdot [L],
 \end{equation}
because both sides can be identified as a signed count of intersection points between $K$ and $L$.\footnote{More generally, we have $\chi(K,L) = (-1)^{n(n+1)/2} [K]\cdot [L]$ where $n$ is half the real dimension on $X$.} 
As a consequence, any spherical object (in particular, any Lagrangian sphere) has homology class $[L] \in \Delta(\bL(X,\omega))$.
\end{rmk}

\begin{defn}\label{defn:mcpx}
Suppose that $\bL(X,\omega)$ has signature $(2,t)$. 
Define
\begin{align*}
\Omega^+_0(X,\omega) &\coloneqq \Omega^+_0(\bL(X,\omega)), \\
\Gamma^+(X,\omega) & \coloneqq \Gamma^+(\bL(X,\omega)), \text{ and} \\
\cM_{\text{cpx}}(X,\omega) & \coloneqq \cM_\amp(\bL(X,\omega)).
\end{align*}
We call the latter the \emph{complex moduli space of $(X,\omega)$}.
\end{defn}

Remarks \ref{rmk:Nrelevance} and \ref{rmk:Lrelevance} give some evidence for the following conjecture, which is implicit in \cite{Dolgachev}:

\begin{conj}\label{conj:lattmirr}
If a symplectic $K3$ $(X,\omega)$ is mirror to a complex $K3$ $X^\circ$, then we have an isometry
\[ \bL(X,\omega) \cong \N(X^\circ).\]
As a consequence, when $(X,\omega)$ and $X^\circ$ are mirror there is an isomorphism
\[ \cM_{\text{cpx}}(X,\omega) \cong \cM_{\text{K\"ah}}(X^\circ).\]
\end{conj}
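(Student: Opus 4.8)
The plan is to deduce the conjecture for the two specific families of interest (the mirror quartic and the mirror double plane) from the general construction of Greene--Plesser mirrors, rather than to attempt it in full generality. For such a pair $(X,\omega)$ and $X^\circ$, the combinatorics of the reflexive simplex $\pol$ controls both sides simultaneously. First I would use the toric description recalled after Theorem~\ref{Thm:Torelli}: $X \subset Y$ is the proper transform of an anticanonical hypersurface in the toric variety $\bar Y$ associated to $\pol$, and the mirror $X^\circ$ arises from the dual simplex $\pol^\circ$. The lattice $\bL(X,\omega) = [\omega]^\perp \cap H^2(X,\Z)$ for an ambient K\"ahler class can be computed from the sublattice of $H^2(X,\Z)$ generated by the ambient toric divisor classes (the ``vertical'' part coming from the toric resolution $Y \to \bar Y$), together with the orthogonal complement of the polarisation class $[\omega]$ inside it; dually, $\N(X^\circ) = U \oplus \Pic(X^\circ)$ is computed from the Picard lattice of the mirror, which for a Greene--Plesser mirror is again read off from the toric data of $\pol^\circ$. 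The content of the first step is that these two lattice computations yield isometric answers; this is exactly the Dolgachev--Nikulin lattice-duality phenomenon for anticanonical hypersurfaces in Gorenstein toric Fano varieties, and in the two cases at hand it is a finite check.

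Second, granting the isometry $\phi\colon \bL(X,\omega) \xrightarrow{\sim} \N(X^\circ)$, I would check that it is compatible with all the auxiliary structures used to build the moduli spaces: it has signature $(2,t)$ on both sides (this is where the hypothesis of Definition~\ref{defn:mcpx} is verified, using that $X^\circ$ is algebraic so $\N(X^\circ)$ has signature $(2,\rho(X^\circ)-1)$), it carries $\Delta(\bL(X,\omega))$ to $\Delta(\N(X^\circ))$ since it is an isometry, and it induces an isomorphism of discriminant groups, hence conjugates $\Gamma(\bL(X,\omega))$ to $\Gamma(\N(X^\circ))$. To pin down orientations I would need to choose $\phi$ so that it sends one distinguished component $\Omega^+$ to the other, or else compose with complex conjugation; this is a sign choice of no real substance. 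With these compatibilities in hand, $\phi$ induces a biholomorphism $\Omega(\bL(X,\omega)) \cong \Omega(\N(X^\circ))$ restricting to $\Omega^+_0(\bL(X,\omega)) \cong \Omega^+_0(\N(X^\circ))$ (the $(-2)$-hyperplanes removed on each side correspond under $\phi$), equivariant for $\Gamma^+$.

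Third, I would descend: an isometry conjugating $\Gamma^+(\bL(X,\omega))$ to $\Gamma^+(\N(X^\circ))$ and intertwining the actions on $\Omega^+_0$ induces an isomorphism of global quotient stacks $[\Omega^+_0(\bL(X,\omega))/\Gamma^+(\bL(X,\omega))] \cong [\Omega^+_0(\N(X^\circ))/\Gamma^+(\N(X^\circ))]$, i.e.\ $\cM_{\text{cpx}}(X,\omega) \cong \cM_{\text{K\"ah}}(X^\circ)$, as required; at the level of stacky fundamental groups this is immediate from the short exact sequence displayed before Section~\ref{sec:mkahk3}.

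The main obstacle is the first step: identifying $\bL(X,\omega)$ with $\N(X^\circ)$ as lattices, not merely as abstract groups of the right rank and signature. The ranks and signatures match for soft reasons (mirror symmetry exchanges $h^{1,1}$-type data), but the refined quadratic-form statement requires knowing the intersection form on the ambient part of $H^2(X,\Z)$ and the intersection form on $\Pic(X^\circ)$ explicitly, and matching them. I expect this to come down to Batyrev-style combinatorics of the pair $(\pol, \pol^\circ)$ together with the Dolgachev--Nikulin duality for lattice-polarised $K3$ surfaces, carried out by hand in the two cases; Remarks~\ref{rmk:Nrelevance} and~\ref{rmk:Lrelevance} are exactly the ``categorical shadow'' that makes one confident the answer is right, but they do not by themselves prove the isometry. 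A secondary, more bookkeeping-level subtlety is ensuring the chosen isometry respects orientations of the positive two-planes so that it preserves (rather than swaps) the $+$ components; one resolves this by composing with conjugation if necessary, which does not change the isomorphism class of the quotient stack.
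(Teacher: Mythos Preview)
The statement is labelled a Conjecture in the paper and is not proved in general; the paper only \emph{verifies} it for the mirror quartic and mirror double plane with ambient-irrational K\"ahler form (Example~\ref{eg:GPirrat}). Your proposal correctly targets these same cases, and your second and third steps are a correct (if more detailed than necessary) explanation of why the moduli-space isomorphism follows formally from the lattice isometry: both $\cM_{\text{cpx}}$ and $\cM_{\text{K\"ah}}$ are built functorially from the underlying lattice, so an abstract isometry transports the auxiliary data ($\Delta$, $\Gamma^+$, period domain, orientation up to a harmless conjugation) automatically. The paper treats this implication as immediate from the definitions and does not spell it out.

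The gap is in your first step. You assert that $\Pic(X^\circ)$ ``for a Greene--Plesser mirror is again read off from the toric data of $\pol^\circ$'' and that the lattice identification is ``a finite check''. But toric data only determines the sublattice generated by restrictions of toric divisors; a \emph{specific} quartic or double plane $X^\circ_d$ over $\Lambda$ could in principle have strictly larger Picard group. The paper's verification hinges on Proposition~\ref{prop:irratmeansgen}, which combines a Noether--Lefschetz argument (Theorem~\ref{thm:MP}) with a tropical/valuation trick to show that when $\lambda = \val(d)$ is irrational, $X^\circ_d$ avoids the Noether--Lefschetz locus and hence has the \emph{generic} Picard rank, namely~$1$. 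This is not a finite combinatorial check and is the genuine content of the verification on the $B$-side. On the symplectic side, the paper likewise uses ambient-irrationality of $\omega$ to guarantee $\bL(X,\omega) = \Pic_{tor}(X)^\perp$, and cites Dolgachev and Rohsiepe (Examples~\ref{Ex:mirror_quartic} and~\ref{Ex:mirror_double_plane}) for $\Pic_{tor}(X)^\perp \cong U \oplus \langle 2n\rangle$; once both sides are identified with $U \oplus \langle 2n\rangle$, the isometry is tautological.
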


\begin{rmk}
It may seem peculiar that Definition \ref{defn:mcpx} is only given when $\bL(X,\omega)$ has signature $(2,t)$. 
However this condition is natural in light of Conjecture \ref{conj:lattmirr}, which shows that it is expected to hold whenever the mirror $X^\circ$ is algebraic.
A version of the complex moduli space, which does not require the assumption on the signature of $\bL(X,\omega)$, is defined in Section \ref{sec:losemark}. 
(It is explained in Section \ref{subsec:shrink} why $\cM_{\text{cpx}}(X,\omega)$ is superior when it is defined.)
\end{rmk}

\begin{rmk}
We briefly explain the connection with \cite{Dolgachev}. 
If $M$ is a lattice of signature $(1,t)$, Dolgachev defines an ``$M$-polarized $K3$ surface'' to be one equipped with an embedding $M \hookrightarrow \Pic(X)$. 
Given an embedding $M \hookrightarrow L$, he identifies $\Omega^+_0(M^\perp)$ as the ``moduli space of ample marked $M$-polarized $K3$ surfaces'', and $\cM_0(M^\perp)$ as the ``moduli space of ample $M$-polarized $K3$ surfaces''. 
He claims that an $M$-polarized $K3$ surface $X$, equipped with a K\"ahler form $\omega$ satisfying $[\omega] \in M_\R$, should be mirror to an $M^\circ$-polarized $K3$ surface $X^\circ$, where $M^\perp \cong U \oplus M^\circ$. 
If $[\omega] \in M_\R$ is ``irrational'', then $\bL(X,\omega) = M^\perp$; and if $X^\circ$ is very general, then $\Pic(X^\circ) = M^\circ$. 
This gives rise to Conjecture \ref{conj:lattmirr}.
\end{rmk}

\subsection{Picard rank one case}\label{sec:pic1eg}

We will be interested in the case that $(X,\omega)$ is a K\"ahler $K3$ surface mirror to an algebraic $K3$ surface $X^\circ$. 
According to Conjecture \ref{conj:lattmirr}, this means that
\[ \bL(X,\omega) \cong \N(X^\circ) \cong U \oplus \Pic(X^\circ)\]
where $\Pic(X^\circ)$ has signature $(1,\rho(X^\circ) - 1)$ and is furthermore even (since the $K3$ lattice is even). 
The smallest such examples are $\Pic(X^\circ) = \langle 2n \rangle$ for $n \ge 1$. 
Dolgachev has computed the relevant moduli spaces explicitly in this case \cite[Section 7]{Dolgachev}.

The space $\Omega^+(U \oplus \langle 2n\rangle)$ is isomorphic to the upper half plane $\mathfrak{h} \coloneqq \{z \in \C: \im z > 0\}$ via the map
\begin{align*}
\mathfrak{h} & \to \Omega^+(U \oplus \langle 2n\rangle) \\
z & \mapsto [-nz^2:1:z].
\end{align*}
The set $\Delta(U \oplus \langle 2n \rangle)$ consists of all $(a,b,c) \in \Z^3$ satisfying $ab+nc^2+1 = 0$. 
Given such $\delta = (a,b,c)$, the intersection of $\bP(\delta^\perp_\C)$ with $\Omega^+(U \oplus \langle 2n \rangle)$ corresponds to the single point $p_\delta = c/b + i/b\sqrt{n} \in \mathfrak{h}$.
Therefore, the subset $\Omega^+_0(U \oplus \langle 2n \rangle)$ corresponds to the subset
\[ \mathfrak{h}_0 \coloneqq \mathfrak{h} \setminus \left\{\left.\frac{c}{b} + \frac{i}{b\sqrt{n}}\right| c \in \Z, \,b \in \Z_{>0},\, b|nc^2+1\right\}.\]
The group $\Gamma^+(U \oplus \langle 2n\rangle)$ is isomorphic to the Fricke modular group $\Gamma_0^+(n) \subset \mathrm{PSL}(2,\R)$, which is generated by the matrices 
\[
\left( \begin{array}{cc} a&b \\ c&d\end{array}\right) \in \mathrm{SL}(2,\Z) \, \textrm{ with} \  n|c , \quad \textrm{and} \ \left( \begin{array}{cc} 0& -1/\sqrt{n} \\ \sqrt{n}&0\end{array}\right).
\]

\subsection{More moduli spaces}\label{sec:auxil}

Still assuming $N$ to be a lattice of signature $(2,t)$, we define
\[ \EuQ(N) \coloneqq \left\{ \Omega \in N_\C| (\Omega,\Omega) = 0, (\Omega,\overline{\Omega})>0\right\}.\]
It is clearly a $\C^*$-bundle over $\Omega(N)$. 

Now observe that $\EuQ(N)$ consists of classes $\Omega \in N_\C$ satisfying
\[ \left(\mathrm{Re} \,\Omega\right)^2 = \left(\mathrm{Im}\, \Omega\right)^2 > 0, \quad \left(\mathrm{Re} \,\Omega, \mathrm{Im}\, \Omega\right) = 0.\]
Thus it can be identified with the set of conformal bases for positive-definite two-planes in $N_\R$. 
We now define $\EuP(N) \subset N_\C$ to be the open subset of vectors whose real and imaginary parts span a positive-definite two-plane.

\begin{lem}\label{lem:PQdefret}
The subspace $\EuQ(N) \subset \EuP(N)$ is a deformation retract. 
\end{lem}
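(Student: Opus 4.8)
The plan is to produce an explicit strong deformation retraction $H\colon \EuP(N)\times[0,1]\to\EuP(N)$ onto $\EuQ(N)$ by a fibrewise Gram--Schmidt process. Motivationally, the assignment $\Omega\mapsto\operatorname{span}_\R(\mathrm{Re}\,\Omega,\mathrm{Im}\,\Omega)$ exhibits $\EuP(N)$ as a fibre bundle over the (open) set of positive-definite $2$-planes in the Grassmannian $\operatorname{Gr}_2(N_\R)$, with fibre over a plane $P$ the space of ordered bases of $P$; the subbundle $\EuQ(N)$ has fibre over $P$ the set of \emph{conformal} bases, i.e. orthogonal pairs of equal positive length. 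Each fibre retracts onto its conformal bases using only the restricted pairing, so the retraction globalises; rather than invoke bundle machinery I would just write the formula down. For $\Omega=x+iy\in\EuP(N)$ set $a=(x,x)$, $b=(x,y)$, $c=(y,y)$. Since $\operatorname{span}_\R(x,y)$ is a positive-definite $2$-plane and $x,y$ is a basis for it, the Gram matrix $\begin{psmallmatrix} a & b \\ b & c \end{psmallmatrix}$ is positive-definite, so $a>0$ and $d\coloneqq ac-b^2>0$ everywhere on $\EuP(N)$; these are the quantities that will sit in denominators.

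I would define $H$ in two phases. For $t\in[0,\tfrac12]$, put $H_t(\Omega)=x+i\bigl(y-2t\tfrac{b}{a}x\bigr)$: this is the Gram--Schmidt step, and at $t=\tfrac12$ the vector $y^\flat\coloneqq y-\tfrac{b}{a}x$ satisfies $(x,y^\flat)=0$ and $(y^\flat,y^\flat)=d/a>0$. For $t\in[\tfrac12,1]$, put $H_t(\Omega)=x+i\,\rho^{\,2t-1}y^\flat$, where $\rho\coloneqq a/\sqrt d>0$ is the unique positive scalar with $(\rho y^\flat,\rho y^\flat)=a$. Then $H_0=\mathrm{id}$, the two formulas agree at $t=\tfrac12$ (each gives $x+iy^\flat$), and $H_1(\Omega)=x+i\rho y^\flat$ has $\mathrm{Re}$ and $\mathrm{Im}$ orthogonal with common square $a>0$, hence $H_1(\Omega)\in\EuQ(N)$.

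The remaining verifications are routine. \emph{Continuity:} $a,b,c,d$ depend continuously (indeed polynomially) on $\Omega$, and $a,d>0$ on $\EuP(N)$, so $b/a$ and $\rho^{\,2t-1}=\exp\!\bigl((2t-1)\log(a/\sqrt d)\bigr)$ are continuous in $(\Omega,t)$; as the two phases agree at $t=\tfrac12$, $H$ is continuous on $\EuP(N)\times[0,1]$. \emph{$H$ maps into $\EuP(N)$:} throughout, $\mathrm{Re}\,H_t(\Omega)=x$ while $\mathrm{Im}\,H_t(\Omega)$ is either of the form $y-\lambda x$ with $\lambda\in\R$ or a positive multiple of $y^\flat$, so $\operatorname{span}_\R(\mathrm{Re}\,H_t(\Omega),\mathrm{Im}\,H_t(\Omega))$ stays equal to the positive-definite plane $\operatorname{span}_\R(x,y)$, and in particular the two vectors remain linearly independent. \emph{$H$ fixes $\EuQ(N)$ pointwise:} if $\Omega\in\EuQ(N)$ then $b=0$, so phase one is constant and $y^\flat=y$; moreover $a=c$, so $d/a=a$ and $\rho=a/\sqrt{a^2}=1$, so phase two is constant. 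Hence $H$ is a strong deformation retraction of $\EuP(N)$ onto $\EuQ(N)$, which is what we want.

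There is no real obstacle here; the only points needing care are that $a$ and $d=ac-b^2$ (the denominators) are strictly positive on $\EuP(N)$ --- which is precisely its defining condition --- that the two phases glue continuously at $t=\tfrac12$, and that one rescales \emph{only} $y^\flat$, so that $(\mathrm{Im}\,H_1(\Omega))^2$ matches $(\mathrm{Re}\,H_1(\Omega))^2=a$, rather than rescaling both coordinates to force both real and imaginary parts to have square $1$ --- the latter choice would retract onto the proper sublocus $\{a=c=1,\ b=0\}$ of $\EuQ(N)$ rather than fixing $\EuQ(N)$ pointwise. Alternatively one could run the same argument via the polar decomposition of $\mathrm{GL}(2,\R)$ together with the straight-line homotopy $S\mapsto(1-t)S+t\sqrt{\det S}\,I$ inside the convex cone of positive-definite symmetric matrices, which is $O(2)$-conjugation-equivariant and so descends to $\EuP(N)$; but the explicit formula above avoids all bundle-theoretic bookkeeping.
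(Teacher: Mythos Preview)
Your proof is correct and is precisely what the paper intends: its entire proof reads ``By Gram--Schmidt'', and you have spelled out that Gram--Schmidt retraction explicitly and checked the details carefully.
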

\begin{proof}
By Gram--Schmidt.
\end{proof}

Furthermore, by taking conformal bases, one sees that $\EuP(N)$ is a $\mathrm{GL}^+(2,\R)$-bundle over $\Omega(N)$ and $\EuQ(N) \subset \EuP(N)$ a $\mathrm{CO}^+(2,\R)$-subbundle, where $\mathrm{CO}^+(2,\R) \cong \R^+ \times \mathrm{SO}(2,\R)$ is the group of orientation-preserving conformal transformations of $\R^2$. 
We denote the component of $\EuP(N)$ lying over $\Omega^\pm(N)$ (respectively, $\Omega^\pm_\amp(N)$) by $\EuP^\pm(N)$ (respectively, $\EuP^\pm_\amp(N)$), and define $\EuQ^\pm_\amp(N) \subset \EuQ^\pm(N)$ similarly.

\begin{defn}
We define
\begin{align*}
\cL_0(N) &\coloneqq [\EuQ_0^+(N)/\Gamma^+(N)],\\
\cL_{\text{K\"ah}}(X^\circ) &\coloneqq \cL_0(\N(X^\circ)) \quad \text{if $X^\circ$ is an algebraic $K3$, and}\\
\cL_{\text{cpx}}(X,\omega) &\coloneqq \cL_0(\bL(X,\omega)) \quad \text{if $\bL(X,\omega)$ has signature $(2,t)$.}
\end{align*}
\end{defn}

\section{Symplectic mapping class groups of $K3$ surfaces} 

In this section we explain how monodromy can be used to construct classes in the symplectic mapping class group of a $K3$ surface equipped with a K\"ahler form.

\subsection{Symplectic monodromy of K\"ahler manifolds}\label{sec:monod}

Let $p:\EuX \to B$ be a family of compact complex manifolds equipped with a locally-constant family $\kappa$ of K\"ahler classes. I.e., $p$ is a proper holomorphic submersion between complex manifolds, and $\kappa_b \in H^2(X_b;\R)$ is locally constant and K\"ahler for each $b \in B$.
Then there exists a smoothly-varying family of K\"ahler forms $\omega_b \in \Omega^{1,1}(X_b;\R)$ representing the K\"ahler classes $\kappa_b$, by \cite[Section 9.3.3]{VoisinI}. 
Moser's theorem says that the family of symplectic manifolds $(X_b,\omega_b)$ is symplectically locally trivial, so there is a monodromy map
\begin{equation}
\label{eq:monod}
\pi_1(B,b) \to G(X_b,\omega_b).
\end{equation}
Because the space of K\"ahler forms representing a given K\"ahler class is convex and therefore contractible, this does not depend on the choice of forms $\omega_b$ (again by Moser).

We require a generalization of this construction: suppose that we have a discrete group $\Gamma$ acting on the family $p:\EuX \to B$, respecting the classes $\kappa_b$ (i.e., $\Gamma$ acts on $\EuX$ and $B$ by biholomorphisms, so that $p$ is equivariant, and $\gamma^* \kappa_{\gamma \cdot b} = \kappa_b$). 
Then we obtain a monodromy map
\begin{equation}
\label{eq:stackmon}
 \pi_1([B/\Gamma],[b]) \to G(X_b,\omega_b),
\end{equation}
which sends a class $(\gamma,y)$ to the composition 
\[(X_b,\omega_b) \xrightarrow{\text{parallel transport along $y$}} (X_{\gamma \cdot b},\omega_{\gamma \cdot b}) \xrightarrow{\gamma^{-1}} (X_b,\gamma^* \omega_{\gamma \cdot b}) \xrightarrow{\text{Moser}} (X_b,\omega_b).\]
The final arrow is defined using Moser's theorem, and the fact that $\gamma^* \omega_{\gamma \cdot b}$ and $\omega_b$ are K\"ahler forms representing the same K\"ahler class. 

By construction, the morphisms \eqref{eq:monod} and \eqref{eq:stackmon} fit into a commutative diagram
\[ \xymatrix{ 1 \ar[r] & \pi_1(B,b) \ar[d]_-{\eqref{eq:monod}} \ar[r] & \pi_1([B/\Gamma],[b]) \ar[d]_-{\eqref{eq:stackmon}} \ar[r] & \Gamma \ar[r] & 1 \\
& G(X_b,\omega_b) \ar@{=}[r] & G(X_b,\omega_b) &&}\]
where the top row is exact.

\subsection{Moduli space of $K3$ surfaces}\label{sec:k3mods}

A \emph{marked $K3$ surface} is a pair $(X,\phi)$ with $X$ a $K3$ surface and $\phi: H^2(X,\Z) \rightarrow L$ a lattice isometry. 
A marked $K3$ surface $(X,\phi)$ has a period point
\begin{equation} \label{eqn:period}
\phi_\C(H^{2,0}(X)) \in \Omega(L).
\end{equation}
There is a moduli space of marked $K3$ surfaces, and the period mapping \eqref{eqn:period} defines a local isomorphism between this moduli space and the 20-dimensional complex manifold $\Omega(L)$. 
The period mapping is furthermore surjective, however it is not injective, and the moduli space is not Hausdorff (see \cite[Section VIII.12]{BHPV}). 

We define a \emph{K\"ahler $K3$ surface} to be a $K3$ surface $X$ equipped with a K\"ahler class $\kappa \in H^{1,1}(X;\R)$; thus a \emph{marked K\"ahler $K3$ surface} is a triple $(X,\phi,\kappa)$ where $(X,\phi)$ is a marked $K3$ surface and $\kappa \in H^{1,1}(X;\R)$ is a K\"ahler class.
We define
\[ \mathcal{K}(L) := \left\{ (\Omega,\kappa) \in \Omega(L) \times L_\R: \kappa^2>0, (\Omega,\kappa)=0\right\}.\]
A marked K\"ahler $K3$ surface $(X,\phi,\kappa)$ has a period point
\begin{equation}
\label{eq:kperiod}
\left(\phi_\C(H^{2,0}(X)),\phi_\R(\kappa)\right) \in \mathcal{K}(L).
\end{equation}
There is a moduli space of marked K\"ahler $K3$ surfaces, and the period mapping \eqref{eq:kperiod} defines an isomorphism with the subspace 
\begin{equation}
\label{eqn:kamp}
 \mathcal{K}_\amp(L) \coloneqq \mathcal{K}(L) \setminus \bigcup_{\delta \in \Delta(L)} \bP(\delta^\perp_\C) \times \delta^\perp_\R
 \end{equation}
(see \cite[Theorems VIII.12.3 and VIII.14.1]{BHPV}).

\begin{defn}
Given $\kappa \in L_\R$ satisfying $\kappa^2>0$, we define
\begin{align*}
\Omega(\kappa) &\coloneqq \left\{ \Omega \in \Omega(L): (\Omega,\kappa)=0\right\} \subset \Omega(L)\\
\Omega_\amp(\kappa) & \coloneqq \Omega(\kappa) \setminus \bigcup_{\delta \in \Delta(L) \cap \kappa^\perp} \bP(\delta^\perp_\C).
\end{align*}
We observe that $\kappa^\perp$ has signature $(2,19)$, and therefore $\Omega(\kappa) \subset \bP((\kappa^\perp)_\C)$ has two connected components $\Omega^\pm(\kappa)$, distinguished by the orientation of the positive-definite two-plane in $\kappa^\perp$ spanned by the real and imaginary parts of $\Omega$ and interchanged by complex conjugation. 
We similarly denote the connected components of $\Omega_\amp(\kappa)$ by $\Omega^\pm_\amp(\kappa)$.
\end{defn}

Now the moduli space of marked K\"ahler $K3$ surfaces is \emph{fine}, meaning it carries a universal family $\EuX \to \mathcal{K}_\amp(L)$. 
It is clear from the definitions that $ \Omega^+_\amp(\kappa) \times \{\kappa\} \subset \mathcal{K}_\amp(L)$, so we may restrict the universal family to this submanifold to obtain a family of $K3$ surfaces $\EuX(\kappa) \to \Omega^+_\amp(\kappa)$ equipped with a locally-constant family of K\"ahler classes $\phi_\R^{-1}(\kappa)$. 
By the previous section we obtain a map
\[ \pi_1\left(\Omega^+_\amp(\kappa)\right) \to G(X,\omega)\]
where $\omega$ is a K\"ahler form with class $\kappa$. 
In fact, because the cohomology of the family is globally trivialized by the marking $\phi$, the image of this map lies in $I(X,\omega) \subset G(X,\omega)$.

\begin{rmk}\label{rmk:dehntwist}
Let us consider a neighbourhood of a point on one of the hypersurfaces $\bP(\delta^\perp_\C) \cap \Omega^+(\kappa)$ that get removed to obtain $\Omega^+_0(\kappa)$, which does not intersect any of the other hypersurfaces. 
Inside this neighbourhood, the family $\EuX(\kappa)$ can be extended across the hypersurface, at the expense that it develops a node along the hypersurface. 
The resulting vanishing cycle is a Lagrangian sphere $L$ with homology class satisfying $\phi([L]) = \pm\delta$, and the symplectic monodromy around the hypersurface is the squared Dehn twist $\tau_L^2$. 
In particular, every class in $\Delta(\N(X,\omega))$ is the homology class of a vanishing cycle.
\end{rmk}

\subsection{Getting rid of the marking}\label{sec:losemark}

The fibres of the family $\EuX(\kappa) \to \Omega^+_0(\kappa)$ are marked K\"ahler $K3$ surfaces. 
We now determine which fibres are isomorphic as unmarked K\"ahler $K3$ surfaces. 
We do this by applying the Torelli theorem for $K3$ surfaces \cite[Chapter VIII]{BHPV}, which states that two K\"ahler $K3$ surfaces $(X_0,\kappa_0)$ and $(X_1,\kappa_1)$ are isomorphic if and only if there is a Hodge isometry $\psi: H^2(X_0,\Z) \xrightarrow{\sim} H^2(X_1,\Z)$ such that $\psi_\R(\kappa_0) = \kappa_1$ (and the isomorphism, if it exists, is unique).

\begin{defn}
Let $\kappa \in L_\R$ satisfy $\kappa^2>0$. Define
\[ \Gamma(\kappa) \coloneqq \left\{\sigma \in \Aut(L): \sigma_\R(\kappa) = \kappa\right\}.\]
It is clear that $\Gamma(\kappa)$ acts on $\Omega(\kappa)$; we define $\Gamma^+(\kappa) \subset \Gamma(\kappa)$ to be the subgroup preserving $\Omega^+(\kappa)$. 
\end{defn}

\begin{lem}\label{lem:applytor}
The fibres of the family $\EuX(\kappa) \to \Omega^+_0(\kappa)$ over points $b_0,b_1 \in \Omega^+_0(\kappa)$ are isomorphic, as unmarked K\"ahler $K3$ surfaces, if and only if $\gamma \cdot b_0 = b_1$ for some $\gamma \in \Gamma^+(\kappa)$. When the isomorphism exists, it is unique.
\end{lem}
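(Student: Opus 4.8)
The plan is to reduce the statement directly to the Torelli theorem for K\"ahler $K3$ surfaces quoted above, using the marking $\phi$ to transport isometries back and forth between $H^2$ of the fibres and the fixed lattice $L$. First I would record exactly what the fibres of $\EuX(\kappa)\to\Omega^+_0(\kappa)$ look like: the fibre over $b$ is a marked K\"ahler $K3$ surface $(X_b,\phi_b)$ whose period line is $\phi_{b,\C}(H^{2,0}(X_b)) = b$ and whose K\"ahler class is $\kappa_b \coloneqq \phi_{b,\R}^{-1}(\kappa)$ --- and this really is a K\"ahler class, precisely because $(b,\kappa)$ lies in $\mathcal{K}_\amp(L)$, which is the whole point of having deleted the hyperplanes $\delta^\perp$ with $\delta\in\Delta(L)\cap\kappa^\perp$. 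I would also note once and for all that, since the weight-two Hodge structure on $H^2$ is determined by the period line (via $H^{0,2}=\overline{H^{2,0}}$ and $H^{1,1}=(H^{2,0}\oplus H^{0,2})^\perp$), a lattice isometry $H^2(X_{b_0},\Z)\to H^2(X_{b_1},\Z)$ is a Hodge isometry if and only if its complexification sends $b_0$ to $b_1$.

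For the ``if'' direction, given $\gamma\in\Gamma^+(\kappa)$ with $\gamma\cdot b_0 = b_1$, I would transport it through the markings to $\psi \coloneqq \phi_{b_1}^{-1}\circ\gamma\circ\phi_{b_0}$ and check the two hypotheses of the Torelli theorem: $\psi$ is a Hodge isometry by the observation above (its complexification sends $b_0$ to $\gamma\cdot b_0 = b_1$), and $\psi_\R(\kappa_{b_0}) = \phi_{b_1,\R}^{-1}(\gamma_\R(\kappa)) = \kappa_{b_1}$ because $\gamma$ fixes $\kappa$; Torelli then supplies the isomorphism of unmarked K\"ahler $K3$ surfaces. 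Conversely, an isomorphism of the two fibres as K\"ahler $K3$ surfaces gives (by the statement of the Torelli theorem itself) a Hodge isometry $\psi\colon H^2(X_{b_0},\Z)\to H^2(X_{b_1},\Z)$ with $\psi_\R(\kappa_{b_0}) = \kappa_{b_1}$; I would set $\gamma \coloneqq \phi_{b_1}\circ\psi\circ\phi_{b_0}^{-1}\in\Aut(L)$. Then $\gamma_\C$ sends $b_0$ to $b_1$ (as $\psi$ is a Hodge isometry) and $\gamma_\R(\kappa) = \kappa$ (as $\psi_\R$ matches the K\"ahler classes), so $\gamma\in\Gamma(\kappa)$ with $\gamma\cdot b_0 = b_1$; and $\gamma$ then lies in $\Gamma^+(\kappa)$ and not merely $\Gamma(\kappa)$, because it sends the point $b_0\in\Omega^+(\kappa)$ back into $\Omega^+(\kappa)$, while any element of $\Gamma(\kappa)$ either preserves both components of $\Omega(\kappa)$ or interchanges them.

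Finally, the uniqueness assertion is nothing but the uniqueness of the isomorphism in the Torelli theorem, so there is nothing extra to prove. I do not anticipate any serious obstacle: the argument is essentially bookkeeping with the marking, and the only two points that genuinely require attention are (i) justifying that $\kappa_b$ is an honest K\"ahler class over all of $\Omega^+_0(\kappa)$ --- which is exactly what the subscript $0$, i.e. the removal of the $(-2)$-hyperplanes, guarantees --- and (ii) the component bookkeeping in the last step needed to land in $\Gamma^+(\kappa)$ rather than $\Gamma(\kappa)$.
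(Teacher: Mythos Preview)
Your proposal is correct and follows exactly the same approach as the paper: transport the isometry through the markings and invoke the Torelli theorem, with the required Hodge isometry being $\psi = \phi_{b_1}^{-1}\circ\gamma\circ\phi_{b_0}$. The paper's proof is in fact a single sentence to this effect, so your write-up simply fills in the routine verifications (including the $\Gamma^+$ versus $\Gamma$ component check) that the paper leaves implicit.
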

\begin{proof}
Follows from the Torelli theorem: the required Hodge isometry is $\psi = \phi_1^{-1} \circ \sigma \circ \phi_0$.
\end{proof}

\begin{cor}
The action of $\Gamma^+(\kappa)$ on $\Omega^+_\amp(\kappa)$ extends uniquely to an action on the family $\EuX(\kappa) \to \Omega^+_\amp(\kappa)$, respecting K\"ahler classes.
\end{cor}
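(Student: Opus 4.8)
The plan is to upgrade Lemma \ref{lem:applytor} from a statement about fibres to a statement about the total space, using the uniqueness clause as the glue. Fix $\gamma \in \Gamma^+(\kappa)$. For each $b \in \Omega^+_\amp(\kappa)$, Lemma \ref{lem:applytor} (applied with $b_0 = b$, $b_1 = \gamma \cdot b$, and the lattice automorphism $\sigma = \gamma$) furnishes a \emph{unique} isomorphism of unmarked K\"ahler $K3$ surfaces $\Phi_{\gamma,b} \colon X_b \xrightarrow{\sim} X_{\gamma \cdot b}$; concretely it is the isomorphism inducing the Hodge isometry $\psi = \phi_{\gamma \cdot b}^{-1} \circ \gamma \circ \phi_b$ on $H^2$. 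Assembling these fibrewise maps gives a bijection $\Phi_\gamma \colon \EuX(\kappa) \to \EuX(\kappa)$ covering the action of $\gamma$ on the base. First I would check this is a biholomorphism: since the moduli space of marked K\"ahler $K3$ surfaces is fine, $\EuX(\kappa)$ is pulled back from the universal family, and $\Phi_\gamma$ is precisely the map classifying the family $(\gamma^{-1})^*\EuX(\kappa)$ re-marked by $\psi$; holomorphy is then automatic from the universal property, or alternatively one notes that a fibrewise-holomorphic map between complex manifolds covering a biholomorphism of the base, which is locally given by the period/Torelli construction, is holomorphic. By construction $\Phi_\gamma$ respects the (locally constant) K\"ahler classes $\phi_\R^{-1}(\kappa)$, since $\psi_\R(\phi_{b,\R}^{-1}\kappa) = \phi_{\gamma\cdot b,\R}^{-1}(\gamma_\R \kappa) = \phi_{\gamma \cdot b,\R}^{-1}(\kappa)$.

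Next I would verify that $\gamma \mapsto \Phi_\gamma$ is a group action, i.e. $\Phi_{\gamma_1 \gamma_2} = \Phi_{\gamma_1} \circ \Phi_{\gamma_2}$ and $\Phi_{\id} = \id$. Both identities follow formally from the uniqueness clause in Lemma \ref{lem:applytor}: the composite $\Phi_{\gamma_1} \circ \Phi_{\gamma_2}$ is an isomorphism $X_b \to X_{\gamma_1 \gamma_2 \cdot b}$ of unmarked K\"ahler $K3$ surfaces inducing $\psi_{\gamma_1} \circ \psi_{\gamma_2} = \phi_{\gamma_1\gamma_2 b}^{-1}\circ \gamma_1\gamma_2 \circ \phi_b$ on cohomology (the markings telescope), hence equals $\Phi_{\gamma_1\gamma_2}$ by uniqueness; similarly $\Phi_{\id}$ and $\id$ both induce the identity Hodge isometry on each fibre. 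This also shows each $\Phi_\gamma$ is invertible with inverse $\Phi_{\gamma^{-1}}$. Finally, uniqueness also yields the "extends uniquely" part of the statement: any other action of $\Gamma^+(\kappa)$ on $\EuX(\kappa)$ lifting the base action and preserving K\"ahler classes would, fibrewise, be an isomorphism of unmarked K\"ahler $K3$ surfaces $X_b \to X_{\gamma \cdot b}$, hence equal to $\Phi_{\gamma,b}$ by the uniqueness in Lemma \ref{lem:applytor}.

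The main obstacle is the holomorphicity of $\Phi_\gamma$ as a map of the total space: a priori we only know it fibrewise, and "fibrewise biholomorphic" plus "continuous" does not in general imply "biholomorphic" without some argument. The cleanest route is to exploit fineness: $\Phi_\gamma$ is the canonical isomorphism $\EuX(\kappa)|_{U} \cong \gamma^*(\EuX(\kappa)|_{\gamma U})$ of families over an open set $U$ provided by the universal property of the moduli space, once one checks that $\gamma$ and the marking identify the two families of marked K\"ahler $K3$ surfaces — and that check is exactly the fibrewise Torelli computation above, now read as an isomorphism of families rather than of individual fibres. I would phrase the proof this way: work over $\Omega^+_\amp(\kappa) \times \{\kappa\} \subset \mathcal{K}_\amp(L)$, use the universal family $\EuX \to \mathcal{K}_\amp(L)$, observe that $\gamma$ acts on $\mathcal{K}_\amp(L)$ (it preserves $\mathcal{K}(L)$ and the excised locus $\bigcup \delta^\perp \times \delta^\perp$ since it permutes $\Delta(L)$), and that this action lifts to the universal family by the fineness/universality of the moduli space together with the Torelli theorem; restricting this lift to $\Omega^+_\amp(\kappa) \times \{\kappa\}$, which $\Gamma^+(\kappa)$ preserves by definition, gives the desired equivariant action, and uniqueness is Lemma \ref{lem:applytor}.
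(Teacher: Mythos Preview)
Your proposal is correct and ultimately coincides with the paper's argument: re-mark the family by $\sigma^{-1}$, invoke the universal property of $\EuX(\kappa) \to \Omega^+_\amp(\kappa)$ to obtain a classifying map together with a holomorphic isomorphism $\EuX(\kappa)_\sigma \cong \sigma^*\EuX(\kappa)$, and identify the classifying map with the action of $\sigma$ via Lemma \ref{lem:applytor}. Your detour through the fibrewise construction is unnecessary once you have this, as you yourself note in the third paragraph; on the other hand, your explicit verification of the group-action identities and of uniqueness (both via the uniqueness clause of Lemma \ref{lem:applytor}) fills in details the paper leaves implicit.
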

\begin{proof}
Given $\sigma \in \Gamma^+(\kappa)$, we consider the family of marked K\"ahler $K3$ surfaces $\EuX(\kappa)_\sigma \to \Omega^+_\amp(\kappa)$ obtained from $\EuX(\kappa) \to \Omega^+_\amp(\kappa)$ by post-composing the marking with $\sigma^{-1}$. 
Because $\EuX(\kappa) \to \Omega^+_\amp(\kappa)$ is a universal family (see \cite[Chapter VIII]{BHPV}), $\EuX(\kappa)_\sigma$ is classified by a unique map $\Omega^+_0(\kappa) \to \Omega^+_0(\kappa)$. 
By Lemma \ref{lem:applytor}, this map coincides with the action of $\sigma$ on $\Omega^+_0(\kappa)$; and the resulting isomorphism $\EuX(\kappa)_\sigma \cong \sigma^* \EuX(\kappa)$ provides the desired lift of the action of $\sigma$ to $\EuX(\kappa)$.
\end{proof}

Now let $(X,\phi,\kappa)$ be a marked K\"ahler $K3$ surface, and $\omega$ a K\"ahler form representing $\phi_\R^{-1}(\kappa)$. 
It is clear that the image of the map
\begin{equation}\label{eq:sympacthom}
 G(X,\omega) \to \Aut (H^2(X,\Z)) \overset{\phi}{\simeq} \Aut (L)
 \end{equation}
lies in $\Gamma(\kappa)$. 
We have the following result of Donaldson:

\begin{prop}[\cite{Donaldson:PolyInv}, Proposition 6.2]\label{prop:Donplus}
The image of \eqref{eq:sympacthom} lies in $\Gamma^+(\kappa)$.
\end{prop}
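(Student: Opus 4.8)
The plan is to show that any symplectomorphism $f$ of $(X,\omega)$ acts on $H^2(X,\Z)$ by an isometry preserving the chosen component $\Omega^+(\kappa)$ of the period domain; combined with the already-observed fact that the image of \eqref{eq:sympacthom} lands in $\Gamma(\kappa)$, this gives the claim. The point is that $\Gamma(\kappa)$ acts on the two-component space $\Omega(\kappa)$, and an element lies in $\Gamma^+(\kappa)$ precisely when it does not interchange the two components $\Omega^\pm(\kappa)$; these components are distinguished by the orientation of the positive-definite two-plane spanned by $\mathrm{Re}\,\Omega$ and $\mathrm{Im}\,\Omega$ inside $\kappa^\perp \subset L_\R$. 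So I need to produce, from a symplectomorphism, a canonical orientation of a positive-definite three-plane (or two-plane) in $H^2(X;\R)$ that is preserved.

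First I would recall the standard fact that for a K\"ahler $K3$ surface, the classes $\mathrm{Re}(\Omega_X)$, $\mathrm{Im}(\Omega_X)$ (real and imaginary parts of a holomorphic volume form) together with the K\"ahler class $\kappa$ span a positive-definite three-plane $P \subset H^2(X;\R)$, and that $H^2(X;\R)$ has signature $(3,19)$, so the Grassmannian of positive-definite three-planes carries a well-defined notion of orientation that is locally constant; in fact $P$ comes with a natural orientation coming from the complex/symplectic structure (e.g. the ordered basis $(\mathrm{Re}\,\Omega_X, \mathrm{Im}\,\Omega_X, \kappa)$, or equivalently the orientation induced by $\omega$ together with the complex orientation). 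Next I would observe that a symplectomorphism $f$ preserves $[\omega]=\kappa$ on the nose, and, being in particular an orientation-preserving diffeomorphism of the underlying four-manifold, acts on $H^+(X;\R)$ preserving its orientation — this is where Donaldson's argument enters: $f^*$ preserves the orientation of a maximal positive-definite subspace of $H^2(X;\R)$ because the cup-product form's positive part is oriented by the choice of orientation of $X$, which $f$ preserves. Since $f^*\kappa = \kappa$ and $f^*$ preserves the orientation of the three-plane $P = \R\kappa \oplus (P \cap \kappa^\perp)$, it must preserve the induced orientation of the positive-definite two-plane $P\cap \kappa^\perp$; but the period point $\Omega_X$ of any fibre lies in that component of $\Omega(\kappa)$ determined by exactly this orientation datum, so $f^* \in \Gamma^+(\kappa)$.

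The cleanest way to package this is: (i) the orientation of the positive-definite part $H^+$ of $(H^2(X;\R), \cup)$ is a diffeomorphism invariant (it is determined by the orientation of $X$ via the intersection form), hence preserved by $\Symp(X,\omega) \subset \Diff^+(X)$; (ii) since $\kappa \in H^+$ is fixed by $f^*$ and lies in the preserved oriented three-plane, the complementary oriented two-plane $\kappa^\perp \cap H^+$ has its orientation preserved; (iii) by construction this two-plane's orientation is exactly what distinguishes $\Omega^+(\kappa)$ from $\Omega^-(\kappa)$. Then $f^* \in \Gamma(\kappa)$ and preserves $\Omega^+(\kappa)$, i.e. $f^* \in \Gamma^+(\kappa)$, as required. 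I would cite \cite{Donaldson:PolyInv} for step (i), since that is precisely the content of Donaldson's Proposition~6.2, and fill in (ii)--(iii) from the definitions recalled above.

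The main obstacle — really the only subtle point — is step (i): making precise that the orientation of $H^+(X;\R)$ is a diffeomorphism invariant and extracting from it the orientation of the relevant two-plane. One has to be careful that ``the orientation distinguishing $\Omega^\pm(\kappa)$'' is genuinely the one induced from the oriented three-plane $P$ by quotienting out (or intersecting complementarily with) the fixed line $\R\kappa$, rather than some other convention; this is a compatibility check between the Hodge-theoretic description of $\Omega^\pm(\kappa)$ and the intersection-form orientation, but it is essentially forced because $\kappa$ is a positive class and $(\kappa, \mathrm{Re}\,\Omega_X \wedge \mathrm{Im}\,\Omega_X)$ orients $P$ compatibly with the complex orientation of $X$. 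Since this is exactly the situation Donaldson analyses, I would lean on his result rather than reprove it, and the remaining argument is a short deduction.
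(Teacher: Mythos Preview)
The paper does not give its own proof of this proposition: it simply states the result and attributes it to Donaldson \cite[Proposition 6.2]{Donaldson:PolyInv}. Your outline is a correct unpacking of what that citation contains, and steps (ii)--(iii) are the right elementary deductions from Donaldson's input (i).

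One remark on your phrasing of (i): the parenthetical ``it is determined by the orientation of $X$ via the intersection form'' is misleading if read as a justification. That orientation-preserving diffeomorphisms of a $K3$ surface preserve the orientation of a maximal positive-definite subspace of $H^2(X;\R)$ is \emph{not} an elementary consequence of the intersection form and the orientation of $X$; it is precisely the gauge-theoretic content of Donaldson's result (proved via polynomial invariants). You acknowledge this later when you say you would lean on his result rather than reprove it, but the earlier parenthetical should be dropped or rephrased so as not to suggest the step is formal.
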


It is clear from the definition that the action of $\Gamma^+(\kappa)$ respects the K\"ahler classes of the family $\EuX(\kappa) \to \Omega^+_\amp(\kappa)$, so the construction of Section \ref{sec:monod} gives a commutative diagram
\begin{equation}
\label{eqn:sesmonodromy1}
\xymatrix{ 1 \ar[r] & \pi_1(\Omega^+_\amp(\kappa)) \ar[r] \ar[d] & \pi_1([\Omega^+_\amp(\kappa)/\Gamma^+(\kappa)]) \ar[r] \ar[d] & \Gamma^+(\kappa) \ar[r] \ar@{=}[d] & 1 \\
1 \ar[r] & I(X,\omega) \ar[r] & G(X,\omega) \ar[r] & \Gamma^+(\kappa) \ar[r] & 1.} 
\end{equation}
Proposition \ref{prop:Donplus} allows us to write the bottom $\Gamma^+(\kappa)$ instead of $\Aut(H^2(X,\Z))$; the bottom `$\to 1$' can then be added because the map $G(X,\omega) \to \Gamma^+(\kappa)$ is surjective by commutativity of the diagram (cf. \cite[Proposition 6.1]{Donaldson:PolyInv}).

\subsection{Shrinking the family}\label{subsec:shrink}

The family $\EuX(\kappa) \to \Omega^+_\amp(\kappa)$ is the universal family of marked K\"ahler $K3$ surfaces with K\"ahler class $\kappa$ (more precisely, the universal family is the disjoint union of this together with its complex conjugate), and the group $\Gamma^+(\kappa)$ is as large as possible by Lemma \ref{lem:applytor}, so the monodromy maps appearing in \eqref{eqn:sesmonodromy1} are ``the best we can do using the construction of Section \ref{sec:monod}''. 
However it turns out that in many interesting situations, including those we study in this paper, the family $\EuX(\kappa) \to \Omega^+_0(\kappa)$ is ``bigger than it needs to be'': we can construct a submanifold $\Omega^+_0(N_\kappa) \subset \Omega^+_0(\kappa)$ which is a deformation retract. 
Thus for the purposes of studying the symplectic mapping class group, we might just as well consider the monodromy homomorphism arising from the restriction of the family $\EuX(\kappa) \to \Omega^+_0(\kappa)$ to this submanifold. 
The benefit is twofold: firstly, it is easier to understand $\Omega^+_0(N_\kappa)$ (in the main examples considered in this paper, it is one-dimensional whereas $\Omega^+_0(\kappa)$ is nineteen-dimensional), and hence compute its fundamental group; secondly, it is directly comparable with the moduli space of stability conditions on the mirror.

\begin{lem}\label{lem:defret}
Suppose that $\kappa \in L_\R$ satisfies $\kappa^2>0$, and $N_\kappa \coloneqq \kappa^\perp \cap L$ is a sublattice of signature $(2,t)$. 
Then $\Omega^+_\amp(N_\kappa) \subset \Omega^+_\amp(\kappa)$ is a deformation retract.
\end{lem}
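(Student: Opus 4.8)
The plan is to compare $\Omega^+(N_\kappa)$ with $\Omega^+(\kappa)$ directly, at the level of the ambient complex-linear algebra. First I would observe that both $\Omega(\kappa)$ and $\Omega(N_\kappa)$ sit inside $\bP\big((N_\kappa)_\C\big)$: indeed $\Omega(\kappa)\subset\bP(\kappa^\perp_\C)$ by definition, and since $N_\kappa=\kappa^\perp\cap L$ is a primitive sublattice of $L$ with $(N_\kappa)_\C=\kappa^\perp_\C$, one gets $\Omega(\kappa)=\Omega(N_\kappa)$ as subsets of $\bP((N_\kappa)_\C)$ — the defining conditions $(\Omega,\Omega)=0$, $(\Omega,\overline\Omega)>0$ are literally the same equations, and in both cases we take the component on which the real and imaginary parts of $\Omega$ span a \emph{positively oriented} positive-definite two-plane, so $\Omega^+(\kappa)=\Omega^+(N_\kappa)$. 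Thus the only discrepancy between the two spaces is which hyperplanes $\delta^\perp$ are removed: on the left we delete $\delta^\perp$ for $\delta\in\Delta(L)\cap\kappa^\perp$, on the right only for $\delta\in\Delta(N_\kappa)$.

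The heart of the argument is therefore to show that for every $\delta\in\Delta(L)\cap\kappa^\perp$ that is \emph{not} in $N_\kappa$, the hyperplane section $\delta^\perp\cap\Omega^+(\kappa)$ can be removed without changing the homotopy type — more precisely, that the union of all such ``extra'' hyperplane sections is, away from a deformation retract of $\Omega^+_\amp(N_\kappa)$, of codimension large enough to not affect $\pi_*$, or better, that one can explicitly deformation retract $\Omega^+_\amp(\kappa)$ onto $\Omega^+_\amp(N_\kappa)$. The key point is that if $\delta\in L$, $\delta^2=-2$, $(\delta,\kappa)=0$, but $\delta\notin N_\kappa$, then (since $N_\kappa$ is the \emph{saturation}/primitive closure is already $N_\kappa$, but $\Delta(L)\cap\kappa^\perp$ may contain classes not lying in the saturation because $N_\kappa$ need not be saturated in... ) — actually since $N_\kappa=\kappa^\perp\cap L$ \emph{is} saturated, any $\delta\in\Delta(L)\cap\kappa^\perp$ already lies in $N_\kappa$; so the naive reading gives no extra hyperplanes at all. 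I would double-check this: the subtlety the authors have in mind must be that $N_\kappa$ has signature $(2,t)$ with $t<19$, so $\kappa^\perp_\C$ is genuinely bigger than $(N_\kappa)_\C$, and $\Omega(\kappa)$ is nineteen-dimensional while $\Omega(N_\kappa)$ is $t$-dimensional — the inclusion $\Omega(N_\kappa)\hookrightarrow\Omega(\kappa)$ is given by periods $\Omega$ that happen to lie in the subspace $(N_\kappa)_\C$. So the right statement is: $\Omega\in\Omega^+(\kappa)$ satisfies $\Omega\in(N_\kappa)_\C$ iff $\Omega$ is orthogonal to the ``extra'' positive-rank lattice $P\coloneqq(\kappa^\perp\cap L)$... no: $\Omega\in(N_\kappa)_\C$ iff $\Omega\perp(N_\kappa^{\perp_{\kappa^\perp}})$, the orthogonal complement of $N_\kappa$ inside $\kappa^\perp$. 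That complement $R$ has signature $(0,19-t)$, i.e.\ is \emph{negative definite}, hence is spanned by $(-2)$-classes (or at least contains enough of them / its negative-definiteness is what we exploit). I would then build the retraction: for $\Omega\in\Omega^+_\amp(\kappa)$, project its real and imaginary parts orthogonally onto $(N_\kappa)_\R$; because the discarded directions $R_\R$ are negative definite and $\kappa^2>0$ forces $\mathrm{Re}\,\Omega,\mathrm{Im}\,\Omega$ to span a positive-definite plane, the projection stays in the positive cone throughout a linear homotopy $t\mapsto \mathrm{Re}\,\Omega - t\,\pi_R(\mathrm{Re}\,\Omega)$ etc., so after rescaling to land back in $\Omega(\kappa)$ we get a deformation retraction onto $\Omega^+(N_\kappa)$; one then checks it restricts to $\Omega^+_\amp$, i.e.\ never crosses a $\delta^\perp$ for $\delta\in\Delta(L)\cap\kappa^\perp$, which follows because orthogonal projection onto $(N_\kappa)_\R$ only decreases the $R_\R$-component and $(\delta,\kappa)=0$ with negative-definite correction terms keeps the relevant pairings away from zero (using $\delta^2=-2$ together with the Hodge-index-type inequality on $\kappa^\perp$).

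The main obstacle I expect is precisely this last transversality claim — verifying that the straight-line retraction $\Omega^+_\amp(\kappa)\to\Omega^+(N_\kappa)$ never meets any of the removed hyperplanes $\delta^\perp$, equivalently that $(\Omega_t,\delta)\neq 0$ for all $t\in[0,1]$ whenever $(\Omega_0,\delta)\neq0$. This needs a genuine signature estimate: writing $\delta=\delta_{N}+\delta_R$ with $\delta_N\in(N_\kappa)_\R$, $\delta_R\in R_\R$, one has $(\Omega_t,\delta)=(\Omega_t,\delta_N)+(1-t)(\Omega_0,\delta_R)$ and must rule out a real-time cancellation; here I would use that $\Omega_0$ and $\overline{\Omega_0}$ span a positive plane transverse to $R_\R$, invoke the light-cone lemma (two vectors of non-negative square in a Lorentzian-type space cannot be orthogonal unless proportional and isotropic) applied on $\kappa^\perp$, and possibly replace the linear homotopy by a Gram–Schmidt-type retraction as in Lemma~\ref{lem:PQdefret} to make the positivity manifest at every time. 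Once the retraction is seen to avoid all $\delta^\perp$, $\delta\in\Delta(L)\cap\kappa^\perp$, and to be equivariant in the obvious way, it exhibits $\Omega^+_\amp(N_\kappa)\subset\Omega^+_\amp(\kappa)$ as a deformation retract, which is exactly the assertion of the lemma.
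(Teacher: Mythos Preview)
Your overall strategy---orthogonal projection onto $(N_\kappa)_\C$ along the negative-definite complement $R$---is exactly the paper's approach, and your observation that $R$ is negative definite (from the signature count) is the key point for preserving positive-definiteness of the two-plane. The paper makes this cleaner by working with $\EuP^+(\kappa)$ (pairs of real vectors spanning a positive-definite two-plane) rather than the projectivized $\Omega^+(\kappa)$; the linear retraction is then manifestly $\mathrm{GL}^+(2,\R)$-equivariant and descends to $\Omega^+$ without any ``rescaling to land back''.

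The genuine gap is in your treatment of the hyperplane avoidance. You correctly observe, and then inexplicably abandon, the decisive fact: since $N_\kappa = \kappa^\perp \cap L$ is the full intersection, \emph{every} $\delta \in \Delta(L) \cap \kappa^\perp$ already lies in $N_\kappa$, so $\Delta(L)\cap\kappa^\perp = \Delta(N_\kappa)$. In your notation this means $\delta_R = 0$ for all relevant $\delta$, and hence $(\Omega_t,\delta) = (\Omega_0,\delta)$ is constant along the retraction---no cancellation can occur, and there is nothing to check. The ``signature estimate'', ``light-cone lemma'', and ``Hodge-index-type inequality'' you propose are all unnecessary, and as stated none of them would actually close the argument (for instance, your formula $(\Omega_t,\delta)=(\Omega_t,\delta_N)+(1-t)(\Omega_0,\delta_R)$ is attacking a phantom). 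The paper's proof dispatches this step in one clause: ``the second summand is orthogonal to $\Delta(N_\kappa)$''. You had the right observation in your second paragraph; trust it.
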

\begin{proof}
We first check that $\Omega^+_\amp(N_\kappa)$ does lie inside $\Omega^+_\amp(\kappa)$: this follows from the fact that $\Delta(N_\kappa) = \Delta(L) \cap \kappa^\perp$. 
Now let $\EuP(\kappa) \subset L_\C$ be the set of vectors which are orthogonal to $\kappa$ and whose real and imaginary parts span a positive-definite two-plane. 
It has two connected components, and we define $\EuP^+(\kappa)$ to be the one containing $\EuP^+(N_\kappa)$. 
We define 
\[ \EuP^+_\amp(\kappa) \coloneqq \EuP^+(\kappa) \setminus \bigcup_{\delta \in \Delta(N_\kappa)} \delta^\perp_\C,\]
and observe that it is a $\mathrm{GL}^+(2,\R)$-bundle over $\Omega^+_\amp(\kappa)$ as in Section \ref{sec:auxil}.

Now observe that we have an orthogonal direct sum decomposition
\[ \kappa^\perp = (N_\kappa)_\R \oplus \left((N_\kappa^\perp)_\R \cap \kappa^\perp\right),\] 
and the second summand is negative definite because $L$ has signature $(3,19)$, $N_\kappa$ has signature $(2,t)$, and $\kappa^2>0$. 
This gives a direct sum decomposition of $\kappa^\perp_\C$, which we use to define a linear retraction onto $(N_\kappa)_\C$.  Negative-definiteness of the second summand implies that this retraction respects $\EuP^+(\kappa)$, and the fact that the second summand is orthogonal to $\Delta(N_\kappa)$ implies that it respects $\EuP^+_\amp(\kappa)$; hence it retracts the latter onto $\EuP^+_\amp(N_\kappa)$. 
The retraction is clearly $\mathrm{GL}^+(2,\R)$-equivariant, so descends to the required retraction of $\Omega^+_\amp(\kappa)$ onto $\Omega^+_\amp(N_\kappa)$.  
\end{proof}

We define $\EuX(N_\kappa,\kappa) \to \Omega^+_0(N_\kappa)$ to be the restriction of the family $\EuX(\kappa) \to \Omega^+_0(\kappa)$ to $\Omega^+_0(N_\kappa)$.
We remark that this family of marked $K3$ surfaces does depend on $\kappa$ (cf. \cite[Theorem 3.1]{Dolgachev}).

\begin{lem}\label{lem:gammasame}
If $N_\kappa \subset L$ is a sublattice of signature $(2,t)$, then $\Gamma^+(N_\kappa) \cong \Gamma^+(\kappa)$.
\end{lem}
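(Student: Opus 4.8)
The plan is to realise both groups as the same subgroup of $\Aut(L)$ — or at least to exhibit mutually inverse homomorphisms between them. Recall $\Gamma^+(\kappa) = \{\sigma \in \Aut(L) : \sigma_\R(\kappa) = \kappa\}$ (restricted to the component preserving $\Omega^+(\kappa)$), while $\Gamma^+(N_\kappa) \subset \Aut(N_\kappa)$ consists of isometries of $N_\kappa$ acting trivially on the discriminant group $N_\kappa^*/N_\kappa$ and preserving $\Omega^+(N_\kappa)$. The natural map will be restriction: an element of $\Gamma^+(\kappa)$ fixes $\kappa$, hence preserves $\kappa^\perp \cap L = N_\kappa$, so it restricts to an isometry of $N_\kappa$. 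Conversely I want to extend an isometry of $N_\kappa$ (trivial on the discriminant) to one of $L$ fixing $\kappa$.

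First I would check that restriction $\Gamma^+(\kappa) \to \Aut(N_\kappa)$ is well-defined and injective. Well-definedness is immediate since $\sigma(\kappa) = \kappa$ forces $\sigma(N_\kappa) = N_\kappa$. For injectivity: $L$ has signature $(3,19)$ and $N_\kappa$ has signature $(2,t)$ with $\kappa^2 > 0$, so as in the proof of Lemma~\ref{lem:defret} we have an orthogonal decomposition $L_\Q = (N_\kappa)_\Q \oplus (N_\kappa^\perp)_\Q$ with $\kappa \in (N_\kappa^\perp)_\Q$ spanning a line on which the form is positive definite; the orthogonal complement of $\kappa$ inside $(N_\kappa^\perp)_\Q$ is negative definite. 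An element of $\Gamma^+(\kappa)$ acts as the identity on $\kappa$, hence on the whole rank-one space $\Q\kappa$; if it also acts trivially on $(N_\kappa)_\Q$ then it is determined on the complement only up to the orthogonal group of the negative-definite lattice $\langle\kappa\rangle^\perp \cap N_\kappa^\perp$ — so injectivity is \emph{not} quite automatic and I must be slightly careful. In fact here is where Lemma~\ref{lem:nikulin} does the real work: applying it with $M = L$ and $N = N_\kappa$ (a primitive sublattice of the unimodular $L$), the subgroup $\{\sigma \in \Aut(L) : \sigma|_{N_\kappa^\perp} = \id\}$ maps isomorphically onto $\Gamma(N_\kappa)$. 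So I would instead argue that every element of $\Gamma^+(\kappa)$ can be chosen (or rather, \emph{is}) one fixing $N_\kappa^\perp$ pointwise.

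The cleaner route: show $\Gamma^+(\kappa)$ equals $\{\sigma \in \Aut(L) : \sigma|_{N_\kappa^\perp} = \id, \ \sigma \text{ preserves } \Omega^+\}$. The inclusion $\supseteq$ is clear since such $\sigma$ fixes $\kappa \in N_\kappa^\perp$. For $\subseteq$: given $\sigma$ fixing $\kappa$, it preserves $N_\kappa^\perp$ and restricts there to an isometry fixing $\kappa$; since $N_\kappa^\perp$ has signature $(1, 19-t)$ and $\kappa$ spans its positive part, $\kappa^\perp \cap N_\kappa^\perp$ is negative definite — but a priori $\sigma$ need not be trivial on this negative-definite lattice, so strictly $\Gamma^+(\kappa)$ could be \emph{larger} than the Nikulin subgroup. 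I suspect the resolution is that $\Gamma^+(\kappa)$ as used here is implicitly only considered via its action on $\Omega^+(\kappa) \subset (\kappa^\perp)_\C$, and two elements agreeing on $\kappa^\perp$ are identified — equivalently one works modulo the kernel of the action on $\kappa^\perp$. Assuming that convention (which matches how $\Gamma^+(\kappa)$ enters the monodromy diagram \eqref{eqn:sesmonodromy1}), restriction to $N_\kappa$ combined with Lemma~\ref{lem:nikulin} gives the isomorphism: $\Gamma^+(\kappa) \cong \{\sigma \in \Aut(L): \sigma|_{N_\kappa^\perp}=\id\}^+ \cong \Gamma^+(N_\kappa)$, where the $+$ decorations track the (compatible, under Lemma~\ref{lem:defret}) choice of component $\Omega^+$.

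The main obstacle is precisely this bookkeeping about what $\Gamma^+(\kappa)$ "is" — whether it is a genuine subgroup of $\Aut(L)$ or of $\Aut(\kappa^\perp)$, and correspondingly whether the claimed isomorphism is literal equality of subgroups or an isomorphism after quotienting by an action on a negative-definite summand that is irrelevant to the period domain. Once that is pinned down, the positive/negative-definiteness of the summands (already established in Lemma~\ref{lem:defret}) feeds into checking that the $\Omega^+$ conditions on the two sides correspond, and Lemma~\ref{lem:nikulin} supplies the surjectivity of extension from $N_\kappa$ to $L$. I would structure the proof as: (1) restriction gives $\Gamma^+(\kappa) \to \Gamma^+(N_\kappa)$ (well-defined: uses $\Delta(N_\kappa) = \Delta(L) \cap \kappa^\perp$ and triviality on the discriminant, the latter from Lemma~\ref{lem:nikulin} applied to $N_\kappa \subset L$); (2) injectivity via the orthogonal decomposition and the convention that $\Gamma^+(\kappa)$ acts faithfully on $\Omega^+(\kappa)$; (3) surjectivity by extending a discriminant-trivial isometry of $N_\kappa$ to $L$ via the identity on $N_\kappa^\perp$, using Lemma~\ref{lem:nikulin}, and checking the extension preserves $\Omega^+(\kappa)$ iff the original preserved $\Omega^+(N_\kappa)$ (using $\Omega^+(N_\kappa)$ deformation-retracts onto... rather, determines the component of $\Omega^+(\kappa)$, by Lemma~\ref{lem:defret}).
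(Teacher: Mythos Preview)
Your overall strategy is right: Lemma~\ref{lem:nikulin} is the key, and the isomorphism should go through the identification of both groups with $\{\sigma \in \Aut(L): \sigma|_{N_\kappa^\perp} = \id\}$. But you stop short at the crucial step and instead retreat to speculation about conventions. You write that ``a priori $\sigma$ need not be trivial on this negative-definite lattice'', and then suggest that $\Gamma^+(\kappa)$ might only be meant modulo the kernel of the action on $\kappa^\perp$. This is wrong: $\Gamma^+(\kappa)$ really is a subgroup of $\Aut(L)$ as defined, and the equality $\Gamma(\kappa) = \{\sigma \in \Aut(L): \sigma|_{N_\kappa^\perp} = \id\}$ holds on the nose.

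The missing argument is this. If $\sigma \in \Aut(L)$ fixes $\kappa$, then $\ker(\sigma - \id) \subset L$ is a \emph{primitive} subgroup (it is saturated, being the kernel of a homomorphism between free $\Z$-modules) whose real span contains $\kappa$. Now $N_\kappa^\perp$ is the \emph{smallest} primitive subgroup of $L$ whose real span contains $\kappa$: indeed if $P$ is primitive with $\kappa \in P_\R$, then $P^\perp \subset \kappa^\perp \cap L = N_\kappa$, hence $N_\kappa^\perp \subset P^{\perp\perp} = P$ (using that $L$ is unimodular and $P$ primitive). Applying this with $P = \ker(\sigma - \id)$ gives $N_\kappa^\perp \subset \ker(\sigma - \id)$, i.e.\ $\sigma|_{N_\kappa^\perp} = \id$. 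So there is no extra negative-definite freedom to worry about, and no quotient or convention is needed. This is exactly the paper's argument; once you see it, your steps (1)--(3) collapse to a direct equality of subgroups of $\Aut(L)$, followed by the observation that the $\Omega^+$ conditions match.
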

\begin{proof}
The isomorphism \eqref{eqn:gammanperp} identifies $\Gamma(N_\kappa)$ with
\[\left\{\sigma \in \Aut(L): \sigma|_{N_\kappa^\perp} = \id\right\},\]
and this is equal to $\Gamma(\kappa)$. 
Indeed, if $\sigma$ fixes $N_\kappa^\perp$ then it fixes $\kappa \in (N_\kappa^\perp)_\R$; conversely, if $\sigma \in \Aut(L)$ fixes $\kappa$, then $\ker(\sigma - \id)$ is a primitive subgroup containing $\kappa$ in its $\R$-span, and it is straightforward to show that $N_\kappa^\perp$ is the smallest such subgroup. 
It is clear that the subgroups $\Gamma^+(N_\kappa)$ and $\Gamma^+(\kappa)$ correspond under this identification.
\end{proof}

Putting Lemmas \ref{lem:defret} and \ref{lem:gammasame} together, we have shown:

\begin{prop}\label{prop:shrunkmon}
Let $X$ be a $K3$ surface and $\omega$ a K\"ahler form, such that $\bL(X,\omega)$ is a sublattice of signature $(2,t)$.  
Then we have a morphism of short exact sequences
\begin{equation}
\label{eqn:sesmonodromy2}
\xymatrix{ 1 \ar[r] & \pi_1(\Omega^+_\amp(X,\omega)) \ar[r] \ar[d] & \pi_1(\cM_{\text{cpx}}(X,\omega)) \ar[r] \ar[d] & \Gamma^+(X,\omega) \ar[r] \ar@{=}[d] & 1 \\
1 \ar[r] & I(X,\omega) \ar[r] & G(X,\omega) \ar[r] & \Gamma^+(X,\omega) \ar[r]& 1.} 
\end{equation}
This diagram is isomorphic to \eqref{eqn:sesmonodromy1}.
\end{prop}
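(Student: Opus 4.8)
The plan is to assemble Proposition \ref{prop:shrunkmon} from pieces that are already in place, rather than prove anything genuinely new. Recall that by Definition \ref{defn:mcpx} the objects $\Omega^+_\amp(X,\omega)$, $\Gamma^+(X,\omega)$ and $\cM_{\text{cpx}}(X,\omega)$ are by definition $\Omega^+_0(N_\kappa)$, $\Gamma^+(N_\kappa)$ and $\cM_0(N_\kappa)$ for the sublattice $N_\kappa = \bL(X,\omega) = \kappa^\perp \cap L$ of signature $(2,t)$, where $\kappa = \phi_\R^{-1}([\omega])$ under a chosen marking $\phi$. So the content is: the restricted family $\EuX(N_\kappa,\kappa)\to\Omega^+_0(N_\kappa)$, together with the $\Gamma^+(N_\kappa)$-action, gives a monodromy homomorphism fitting into the top row of \eqref{eqn:sesmonodromy2}, and that this top row maps isomorphically onto the bottom row.

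First I would construct the top short exact sequence. Lemma \ref{lem:defret} tells us $\Omega^+_\amp(N_\kappa)\subset\Omega^+_\amp(\kappa)$ is a deformation retract, so in particular $\pi_1(\Omega^+_\amp(N_\kappa))\xrightarrow{\ \sim\ }\pi_1(\Omega^+_\amp(\kappa))$; Lemma \ref{lem:gammasame} gives $\Gamma^+(N_\kappa)\cong\Gamma^+(\kappa)$, compatibly with the actions on the respective period domains. These two facts, fed into the general stacky exact sequence from Section \ref{sec:auxil} (the one displayed after the definition of the stacky fundamental group), produce the exact sequence $1\to\pi_1(\Omega^+_\amp(N_\kappa))\to\pi_1(\cM_0(N_\kappa))\to\Gamma^+(N_\kappa)\to 1$, which is the top row, and moreover identify it with the top row of \eqref{eqn:sesmonodromy1}. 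The restricted family $\EuX(N_\kappa,\kappa)$ inherits the $\Gamma^+(\kappa)$-equivariant structure from the Corollary in Section \ref{sec:losemark}, since $\Gamma^+(N_\kappa)$ acts on $\Omega^+_\amp(N_\kappa)$ as the restriction of the $\Gamma^+(\kappa)$-action and preserves the locally-constant K\"ahler class; hence Section \ref{sec:monod} applies verbatim to give the vertical monodromy maps, and these are literally the restrictions of the monodromy maps for $\EuX(\kappa)$ because parallel transport and Moser interpolation can be performed inside the submanifold (using that the K\"ahler classes agree). This gives the commuting diagram \eqref{eqn:sesmonodromy2}, with the right-hand vertical map the identity and the bottom row being \eqref{eqn:sesmonodromy1} (whose exactness, including surjectivity onto $\Gamma^+(\kappa)$ and the addition of `$\to 1$', was established there via Proposition \ref{prop:Donplus}).

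Finally I would verify the claim that \eqref{eqn:sesmonodromy2} is isomorphic to \eqref{eqn:sesmonodromy1}. The bottom rows are visibly the same sequence (once one checks $\Gamma^+(\kappa) = \Gamma^+(X,\omega)$, which is immediate from $N_\kappa = \bL(X,\omega)$ and Lemma \ref{lem:gammasame}). For the top rows, the deformation retraction of Lemma \ref{lem:defret} is $\Gamma^+(\kappa)$-equivariant (this is asserted in that proof — the linear retraction onto $(N_\kappa)_\C$ commutes with any isometry fixing $\kappa$, since such an isometry preserves both summands of the decomposition $\kappa^\perp = (N_\kappa)_\R\oplus((N_\kappa^\perp)_\R\cap\kappa^\perp)$), so it induces an isomorphism of the corresponding stacky fundamental groups $\pi_1(\cM_0(N_\kappa))\xrightarrow{\ \sim\ }\pi_1([\Omega^+_\amp(\kappa)/\Gamma^+(\kappa)])$ commuting with the rest of the data; five-lemma-type bookkeeping then finishes the identification of the two three-term sequences.

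The routine-but-slightly-fiddly point — and the only place any care is needed — is checking that the monodromy homomorphism built from the shrunken family genuinely agrees with the restriction of the one from the big family, i.e.\ that the three composable arrows defining \eqref{eq:stackmon} (parallel transport, the group element, Moser interpolation) can all be taken to live over $\Omega^+_\amp(N_\kappa)$ and produce the same symplectomorphism of $(X,\omega)$ as when computed over $\Omega^+_\amp(\kappa)$. This follows because $\Omega^+_\amp(N_\kappa)$ is a complex submanifold over which the family and its K\"ahler classes restrict, parallel transport is computed via any connection and is homotopy-invariant, and the Moser interpolation depends only on the (unchanged) K\"ahler class; but it is worth spelling out so that the asserted isomorphism of diagrams is unambiguous. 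Everything else is a direct quotation of Lemmas \ref{lem:defret} and \ref{lem:gammasame} together with the constructions of Sections \ref{sec:monod}--\ref{sec:losemark}.
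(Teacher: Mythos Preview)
Your proposal is correct and follows exactly the paper's approach: the paper's proof is the single sentence ``Putting Lemmas \ref{lem:defret} and \ref{lem:gammasame} together, we have shown'', and your write-up is a faithful expansion of that. One small inaccuracy: the stacky short exact sequence you invoke is displayed in the ``Abstract moduli space'' subsection, not in Section \ref{sec:auxil}; and the $\Gamma^+(\kappa)$-equivariance of the retraction is not literally asserted in the proof of Lemma \ref{lem:defret} (which only states $\mathrm{GL}^+(2,\R)$-equivariance), though your justification via preservation of the orthogonal summands is correct.
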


\subsection{Examples}\label{sec:eg_fam}

We consider K\"ahler $K3$ surfaces $(X,\omega)$ which are mirror to an algebraic $K3$ surface $X^\circ$ of the smallest possible Picard rank $\rho(X^\circ) = 1$. 
According to Section \ref{sec:pic1eg}, this means we have
\[ \bL(X,\omega) \cong \N(X^\circ) \cong U \oplus \langle 2n \rangle\]
for some positive integer $n$. 
We will focus on the cases $n=1,2$. 
The mirrors $X^\circ$ arise geometrically as follows:

\begin{enumerate}
\item A very general quartic hypersurface $X^\circ \subset \bP^3$ has $\Pic(X^\circ) \cong \langle 4\rangle$;  \item  A very general sextic hypersurface $X^\circ \subset \bP(1,1,1,3)$ (also called a ``double plane'') has $\Pic(X^\circ) \cong \langle 2\rangle$.\end{enumerate}
Note that in the second case, $\bP(1,1,1,3)$ has an isolated singularity, and the generic hypersurface is disjoint from that point and hence smooth. 

We will now give explicit descriptions of $\cM_0(U \oplus \langle 2n \rangle) \cong [\mathfrak{h}_0/\Gamma^+_0(n)]$ in the cases $n=1,2$. 
We will also give explicit descriptions of the universal families 
\[ [\EuX(U \oplus \langle 2n \rangle,\kappa)/\Gamma^+(U \oplus \langle 2n \rangle)] \to \cM_0(U \oplus \langle 2n \rangle)\] 
for certain $\kappa$, in these cases. 
The families will be constructed as families of hypersurfaces $X\subset Y$, where $Y$ is a certain toric variety. 
The intersections of $X$ with the toric boundary divisors (which will always be transverse) span a sublattice $\Pic_{tor}(X) \subset \Pic(X)$. 
We say that a K\"ahler form $\omega$ on $X$ is ``ambient'' if it is the restriction of a K\"ahler form on $Y$, and ``ambient-irrational'' if it is ambient and furthermore $[\omega] \in \Pic_{tor}(X)_\R$ is not contained in any proper rational subspace.
In the latter case, we have
\[ \bL(X,\omega) = \Pic_{tor}(X)^\perp.\]

\begin{example}[The case $n=2$] \label{Ex:mirror_quartic} 
We consider the ``Dwork family''
\begin{equation} \label{eqn:Dwork}
Q_{\lambda} = \left \{x_0^4 +x_1^4+x_2^4+x_3^4+ 4\lambda x_0x_1x_2x_3 = 0\right \} \subset \bP^3
\end{equation}
of quartics, parametrized by $\lambda \in \bA^1$. These are all invariant under the group $\Pi = \Z/4\times\Z/4$ of diagonal projective transformations $[i^{a_0},i^{a_1},i^{a_2},i^{a_3}]$ of $\bP^3$ with $\sum a_j = 0$.  If $\lambda^4\neq 1$ then the quotient $Q_{\lambda}/\Pi$ has six $A_3$-singularities; the fibres over $\lambda^4=1$ have an additional nodal singularity. There is a  toric resolution of the ambient toric variety $\bP^3/\Pi$, yielding a simultaneous resolution of the Dwork family, which defines a family of $K3$'s over $\bA^1 \backslash \{\lambda^4=1\}$. This is the Greene--Plesser mirror family to the family of smooth quartics in $\bP^3$.

A member of this family is called a ``mirror quartic''. 
Any mirror quartic $X$ satisfies $\Pic_{tor}(X)^\perp \cong U \oplus \langle 4 \rangle$ (see \cite[Theorem 8.2]{Dolgachev}, and also \cite{Rohsiepe}). 
Therefore, if $\omega$ is an ambient-irrational K\"ahler form, we have 
\begin{align*}
\bL(X,\omega) &\cong U \oplus \langle 4 \rangle, \text{ and in particular,}\\
\cM_{\text{cpx}}(X,\omega) & \cong \cM_0(U \oplus \langle 4 \rangle).
\end{align*}
There is an action of $\Z/4$ on the family by multiplying one coordinate by $i$, which covers the action $\lambda \mapsto i\lambda$ on $(\bA^1 \backslash \{\lambda^4 = 1\})$.  
Dolgachev proves that the quotient $[\bA^1/(\Z/4)]$ is in fact isomorphic to $\cM(U \oplus \langle 4 \rangle)$, and the complement of $\{\lambda^4=1\}$ corresponds to $\cM_0(U \oplus \langle 4 \rangle)$. 
He furthermore identifies the quotient of the family by this $\Z/4$-action, with a family $[\EuX(U \oplus \langle 4 \rangle,\kappa)/\Gamma^+(U \oplus \langle 4 \rangle)]$ for appropriate $\kappa$.

We can compactify $[\bA^1/(\Z/4)]$ to an orbifold $\bP^1$, by adding a point at $\lambda^4=\infty$. 
The orbifold has 3 special points: the ``cusp'' $\lambda^4 = \infty$ which must be removed to get $\cM(U \oplus \langle 4 \rangle)$; the ``nodal point'' $\lambda^4 = 1$ which must further be removed to get $\cM_{\amp}(U \oplus \langle 4 \rangle)$; and the order-4 orbifold point $\lambda^4 = 0$. 
\end{example}

\begin{example}[The case $n=1$] \label{Ex:mirror_double_plane} 
The one-dimensional family of hypersurfaces
\begin{equation} \label{eqn:non-Dwork}
P_{\lambda} = \left\{ x_0^6 + x_1^6 + x_2^6 + x_3^2 + \lambda x_0x_1x_2x_3 \right\} \subset \bP(1,1,1,3)
\end{equation}
are all invariant under a group $\Pi' = \Z/6 \times \Z/2$ of diagonal projective transformations. The simultaneous crepant resolution of the quotient defines the Greene--Plesser mirror family to the family of double planes, and its fibres are called ``mirror double planes''. 
Any mirror double plane $X$ satisfies $\Pic_{tor}^\perp \cong U \oplus \langle 2 \rangle$ (see \cite[Example 8.3]{Dolgachev} and \cite{Rohsiepe}). 
In particular, if $\omega$ is an ambient-irrational K\"ahler form then we have $\cM_{\text{cpx}}(X,\omega) \cong \cM_0(U \oplus \langle 2 \rangle)$.

Similarly to the previous example, $\cM(U \oplus \langle 2 \rangle)$ compactifies to an orbifold $\bP^1$ with 3 special points: the cusp, a nodal point, and an order-3 orbifold point.
\end{example} 

\subsection{Consequences}

We now record some consequences of the explicit descriptions of $\cM_\amp(U \oplus \langle 2n \rangle)$ that were given in Examples \ref{Ex:mirror_quartic} and \ref{Ex:mirror_double_plane}. 
These can of course be proven more directly, by the techniques that Dolgachev uses to arrive at these explicit descriptions.

\begin{lem}
\label{lem:oneorbit}
Suppose $N = U \oplus \langle 2 \rangle$ or $U \oplus \langle 4 \rangle$. 
Then $\Gamma^+(N)$ acts transitively on $\Delta(N)/\pm \id$.
\end{lem}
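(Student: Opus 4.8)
The plan is to use the explicit model for the relevant objects from Section~\ref{sec:pic1eg}: we identify $\Omega^+(U \oplus \langle 2n\rangle)$ with the upper half plane $\mathfrak{h}$ via $z \mapsto [-nz^2 : 1 : z]$, and $\Gamma^+(U \oplus \langle 2n\rangle)$ with the Fricke group $\Gamma_0^+(n) \subset \mathrm{PSL}(2,\R)$, for $n = 1, 2$. Under this identification, a class $\delta \in \Delta(N)$ spans a negative line $\delta^\perp$ meeting $\Omega^+(N)$ in a single point (since $\delta^\perp$ has signature $(2, t-1)$ restricted to the relevant space, and the point is where the period is orthogonal to $\delta$); equivalently, by Remark~\ref{rmk:dehntwist} each $\delta \in \Delta(N)$ (up to sign) corresponds to one of the punctures removed to form $\Omega^+_0(N)$, i.e.\ to a point $\frac{c}{b} + \frac{i}{b\sqrt n}$ with $b \mid nc^2 + 1$. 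So the claim to prove is exactly that $\Gamma_0^+(n)$ acts transitively on this set of cusp-like points for $n = 1, 2$.

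First I would set up the dictionary between $(-2)$-classes $\delta = (r, D, s) \in U \oplus \langle 2n \rangle$ with $\delta^2 = 2nD^2 - 2rs = -2$ and the corresponding point of $\mathfrak{h}$, checking that the orthogonality condition $(\Omega, \delta) = 0$ with $\Omega = [-nz^2 : 1 : z]$ pins down $z = \frac{c}{b} + \frac{i}{b\sqrt n}$ in terms of the integer entries of $\delta$, and that the divisibility $b \mid nc^2 + 1$ is equivalent to $\delta$ being a $(-2)$-class; this is the routine calculation that makes the two descriptions of $\Delta(N)/\pm$ agree. Then I would verify that the Fricke generators act on these points in the expected way: the matrices $\left(\begin{smallmatrix} a & b \\ c & d\end{smallmatrix}\right) \in \mathrm{SL}(2,\Z)$ with $n \mid c$ act by M\"obius transformations, and the Fricke involution $w_n = \left(\begin{smallmatrix} 0 & -1/\sqrt n \\ \sqrt n & 0 \end{smallmatrix}\right)$ sends $z \mapsto -\tfrac{1}{nz}$. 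Transitivity then reduces to a number-theoretic statement: given $c, b$ with $b > 0$ and $b \mid nc^2 + 1$, one can move $\frac{c}{b} + \frac{i}{b\sqrt n}$ to $\frac{0}{1} + \frac{i}{\sqrt n}$ (the point attached to $\delta = (1, 0, 1)$, say) using these generators.

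The main obstacle is this last arithmetic step. The strategy is a descent on $b$: given a point with parameter $b > 1$, first use an integral translation $z \mapsto z + k$ (which is in $\Gamma_0^+(n)$) to normalize $c$ modulo $b$ into a convenient range, then apply the Fricke involution $z \mapsto -1/(nz)$; a short computation shows that $-1/(nz)$ with $z = \frac{c}{b} + \frac{i}{b\sqrt n}$ has the form $\frac{c'}{b'} + \frac{i}{b'\sqrt n}$ with $b' = n c^2 + 1)/\gcd(\dots)$ — crucially, the constraint $b \mid nc^2+1$ forces the new denominator to be a proper divisor-type quantity, so after combining translation and involution one strictly decreases $b$ (or its relevant size measure). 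Iterating terminates at $b = 1$, where the only points are $\frac{c}{1} + \frac{i}{\sqrt n}$ with $1 \mid nc^2+1$ (always true), all of which are translates of $\frac{i}{\sqrt n}$. For $n = 1$ this is essentially the classical fact that $\mathrm{PSL}(2,\Z)$ acts transitively on $\mathbb{Q} \cup \{\infty\}$ together with the elliptic-point bookkeeping; for $n = 2$ one checks the few residues that occur by hand. I would present the $n=1,2$ cases together, isolating the descent lemma as the one genuine computation, and note that the statement is also immediate from Dolgachev's explicit monodromy description of the families in Examples~\ref{Ex:mirror_quartic} and~\ref{Ex:mirror_double_plane}, where the $(-2)$-classes are the vanishing cycles at the single nodal orbifold point, necessarily a single $\Gamma^+(N)$-orbit.
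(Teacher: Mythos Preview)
Your closing remark---that the statement follows from Dolgachev's description of the families in Examples~\ref{Ex:mirror_quartic} and~\ref{Ex:mirror_double_plane}, because the $(-2)$-hyperplanes descend to a \emph{single} nodal point in $\cM(N)$---is exactly the paper's proof. The paper observes that $\Gamma^+(N)$-orbits on $\Delta(N)/\pm\id$ are in bijection with nodal points of $\cM(N) = [\Omega^+(N)/\Gamma^+(N)]$, and then quotes the fact (established by Dolgachev and recorded in those Examples) that there is precisely one such point. That is the entire argument.

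The bulk of your proposal---the explicit descent on $b$ using translations and the Fricke involution---is a genuinely different, more self-contained route. It trades the black-box citation of Dolgachev for an elementary arithmetic computation. Your sketch is essentially correct: applying $z\mapsto -1/(nz)$ to $\tfrac{c}{b}+\tfrac{i}{b\sqrt n}$ yields $\tfrac{-c}{b'}+\tfrac{i}{b'\sqrt n}$ with $b'=(nc^2+1)/b$ (the $\gcd$ in your formula is spurious; $b'$ is already an integer by the divisibility hypothesis). For the descent you should normalise to $|c|\le b/2$ rather than $0\le c<b$; then $nc^2+1\le nb^2/4+1<b^2$ for $n\in\{1,2\}$ and $b\ge 2$, giving $b'<b$. (For $n=2$ note that $b\mid 2c^2+1$ forces $b$ odd, so in fact $|c|\le (b-1)/2$.) This is cleaner to state than your ``proper divisor-type quantity'' phrasing. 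The payoff of your approach is that it is independent of Dolgachev's identification of the family; the cost is a page of arithmetic that the paper avoids by outsourcing.
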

\begin{proof}
There is a bijective correspondence between classes $[\delta] \in \Delta(N)/\pm \id$ and nodal points $p_{\delta} \in \Omega^+(N)$ that must be removed to obtain $\Omega^+_0(N)$: precisely, $\bP(\delta^\perp_\C) \cap \Omega^+(N)= \{p_{\delta}\}$. 
Therefore there is a bijective correspondence between orbits of the action of $\Gamma^+(N)$ on $\Delta(N)/\pm \id$ and nodal points in $\cM(N)$. 
We have seen in Examples \ref{Ex:mirror_quartic} and \ref{Ex:mirror_double_plane} that $\cM(N)$ contains a unique nodal point when $N = U \oplus \langle 2 \rangle$ or $U \oplus \langle 4 \rangle$.
\end{proof}

\begin{lem}\label{lem:free_prod}
Suppose $N = U \oplus \langle 2 \rangle$ (respectively, $N =U \oplus \langle 4\rangle$). 
Then we have identifications 
\[ \xymatrix{ \pi_1(\cM_0(N)) \ar@{<->}[d]^{\rotatebox[origin=c]{90}{$\sim$}} \ar[r]^-{\eqref{eqn:sesmonodromy2}} & \Gamma^+(N) \ar@{<->}[d]^{\rotatebox[origin=c]{90}{$\sim$}} \\
\Z \ast \Z/p \ar[r] & \Z/2 \ast \Z/p,}\]
where $p=3$ (respectively, $p=4$) and the homomorphism on the bottom row is the obvious one.
\end{lem}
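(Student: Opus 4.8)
The plan is to identify the orbifold $\cM(N)$ with a quotient of $\mathfrak{h}$ by an explicit Fuchsian group, read off the orbifold fundamental group as a free product, and then track which punctures survive the passage to $\cM_0(N)$. Concretely, by Section~\ref{sec:pic1eg} we have $\Gamma^+(N) \cong \Gamma_0^+(n)$ (the Fricke group) acting on $\mathfrak{h}$, with $n=1$ or $2$, and the compactified quotient $\overline{\cM(N)}$ is an orbifold $\bP^1$ whose special points were enumerated in Examples~\ref{Ex:mirror_quartic} and~\ref{Ex:mirror_double_plane}: a cusp, a nodal point, and a single cone point of order $p$ (where $p=4$ for $n=2$, $p=3$ for $n=1$). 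The orbifold $\cM(N)$ itself is $\overline{\cM(N)}$ with the cusp deleted; it is therefore a disc with one orbifold point of order $p$, so $\pi_1(\cM(N)) \cong \Z/p$ — this computes $\Gamma_0^+(n)$ as an abstract group, recovering the classical fact that $\Gamma_0^+(2) \cong \Z/2 \ast \Z/4$ and $\Gamma_0^+(3) \cong \Z/2 \ast \Z/3$ (one should cite this; it goes back to the structure theory of these Fricke groups, and is also visible from the generators listed in Section~\ref{sec:pic1eg}).

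Next I would pass from $\cM(N)$ to $\cM_0(N)$, which by definition further removes the nodal point, i.e.\ one further puncture of the underlying orbifold disc-with-one-cone-point. Deleting a smooth point from a $2$-orbifold with $\pi_1 \cong \Z/p$ has the effect of free-producting with a $\Z$ (the loop around the new puncture is free of finite order), so $\pi_1(\cM_0(N)) \cong \Z \ast \Z/p$. This is exactly the left column, and the fact that it surjects onto $\Gamma^+(N) \cong \Z/2 \ast \Z/p$ via~\eqref{eqn:sesmonodromy2}: the map kills the new free generator (the loop around the nodal point lies in the kernel $\pi_1(\Omega^+_\amp(N))$ of~\eqref{eqn:sesmonodromy2}, since that loop bounds in $\Omega^+(N)$ modulo the removed hyperplane, cf.\ Remark~\ref{rmk:dehntwist}) and is the identity on the retained $\Z/p$ and on the order-$2$ Fricke involution. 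Identifying the order-$2$ factor of $\Gamma^+(N)$ requires noting that the element of $\Gamma_0^+(n)$ of order $2$ is the Fricke involution $\begin{psmallmatrix} 0 & -1/\sqrt{n} \\ \sqrt{n} & 0\end{psmallmatrix}$, whose fixed point in $\mathfrak{h}$ maps to a cone point of $\cM(N)$ of order $2$; but that cone point is precisely the nodal point (it is the one that gets removed), so after removing it the $\Z/2$ is replaced by the free $\Z$ generated by a small loop around it. This pins down the square of the commutative diagram.

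The main obstacle is bookkeeping rather than conceptual: one must be careful that the "order-$2$ orbifold point" of $\cM(N)$ and the "nodal point" of $\cM(N)$ genuinely coincide — equivalently, that the nodal hyperplane $\delta^\perp$ in $\Omega^+(N)$ passes through the $\Z/2$-fixed point of the Fricke involution and through no point of higher isotropy. This can be checked directly from Dolgachev's explicit coordinates (Section~\ref{sec:pic1eg}): the $(-2)$-classes in $U \oplus \langle 2n\rangle$ are a single $\Gamma^+(N)$-orbit by Lemma~\ref{lem:oneorbit}, so it suffices to locate one $\delta^\perp$ and observe that it is the fixed locus of the Fricke involution (for $n=1,2$ one has $\delta = (1,1,\ldots)$ giving $\delta^\perp = \{z \in \mathfrak{h} : \text{a linear condition}\}$, a single point fixed by $z \mapsto -1/(nz)$). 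Once this coincidence is established, the two free-product descriptions and the bottom map in the diagram follow formally from van Kampen for orbifolds, and the asserted compatibility with~\eqref{eqn:sesmonodromy2} is then immediate.
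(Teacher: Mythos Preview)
Your overall geometric picture is right and close to the paper's, but the middle paragraph contains a genuine error that contradicts both your final paragraph and the statement being proved. You write that the map $\pi_1(\cM_0(N)) \to \Gamma^+(N)$ ``kills the new free generator (the loop around the nodal point lies in the kernel $\pi_1(\Omega^+_0(N))$)''. This is false: the loop going \emph{once} around the nodal point in $\cM_0(N)$ does not lift to a loop in $\Omega^+_0(N)$, because the covering $\Omega^+_0(N) \to \cM_0(N)$ is two-to-one on a punctured neighbourhood of $\delta^\perp$ (precisely because, as you correctly observe later, the nodal point is the order-$2$ cone point fixed by the Fricke involution). Only the \emph{square} of that loop lifts. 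So the generator of the $\Z$ factor maps to the nontrivial element of $\Z/2$, not to the identity; if it were killed, you would get $\Gamma^+(N) \cong \Z/p$, which is not what the diagram asserts. Relatedly, your earlier claim that $\cM(N)$ is ``a disc with one orbifold point of order $p$, so $\pi_1(\cM(N)) \cong \Z/p$'' is a slip of the same kind: $\cM(N)$ has \emph{two} cone points (orders $2$ and $p$), and $\pi_1(\cM(N)) \cong \Z/2 \ast \Z/p$, which is how you then correctly describe $\Gamma^+(N)$. (Also: you want $\Gamma_0^+(1)$, not $\Gamma_0^+(3)$, for the $p=3$ case.)

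Once these are fixed, your argument and the paper's are essentially the same observation viewed from two sides. The paper does not quote $\Gamma^+(N) \cong \Z/2 \ast \Z/p$ as input; instead it identifies the kernel of $\pi_1(\cM_0(N)) \to \Gamma^+(N)$ with the image of $\pi_1(\Omega^+_0(N))$ via the short exact sequence, uses Lemma~\ref{lem:oneorbit} to see this image is normally generated by a single loop around some $\delta^\perp$, and then notes that such a loop maps to a loop going \emph{twice} around the nodal point, i.e.\ to $2 \in \Z$. This yields the quotient $\Z/2 \ast \Z/p$ directly. Your route---identify the nodal point as the order-$2$ cone point, so that removing it replaces the $\Z/2$ factor by a free $\Z$---is the same ramification fact read downstairs rather than upstairs. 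Either is fine, but you must keep straight that the generator of $\Z$ maps to the Fricke involution, not to $1$.
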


\begin{proof}
The identification $\pi_1(\cM_{\amp}) \cong \Z \ast \Z/p$ is immediate from the description of $\cM_{\amp}$ given in Examples \ref{Ex:mirror_quartic} and \ref{Ex:mirror_double_plane}: the factor $\Z$ corresponds to loops around the nodal point, and the factor $\Z/p$ to loops around the orbifold point. 
By the short exact sequence \eqref{eqn:sesmonodromy2}, the kernel of the map $\pi_1(\cM_{\amp}) \to \Gamma^+$ is equal to the image of $\pi_1(\Omega^+_\amp)$. 
We recall that $\Omega^+_{\amp}$ is the complement of the infinite set of points $p_{\delta} \in \mathfrak{h}$ corresponding to $[\delta] \in \Delta(N)/\pm \id$, so its fundamental group is generated by loops around these points. 
The covering group acts transitively on these points by Lemma \ref{lem:oneorbit}, so the image of the fundamental group is generated by elements conjugate to the image of a loop around a single point $p_{\delta}$. 
A loop around a single point $p_{\delta} \in \Omega^+$ maps to a loop going twice around the nodal point in $\cM$, which can be chosen to correspond to the element $2 \in \Z$. 
Therefore the kernel of the map $\Z \ast \Z/p \to \Gamma^+$ is generated by the elements conjugate to $2 \in \Z$, which means the map can be identified with the projection $\Z \ast \Z/p \twoheadrightarrow \Z/2 \ast \Z/p$ as required.
\end{proof}

\begin{cor} \label{cor:not_generated_by_dehn_twists}
If $X$ is a mirror quartic or mirror double plane and $\omega$ is ambient-irrational, then $G(X,\omega)$ is not generated by Dehn twists, although $X$ contains a Lagrangian sphere.
\end{cor}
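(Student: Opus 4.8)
The plan is to deduce Corollary \ref{cor:not_generated_by_dehn_twists} directly from Lemma \ref{lem:free_prod} together with the monodromy exact sequence of Proposition \ref{prop:shrunkmon}. First I would recall that the subgroup of $G(X,\omega)$ generated by squared Dehn twists in Lagrangian spheres lands, under the map $G(X,\omega)\to\Gamma^+(X,\omega)$, inside the image of $\pi_1(\Omega^+_\amp(X,\omega))$: indeed, by Remark \ref{rmk:dehntwist} every Lagrangian sphere has homology class in $\Delta(\bL(X,\omega))$, so a squared Dehn twist $\tau_L^2$ acts on $H^2$ as the square of a reflection in $\delta=[L]$, and this coincides with the monodromy around the hyperplane $\delta^\perp$, i.e.\ with the image of a loop in $\Omega^+_\amp$. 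Hence the image in $\Gamma^+(X,\omega)$ of the subgroup generated by all Dehn twists (whose squares are the squared Dehn twists, and which in any case only matters through its image in $\Gamma^+$) is contained in the normal subgroup generated by $2\in\Z$ under the identification $\Gamma^+(N)\cong\Z/2\ast\Z/p$ of Lemma \ref{lem:free_prod}.

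The key point is then that this normal subgroup is \emph{proper}: under $\Gamma^+(N)\cong\Z/2\ast\Z/p$, the element $2\in\Z$ maps to the identity, since $2$ lies in the kernel $\Z\to\Z/2$. Concretely, in the proof of Lemma \ref{lem:free_prod} a loop around a single point $\delta^\perp\in\Omega^+$ maps to twice a generator of the $\Z$-factor of $\pi_1(\cM_\amp)\cong\Z\ast\Z/p$, hence dies in $\Gamma^+\cong\Z/2\ast\Z/p$. Therefore \emph{every} Dehn twist acts trivially on $H^2(X;\Z)$, so the subgroup of $G(X,\omega)$ they generate lies inside the Torelli group $I(X,\omega)$, and in particular maps to the trivial subgroup of $\Gamma^+(X,\omega)$. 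Since $\Gamma^+(X,\omega)\cong\Z/2\ast\Z/p$ is nontrivial and the map $G(X,\omega)\to\Gamma^+(X,\omega)$ is surjective (Proposition \ref{prop:shrunkmon}), the subgroup generated by Dehn twists cannot be all of $G(X,\omega)$.

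Finally I would note the second clause: $X$ does contain a Lagrangian sphere. This follows from Remark \ref{rmk:dehntwist}: the lattice $\Delta(\bL(X,\omega))=\Delta(U\oplus\langle 2n\rangle)$ is nonempty (for instance the class $(1,0,-1)$ in $U\oplus\langle 2n\rangle$ has square $-2$), and by Remark \ref{rmk:dehntwist} every class in $\Delta(\bL(X,\omega))$ is realised as the homology class of a vanishing cycle, which is a Lagrangian sphere. This shows the statement is not vacuous. The main obstacle is essentially bookkeeping: one must be careful that ``generated by Dehn twists'' is interpreted as generated by Dehn twists in \emph{Lagrangian spheres} (the only Dehn twists available on a symplectic four-manifold in this context), so that their homology actions are reflections in $(-2)$-classes of $\bL(X,\omega)$ and hence, as computed above, trivial in $\Gamma^+$ — there is no subtlety beyond invoking Lemma \ref{lem:free_prod} and the surjectivity in Proposition \ref{prop:shrunkmon}.
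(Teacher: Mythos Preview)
Your argument contains a genuine error: you claim that ``every Dehn twist acts trivially on $H^2(X;\Z)$'', but this is false. A single Dehn twist $\tau_L$ acts on $H^2(X;\Z)$ via the Picard--Lefschetz reflection $s_\delta: x \mapsto x + (x,\delta)\delta$ where $\delta = [L] \in \Delta(\bL(X,\omega))$, and this reflection is a \emph{nontrivial} element of order $2$. What you have correctly argued is that the \emph{squared} Dehn twist $\tau_L^2$ acts trivially on homology (the square of a reflection is the identity), but you then jump without justification to the same conclusion for $\tau_L$ itself. Your parenthetical ``whose squares are the squared Dehn twists, and which in any case only matters through its image in $\Gamma^+$'' does not bridge this gap: knowing that $\tau_L^2$ maps to the identity in $\Gamma^+$ only tells you that the image of $\tau_L$ has order dividing $2$, not that it is trivial.

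The paper's proof uses precisely the fact you have missed: the Dehn twist in a vanishing cycle maps to the nontrivial element $1 \in \Z/2$ under $G(X,\omega) \twoheadrightarrow \Gamma^+(N) \cong \Z/2 \ast \Z/p$. One then invokes Lemma~\ref{lem:oneorbit} (transitivity of $\Gamma^+(N)$ on $\Delta(N)/\pm\id$) to conclude that \emph{every} Dehn twist maps to an element conjugate to $1 \in \Z/2$; passing to the abelianization $\Z/2 \oplus \Z/p$, all Dehn twists map to $(1,0)$, which does not generate. Your argument uses neither Lemma~\ref{lem:oneorbit} nor the abelianization, and as written it would, if correct, prove the stronger (but false) statement that Dehn twists lie in the Torelli group.
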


\begin{proof}
Let $N \coloneqq \bL(X,\omega)$. If $X$ is a mirror quartic (respectively, a mirror double plane), we saw in Example \ref{Ex:mirror_quartic} (respectively, Example \ref{Ex:mirror_double_plane}) that $N = U \oplus \langle 4 \rangle$ (respectively, $N = U \oplus \langle 2 \rangle$).
Applying Proposition \ref{prop:shrunkmon} and Lemma \ref{lem:free_prod}, we have a surjective homomorphism 
\[G(X,\omega) \twoheadrightarrow \Gamma^+(N) \cong \Z/2 \ast \Z/p.\]
The Dehn twist in a given vanishing cycle can be arranged to map to $1 \in \Z/2$ (cf. Remark \ref{rmk:dehntwist}). 
We know that the action of a Dehn twist in $L$ on homology is given by the Picard--Lefschetz reflection in the corresponding homology class up to sign, $[L] \in \Delta(N)/\pm \id$. 
It follows by Lemma \ref{lem:oneorbit} that all Dehn twists map to elements conjugate to $1 \in \Z/2$. 
Passing to abelianizations, it follows that the image of a Dehn twist under the map
\[
G(X,\omega) \twoheadrightarrow (\Z/2 \ast \Z/p)^{ab} = \Z/2 \oplus \Z/p
\]
is $(1,0)$, so Dehn twists can not generate $G(X,\omega)$. 
\end{proof}

\section{Derived autoequivalences of complex $K3$s}\label{sec:derautk3}

Let $X$ be a complex algebraic $K3$ surface, and $\Db(X)$ its bounded derived category. 
In this section we briefly survey what is known about the group $\Auteq \Db(X)$ of triangulated, $\C$-linear autoequivalences of $\Db(X)$, following \cite{BB} closely. There are no original results in this section.

\subsection{Action of autoequivalences on cohomology}

Let $K(\Db(X))$ denote the Grothendieck group of $\Db(X)$. It admits the \emph{Euler form} $(E,F) \mapsto \chi(E,F)$, whose left- and right-kernels coincide by Serre duality:
\[ \chi(E,F)=0 \quad \forall F \quad \iff \quad \chi(F,E)=0 \quad \forall F.\]
The numerical Grothendieck group $K_{num}(\Db(X))$ is the quotient of the Grothendieck group by this kernel. The Euler pairing descends to it, by construction. The association $[E] \mapsto v(E)$ defines an isometry
\begin{equation}
\label{eq:knummuk}
 K_{num}(\Db(X)) \simeq \N(X)^-,
 \end{equation}
cf. Remark \ref{rmk:Nrelevance} (the fact that this is an isomorphism is specific to $K3$ surfaces: see \cite[Section 16.2.4]{Huybrechts:K3book}). 
Triangulated autoequivalences clearly act on the numerical Grothendieck group by isometries, so we obtain a homomorphism
\begin{equation}\label{eq:derautKact} \varpi_K\colon \Auteq \Db(X) \longrightarrow \aut \N(X).
\end{equation}

Recall from Section \ref{sec:mkahk3} that $H^*(X,\Z)$ carries a polarized weight-two Hodge structure, of which $\N(X)$ is the algebraic part.  
The action \eqref{eq:derautKact} of derived autoequivalences on the algebraic part extends to an action on the whole polarized Hodge structure: i.e., there is a group homomorphism
\begin{equation}
\label{eq:derautact}
\varpi\colon \Auteq \Db(X)\longrightarrow \aut H^*(X,\Z)
\end{equation}
to the group of Hodge isometries, such that $\varpi_K = \varpi|_{\N(X)}$.
Indeed, by a theorem of Orlov \cite[Theorem 2.19]{Orl}, any autoequivalence is of Fourier--Mukai type, meaning it has the form $\pi_{2*}(E \otimes \pi_1^*(-))$ for some $E \in \Db(X \times X)$ called the Fourier--Mukai kernel.  
The Mukai vector $v(E) \in H^*(X \times X,\Z)$ of the kernel induces a correspondence, whose action on cohomology preserves the Hodge filtration, the integral structure and the Mukai pairing. 
This defines the desired map $\varpi$. 

The image of $\varpi$ is the index-2 subgroup $\aut^+ H^*(X,\Z)$ consisting of Hodge isometries which preserve orientations of positive-definite 4-planes, by work of Huybrechts, Macr\`i and Stellari: 

\begin{thm}[\cite{HubMacStel}, Theorem 2]\label{thm:cohaut}
Let $\Auteq^0 \Db(X)$ denote the kernel of $\varpi$. 
 Then there is a short exact sequence
\[ 1 \to \Auteq^0 \Db(X) \to \Auteq \Db(X) \xrightarrow{\varpi} \aut^+ H^*(X,\Z) \to 1.\]
\end{thm}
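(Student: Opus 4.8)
The statement bundles three claims: exactness at $\Auteq^0\Db(X)$, which is immediate since $\Auteq^0\Db(X)$ is \emph{defined} as $\ker\varpi$; the inclusion $\Image\varpi \subseteq \aut^+ H^*(X,\Z)$ (the orientation statement conjectured by Szendr\H{o}i); and surjectivity of $\varpi$ onto $\aut^+ H^*(X,\Z)$. The plan is to treat the last two points separately, the orientation statement being by far the harder.

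For surjectivity I would exhibit enough autoequivalences. The \emph{standard} autoequivalences --- even shifts, tensoring by a line bundle $\otimes\, L$, push-forward $f_*$ along an automorphism $f\in\Aut(X)$, spherical twists $T_E$ in spherical objects, and the Fourier--Mukai transforms attached to fine two-dimensional moduli spaces of Gieseker-stable sheaves on $X$ (again $K3$ surfaces, by Mukai) --- all lie in $\Auteq\Db(X)$, and $\varpi$ takes them to explicit Hodge isometries of the Mukai lattice: a shift acts by $\pm\id$, $\otimes\, L$ by the unipotent $\exp(c_1(L))$, $f_*$ by the induced isometry, $T_E$ by the reflection $x\mapsto x+\langle x,v(E)\rangle v(E)$ in the $(-2)$-class $v(E)\in\Delta(\N(X))$, and the moduli-space transforms exchange hyperbolic summands appropriately. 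A lattice-theoretic argument going back to Mukai and Orlov (see also Ploog) then shows that these isometries generate precisely the index-two subgroup $\aut^+ H^*(X,\Z)$; equivalently, one invokes Orlov's theorem that every Hodge isometry of Mukai lattices is realised by a Fourier--Mukai equivalence after composing with standard equivalences, and checks the composition can be chosen orientation-preserving.

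The inclusion $\Image\varpi\subseteq\aut^+ H^*(X,\Z)$ is the crux. By Orlov's theorem every autoequivalence has the form $\Phi=\Phi_E$ with kernel $E\in\Db(X\times X)$, and $\varpi(\Phi)$ depends only on $v(E)$, so the autoequivalences acting orientation-preservingly form a subgroup $\Auteq^+\Db(X)\subseteq\Auteq\Db(X)$, which I must show is everything. The key is \emph{deformation invariance}: given a generic one-parameter deformation $\mathcal X\to\Delta$ of $X$ with $\Pic(\mathcal X_t)=0$ for $t\neq 0$, one shows the kernel $E$ deforms to a kernel $\mathcal E_t\in\Db(\mathcal X_t\times\mathcal Y_t)$ of an equivalence for a suitably induced deformation $\mathcal Y_t$ of the Fourier--Mukai partner; the induced isometries of the fibrewise Mukai lattices then form a continuous family inside the real orthogonal group, on which orientation-preservation is locally constant, so it suffices to verify the property at $t\neq 0$. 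For $X'=\mathcal X_t$ with $\Pic(X')=0$ one has $\N(X')\cong U$, whose only $(-2)$-classes are $\pm v(\mathcal O_{X'})$, so $\mathcal O_{X'}$ is the unique spherical object up to shift; using this one describes $\Auteq\Db(X')$ explicitly as generated by the shift $[1]$, $\Aut(X')$, and the single spherical twist $T_{\mathcal O_{X'}}$. Each generator is visibly orientation-preserving: $[1]$ acts by $-\id$, which has determinant $1$ on the positive-definite $4$-plane; an automorphism preserves the complex orientation; and $T_{\mathcal O_{X'}}$ acts by the reflection in the negative-square class $v(\mathcal O_{X'})$, hence fixes $v(\mathcal O_{X'})^\perp$ --- which contains the whole positive $4$-plane --- pointwise. (This last computation also re-proves that the standard autoequivalences used for surjectivity lie in $\aut^+$, since every shift and every spherical twist is orientation-preserving for the same reasons.)

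The main obstacle is the deformation-theoretic step: a general object of $\Db(X\times X)$ is obstructed, so one must exploit that $E$ is the kernel of an \emph{equivalence} --- which makes it simple and rigid relative to the product deformations coming from $X$ and $Y$ separately --- to produce a simultaneous deformation of $X$, of its partner $Y$, and of $E$ along a chosen generic direction. Controlling the obstruction classes in $\Ext^2$ and matching the deformation of $Y$ with that of $X$ is the technical heart; granting this, the reduction to the Picard rank zero case together with the explicit check above completes the proof, and combining with the surjectivity discussion yields the short exact sequence.
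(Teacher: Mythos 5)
The paper does not prove this statement: it is quoted directly from Huybrechts--Macr\`i--Stellari \cite{HubMacStel}, and the section containing it is explicitly a survey with no original results. Your sketch is, in outline, the strategy of that cited work: surjectivity of $\varpi$ onto $\aut^+ H^*(X,\Z)$ comes from the standard autoequivalences (shifts, $-\otimes L$, automorphisms, spherical twists, Mukai's moduli-space transforms) together with lattice-theoretic results going back to Mukai, Orlov, Hosono--Lian--Oguiso--Yau and Ploog, while the inclusion of the image in $\aut^+$ is proved by deforming the Fourier--Mukai kernel towards a ``generic'' K3 surface and checking orientation-preservation on an explicit list of generators there. As a roadmap of the literature this is accurate, and within the present paper the citation is the intended proof.

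If, however, your text were offered as a proof, the gaps sit exactly where you place ``the technical heart'', and they are larger than the proposal suggests. A K3 surface with trivial Picard group is never projective, so the fibres $\mathcal{X}_t$, $t\neq 0$, of your family are not algebraic K3 surfaces; their derived categories of coherent sheaves are delicate (this paper itself notes in a footnote that such categories are not saturated and admit no generator), and Gieseker stability and fine moduli of sheaves are not available in the naive sense. Huybrechts--Macr\`i--Stellari instead argue via formal and twisted deformations, and the statement you take as input --- that for a generic non-projective K3 the autoequivalence group is generated by the shift, automorphisms, and the single spherical twist $T_{\mathcal{O}}$ --- is itself a nontrivial theorem from their earlier work on generic K3 categories, not an easy explicit description. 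Likewise, the simultaneous deformation of $X$, of its Fourier--Mukai partner, and of the kernel $E$, with the obstruction analysis in $\Ext^2$ and the matching of deformation directions through the action on (co)homology, constitutes a separate substantial body of work rather than something one can simply grant; and the ``continuity'' argument requires knowing that the deformed kernels induce isometries of the fibrewise Mukai lattices varying continuously, which again rests on that machinery. None of this is a defect of the paper under review, which correctly imports the theorem; but your proposal should be read as a faithful summary of the proof in \cite{HubMacStel}, not as a self-contained argument.
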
 

We now define $\Aut^+_{CY} H^*(X,\Z) \subset \aut^+ H^*(X,\Z)$ to be the subgroup of Hodge isometries acting by the identity on $H^{2,0}(X,\C)$. 
An autoequivalence is called \emph{Calabi--Yau} if its image under $\varpi$ lies in this subgroup, and the subgroup of Calabi--Yau autoequivalences is denoted by $\Auteq_{CY}\Db(X) \subset \Auteq \Db(X)$. 

\begin{cor}\label{cor:cohcyaut}
There is a short exact sequence
\[ 1 \to \Auteq^0 \Db(X) \to \Auteq_{CY} \Db(X) \xrightarrow{\varpi_K} \Gamma^+(X) \to 1.\]
\end{cor}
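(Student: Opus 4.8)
The plan is to derive Corollary~\ref{cor:cohcyaut} by restricting the short exact sequence of Theorem~\ref{thm:cohaut} to the preimage of $\Aut^+_{CY} H^*(X,\Z) \subset \aut^+ H^*(X,\Z)$ under $\varpi$. By definition $\Auteq_{CY}\Db(X) = \varpi^{-1}(\Aut^+_{CY} H^*(X,\Z))$, so restricting the surjection $\varpi$ automatically gives an exact sequence
\[ 1 \to \Auteq^0 \Db(X) \to \Auteq_{CY}\Db(X) \xrightarrow{\varpi} \Aut^+_{CY} H^*(X,\Z) \to 1,\]
the kernel being unchanged since $\Auteq^0\Db(X) = \ker\varpi$ already lies in $\Auteq_{CY}\Db(X)$ (the identity on $H^*$ is certainly the identity on $H^{2,0}$), and surjectivity onto $\Aut^+_{CY}$ being immediate from surjectivity of $\varpi$ onto $\aut^+$. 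So the only real content is the identification
\[ \Aut^+_{CY} H^*(X,\Z) \cong \Gamma^+(X) = \Gamma^+(\N(X)),\]
under which $\varpi$ restricted to $\Auteq_{CY}$ becomes $\varpi_K$.

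To establish this I would argue as follows. A Hodge isometry $\sigma$ of $H^*(X,\Z)$ that fixes $H^{2,0}(X,\C)$ pointwise also fixes $H^{0,2}(X,\C) = \overline{H^{2,0}(X)}$, hence fixes the orthogonal complement of $H^{2,0}\oplus H^{0,2}$ inside $H^*(X,\Z)_\C$; intersecting with the integral lattice, this complement is exactly the algebraic part $\N(X) = H^0 \oplus \Pic(X) \oplus H^4$. Therefore $\sigma$ preserves $\N(X)$ and its transcendental orthogonal complement $\N(X)^\perp$, acting as the identity on the latter after complexification, hence (being integral) as the identity on $\N(X)^\perp$ itself. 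Conversely, any isometry of $\N(X)$ that acts trivially on the discriminant group $\N(X)^*/\N(X)$ extends, by Lemma~\ref{lem:nikulin} applied with $M = H^*(X,\Z)$ (the Mukai lattice, which is unimodular, isomorphic to $U^{\oplus 4}\oplus E_8^{\oplus 2}$, with $\N(X)$ primitively embedded), to an isometry of $H^*(X,\Z)$ that is the identity on $\N(X)^\perp$ — and such an extension is automatically a Hodge isometry (it is the identity on $H^{2,0}\subset \N(X)^\perp_\C$) and Calabi--Yau. This sets up a bijection between $\Aut^+_{CY}H^*(X,\Z)$ and the subgroup of $\Gamma(\N(X))$ consisting of those isometries whose extension preserves the positive-orientation component; one checks this corresponds precisely to $\Gamma^+(\N(X)) = \Gamma^+(X)$, using that the positive-definite $4$-plane decomposes as (positive $2$-plane in $\N(X)_\R$) $\oplus$ (positive $2$-plane $\mathrm{Re}\,\Omega, \mathrm{Im}\,\Omega$ in $\N(X)^\perp_\R$), with the second summand fixed, so preserving the orientation of the $4$-plane is equivalent to preserving the orientation of the positive $2$-plane in $\N(X)_\R$, i.e.\ to lying in $\Gamma^+$. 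Finally, under this identification the restriction of $\varpi$ to $\Auteq_{CY}\Db(X)$ is visibly $\varpi_K$ (both record the action on $\N(X)$, compatibly with \eqref{eq:knummuk} up to the sign change $\N(X) \leftrightarrow \N(X)^-$, which does not affect the isometry group).

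The main obstacle is the careful bookkeeping in the lattice-theoretic identification $\Aut^+_{CY}H^*(X,\Z)\cong\Gamma^+(X)$: one must verify that the extension supplied by Nikulin's criterion (Lemma~\ref{lem:nikulin}) is unique, so that the assignment $\sigma\mapsto\sigma|_{\N(X)}$ and its inverse are mutually inverse group homomorphisms, and one must track the orientation condition through the decomposition of the positive-definite $4$-plane to match the superscript $+$ on both sides. Everything else — exactness of the restricted sequence, identification of the kernel, surjectivity — is formal once Theorem~\ref{thm:cohaut} is in hand. Since the paper attributes this corollary to \cite{BB} and treats it as survey material, I would keep the argument brief, citing \cite[Corollary 1.5.2]{Nikulin} (equivalently Lemma~\ref{lem:nikulin}) for the extension and \cite{BB} for the packaging.
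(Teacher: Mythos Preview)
Your approach is the same as the paper's: restrict Theorem~\ref{thm:cohaut} to the Calabi--Yau locus, identify $\Aut^+_{CY}H^*(X,\Z)$ with $\Gamma^+(\N(X))$ via Lemma~\ref{lem:nikulin}, and match the orientation conditions by decomposing a positive-definite $4$-plane into a $2$-plane in $\N(X)_\R$ and a $2$-plane in $\N(X)^\perp_\R$. There is, however, one step that needs more care: the assertion that a Calabi--Yau Hodge isometry $\sigma$ acts as the identity on $\N(X)^\perp$. Your chain of implications establishes only that $\sigma$ fixes $H^{2,0}\oplus H^{0,2}$ pointwise and preserves $\N(X)^\perp$ setwise; this does not by itself force $\sigma|_{\N(X)^\perp}=\id$, since in general $\N(X)^\perp_\C$ strictly contains $H^{2,0}\oplus H^{0,2}$ (its dimension is $22-\rho$, which exceeds $2$ whenever $\rho<20$), and an integral isometry fixing a $2$-dimensional subspace pointwise need not fix the whole space.

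The missing ingredient, stated explicitly in the paper's proof, is that $H^{2,0}\subset \N(X)^\perp_\C$ is not contained in any proper rational subspace --- equivalently, $\N(X)^\perp$ is the smallest primitive sublattice of $H^*(X,\Z)$ whose complexification contains $H^{2,0}$. Granting this, $\ker(\sigma-\id)\cap \N(X)^\perp$ is an integral (hence rational) sub\-lattice whose complexification contains $H^{2,0}$, so it must be all of $\N(X)^\perp$. Once you insert this sentence, your argument coincides with the paper's.
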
 
\begin{proof}
Note that $H^{2,0}(X,\C) \subset \N(X)^\perp_\C$ is not contained in any proper rational subspace, so any $\phi \in \aut_{CY}H^*(X,\Z)$ acts by the identity on $\N(X)^\perp$. 
Thus we have an identification $\aut_{CY} H^*(X,\Z) \simeq \Gamma(\N(X))$ by Lemma \ref{lem:nikulin}. 
An element of $\aut_{CY} H^*(X,\Z)$ acts by the identity on the positive-definite 2-planes in $(\N(X)^\perp)_\R$, so it preserves orientations of positive-definite 4-planes if and only if the corresponding element of $\Gamma(\N(X))$ preserves orientations of positive-definite 2-planes.  
Therefore the image of $\varpi_K|_{\Auteq_{CY} \Db(X)}$ is identified with $\Gamma^+(\N(X))$, by Theorem \ref{thm:cohaut}.
\end{proof}

\subsection{Stability conditions on $K3$ surfaces}

Fix a triangulated category $\Db$ linear over a field $\fk$. 
Suppose $\Db$ is proper, i.e. that $\oplus_{i \in \Z} \Hom_\Db(E,F[i])$ is finite-dimensional for all objects $E$, $F$. 
We let  $\Stab(\Db)$ denote the set of stability conditions on $\Db$
which  are \emph{numerical}, \emph{full} and \emph{locally-finite} in the terminology of \cite{Bridgeland, Bridgeland-K3}.     Recall that a numerical stability condition $\sigma = (Z,\P)$ on $\Db$ comprises a group homomorphism $Z: K_{num}(\Db) \to \C$, called the central charge, and a collection of full additive subcategories $\P(\phi) \subset \Db$ for $\phi \in \R$ which satisfy various axioms, see \cite{Bridgeland}. The main result of \emph{op. cit.} asserts that the space $\Stab(\Db)$ has the structure of a complex manifold, such that the forgetful map
\begin{equation}
\label{eqn:stablocaliso}
\pi\colon \Stab(\Db)\longrightarrow \Hom_{\Z}(K_{num}(\Db),\C),
\end{equation}
taking a stability condition to its central charge,
is a local isomorphism.\footnote{More precisely, \cite{Bridgeland} shows that for each connected component of the space of \emph{numerical}, \emph{locally-finite} stability conditions, the forgetful map defines a local isomorphism to a linear subspace $V$ of $\Hom_{\Z}(K_{num}(\Db),\C)$, and \cite{Bridgeland-K3} defines such a stability condition to be \emph{full} if the subspace $V$ is all of $\Hom_{\Z}(K_{num}(\Db),\C)$.} The group of triangulated autoequivalences $\Auteq (\Db)$ acts on $\Stab(\Db)$; an element $\Phi \in \aut(\Db)$ acts by
\[\Phi\colon (Z,\P)\mapsto (Z',\P'), \quad Z'(E)=Z(\Phi^{-1}(E)), \quad \P'=\Phi(\P).\] 

Suppose now that $X$ is a complex algebraic K3 surface, and $\Db(X)$ its bounded derived category. 
We denote $\Stab(X) := \Stab(\Db(X))$.  
We can identify $\Hom_\Z(K_{num}(\Db(X)),\C) \cong \N(X) \otimes \C$ via \eqref{eq:knummuk} together with the Mukai pairing, so \eqref{eqn:stablocaliso} becomes a local isomorphism 
\begin{equation}
\label{eqn:pistabN}
\pi: \Stab(X) \to \N(X) \otimes \C.
\end{equation}
This map is $\Auteq \Db(X)$-equivariant, where the action on $\N(X) \otimes \C$ is via $\varpi_K$.

Recall that $\N(X)$ has signature $(2, \rho(X))$.  Define $\EuP_0(X) \coloneqq \EuP_0(\N(X))$ and let $\EuP^+_0(X)$ be the component containing vectors of the
form $(1,i\omega, -\half\omega^2)$ for an ample class $\omega\in \Pic(X)\otimes \R$. 
Let $\Stab^\dag(X)\subset \Stab(X)$ be the connected component containing the (non-empty) set of ``geometric'' stability conditions,
for which all skyscraper sheaves  $\mathcal{O}_x$ are stable of the same phase.

\begin{thm}[\cite{Bridgeland-K3}, Theorem 1.1]\label{thm:Bridgeland_cov}
The map \eqref{eqn:pistabN} induces a normal covering map
\[ \pi: \Stab^\dag(X) \to \EuP_0^+(X),\]
whose group of deck transformations is the subgroup of $\Auteq^0 \Db(X)$ which preserves $\Stab^\dag(X)$. 
\end{thm}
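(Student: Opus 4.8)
The plan is to reproduce Bridgeland's argument. By his general deformation theorem the forgetful map $\pi$ of \eqref{eqn:pistabN} is a local homeomorphism on all of $\Stab(X)$ and is $\Auteq \Db(X)$-equivariant for the $\varpi_K$-action on $\N(X)\otimes\C$; so the content is to identify the image of the distinguished component $\Stab^\dag(X)$, to upgrade ``local homeomorphism'' to ``covering'' over that image, and to compute the deck group. A technical input used throughout is the \emph{support property}: each $\sigma=(Z,\P)$ in $\Stab^\dag(X)$ carries a quadratic form $Q_\sigma$ on $\N(X)_\R$, negative semidefinite on $\ker Z$ and non-negative on the Mukai vectors of $\sigma$-semistable objects, depending locally uniformly on $\sigma$, and verified by a Bogomolov--Gieseker-type estimate. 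I would first exhibit points of the image explicitly: for $\beta,\omega\in\NS(X)\otimes\R$ with $\omega$ ample, tilt $\mathrm{Coh}(X)$ at the torsion pair cut out by $\mu_\omega$-slope stability to get a heart $\mathcal A(\beta,\omega)\subset\Db(X)$, and set $Z_{\beta,\omega}=\langle\exp(\beta+i\omega),-\rangle$; the support property (immediate once $\omega^2\gg 0$, in general by a wall-crossing induction on $\omega$) shows $\sigma_{\beta,\omega}=(Z_{\beta,\omega},\mathcal A(\beta,\omega))$ is a stability condition, and it is \emph{geometric} in that all skyscrapers $\mathcal O_x$ are $\sigma_{\beta,\omega}$-stable of the common phase $1$. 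Together with their images under even shifts and the $\mathrm{GL}^+(2,\R)$-action on $\Stab(X)$, the $\sigma_{\beta,\omega}$ sweep out an open geometric chamber $U(X)\subset\Stab^\dag(X)$ whose image under $\pi$ is an explicit open subset of $\EuP_0^+(X)$; in particular $\Stab^\dag(X)$ is non-empty.

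Next, $\pi(\Stab^\dag(X))\subseteq\EuP_0^+(X)$. Write $\sigma=(Z,\P)\in\Stab^\dag(X)$ and $Z=\langle\Omega,-\rangle$. Applying the support property with $Q_\sigma=-\langle\cdot,\cdot\rangle$ --- and using that $\ker Z$ is the complexification of $\langle\operatorname{Re}\Omega,\operatorname{Im}\Omega\rangle^{\perp}$ and that a form of signature $(2,\rho(X))$ which is negative semidefinite on a real codimension-$2$ subspace is positive definite on a complement --- forces $\operatorname{Re}\Omega$ and $\operatorname{Im}\Omega$ to span a positive-definite $2$-plane; its orientation is locally constant on $\Stab^\dag(X)$ and agrees there with that of the geometric stability conditions, so $\Omega\in\EuP^+(X)$. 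To rule out $\langle\Omega,\delta\rangle=0$ for $\delta\in\Delta(\N(X))$: on a $K3$ every $(-2)$-class is, up to sign, the Mukai vector of a $\sigma$-stable spherical object $S$ --- produced from any object with that Mukai vector by a minimal-mass/Harder--Narasimhan argument --- and a $\sigma$-stable object has $Z(S)\ne 0$. Hence $\pi(\sigma)$ avoids every hyperplane $\delta^\perp$, i.e.\ lies in $\EuP_0^+(X)$.

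The crux, and the step I expect to be the main obstacle, is path-lifting for $\pi|_{\Stab^\dag(X)}$. Given a path $\gamma:[0,1]\to\EuP_0^+(X)$ and a lift $\sigma_0$ of $\gamma(0)$, lift $\gamma$ over a maximal half-open interval $[0,t_0)$; since $\pi$ is a local homeomorphism it suffices to show the lifted family $(\sigma_t)_{t<t_0}$ converges in $\Stab(X)$, for then the lift extends past $t_0$, contradicting maximality unless $t_0=1$. The essential point is that the support property holds with a single constant along the compact path and up to $t=t_0$ \emph{because} $\gamma(t_0)\notin\bigcup_\delta\delta^\perp$: this keeps $|Z_{t_0}|$ bounded below on the discrete set of integral classes of norm $\ge -2$, so no class can acquire vanishing central charge. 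Granting this, the mass of each fixed object stays bounded along the path, so only finitely many classes occur in the Harder--Narasimhan filtrations near $t_0$, only finitely many stability walls accumulate at $t_0$, and $\sigma_t\to\sigma_{t_0}$ in $\Stab^\dag(X)$. Combined with surjectivity onto the connected set $\EuP_0^+(X)$ --- the image of $\pi|_{\Stab^\dag(X)}$ is open since $\pi$ is an open map, and the path-lifting just shown makes it closed --- and with discreteness of the fibres, this proves $\pi:\Stab^\dag(X)\to\EuP_0^+(X)$ is a covering map.

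Finally, the deck group and normality. Using the path-lifting property one obtains an explicit description of $\Stab^\dag(X)$ as the union of the $G$-translates of $\overline{U(X)}$, where $G\le\Auteq\Db(X)$ is the subgroup generated by the even shift $[2]$ and the spherical twists $T_S$ in spherical sheaves $S$; from it one reads off that the subgroup $G_0\coloneqq G\cap\ker(\varpi_K)$ acts transitively on each fibre of $\pi|_{\Stab^\dag(X)}$. Every generator of $G$ acts trivially on $\N(X)^\perp$ --- $[2]$ and the Picard--Lefschetz reflection induced by $T_S$ both do --- so any element of $G$ trivial on $\N(X)$ is trivial on the finite-index sublattice $\N(X)\oplus\N(X)^\perp$ of $H^*(X,\Z)$, hence on all of $H^*(X,\Z)$; thus $G_0\subseteq\Auteq^0\Db(X)$, and as $G_0$ preserves $\Stab^\dag(X)$ and fixes central charges, it acts by deck transformations of $\pi$. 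A subgroup of the (freely-acting) deck group of a connected covering that is transitive on one fibre is the entire deck group, so $G_0$ is the full deck group and the covering is normal. Conversely any $\Phi\in\Auteq^0\Db(X)$ preserving $\Stab^\dag(X)$ acts trivially on $\N(X)\otimes\C$, hence by a deck transformation, hence lies in $G_0$; therefore the deck group is precisely the subgroup of $\Auteq^0\Db(X)$ preserving $\Stab^\dag(X)$.
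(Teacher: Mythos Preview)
The paper does not prove this theorem: it is quoted verbatim as \cite[Theorem 1.1]{Bridgeland-K3} and used as a black box, so there is no ``paper's own proof'' to compare against. Your proposal is a reasonable high-level sketch of Bridgeland's original argument, and in that sense it is the appropriate thing to write here.

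A few remarks on the sketch itself. First, your use of the support property with the quadratic form $Q_\sigma=-\langle\cdot,\cdot\rangle$ is somewhat anachronistic: Bridgeland's 2008 paper predates that formulation (due to Kontsevich--Soibelman and later refined by Bayer--Macr\`i--Stellari), and his original argument for $\pi(\sigma)\in\EuP^+(X)$ proceeds instead by analysing the boundary of the geometric chamber $U(X)$ directly and showing that the walls are governed by spherical classes. Second, the assertion that every $\delta\in\Delta(\N(X))$ is, up to sign, the Mukai vector of a $\sigma$-stable spherical object is itself a substantial lemma in \cite{Bridgeland-K3} (Lemma~12.2 there), not an immediate consequence of a ``minimal-mass'' argument; you should flag it as such. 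Third, your description of the deck group via the tessellation by $G$-translates of $\overline{U(X)}$ is correct in outline and matches Bridgeland's Section~13, though the generating set for $G$ in \emph{op.\ cit.}\ is specifically the squares of spherical twists $T_A^2$ in the objects $A$ appearing on the walls of $U(X)$, together with the double shift; the single twists $T_S$ do not in general lie in $\Auteq^0$.
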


\subsection{Bridgeland's conjecture, and the Bayer--Bridgeland theorem}

Bridgeland has conjectured:

\begin{conj}[\cite{Bridgeland-K3}, Conjecture 1.2]\label{conj:Bridgeland}
The component $\Stab^\dag(X)$ is simply-connected, and preserved by the group $\Auteq \Db(X)$. 
\end{conj}

Bayer and Bridgeland have proved the conjecture in the Picard rank one case:

\begin{thm}[\cite{BB}, Theorem 1.3] \label{thm:bb}
Conjecture \ref{conj:Bridgeland} holds when $\rho(X) = 1$.
\end{thm}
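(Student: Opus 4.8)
The plan is to deduce the statement from Bridgeland's covering theorem, Theorem~\ref{thm:Bridgeland_cov}, by exploiting the very explicit form of the period domain $\EuP_0^+(X)$ when $\rho(X)=1$. Theorem~\ref{thm:Bridgeland_cov} presents the forgetful map $\pi\colon \Stab^\dag(X)\to \EuP_0^+(X)$ as a normal (Galois) covering whose group of deck transformations is the subgroup of $\Auteq^0\Db(X)$ preserving $\Stab^\dag(X)$. For such a covering, $\Stab^\dag(X)$ is simply connected precisely when the associated monodromy homomorphism $\pi_1(\EuP_0^+(X))\to \Auteq^0\Db(X)$ is \emph{injective}. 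Granting this, the preservation clause of Conjecture~\ref{conj:Bridgeland} follows from an explicit generation statement: the deck transformations preserve $\Stab^\dag(X)$ by construction, the ``classical'' autoequivalences — shifts, twists by line bundles, and the spherical twists $\mathrm{ST}_{\mathcal{O}_X(k)}$ — each preserve it directly (a spherical twist moves a geometric stability condition across an interior wall of $\Stab^\dag(X)$ but stays in the same connected component), and one shows that these together generate $\Auteq\Db(X)$. Thus the whole content is the simple-connectedness of $\Stab^\dag(X)$.

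First one pins down the target. When $\rho(X)=1$ the Mukai lattice $\N(X)$ is the rank-three lattice $U\oplus\langle 2n\rangle$ of signature $(2,1)$, where $2n$ is the degree (Sections~\ref{sec:mkahk3} and~\ref{sec:pic1eg}). Hence $\EuP^+(X)$ is a $\mathrm{GL}^+(2,\R)$-bundle over $\Omega^+(\N(X))\cong \mathfrak{h}$, and $\EuP_0^+(X)$ is obtained by deleting the hyperplanes $\delta^\perp$, $\delta\in\Delta(\N(X))$, which meet $\mathfrak{h}$ in the discrete set $\mathfrak{h}\setminus\mathfrak{h}_0$ — a single orbit, by an elementary computation with the quadratic form $U\oplus\langle 2n\rangle$. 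Consequently $\pi_1(\EuP_0^+(X))$ is an extension by $\Z$ — generated by the even shift $[2]$, which acts trivially on $\N(X)$ — of the (free) group $\pi_1(\mathfrak{h}_0)$, so it is completely understood; the task is to match it with the deck group.

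The heart is the wall-and-chamber analysis of $\Stab^\dag(X)$ itself. In the Picard rank one case there are no $(-2)$-curves on $X$, so the spherical classes in $\Delta(\N(X))$ are exactly the Mukai vectors of the (shifts of the) line bundles $\mathcal{O}_X(k)$ and of their images under spherical twists; every wall of $\Stab^\dag(X)$ is the locus on which such a spherical object destabilises, and crossing it is realised by the corresponding spherical twist. One then establishes: (i) the locus $\mathcal{U}\subset\Stab^\dag(X)$ of geometric stability conditions is contractible and $\pi|_{\mathcal{U}}$ realises it as the universal cover of the geometric chamber of $\EuP_0^+(X)$, with deck group $\langle[2]\rangle\cong\Z$; (ii) the images of $\mathcal{U}$ under the spherical twists and the shift exhaust $\Stab^\dag(X)$ — equivalently, every stability condition in $\Stab^\dag(X)$ can be moved into the geometric chamber by a finite sequence of spherical twists, up to shift, which is proved by a wall-crossing induction on a ``distance to the geometric chamber'' (the minimal number of twists needed); and (iii) the combinatorial pattern in which these chambers meet along their walls is a \emph{tree}: in the Picard rank one case the adjacency of spherical classes is governed by the Farey/Stern--Brocot structure of the $\mathrm{PSL}(2,\Z)$-action on $\mathfrak{h}$, so the relevant spherical-twist group is (essentially) a free product and its chamber graph contains no non-trivial cycle. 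Given (i)--(iii), a van~Kampen (equivalently, nerve) argument exhibits $\Stab^\dag(X)$ as glued from contractible pieces along contractible pieces in a tree-like pattern, and hence shows it is simply connected — indeed contractible.

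The main obstacle is step (iii): proving that the chamber structure of $\Stab^\dag(X)$ is genuinely tree-like — equivalently and more directly, that the monodromy $\pi_1(\EuP_0^+(X))\to\Auteq^0\Db(X)$ has trivial kernel. This demands a precise description of which spherical objects are (semi)stable on which chamber and of how their central charges evolve across walls, and it is exactly here that the hypothesis $\rho(X)=1$ is indispensable: the lattice bookkeeping collapses to computations with binary quadratic forms and continued fractions, rendered tractable by the $\mathrm{SL}_2$-dictionary, whereas in higher Picard rank the analogous combinatorics of spherical classes is not understood and Conjecture~\ref{conj:Bridgeland} remains open.
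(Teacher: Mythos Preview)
The paper does not prove this statement: Theorem~\ref{thm:bb} is quoted directly from \cite{BB} as an external input, with no argument given in the body of the paper. There is therefore nothing in the paper to compare your proposal against; the authors simply invoke the Bayer--Bridgeland result as a black box and then use its consequences (Corollaries~\ref{cor:BBaut}--\ref{cor:BBses}).

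That said, your sketch is a reasonable outline of the actual strategy in \cite{BB}, though it blurs some of the structure. A few points worth sharpening if you intend this as a genuine summary: the argument in \cite{BB} does not proceed by first proving simple-connectedness and then separately checking that $\Auteq\Db(X)$ preserves the component; rather, both clauses are extracted simultaneously from an explicit presentation of $\Auteq\Db(X)$ in terms of shifts, line bundle twists, and spherical twists in the $\mathcal{O}_X(k)$, together with the identification of the deck group. Your step~(iii) — the ``tree-like'' chamber structure — is indeed the crux, but the mechanism in \cite{BB} is the notion of \emph{width} of a point-like or spherical object and an inductive reduction of width via spherical twists (cf.\ the references to \cite[Lemma~6.3, Proposition~3.15]{BB} elsewhere in this paper), rather than a direct Farey/continued-fraction combinatorics, though the two are closely related. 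Finally, your claim that the spherical objects are ``exactly'' shifts of line bundles and their twist-images is the output of the argument, not an input; establishing it is essentially equivalent to the transitivity statement \cite[Remark~6.10]{BB} used later in the paper.
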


We now state some Corollaries of Theorems \ref{thm:Bridgeland_cov} and \ref{thm:bb}. 
The hypothesis $\rho(X)=1$ can be removed from each if Bridgeland's conjecture holds.

\begin{cor} \label{cor:BBaut}
If $\rho(X) = 1$, then there is an isomorphism of short exact sequences
\[
\xymatrix{ 1 \ar[r] & \pi_1 (\EuP^+_0(X)) \ar[r] \ar@{<->}[d]^{\rotatebox[origin=c]{90}{$\sim$}} & \pi_1([\EuP^+_0(X)/\aut^+ H^*(X)]) \ar[r] \ar@{<->}[d]^{\rotatebox[origin=c]{90}{$\sim$}} & \aut^+ H^*(X,\Z) \ar[r] \ar@{=}[d] & 1 \\
1 \ar[r] & \Auteq^0 \Db(X) \ar[r] & \Auteq \Db(X) \ar[r]^-{\varpi} & \aut^+ H^*(X,\Z)\ar[r] & 1.}\]
\end{cor}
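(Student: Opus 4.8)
The plan is to deduce Corollary \ref{cor:BBaut} by combining Theorem \ref{thm:Bridgeland_cov} (the Bridgeland covering) with Theorem \ref{thm:bb} (the Bayer--Bridgeland simple-connectivity result), using the general formalism of stacky/orbifold fundamental groups set up in Section 2. First I would observe that since $\rho(X)=1$, Theorem \ref{thm:bb} tells us that $\Stab^\dag(X)$ is simply-connected and is preserved by the full group $\Auteq\Db(X)$. Combined with Theorem \ref{thm:Bridgeland_cov}, this says that $\pi\colon \Stab^\dag(X) \to \EuP^+_0(X)$ is the universal cover of $\EuP^+_0(X)$, and that its deck group --- namely $\pi_1(\EuP^+_0(X))$ --- is identified with the subgroup of $\Auteq^0\Db(X)$ preserving $\Stab^\dag(X)$, which by Bridgeland's conjecture (now a theorem in this case) is all of $\Auteq^0\Db(X)$. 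This gives the vertical isomorphism on the left of the diagram.

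Next I would promote this to the statement about the quotient. The group $\Auteq\Db(X)$ acts on $\Stab^\dag(X)$, compatibly via $\varpi$ with the action of $\aut^+ H^*(X,\Z)$ on $\EuP^+_0(X)$ (the latter action being the content of Theorem \ref{thm:cohaut}, which says $\varpi$ is surjective onto $\aut^+ H^*(X,\Z)$). So we have a group acting on a simply-connected space, with the covering-space map $\pi$ intertwining everything. The elementary description of the stacky fundamental group from Section 2 applies: $\pi_1([\EuP^+_0(X)/\aut^+ H^*(X)])$ consists of pairs $(\gamma, y)$ with $\gamma \in \aut^+ H^*(X,\Z)$ and $y$ a homotopy class of paths from a basepoint $p$ to $\gamma\cdot p$. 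Using the universal cover $\Stab^\dag(X) \to \EuP^+_0(X)$, such a pair is the same data as an autoequivalence $\Phi$ together with a lift of $\gamma\cdot p$ to $\Stab^\dag(X)$; more precisely, I would argue that lifting $\gamma$ to an autoequivalence $\Phi$ (possible because $\varpi$ is surjective) and then choosing the path $y$ amounts to choosing among the $\Auteq^0$-many lifts, exactly matching the short exact sequence for $\Auteq\Db(X)$. This produces the middle vertical isomorphism, and by construction it is compatible with the two horizontal short exact sequences and with the identity on $\aut^+ H^*(X,\Z)$ on the right, so the whole diagram commutes and the five lemma (or direct inspection) gives the isomorphism of short exact sequences.

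The main obstacle, and the step deserving the most care, is making the identification of the middle terms genuinely canonical and checking that the induced maps on the outer terms are precisely $\id$ on $\aut^+ H^*(X,\Z)$ and the already-established left isomorphism on the kernels --- in other words, that the square-by-square commutativity really holds rather than merely holding up to some automorphism. Concretely one must verify that the bijection ``$(\gamma,y) \leftrightarrow \Phi$'' is a group homomorphism, which requires matching the concatenation-of-paths composition law $(\gamma_1,y_1)\cdot(\gamma_2,y_2) = (\gamma_1\gamma_2, (\gamma_2\cdot y_1)\# y_2)$ against composition of autoequivalences; this is a formal but slightly fiddly check about how the $\Auteq\Db(X)$-action on $\Stab^\dag(X)$ covers the $\aut^+ H^*(X,\Z)$-action on $\EuP^+_0(X)$, together with equivariance of $\pi$. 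Everything else --- surjectivity of $\varpi$, the covering property, simple-connectivity --- is quoted directly from Theorems \ref{thm:cohaut}, \ref{thm:Bridgeland_cov} and \ref{thm:bb}.
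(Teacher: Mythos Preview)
Your proposal is correct and is essentially the paper's argument spelled out in full: the paper's proof is the single sentence ``Follows from the Galois correspondence, using the fact that $\pi$ is $\Auteq \Db(X)$-equivariant,'' and what you have written is precisely an unpacking of that sentence---identifying $\Stab^\dag(X)$ as the universal cover (hence $\pi_1(\EuP^+_0(X))\cong\Auteq^0\Db(X)$ by deck transformations), and then using $\Auteq\Db(X)$-equivariance of $\pi$ over the $\aut^+ H^*(X,\Z)$-action to identify the stacky fundamental group of the quotient with $\Auteq\Db(X)$.
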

\begin{proof}
Follows from the Galois correspondence, using the fact that $\pi$ is $\Auteq \Db(X)$-equivariant.
\end{proof}

\begin{cor}\label{cor:BBlkah}
If $\rho(X)=1$, then there is an isomorphism of short exact sequences
\[
\xymatrix{ 1 \ar[r] & \pi_1 (\EuQ^+_0(X)) \ar[r] \ar@{<->}[d]^{\rotatebox[origin=c]{90}{$\sim$}} & \pi_1(\cL_{\text{K\"ah}}(X)) \ar[r] \ar@{<->}[d]^{\rotatebox[origin=c]{90}{$\sim$}} & \Gamma^+(X) \ar[r] \ar@{=}[d] & 1 \\
1 \ar[r] & \Auteq^0 \Db(X) \ar[r] & \Auteq_{CY} \Db(X) \ar[r]^-{\varpi_K} &  \Gamma^+(X) \ar[r] & 1.}\]
\end{cor}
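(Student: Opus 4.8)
The plan is to deduce Corollary \ref{cor:BBlkah} from Corollary \ref{cor:BBaut} by restricting the covering-space picture of Theorem \ref{thm:Bridgeland_cov} from the $\mathrm{GL}^+(2,\R)$-bundle $\EuP^+_0(X)$ over $\Omega^+_0(X)$ to the $\mathrm{CO}^+(2,\R)$-subbundle $\EuQ^+_0(X)$, while simultaneously restricting the acting group from $\aut^+ H^*(X,\Z)$ to its Calabi--Yau subgroup. First I would recall, from Corollary \ref{cor:cohcyaut} and its proof, that under the identification $\aut_{CY} H^*(X,\Z) \simeq \Gamma(\N(X))$ the subgroup $\Auteq_{CY}\Db(X) \subset \Auteq \Db(X)$ is exactly $\varpi^{-1}(\Aut^+_{CY} H^*(X,\Z))$, and that $\Aut^+_{CY}H^*(X,\Z)$ maps isomorphically onto $\Gamma^+(X) = \Gamma^+(\N(X))$ via $\varpi_K$. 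Since $\Aut^+_{CY}H^*(X,\Z)$ acts on $\EuP^+_0(X)$ through its image $\Gamma^+(X)$ in $\aut \N(X)$ — it acts trivially on the transcendental part — the quotient stack $[\EuP^+_0(X)/\Aut^+_{CY}H^*(X,\Z)]$ can be replaced throughout by $[\EuP^+_0(X)/\Gamma^+(X)]$.

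Next I would carry out the two reductions. For the fibre: the inclusion $\EuQ^+_0(X) \hookrightarrow \EuP^+_0(X)$ is a deformation retract by Lemma \ref{lem:PQdefret} (applied to $N = \N(X)$, noting $\EuQ_0$ and $\EuP_0$ differ from $\EuQ$, $\EuP$ only by removing the same $\delta^\perp$'s, $\delta\in\Delta(\N(X))$, so the Gram--Schmidt retraction respects the complements), hence $\pi_1(\EuQ^+_0(X)) \xrightarrow{\sim} \pi_1(\EuP^+_0(X))$. Because this retraction is $\mathrm{GL}^+(2,\R)$-equivariant it is in particular $\Gamma^+(X)$-equivariant, so it descends to a homotopy equivalence of stacks $\cL_{\text{K\"ah}}(X) = [\EuQ^+_0(X)/\Gamma^+(X)] \to [\EuP^+_0(X)/\Gamma^+(X)]$, inducing an isomorphism on (stacky) fundamental groups compatible with the projections to $\Gamma^+(X)$. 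For the group: restricting the short exact sequence of Corollary \ref{cor:BBaut} along the inclusion $\Aut^+_{CY}H^*(X,\Z) \hookrightarrow \aut^+ H^*(X,\Z)$ — i.e. taking the preimage under $\varpi$ — gives the bottom row $1 \to \Auteq^0\Db(X) \to \Auteq_{CY}\Db(X) \xrightarrow{\varpi_K} \Gamma^+(X) \to 1$ (the kernel is unchanged since $\Auteq^0\Db(X) = \ker\varpi$ already lies in $\Auteq_{CY}\Db(X)$), and correspondingly the top row becomes the short exact sequence of the stacky fundamental group of $[\EuP^+_0(X)/\Gamma^+(X)]$. Composing these two identifications yields the claimed isomorphism of short exact sequences.

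The one point needing slight care — and the place I would expect a referee to push — is the bookkeeping identifying $[\EuP^+_0(X)/\Aut^+_{CY}H^*(X,\Z)]$ with $[\EuP^+_0(X)/\Gamma^+(X)]$: a priori $\Aut^+_{CY}H^*(X,\Z)$ could act on $\EuP^+_0(X)$ through a quotient, so one must check that $\varpi_K|_{\Aut^+_{CY}}$ is actually \emph{injective} with image $\Gamma^+(X)$, so that passing to the action on $\EuP^+_0(X)\subset \N(X)\otimes\C$ loses no information. This is exactly the content of the proof of Corollary \ref{cor:cohcyaut}: an element of $\aut_{CY}H^*(X,\Z)$ acting trivially on $\N(X)$ also acts trivially on $\N(X)^\perp$ (no proper rational subspace contains $H^{2,0}$), hence is the identity. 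Everything else is a routine application of the Galois correspondence for covering spaces together with Lemma \ref{lem:PQdefret}, so once this injectivity is in hand the proof is essentially a one-line invocation of Corollary \ref{cor:BBaut} and the deformation retract.
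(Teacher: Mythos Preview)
Your proposal is correct and follows exactly the paper's approach: the paper's proof is the one-line ``Follows from Corollary \ref{cor:BBaut}, together with the fact that $\EuQ^+_0(X) \subset \EuP^+_0(X)$ is a deformation retract (cf.\ Lemma \ref{lem:PQdefret})'', and you have simply spelled out the bookkeeping (restricting to the Calabi--Yau subgroup via Corollary \ref{cor:cohcyaut}, and checking the retraction respects the $\delta^\perp$-complements and is $\Gamma^+(X)$-equivariant). One tiny slip: the Gram--Schmidt retraction is not $\mathrm{GL}^+(2,\R)$-equivariant, but it \emph{is} $\Gamma^+(X)$-equivariant for the correct reason --- $\Gamma^+(X)$ acts by isometries of $\N(X)$, and Gram--Schmidt commutes with isometries --- so the conclusion stands.
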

\begin{proof}
Follows from Corollary \ref{cor:BBaut}, together with the fact that $\EuQ^+_0(X) \subset \EuP^+_0(X)$ is a deformation retract (cf. Lemma \ref{lem:PQdefret}).
\end{proof}

\begin{cor}
\label{cor:BBses}
If $\rho(X) = 1$, then there is an isomorphism of short exact sequences
\begin{equation}
\label{eqn:dbcohses}
 \xymatrix{ 1 \ar[r] & \pi_1(\Omega^+_0(X)) \ar[r] \ar@{<->}[d]^{\rotatebox[origin=c]{90}{$\sim$}} & \pi_1(\cM_{\text{K\"ah}}(X)) \ar[r] \ar@{<->}[d]^{\rotatebox[origin=c]{90}{$\sim$}} & \Gamma^+(X) \ar[r] \ar@{=}[d] & 1 \\
1 \ar[r] & \Auteq^0 \Db(X)/[2] \ar[r] & \Auteq_{CY} \Db(X)/[2] \ar[r]^-{\varpi_K} & \Gamma^+(X) \ar[r] & 1.}
\end{equation}
\end{cor}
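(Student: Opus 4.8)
The plan is to deduce this from Corollary~\ref{cor:BBlkah} by quotienting both rows of the isomorphism of short exact sequences there by a compatible central copy of $\Z$. On the topological side this $\Z$ will be $\pi_1(\C^*)$ for the fibre of the $\C^*$-bundle $\EuQ^+_0(X)\to\Omega^+_0(X)$: recall from Section~\ref{sec:auxil} that $\EuQ(N)\to\Omega(N)$ is a principal $\mathrm{CO}^+(2,\R)$-bundle and $\mathrm{CO}^+(2,\R)\cong\R^+\times\mathrm{SO}(2,\R)\cong\C^*$. On the categorical side it will be the subgroup $\langle[2]\rangle$ generated by the double shift; since $v(E[2])=v(E)$ we have $[2]\in\Auteq^0\Db(X)\subseteq\Auteq_{CY}\Db(X)$, and $[2]$ is central of infinite order, so $\langle[2]\rangle\cong\Z$.

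First I would treat the $\C^*$-bundles. Since $\rho(X)=1$ the lattice $\N(X)\cong U\oplus\Pic(X)$ has signature $(2,1)$, so $\Omega^+_0(X)$ is the upper half plane with a countable discrete subset removed; being a non-compact Riemann surface it is aspherical, so $\pi_2(\Omega^+_0(X))=0$. The homotopy exact sequence of $\EuQ^+_0(X)\to\Omega^+_0(X)$ then gives a central extension
\begin{equation}\label{pf:Zext}
1\longrightarrow\Z\longrightarrow\pi_1(\EuQ^+_0(X))\longrightarrow\pi_1(\Omega^+_0(X))\longrightarrow1,\qquad\Z=\pi_1(\C^*),
\end{equation}
central because $\C^*$ acts trivially on its own $\pi_1$. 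Because the $\Gamma^+(X)$-action on $\EuQ^+_0(X)$ commutes with the $\C^*$-action, this descends to a $\C^*$-bundle of stacks $\cL_{\text{K\"ah}}(X)\to\cM_{\text{K\"ah}}(X)$, and (using the same commuting) a short diagram chase comparing the defining short exact sequences $1\to\pi_1(\EuQ^+_0(X))\to\pi_1(\cL_{\text{K\"ah}}(X))\to\Gamma^+(X)\to1$ and $1\to\pi_1(\Omega^+_0(X))\to\pi_1(\cM_{\text{K\"ah}}(X))\to\Gamma^+(X)\to1$ with \eqref{pf:Zext} produces a central extension
\begin{equation}\label{pf:Zext2}
1\longrightarrow\Z\longrightarrow\pi_1(\cL_{\text{K\"ah}}(X))\longrightarrow\pi_1(\cM_{\text{K\"ah}}(X))\longrightarrow1
\end{equation}
receiving \eqref{pf:Zext}.

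The key step, which I expect to be the main obstacle, is to verify that under the isomorphism $\pi_1(\EuQ^+_0(X))\cong\Auteq^0\Db(X)$ of Corollary~\ref{cor:BBlkah} the subgroup $\Z=\pi_1(\C^*)$ of \eqref{pf:Zext} goes over to $\langle[2]\rangle$. That isomorphism comes, via the deformation retract $\EuQ^+_0(X)\hookrightarrow\EuP^+_0(X)$ of Lemma~\ref{lem:PQdefret}, from the deck-group identification for the covering $\pi\colon\Stab^\dag(X)\to\EuP^+_0(X)$, which by Theorem~\ref{thm:bb} is the universal cover and by Theorem~\ref{thm:Bridgeland_cov} has deck group all of $\Auteq^0\Db(X)$. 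To pin down which deck transformation the $\C^*$-fibre loop represents I would use Bridgeland's $\C$-action on $\Stab(X)$ lifting the $\mathrm{CO}^+(2,\R)\cong\C^*$-action: $\lambda\in\C$ acts on central charges by multiplication by $e^{-i\pi\lambda}$ and correspondingly translates the slicing, so $\pi$ intertwines this $\C$-action with scalar multiplication on $\EuP^+_0(X)\subset\N(X)\otimes\C$ via the homomorphism $\C\to\C^*$, $\lambda\mapsto e^{-i\pi\lambda}$. This homomorphism has kernel $2\Z$, generated by $2\in\C$, and $2$ acts on $\Stab(X)$ precisely as the shift $[2]$; hence lifting the $\C^*$-orbit loop through the basepoint (onto which $\EuQ^+_0(X)$ retracts) to the $\C$-path $t\mapsto(2t)\cdot\widetilde{x}_0$, $t\in[0,1]$, exhibits its endpoint as $[2]\cdot\widetilde{x}_0$, so the corresponding deck transformation is $[2]$. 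Restricting the identification to middle terms carries $\pi_1(\C^*)\subset\pi_1(\cL_{\text{K\"ah}}(X))$ to $\langle[2]\rangle\subset\Auteq_{CY}\Db(X)$.

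Finally I would pass to the quotients. Quotienting the isomorphism of short exact sequences of Corollary~\ref{cor:BBlkah} by these matched central copies of $\Z$: the left and middle topological terms become $\pi_1(\Omega^+_0(X))$ and $\pi_1(\cM_{\text{K\"ah}}(X))$ by \eqref{pf:Zext} and \eqref{pf:Zext2}, the left and middle categorical terms become $\Auteq^0\Db(X)/[2]$ and $\Auteq_{CY}\Db(X)/[2]$, and the right-hand $\Gamma^+(X)$ is unchanged since both copies of $\Z$ lie in the kernels of the projections to $\Gamma^+(X)$ (recall $[2]\in\ker\varpi_K$). Commutativity of the resulting square and exactness of its rows are inherited from Corollary~\ref{cor:BBlkah}, giving the isomorphism of short exact sequences~\eqref{eqn:dbcohses}.
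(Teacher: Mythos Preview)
Your proposal is correct and follows essentially the same approach as the paper: deduce the result from Corollary~\ref{cor:BBlkah} by identifying the image of $\pi_1(\C^*)\to\pi_1(\EuQ^+_0(X))\cong\Auteq^0\Db(X)$ with the subgroup of even shifts, then quotient both rows by this central $\Z$. The paper's proof is much terser, deferring the key identification to a citation of \cite[Section 1]{BB}, whereas you spell it out explicitly via Bridgeland's $\C$-action on $\Stab(X)$ lifting the $\C^*$-action on $\EuP^+_0(X)$; your added detail (asphericity of $\Omega^+_0(X)$, centrality of the extensions, compatibility of the quotients on the stacky middle terms) is all accurate and simply makes explicit what the paper leaves to the reader.
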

\begin{proof}
Recall that $\EuQ^+_0(X)$ is a $\C^*$-bundle over $\Omega^+_0(X)$. 
The image of the composition 
\[ \Z = \pi_1(\C^*) \to \pi_1(\EuQ^+_0(X)) \cong \Auteq^0 \Db(X)\]
consists of the even shifts (cf. \cite[Section 1]{BB}). 
The result now follows from Corollary \ref{cor:BBlkah} by taking a quotient.
\end{proof}

\section{Homological mirror symmetry}

A version of homological mirror symmetry was proved for certain $K3$ surfaces in \cite{SS}. 
In this section we recall the precise statement, and give some immediate formal consequences of it. 
These formal consequences are not specific to the setting of mirror symmetry for $K3$ surfaces: they should work the same for general compact Calabi--Yau mirror varieties. 
We will prove our main results in Section \ref{sec:symp_cons} by combining these formal consequences with geometric input specific to $K3$ surfaces.

\subsection{Homological mirror symmetry statement\label{Subsec:HMS}}

Let $\Lambda$ denote the universal Novikov field over $\C$:
\begin{equation} \Lambda \coloneqq  \left\{ \sum_{j=0}^\infty c_j \cdot q^{\lambda_j}: c_j \in \C, \lambda_j \in \R, \lim_{j \to \infty} \lambda_j = +\infty\right\}.\end{equation}
It is an algebraically closed field extension of $\C$, with a non-Archimedean valuation
\begin{align} 
\val: \Lambda & \to \R \cup \{\infty\} \\
\label{eqn:valdef} \val\left(\sum_{j=0}^\infty c_j \cdot q^{\lambda_j}\right) &\coloneqq  \min_j\{\lambda_j: c_j \neq 0\}.
\end{align}
If $c_1(X) = 0$, then the Fukaya category $\EuF(X,\omega)$ is a $\Lambda$-linear $\Z$-graded (non-curved) $A_{\infty}$-category whose objects are \emph{Lagrangian branes}: closed Lagrangian submanifolds $L \subset X$ equipped with orientations, gradings and Pin structures. 
A \emph{strictly unobstructed Lagrangian brane} is a pair $(L,J_L)$ where $L$ is a Lagrangian brane and $J_L$ an $\omega$-compatible almost-complex structure such that there are no non-constant $J_L$-holomorphic spheres intersecting $L$, nor non-constant $J_L$-holomorphic discs with boundary on $L$. 
For the purposes of this paper we define $\EuF(X,\omega)$ to be the Fukaya category of strictly unobstructed Lagrangian branes, since this can be constructed by classical means \cite{Seidel:HMSquartic,Seidel:Flux}. 
This restriction on the objects is harmless when $X$ has real dimension $\le 4$ (as is the case for $K3$ surfaces), because any Lagrangian brane $L$ admits a $J_L$ turning it into a strictly unobstructed Lagrangian brane by \cite[Lemma 8.4]{Seidel:HMSquartic}.\footnote{We recall the argument: somewhere-injective discs can be ruled out for generic $J_L$ by standard regularity and dimension-counting, and arbitrary discs are then ruled out by the theorem of Kwon--Oh \cite{KwonOh} or Lazzarini \cite{Lazzarini}, showing that any such disc contains a somewhere-injective disc in its image.}

Let $(X,\omega)$ be a symplectic $K3$, and $X^{\circ}$ a smooth Calabi--Yau algebraic surface over $\Lambda$. 
We say that $(X,\omega)$ is \emph{homologically mirror} to $X^{\circ}$ if there is a $\Z$-graded $\Lambda$-linear $A_\infty$ quasi-equivalence
\begin{equation}
\label{eqn:what_is_hms}
\Dpinf\EuF(X,\omega)\ \simeq \Dbdg(X^{\circ})
\end{equation}
where the left-hand side denotes the split-closure of the category of twisted complexes on the Fukaya category (i.e., $\Dpinf \EuC \coloneqq \Pi(Tw \,\EuC)$ in the notation of \cite{Seidel:FCPLT}), and the right-hand side denotes a $dg$ enhancement of the bounded derived category of coherent sheaves on $X^{\circ}$. 

We recall Batyrev's construction of mirror Calabi--Yau hypersurfaces in toric varieties \cite{Batyrev1993}, in the two-dimensional case. 
Fix polar dual reflexive three-dimensional lattice polytopes $\pol$ and $\pol\!^\circ$.  Let $\Xi_0$ denote the set of boundary lattice points of $\pol\!^\circ$ which do not lie in the interior of a codimension-one facet.    
Fix a vector $\lambda \in (\R_{>0})^{\Xi_0}$. 
On the $A$-side, we will consider hypersurfaces in resolutions of the toric variety associated to $\pol$.
The elements of $\Xi_0$ index the toric divisors which intersect such a hypersurface, and the coefficients $\lambda_\kappa$ of $\lambda$ will determine the cohomology class of the K\"ahler form on the hypersurface.

The vector $\lambda$ determines a function $\psi_{\lambda}: \pol\!^\circ \to \R$ as the smallest convex function such that $\psi_\lambda(0) = 0$ and $\psi_{\lambda}(\kappa) \ge -\lambda_\kappa$. We assume: 

\begin{addmargin}{2em} $(\ast)$ \emph{The decomposition of $\pol\!^\circ$ into domains of linearity of $\psi_{\lambda}$ is a simplicial refinement $\Sigma$ of the normal fan $\bar{\Sigma}$ to $\pol$, whose rays are generated by the elements of $\Xi_0$.}\end{addmargin}

\begin{rmk} 
In the language of \cite[Section 6.2.3]{CoxKatz}, condition $(\ast)$ holds if and only if $\lambda$ lies in the interior of the cone $\text{cpl}(\Sigma)$ of the secondary fan (or Gelfand--Kapranov--Zelevinskij decomposition) of $\Xi_0 \subset \R^3$, where $\Sigma$ is a simplified projective subdivision of $\bar{\Sigma}$.
\end{rmk}

 The morphism of fans $\Sigma \to \bar{\Sigma}$ induces a birational morphism of the corresponding toric varieties $Y \to \bar Y$, and $Y$ is an orbifold.  We will consider a smooth Calabi--Yau hypersurface $X \subset Y$ which avoids the orbifold points, and which is the proper transform of $\bar X \subset \bar Y$, a hypersurface with Newton polytope $\pol$.  There is a toric $\R$-Cartier divisor $D_{\lambda}=\sum_\kappa \lambda_\kappa \cdot D_\kappa$ with support function $\psi_{\lambda}$, which determines a toric K\"ahler form on $Y$, which restricts to a K\"ahler form $\omega_{\lambda}$ on $X$. 
 Its cohomology class $[\omega_\lambda]$ is the restriction of $PD(D_\lambda)$ to $X$.
 The pair $(X,\omega_{\lambda})$ depends up to symplectomorphism only on $(\pol,\lambda)$.

On the $B$-side, the polytope $\pol\!^\circ$ defines a line bundle over the toric variety $\bar Y^{\circ}$, whose global sections have a basis indexed by the lattice points of $\pol\!^\circ$.\footnote{In general, this is a toric stack, and one should consider the stacky derived category of hypersurfaces in this stack; in the situations relevant to our applications, the hypersurfaces will be smooth as schemes, and the stacky derived category coincides with the derived category of the underlying scheme.}
For any $d \in \Lambda^{\Xi_0}$ we have a corresponding hypersurface
\begin{equation} \label{eqn:B-side}
X^{\circ}_d \coloneqq \left\{\,  -\chi^0 + \sum_{\kappa\in\Xi_0} d_{\kappa}\cdot \chi^{\kappa} = 0\, \right\} \ \subset \ \bar Y^{\circ}.
\end{equation}

\begin{thm} \label{Thm:HMS}
Let $\pol$ and $\pol\!^\circ$ be polar dual reflexive three-dimensional simplices, and let $\lambda \in (\R_{>0})^{\Xi_0}$ satisfy $(\ast)$.
Then there exists a $d\in \Lambda^{\Xi_0}$, with $\val(d) = \lambda$, such that $(X,\omega_\lambda)$ is homologically mirror to $X^\circ_d$.
\end{thm}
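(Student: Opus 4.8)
The plan is to construct an explicit $A_\infty$-functor realising the equivalence \eqref{eqn:what_is_hms} and to promote it to a quasi-equivalence by a split-generation argument, the real content being a matching of the endomorphism $A_\infty$-algebras of generating objects on the two sides. The organising principle is that both categories are \emph{filtered deformations} of one and the same ``classical-limit'' $A_\infty$-category $\mathcal{D}_0$, determined purely by the combinatorics of the simplicial fan $\Sigma$: on the $A$-side one degenerates $(X,\omega_\lambda)$ toward a large-volume/tropical limit, and on the $B$-side one degenerates $X^\circ_d$ toward the large-complex-structure limit in which it breaks into its toric boundary pieces $D^\circ_\kappa$. The task is to show the two deformation classes coincide; this both proves the equivalence and pins down $d$, its leading part forcing the valuations $\val(d_\kappa)=\lambda_\kappa$ and the higher-order ``mirror map'' corrections determining $d$ completely.

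On the $A$-side I would work with the relative Fukaya category $\EuF(X,D)$, where $D=X\cap\bigcup_\kappa D_\kappa$ is the (transverse, normal-crossings) intersection of $X$ with the toric boundary divisors. This is an $A_\infty$-category linear over a completed polynomial ring $R=\C[[r_\kappa:\kappa\in\Xi_0]]$, whose structure constants count holomorphic discs $u$ with Lagrangian boundary, each weighted by the monomial $\prod_\kappa r_\kappa^{D_\kappa\cdot u}$ recording its intersections with the components of $D$; since $[\omega_\lambda]=PD(D_\lambda)|_X$ with $D_\lambda=\sum_\kappa\lambda_\kappa D_\kappa$, the variable $r_\kappa$ records a symplectic area equal to $\lambda_\kappa$, so the specialisation $r_\kappa\mapsto q^{\lambda_\kappa}$ recovers $\EuF(X,\omega_\lambda)$ over $\Lambda$. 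I would choose as objects a finite generating collection of Lagrangian branes adapted to $\Sigma$ (vanishing cycles of a Lefschetz pencil on $X$, in the spirit of \cite{Seidel:HMSquartic}, or an immersed Lagrangian sphere), prove they split-generate $\Dpinf\EuF(X,\omega_\lambda)$ using Abouzaid's split-generation criterion via the open--closed map, identify their $r_\kappa=0$ limit with $\mathcal{D}_0$ by an explicit count of the surviving zero-area polygons, and extract the first-order deformation term.

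On the $B$-side I would exhibit a corresponding split-generator of $\Dbdg(X^\circ_d)$ adapted to the toric structure --- for instance a suitable direct sum of line bundles restricted from $\bar Y^\circ$ --- and compute its $dg$ endomorphism algebra from an explicit resolution built out of the defining equation \eqref{eqn:B-side}, thereby presenting $\Dbdg(X^\circ_d)$ as a filtered deformation of the same $\mathcal{D}_0$ whose Kodaira--Spencer class is read directly off the coefficients $d_\kappa$. The linchpin is then a \emph{versality} statement: the relevant graded piece of $HH^2(\mathcal{D}_0)$ is small enough that two filtered $A_\infty$-deformations of $\mathcal{D}_0$ sharing a leading term and a first-order class are gauge-equivalent. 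Granting this, matching the $A$- and $B$-side deformations order by order in the Novikov parameter yields a quasi-equivalence of the generating subcategories, hence, after split-closure, the equivalence \eqref{eqn:what_is_hms}; simultaneously it forces $\val(d)=\lambda$ and then determines $d$. Here one uses the ``Lefschetz principle'' of the introduction to import the $B$-side facts, proved over $\C$, to algebraic surfaces over $\Lambda$.

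The main obstacle is the $A$-side enumerative geometry: one must understand holomorphic polygons with boundary on the chosen Lagrangians well enough to compute $\mathcal{D}_0$, to identify the leading-order deformation term, and to exclude unwanted contributions in the Hochschild degree governing the versality statement. I would handle this by degenerating $X$ to the large-complex-structure/tropical limit and setting up a correspondence between holomorphic polygons and combinatorial (tropical) data --- this is the regime in which hypothesis $(\ast)$ on $\lambda$ guarantees precisely that $\Sigma$ is the simplicial refinement needed for the combinatorial model to be well-defined --- the serious difficulties being transversality for the relevant moduli spaces and the reconstruction of honest holomorphic curves from tropical ones with the correct multiplicities. A final, more routine point is independence of the auxiliary choices (the Lefschetz pencil or immersed representative, almost-complex structures, perturbation data), which follows from the standard invariance properties of the Fukaya category.
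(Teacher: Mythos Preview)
The paper does not prove this theorem: its entire proof is the sentence ``This is \cite[Theorem C]{SS}. We remark that the `embeddedness' and `no $\bc$' conditions of \emph{op.\ cit.} are automatic in this case.'' In other words, Theorem~\ref{Thm:HMS} is quoted wholesale from the companion paper \cite{SS}, and the present paper only checks that the standing hypotheses of that result are satisfied for three-dimensional reflexive simplices.

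Your outline is a broadly accurate high-level sketch of the strategy actually carried out in \cite{SS}: both categories are exhibited as filtered deformations of a common combinatorial $A_\infty$-category, the $A$-side via a relative Fukaya category $\EuF(X,D)$ over a power-series ring (with the specialisation $r_\kappa\mapsto q^{\lambda_\kappa}$ recovering the absolute Fukaya category), the $B$-side via a matrix-factorisation/Koszul-type description of $\Dbdg(X^\circ_d)$, and the two are matched by a versality argument in Hochschild cohomology together with Abouzaid's split-generation criterion. So as a description of what lies behind the citation, your proposal is essentially correct. Two minor corrections: the ``Lefschetz principle'' of Section~\ref{Sec:Lefschetz} is not used in the proof of HMS itself (the $B$-side computations in \cite{SS} are carried out directly over the relevant coefficient rings), but rather later in the paper to transport results about $\Auteq\Db(X^\circ_\C)$ to $\Auteq\Db(X^\circ)$; and the enumerative input on the $A$-side in \cite{SS} is not handled via a tropical correspondence theorem but by a more direct analysis of discs on an immersed Lagrangian sphere, extending the approach of \cite{Seidel:HMSquartic}. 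In any case, for the purposes of \emph{this} paper the correct proof is simply the citation, together with the observation that the auxiliary hypotheses of \cite[Theorem~C]{SS} hold automatically in dimension two.
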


\begin{proof} This is \cite[Theorem C]{SS}. 
We remark that the ``embeddedness'' and ``no $\bc$'' conditions of \emph{op. cit.} are automatic in this case. 
\end{proof}

\begin{rmk}
The fact that $\val(d) = \lambda$, with $\lambda$ satisfying $(\ast)$, implies that $X^{\circ}_d$ is smooth, see \cite[Proposition 4.4]{SS}. 
One can describe the ``mirror map'' $\lambda \mapsto d(\lambda)$ more precisely, but we shall not need the more precise version in this paper.  
\end{rmk}

\begin{rmk}
Theorem \ref{Thm:HMS} should continue to hold without the assumption that $\Delta$ and $\Delta\!^\circ$ are simplices, but it has not been proved.
\end{rmk}

\subsection{Examples revisited}
We want to use Theorem \ref{Thm:HMS} to find examples of $K3$s $(X,\omega)$ which are homologically mirror to an algebraic $K3$ $X^\circ$ of Picard rank $1$, so that we may apply the Bayer--Bridgeland theorem to compute the derived autoequivalences of $X^\circ$. 
Recall that the moduli space of $K3$ surfaces is 20-dimensional, and the subspace of algebraic $K3$ surfaces forms a countable union of hypersurfaces. 

Recall that $\Pic_{tor}(X) \subset H^2(X,\Z)$ is the sublattice spanned by the intersections of $X$ with the toric boundary divisors of $Y$. 
Theorem \ref{Thm:HMS} proves homological mirror symmetry for K\"ahler classes lying in an open subset of $\Pic_{tor}(X)_\R$, matching them up with a mirror family of algebraic $K3$ surfaces. 
According to Conjecture \ref{conj:lattmirr}, we expect members of the mirror family to satisfy
\[ \N(X^\circ) \cong \bL(X,\omega) \supseteq \Pic_{tor}(X)^\perp,\]
with equality holding if and only if $\omega$ is ambient-irrational. 
In particular their Picard rank is $\ge 20 - \rk (\Pic_{tor}(X))$, so can be equal to $1$ only if $\rk(\Pic_{tor}(X))$ is equal to $19$ or $20$. 

One might hope to find an example where $\Pic_{tor}(X)$ has rank $20$, and the mirror family is an open subset of the moduli space of $K3$ surfaces which could potentially intersect infinitely many of the codimension-$1$ families of $K3$ surfaces of Picard rank $\ge 1$. 
However this can't happen, because by construction our mirror families consist entirely of algebraic $K3$s, so must be contained in one of the codimension-$1$ loci of such.\footnote{One might hope to prove a homological mirror symmetry statement involving the bounded derived category of analytic coherent sheaves on a non-algebraic $K3$ surface, but in general this is a poorly-behaved invariant (not saturated, doesn't admit a generator\ldots). Thus, any meaningful statement of homological mirror symmetry for such $K3$ surfaces would require a significant revision of the definitions of the categories involved.} 
Thus the best one can hope for is an example where $\Pic_{tor}(X)$ has rank $19$, in which case we expect any ambient-irrational $\omega$ to be mirror to a $K3$ of Picard rank $1$. There are precisely two such examples arising from the Greene--Plesser construction, namely the mirror quartics and mirror double planes considered in Section \ref{sec:eg_fam}:

\begin{example}\label{eg:1}
Let $\pol$ be the reflexive 3-simplex with  vertices $\{(1,0,0), (0,1,0), (0,0,1), (-1,-1,-1)\}$, and $\pol\!^\circ$ its polar dual, which is given by the convex hull $\mathrm{Conv}(0,4e_1,4e_2,4e_3)-(1,1,1)$, so with vertices $\{(3,-1,-1), (-1,3,-1), (-1,-1,3), (-1,-1,-1)\}$.  
Then $\bar{Y}^\circ= \bP^3$ is smooth, and the anticanonical hypersurface $X^{\circ}_d$ is a quartic surface over $\Lambda$. 
The toric variety associated to $\pol$ is 
\[
\bar{Y} = [\bP^3 / \Pi] \cong \{x_0^4 - x_1x_2x_3x_4 = 0\} \subset \bP^4
\]
where $\Pi = (\Z/4)^2$ is as in Example \ref{Ex:mirror_quartic}. Anticanonical hypersurfaces in $\bar{Y}$ are hyperplane sections; these include the $\Pi$-invariant quartics  $Q_{\lambda} \subset \bP^3$ considered in \eqref{eqn:Dwork}, and in particular the Fermat quartic $Q_0$. $X$ is the crepant resolution of $Q_0 / \Pi$, resolving the 6 $A_3$-singularities. 
It supports a $19$-dimensional family of ambient K\"ahler forms. 
Theorem \ref{Thm:HMS} gives a quasi-equivalence
\[
\Dpinf\EuF(X,\omega) \simeq \Dbdg(X^{\circ}_{d}); \quad X^{\circ}_d \subset \bP^3 \, \textrm{a quartic}
\]  for ambient K\"ahler forms $\omega$ satisfying $(\ast)$. 
The valuations of the coefficients $d_\kappa$ in $19$ of the $20$ quartic monomials defining $X^{\circ}_d$ are determined by the $\omega$-areas of the exceptional resolution curves in $X$ and by its total volume, and the coefficient of the 20th monomial (corresponding to the interior point of $\Delta$) is $-1$.
\end{example}

\begin{example}\label{eg:2}
Let $\pol$ be the reflexive 3-simplex with vertices $\{(1,0,0), (0,1,0), (0,0,1), (-1,-1,-3)\}$, and $\pol\!^\circ$ its polar dual, which is given by the convex hull $\mathrm{Conv}(0,6e_1,6e_2,2e_3)-(1,1,1)$, so with vertices $\{ (-1,-1,-1), (5,-1,-1), (-1,5,-1), (-1,-1,1)\}$.  Then the toric variety $\bar{Y}^\circ = \bP(1,1,1,3)$, and the toric variety $\bar{Y}$ is an orbifold quotient $[\bP(1,1,1,3)/ \Pi']$ with $\Pi'$ as in Example \ref{Ex:mirror_double_plane}.   In the notation of that example, let $X$ denote the crepant resolution of $P_0 / \Pi'$, a hypersurface in the toric resolution of $[\bP(1,1,1,3)/ \Pi']$.  This supports a 19-dimensional family of ambient K\"ahler forms.  Theorem \ref{Thm:HMS} gives a quasi-equivalence
\[
\Dpinf\EuF(X,\omega)  \simeq \Dbdg(X^{\circ}_d); \quad X^{\circ}_d \subset \bP(1,1,1,3) \, \textrm{ a double plane}
\]
for ambient K\"ahler forms $\omega$ satisfying $(\ast)$. 
Here $X^{\circ}_d \to \bP^2 \supset \Sigma_d$ is the double branched cover of $\bP^2$ branched over a smooth sextic curve $\Sigma_{d}$ whose defining equation depends via the mirror map on $\omega$.  
\end{example}

\begin{rmk}\label{rmk:cubic_fourfold}
In \cite[Section 1.7.2]{SS} we elaborate a version of homological mirror symmetry similar to Theorem \ref{Thm:HMS}, taking the form
\begin{equation} \label{eqn:kuznetsov}
\Dpinf\EuF(X_{20},\omega)^\bc \simeq \EuA_{d(\omega)}.
\end{equation} 
It involves a $K3$ surface $X_{20}$ which is a complete intersection inside a toric variety, and has $\Pic_{tor}(X)$ of rank $20$. 
As we explained above, the mirror to the resulting $20$-dimensional family of K\"ahler forms could not be a 20-dimensional family of algebraic $K3$ surfaces; rather, the mirror is Kuznetsov's K3-category $\EuA_d$ of a cubic four-fold \cite{Kuznetsov}. 
This is equivalent to the derived category of certain algebraic $K3$ surfaces along certain 19-dimensional loci \cite{Addington-Thomas} (see also \cite{Huybrechts:K3category}). 
These loci should be mirror to certain rational hyperplanes in $\Pic_{tor}(X)$, so one should be able to prove that $(X_{20},\omega)$ is mirror to a $K3$ of Picard rank $1$, for $[\omega]$ an irrational point on such a hyperplane; these would then give many new examples of symplectic $K3$ surfaces to which our main results (e.g., Theorem \ref{Thm:Torelli}) apply. Indeed, whereas Examples \ref{eg:1} and \ref{eg:2} have $\N(X,\omega) \cong U \oplus \langle d \rangle$ for $d=2,4$, this construction would yield examples for all $d$ in the (infinite) set of natural numbers satisfying \cite[Condition (**)]{Huybrechts:K3category}.
\end{rmk}

\subsection{$A_\infty$ autoequivalences and Hochschild homology}
\label{subsec:ainfauteq}

The results of Section \ref{sec:derautk3} concern autoequivalences of triangulated categories, however our homological mirror equivalence \eqref{eqn:what_is_hms} concerns $A_\infty$ categories. 
Therefore we will develop the relevant theory for $A_\infty$ categories, before giving the relationship with triangulated categories in Section \ref{sec:enh}.

\begin{defn}
Let $\EuD$ be an $A_\infty$ category. We define $\Auteq(\EuD)$ to be the group of $A_\infty$ functors $F: \EuD \dashrightarrow \EuD$, such that $H^0(F)$ is an equivalence, considered up to isomorphism in $H^0(nu\hbox{-}fun(\EuD,\EuD))$, where $nu\hbox{-}fun(\EuD,\EuD)$ denotes the category of non-unital $A_{\infty}$-functors (see \cite[Section 1d]{Seidel:FCPLT}).
\end{defn}

The \emph{Hochschild homology} of an $A_\infty$ category $\EuD$ is  a graded vector space $HH_*(\EuD)$ (see \cite[Section 3.4]{Sheridan2015a}). 
There is a natural action of $\Auteq \EuD$ on $HH_*(\EuD)$ by graded automorphisms.
Thus, a homological mirror equivalence \eqref{eqn:what_is_hms} induces an isomorphism of exact sequences
\begin{equation}
\label{eqn:hms_ses_morph}
\xymatrix{
1 \ar[r] & \Auteq^0 \Dpinf\EuF(X) \ar@{<->}[d]^{\rotatebox[origin=c]{90}{$\sim$}} \ar[r] & \Auteq \Dpinf\EuF(X) \ar@{<->}[d]^{\rotatebox[origin=c]{90}{$\sim$}} \ar[r] & \Aut HH_*(\Dpinf\EuF(X)) \ar@{<->}[d]^{\rotatebox[origin=c]{90}{$\sim$}} \\
1 \ar[r] & \Auteq^0 \Dbdg(X^\circ) \ar[r] & \Auteq \Dbdg(X^\circ) \ar[r] & \Aut HH_*(\Dbdg(X^\circ)),}
\end{equation}
where $\Auteq^0$ is the subgroup consisting of autoequivalences acting trivially on $HH_*$.

For the Fukaya category we have the \emph{open--closed map}
\begin{equation}
\label{eqn:oc}
 \EuO\EuC: HH_*(\Dpinf\EuF(X)) \to H^{*+n}(X,\Lambda),
 \end{equation}
which under certain hypotheses is an isomorphism. 
For example, this is a formal consequence of the existence of a homological mirror $X^\circ$ which is \emph{maximally unipotent} (see \cite[Theorem 5.2]{Ganatra2015}) or \emph{smooth} (see \cite[Corollary 7]{Ganatra2016}), and in particular holds in the context of Theorem \ref{Thm:HMS}. 

For the derived category, \cite{Keller1998a} provides an isomorphism
\begin{equation}
\label{eqn:HHdbdg_hh}
  HH_*(\Dbdg(X^\circ)) \cong HH_*(X^\circ)
  \end{equation}
where the right-hand side denotes the Hochschild homology of the variety $X^\circ$ (see \cite{Caldararu:I}). 
On the other hand, we have the Hochschild--Kostant--Rosenberg isomorphism  
\begin{equation} \label{eqn:HKR}
HH_*(X^{\circ}) \simeq H^*(\Omega^{-*}X^\circ)
\end{equation}
(see, for instance, \cite{Caldararu:II,Huybrechts:FMAG}). 
Thus $HH_*(\EuD(X^\circ)) \simeq H^*(\Omega^{-*}X^\circ)$.

\subsection{Calabi--Yau autoequivalences}

\begin{defn}
An $n$-Calabi--Yau ($n$-CY) structure\footnote{There are various notions of Calabi--Yau structures, relevant in different contexts; the one used here was called a `weak proper Calabi--Yau structure of dimension $n$' in \cite[Section 6]{Ganatra2015}.} on a $\fk$-linear $A_{\infty}$-category $\EuD$  is a map $\phi: HH_{n}(\EuD) \to \fk$ with the property that the pairing
\begin{equation}\label{eq:serre_ainf}
\xymatrix{
\Hom^{*}(K,L) \otimes \Hom^{n-*}(L,K) \ar[r]^-{[\mu^2]} & \Hom^n(L,L) \to HH_n(\EuD) \ar[r]^-{\phi} & \fk
}
\end{equation}
is non-degenerate, for any objects $K,L$. 
\end{defn}

If $\EuD$ comes equipped with an $n$-CY structure $\phi$, we define $\Auteq_{CY} \EuD \subset \Auteq \EuD$ to be the subgroup fixing $\phi$. 
In general it may depend on the choice of $\phi$, but this is not the case for the categories involved in a homological mirror equivalence \eqref{eqn:what_is_hms}. 
These categories admit $2$-CY structures, and have $HH_2$ is of rank $1$ (being isomorphic to $H^4(X,\Lambda)\simeq H^2(\mathcal{O}_{X^\circ})$), so all $2$-CY structures are proportional. 
Thus, $\Auteq_{CY}$ does not depend on the choice of $2$-CY structure: if an autoequivalence preserves one it preserves all of them.

It is clear that $\Auteq^0 \subset \Auteq_{CY}$, so a homological mirror equivalence \eqref{eqn:what_is_hms} induces an isomorphism of exact sequences
\begin{equation}
\label{eqn:H0sesdgen}
\xymatrix{ 1 \ar[r] & \Auteq^0 \EuD\EuF(X) \ar@{<->}[d]^{\rotatebox[origin=c]{90}{$\sim$}} \ar[r] & \Auteq_{CY} \EuD\EuF(X) \ar@{<->}[d]^{\rotatebox[origin=c]{90}{$\sim$}} \ar[r] & \Aut HH_*(\EuD\EuF(X)) \ar@{<->}[d]^{\rotatebox[origin=c]{90}{$\sim$}}\\
1 \ar[r] & \Auteq^0 \Dbdg(X^\circ) \ar[r] & \Auteq_{CY} \Dbdg(X^\circ) \ar[r] & \Aut HH_*(X^\circ).}
\end{equation}

\subsection{Enhancements and autoequivalences}\label{sec:enh}

We would now like to relate the $A_\infty$ autoequivalence groups considered in the preceding sections with the triangulated autoequivalence groups considered by Bayer and Bridgeland. 
The relationship between $A_\infty$ and triangulated categories is given in \cite[Proposition 3.14]{Seidel:FCPLT}, which implies that if $\EuD$ is a triangulated $A_\infty$ category, then the cohomological category $H^0(\EuD)$ has a natural triangulated structure, and there is a homomorphism $\Auteq \EuD \to \Auteq H^0(\EuD)$ to the triangulated autoequivalence group, sending $F \mapsto H^0(F)$. 
We say that $\EuD$ is an $A_\infty$ enhancement of $H^0(\EuD)$. 

In our setting, $\Db(X^\circ)$ has a unique $dg$ (in particular, $A_\infty$) enhancement $\Dbdg(X^\circ)$ by \cite{Lunts2010}, and the results of \cite{Toen2006a,Lunts2016} 
imply that $\Auteq \Dbdg(X^\circ) \to \Auteq \Db(X^\circ)$ is in fact an isomorphism.

The homomorphism $\Auteq \Dbdg \to \Aut HH_*(\Dbdg)$ does not have an analogue in the general setting of triangulated categories; in the particular geometric setting that we consider, however, there is a substitute.
Any autoequivalence of $\Db(X^\circ)$ is of Fourier--Mukai type, and the Fourier--Mukai kernel induces an action on $HH_*(X^\circ)$ (see \cite{Caldararu:I}), so we obtain a map $\Auteq \Db(X^\circ) \to \Aut HH_*(X^\circ)$. 
This is identified with the map $\Auteq \Dbdg(X^\circ) \to \Aut HH_*(\Dbdg(X^\circ))$ by \cite[Theorem 2]{Ramadoss2010}. 

Since $\Auteq_{CY} \Db(X^\circ)$ consists precisely of the autoequivalences acting trivially on $HH_2(X^\circ)$ by \cite[Appendix A]{BB}, it gets identified with $\Auteq_{CY} \Dbdg(X^\circ)$. 
The upshot is an isomorphism of exact sequences
\begin{equation}
\label{eqn:H0sesdg}
\xymatrix{ 1 \ar[r] & \Auteq^0 \Dbdg(X^\circ) \ar@{<->}[d]^{\rotatebox[origin=c]{90}{$\sim$}} \ar[r] & \Auteq_{CY} \Dbdg(X^\circ) \ar@{<->}[d]^{\rotatebox[origin=c]{90}{$\sim$}} \ar[r] & \Aut HH_*(\Dbdg(X^\circ)) \ar@{<->}[d]^{\rotatebox[origin=c]{90}{$\sim$}}_{\eqref{eqn:HHdbdg_hh}}\\
1 \ar[r] & \Auteq^0 \Db(X^\circ) \ar[r] & \Auteq_{CY} \Db(X^\circ) \ar[r] & \Aut HH_*(X^\circ).}
\end{equation}

\subsection{The symplectic mapping class group acts on the Fukaya category}\label{sec:sympgracts}

Now recall \cite{Seidel:graded} that the group of graded symplectomorphisms $\Symp^{gr}(X)$ is a central extension by $\Z$ of the group of all symplectomorphisms $\Symp(X)$ of $X$. 
The Hamiltonian subgroup $\mathrm{Ham}^{gr}(X)$ is the connected component of $\Symp^{gr}(X)$ containing the identity, where we use the ``Hamiltonian topology'' on $\Symp(X)$ (see \cite[Remark 0.1]{Seidel:LecturesDehn} for the definition: when $H^1(X;\R) = 0$, there is no distinction between symplectic and Hamiltonian isotopy, so the Hamiltonian topology coincides with the $C^\infty$ topology). 
In particular we have an isomorphism $\Symp^{gr}(X) / \mathrm{Ham}^{gr}(X) \cong \pi_0\Symp^{gr}(X)$.

\begin{lem} There is a natural homomorphism
\begin{equation}
\label{eqn:sympactsFuk}
\pi_0 \Symp^{gr}(X) \longrightarrow \Auteq \Dpinf\EuF(X).
\end{equation}
\end{lem}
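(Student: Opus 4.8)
The plan is to build the homomorphism \eqref{eqn:sympactsFuk} by having a graded symplectomorphism act on the underlying data of the Fukaya category — Lagrangian branes and their Floer complexes — and then checking that this descends to the split-closed derived category and is insensitive to the equivalences that we quotient by. First I would recall that a symplectomorphism $f \in \Symp(X)$ carries a closed Lagrangian submanifold $L$ to another one $f(L)$, and that the extra grading datum of $\Symp^{gr}(X)$ is precisely what is needed to transport the grading of a Lagrangian brane (an orientation and Pin structure being preserved automatically, or transported along $f$); this is exactly the bookkeeping set up in \cite{Seidel:graded}. For a strictly unobstructed brane $(L,J_L)$ one pushes forward the auxiliary almost-complex structure to $f_* J_L$, which is again $\omega$-compatible since $f$ is a symplectomorphism and still admits no holomorphic discs or spheres meeting $f(L)$, so strict unobstructedness is preserved. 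Since Floer-theoretic structure maps are defined by counting pseudoholomorphic polygons with respect to a choice of almost-complex structures and Hamiltonian perturbations, and $f$ is a diffeomorphism intertwining these data, it induces a strict isomorphism of $A_\infty$-algebras $CF^*(L_0,L_1;J) \to CF^*(f(L_0),f(L_1);f_*J)$ compatible with compositions; hence $f$ induces an $A_\infty$-functor $\EuF(X,\omega) \dashrightarrow \EuF(X,\omega)$, which is a quasi-equivalence because $f^{-1}$ provides an inverse up to the coherence data. Passing to twisted complexes and split-closure is functorial, so we get an element of $\Auteq \Dpinf\EuF(X)$.

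Next I would check that this assignment is a group homomorphism and is well-defined on $\pi_0$. Composition of symplectomorphisms corresponds to composition of the induced functors up to a canonical natural isomorphism coming from the different auxiliary choices, so the map respects multiplication at the level of $\Auteq$, where functors are taken up to isomorphism. For well-definedness on connected components, the content is that an element of $\mathrm{Ham}^{gr}(X)$ — i.e.\ a graded symplectomorphism in the identity component for the Hamiltonian topology — acts trivially. A Hamiltonian isotopy from the identity to $\phi$ gives, for each brane $L$, a quasi-isomorphism in the Fukaya category between $L$ and $\phi(L)$ (the standard continuation/quasi-unit argument), and moreover these quasi-isomorphisms can be made coherent in $L$, assembling into a natural isomorphism between the induced functor and the identity functor. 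One also needs that the grading shift introduced by the central $\Z$ does not interfere: an element of $\mathrm{Ham}^{gr}(X)$ projecting to the identity grading acts by the identity on gradings, whereas the central generator acts by the shift $[2]$ — this is the source of the ``$/[2]$'' ambiguity noted elsewhere in the paper, and is exactly why the lemma is phrased for $\pi_0 \Symp^{gr}(X)$ rather than quotienting further.

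The main obstacle is the coherence of all these identifications: a single symplectomorphism determines a functor only after auxiliary choices (perturbation data), so ``$f \mapsto$ its action'' is only canonically defined in $H^0$ of the functor category, and one must verify that the composition and invariance statements above hold at that homotopy level rather than on the nose. In practice this is handled by the now-standard machinery for the Fukaya category — naturality of continuation maps, the fact that different consistent choices of perturbation data yield quasi-equivalent categories via canonical (up to homotopy) functors, and the contractibility of the relevant spaces of choices — all of which is available in \cite{Seidel:FCPLT} and \cite{Seidel:HMSquartic,Seidel:Flux}. So the proof is essentially an assembly of these known functoriality properties, with the only genuinely new ingredient being the compatibility with the $\Z$-grading datum from \cite{Seidel:graded}; I would present it at that level of detail, citing the coherence results rather than reproving them.
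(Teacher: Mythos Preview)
Your approach is essentially the same as the paper's: the paper simply cites \cite[Section 10c]{Seidel:FCPLT} for the construction of $\Symp^{gr}/\mathrm{Ham}^{gr} \to \Auteq \EuF$ (noting it carries over to the strictly unobstructed setting), and then composes with the canonical map $\Auteq \EuF \to \Auteq \Dpinf\EuF$; you have unpacked what that citation contains.

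One factual slip to correct: the central generator $1 \in \Z \subset \Symp^{gr}(X)$ acts by the shift $[1]$, not $[2]$ (the paper states this explicitly just after the lemma). The quotient by $[2]$ appearing later arises because the $\Z/2$-extension $\Symp^{gr}(X)/[2] \to \Symp(X)$ is canonically split, not because the central generator itself is $[2]$.
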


\begin{proof} The construction of a homomorphism $\Symp^{gr}/\mathrm{Ham}^{gr} \to \Auteq \EuF$ is explained in the setting of exact manifolds in \cite[Section 10c]{Seidel:FCPLT}, and the proof carries over to the strictly unobstructed setting in which we work. 
We compose this homomorphism with the homomorphism
\[ \Auteq \EuF \to \Auteq \Dpinf\EuF,\]
which exists for any $A_\infty$ category $\EuF$. 
\end{proof}

We now recall that $\EuF(X)$ admits a canonical $n$-CY structure $\phi$ (where $n$ is half the real dimension of $X$). 
It is defined by
\begin{equation} \label{eqn:weak_cy_fukaya}
\phi(\alpha) \coloneqq \int_X \mathcal{OC}(\alpha)
\end{equation}
where $\mathcal{OC}: HH_{n}(\EuF(X)) \to H^{2n}(X;\Lambda)$ is the open--closed string map  (compare \cite[Section 2.8]{Sheridan:Fano}). 
This induces an $n$-CY structure on $\Dpinf\EuF(X)$, by Morita invariance of Hochschild homology.

\begin{lem}
The map \eqref{eqn:sympactsFuk} lands in $\Auteq_{CY} \Dpinf\EuF(X)$, the subgroup of Calabi--Yau autoequivalences.
\end{lem}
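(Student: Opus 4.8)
The plan is to show that any graded symplectomorphism preserves the canonical Calabi--Yau structure \eqref{eqn:weak_cy_fukaya} on $\Dpinf\EuF(X)$, hence that its image under \eqref{eqn:sympactsFuk} lies in the subgroup fixing $\phi$. Since $HH_n(\Dpinf\EuF(X))$ is of rank $1$ and, as already remarked, all $n$-CY structures are proportional, it in fact suffices to show that the induced action of a graded symplectomorphism on $HH_n$ is trivial — equivalently, that $\phi$ is sent to a (nonzero) scalar multiple of itself, and then after rescaling we see it is fixed. So the core of the argument is a compatibility/naturality statement: the open--closed map $\mathcal{OC}$ intertwines the action of $\pi_0\Symp^{gr}(X)$ on $HH_n(\EuF(X))$ with the geometric action of $\Symp(X)$ on $H^{2n}(X;\Lambda)$, and the latter action is trivial because any symplectomorphism of a closed symplectic manifold preserves the fundamental class (it has degree $1$, being orientation-preserving), hence the integration map $\int_X$ is invariant.

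Concretely, first I would recall that a graded symplectomorphism $\phi$ acts on $\EuF(X)$ by pushing forward Lagrangian branes and pulling back almost-complex structures, and that this action is covered by a compatible action on all the moduli spaces of pseudoholomorphic discs and spheres defining the $A_\infty$-structure and the open--closed map. Consequently $\phi$ induces a commutative square relating $\mathcal{OC}$ for $\EuF(X)$ to $\mathcal{OC}$ for $\EuF(X)$ again, with the vertical maps being the action of $\phi$ on $HH_n$ on the left and the action of $\phi$ on $H^{2n}(X;\Lambda)$ on the right (this is the standard functoriality of the open--closed map under symplectomorphisms; compare the discussion in \cite[Section 10c]{Seidel:FCPLT} and \cite[Section 2.8]{Sheridan:Fano}). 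Then I would observe that $\phi^*$ acts trivially on $H^{2n}(X;\Lambda)\cong\Lambda$ since $\phi$ is orientation-preserving, so $\int_X\mathcal{OC}(\phi_*\alpha) = \int_X\mathcal{OC}(\alpha)$ for all $\alpha$, i.e. $\phi^*\phi = \phi$ as elements of $\mathrm{Hom}(HH_n,\Lambda)$. This is exactly the statement that $\phi$ fixes the canonical $n$-CY structure, so \eqref{eqn:sympactsFuk} lands in $\Auteq_{CY}\Dpinf\EuF(X)$.

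The main obstacle, such as it is, is bookkeeping rather than conceptual: one must check that the action of a graded symplectomorphism on the Fukaya category constructed in \cite{Seidel:FCPLT} (originally in the exact setting, here in the strictly unobstructed setting) is genuinely compatible with the moduli spaces used to define $\mathcal{OC}$, so that the naturality square above actually commutes — as opposed to commuting only up to a sign or a shift. The grading is precisely what is needed to pin down this sign/shift ambiguity: a graded symplectomorphism acts on $\EuF(X)$ by a functor of degree $0$ (rather than one defined only up to shift), and the $n$-CY structure is concentrated in a single degree, so once the square commutes on the nose the conclusion is immediate. Alternatively, and even more cheaply, one can bypass the naturality square entirely: since $HH_n(\Dpinf\EuF(X))$ has rank one, the action of $\phi$ on it is multiplication by some $c\in\Lambda^\times$; but $\phi$ also acts on the Hochschild homology pairing compatibly, and since the pairing \eqref{eq:serre_ainf} together with $\phi$ recovers Poincar\'e duality on $X$ (which $\phi$ preserves), one forces $c=1$. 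Either route gives the result, and I would present the naturality-of-$\mathcal{OC}$ argument as the main line since it is the most transparent.
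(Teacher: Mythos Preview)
Your proposal is correct and takes essentially the same approach as the paper: the paper's proof is precisely the naturality-of-$\mathcal{OC}$ argument you describe, computing $\int_X \mathcal{OC}(\Psi_*\alpha) = \int_X (\psi^{-1})^*\mathcal{OC}(\alpha) = \int_X \mathcal{OC}(\alpha)$ using naturality of the open--closed map and the fact that a symplectomorphism is orientation-preserving. (One notational quibble: you use $\phi$ for both the graded symplectomorphism and the CY structure, which makes the sentence ``$\phi^*\phi = \phi$'' hard to parse; the paper uses $\psi$ for the symplectomorphism.)
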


\begin{proof}
Let $\psi$ be a graded symplectomorphism, inducing an autoequivalence $\Psi$ on $\EuF(X)$. We have
\begin{align*}
\int_X \mathcal{OC}(\Psi_* \alpha) &= \int_X \left(\psi^{-1}\right)^* \mathcal{OC}(\alpha) \quad \text{(naturality of $\mathcal{OC}$)} \\
&= \int_X \mathcal{OC}(\alpha) \quad \text{($\psi$ is orientation-preserving),}
\end{align*}
so $\Psi$ preserves $\phi$ as required.
\end{proof}

We denote the element of $\Symp^{gr}(X)$ corresponding to $k \in \Z$ by $[k]$. 
That is because the homomorphism \eqref{eqn:sympactsFuk} sends $[k]$ to the shift functor $[k]$. 
The central extension $\Symp^{gr}(X)/[2]$ of $\Symp(X)$ by $\Z/2$ is canonically split (see, e.g., \cite[Section B.5]{Sheridan2016}), so we obtain a homomorphism
\begin{equation}
\label{eqn:sympmcgacts}
 \Symp(X)/\mathrm{Ham}(X) \to  \Auteq_{CY}\Dpinf\EuF(X)/[2].
\end{equation}

\begin{cor} If the open--closed map $\mathcal{OC}: HH_*(\EuD\EuF(X)) \to H^{*+n}(X;\Lambda)$ is an isomorphism, then there is a natural homomorphism of exact sequences
\begin{equation}
\label{eqn:sessympmcgacts}
\xymatrix{1 \ar[r] & I(X,\omega) \ar[d] \ar[r] & G(X,\omega) \ar[r] \ar[d] & \Aut H^*(X,\Z) \ar@{^(->}[d]  \\
1 \ar[r] & \Auteq^0\Dpinf\EuF(X,\omega)/[2] \ar[r] & \Auteq_{CY}\Dpinf\EuF(X,\omega)/[2] \ar[r] & \Aut HH_*(\Dpinf\EuF(X,\omega)).}
\end{equation}
Here the rightmost vertical arrow is obtained by identifying $HH_*(\EuD\EuF) \cong H^{*+n}(X,\Z) \otimes \Lambda$ via $\mathcal{OC}$. Commutativity follows from the naturality of $\mathcal{OC}$.
\end{cor}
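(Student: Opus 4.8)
The plan is to build the diagram entirely out of homomorphisms already assembled in this section, and then check that it is compatible with passing to kernels; beyond one non-formal input (recalled below) this is pure bookkeeping.

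First I would identify the top-left corner. Since $H^1(X;\R)=0$ for a $K3$ surface, the Hamiltonian and $C^\infty$ topologies on $\Symp(X,\omega)$ coincide, so $\Symp(X)/\mathrm{Ham}(X)\cong\pi_0\Symp(X,\omega)=G(X,\omega)$, and \eqref{eqn:sympmcgacts} then furnishes the middle vertical arrow $G(X,\omega)\to\Auteq_{CY}\Dpinf\EuF(X,\omega)/[2]$. For the rightmost arrow I would use the assumed isomorphism $\mathcal{OC}\colon HH_*(\Dpinf\EuF(X))\xrightarrow{\sim}H^{*+n}(X,\Z)\otimes\Lambda$ to transport the tautological action of $\Auteq\Dpinf\EuF(X)$ on $HH_*$ to an action on $H^{*+n}(X,\Z)\otimes\Lambda$. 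Since even shifts act trivially on $HH_*$ (they lie in $\Auteq^0$, cf.\ the discussion preceding Corollary \ref{cor:BBses}), this action factors through $\Auteq_{CY}\Dpinf\EuF(X,\omega)/[2]$, so the composite $G(X,\omega)\to\Auteq_{CY}\Dpinf\EuF(X,\omega)/[2]\to\Aut HH_*(\Dpinf\EuF(X,\omega))$ makes sense; the rightmost vertical map is then the $\Lambda$-linear extension $\Aut H^*(X,\Z)\hookrightarrow\Aut\!\big(H^{*}(X,\Z)\otimes\Lambda\big)=\Aut HH_*(\Dpinf\EuF(X,\omega))$, which is visibly injective.

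Next I would check commutativity of the right-hand square. For $\psi\in\Symp(X,\omega)$ inducing an autoequivalence $\Psi$ of $\Dpinf\EuF(X)$, I need the automorphism of $HH_*$ induced by $\Psi$ to correspond, under $\mathcal{OC}$, to the $\Lambda$-linear extension of the topological action of $[\psi]$ on $H^*(X,\Z)$. This is exactly the naturality identity $\mathcal{OC}(\Psi_*\alpha)=(\psi^{-1})^*\mathcal{OC}(\alpha)$ already invoked in the proof of the preceding lemma, and since $\mathcal{OC}$ is a $\Lambda$-linear isomorphism it identifies the two actions; commutativity of the outer square is then automatic.

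Finally I would observe that the top row is exact by the definition of $I(X,\omega)$ and the bottom row by the definition of $\Auteq^0$. Because the right square commutes and its right vertical arrow is injective, an element $g\in G(X,\omega)$ lies in $I(X,\omega)$ if and only if its image in $\Auteq_{CY}\Dpinf\EuF(X,\omega)/[2]$ acts trivially on $HH_*$, i.e.\ lies in $\Auteq^0\Dpinf\EuF(X,\omega)/[2]$; hence the middle arrow restricts to the desired left vertical arrow $I(X,\omega)\to\Auteq^0\Dpinf\EuF(X,\omega)/[2]$, completing the morphism of exact sequences. The only genuinely non-formal ingredient is the naturality of $\mathcal{OC}$ under symplectomorphisms, so I expect whatever difficulty there is to be located in (or rather already resolved by) that input, with the rest being diagram-chasing.
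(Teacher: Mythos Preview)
Your proposal is correct and follows the same approach as the paper, which in fact offers no separate proof: the two sentences at the end of the statement (``Here the rightmost vertical arrow is obtained by identifying\ldots'' and ``Commutativity follows from the naturality of $\mathcal{OC}$'') are the entire argument, and you have faithfully unpacked them. Your only minor imprecision is the reference to Corollary~\ref{cor:BBses} for why $[2]\in\Auteq^0$; that corollary concerns $\Db(X^\circ)$, whereas the relevant fact here is the general observation that the double shift acts trivially on Hochschild homology of any $A_\infty$ category---but this is standard and does not affect correctness.
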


\subsection{Noncommutative Chern character}
\label{subsec:nc_chern}

If $\EuD$ is triangulated, we define its Grothendieck group $K(\EuD)$ to be equal to the Grothendieck group of the triangulated category $H^0(\EuD)$. 
The \emph{noncommutative Chern character} \cite[Definition 5.13]{Sheridan2015a} is a homomorphism
\[ \Ch: K(\Dbdg) \to HH_0(\Dbdg).\]

\begin{lem}\label{lem:oc_geom}
In the case of the Fukaya category, the composition
\[ \xymatrix{K(\Dpinf\EuF(X)) \ar[r]^-\Ch &HH_0(\Dpinf\EuF(X)) \ar[r]^-{\EuO\EuC} &H^n(X;\Lambda)}\]
takes the class of a Lagrangian $[L]$ to $PD([L])$, the Poincar\'e dual of its homology class.
\end{lem}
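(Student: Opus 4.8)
The plan is to unwind the definitions of all three maps in the composition and reduce everything to a local statement about a single Lagrangian object. First I would recall the definition of the noncommutative Chern character $\Ch: K(\EuD) \to HH_0(\EuD)$ from \cite[Definition 5.13]{Sheridan2015a}: on the class of an object $L$, it is represented by the zeroth-order Hochschild chain built from the identity morphism $e_L \in \Hom^0(L,L)$, i.e.\ the length-zero chain $[e_L] \in CC_0(\EuD)$. Thus the composition sends $[L]$ to $\EuO\EuC([e_L])$, and the claim becomes the identity $\EuO\EuC([e_L]) = PD([L])$ in $H^n(X;\Lambda)$.

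The key input is the geometric description of the open--closed map on the image of the unit. The open--closed map $\EuO\EuC: HH_*(\EuD\EuF(X)) \to H^{*+n}(X;\Lambda)$ is defined by counting pseudoholomorphic discs with one interior output marked point and a cyclic sequence of boundary inputs on the Lagrangian labels; when the input is the length-zero chain $[e_L]$ attached to a single Lagrangian $L$, the relevant moduli space is that of constant discs mapped to $L$ (no boundary punctures, no nonconstant disc bubbles for the strictly unobstructed brane), together with a choice of interior marked point. This moduli space is identified with $L$ itself, and the evaluation map is the inclusion $L \hookrightarrow X$; pushing forward the fundamental class along this evaluation gives precisely $PD([L])$. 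This is the standard computation that $\EuO\EuC$ sends the unit (in the sense of the length-zero generator of each self-Hochschild complex) to the Poincaré dual of the Lagrangian, cf.\ \cite[Section 2.8]{Sheridan:Fano} and \cite[Section 2.8]{Sheridan2016}; I would cite it rather than reprove it.

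So the structure of the argument is: (i) identify $\Ch([L])$ with the Hochschild class of $[e_L]$; (ii) invoke the geometric formula for $\EuO\EuC$ on such classes; (iii) conclude. One should also check compatibility with the split-closure: $\Ch$ and $\EuO\EuC$ are both defined on $\Dpinf\EuF(X) = \Pi(Tw\,\EuF(X))$ and are Morita-invariant, so it suffices to compute on the full subcategory generated by $L$, where $L$ appears as an honest object and the computation above applies. The main obstacle is purely expository: pinning down the precise chain-level model of $\Ch$ in \cite{Sheridan2015a} and matching its sign and normalization conventions with those of the open--closed map, so that one genuinely gets $PD([L])$ on the nose rather than a nonzero scalar multiple; once the conventions are aligned, the geometric content is the elementary observation that the moduli space of once-marked constant discs on $L$ is $L$. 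I would also note that the orientation/grading data on $L$ enters only to make $[e_L]$ the \emph{strict} unit, and does not affect the underlying homology class.

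\medskip

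\emph{Remark on what is really used.} The only nontrivial ingredient is the structure of $\EuO\EuC$ restricted to length-zero Hochschild chains; everything else is formal manipulation of the definitions of $K$-theory, the Chern character, and split-closure, together with the fact (already invoked in the excerpt) that for $\dim_\R X \le 4$ every Lagrangian brane is strictly unobstructed, so that the relevant disc moduli spaces have no codimension-one boundary coming from disc bubbling and the naive count is well-defined.
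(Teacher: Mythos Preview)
Your proposal is correct and follows essentially the same approach as the paper: the paper's proof is the two-line observation that $L$ bounds no non-constant holomorphic discs (by strict unobstructedness), so the open--closed image of the unit is computed by the constant discs, which sweep out the fundamental cycle of $L$ (with references to \cite[Section 5a]{Seidel:biased} and \cite[Lemma 2.14]{Sheridan:Fano}). Your write-up simply unpacks this in more detail, making explicit the identification $\Ch([L]) = [e_L]$ and the Morita-invariance step for the split-closure.
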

\begin{proof}
This follows because we have arranged that $L$ bounds no non-constant holomorphic discs, and the constant discs sweep out the fundamental cycle of $L$ (cf. \cite[Section 5a]{Seidel:biased}, \cite[Lemma 2.14]{Sheridan:Fano}). 
\end{proof}

In the case of the derived category, we introduce the isomorphism
\begin{equation}\label{eq:idhh}
 HH_*(\EuD(X^\circ)) \xrightarrow{\eqref{eqn:HHdbdg_hh}} HH_*(X^\circ) \xrightarrow{\eqref{eqn:HKR}} H^*( \Omega^{-*}{X^\circ}) \xrightarrow{\sqrt{\mathrm{td}(X^\circ)}\wedge -} H^*(\Omega^{-*}{X^\circ}).
 \end{equation}
Then the composition 
\[ K(\Dbdg(X^\circ)) \xrightarrow{\Ch} HH_0(\Dbdg(X^\circ)) \xrightarrow{\eqref{eq:idhh}}\bigoplus H^p\left(\Omega^p_{X^\circ}\right)\]
sends $[E]$ to its Mukai vector $v(E)$ \cite{Caldararu:II}. (If we had omitted the final twist by the square root of the Todd class in \eqref{eq:idhh} it would have sent $[E]$ to its Chern character.)

\begin{rmk}\label{rmk:MStell}
If $X^\circ$ is a complex $K3$ surface, then the diagram
\[\xymatrix{ \Auteq \EuD(X^\circ) \ar[rr] \ar@{<->}[d]^{\rotatebox[origin=c]{90}{$\sim$}} && HH_*(\EuD(X^\circ)) \ar@{<->}[d]^{\rotatebox[origin=c]{90}{$\sim$}}_-{\eqref{eq:idhh}} \\
\Auteq \Db(X^\circ) \ar[r]^-{\omega_\C} & \aut H^*(X,\C) \ar@{<->}[r]^-{\sim} & \aut H^*(\Omega^{-*} X^\circ)}\]
commutes, by \cite[Theorem 1.2]{Macri2009}. This would not have been the case had we not twisted by the square root of the Todd class in \eqref{eq:idhh}.
\end{rmk}

\section{$K3$ surfaces over the Novikov field} 

A $K3$ surface over an arbitrary field $\fk$ is an algebraic surface $X$ over $\fk$ with $H^1(X,\mathcal{O}_X) = 0$ and $K_X \cong \mathcal{O}_X$. 
 This again has a Picard group scheme $\Pic(X)$; we let $\NS(X)$ denote the Neron--Severi group, which is the quotient $\Pic(X) / \Pic^0(X)$ of the Picard group by its identity component. The fact that $H^1(X,\mathcal{O}_X) = 0$ implies that $\Pic(X)$ consists of rigid isolated points, so $\Pic(X) = \NS(X)$.

\subsection{Lefschetz principle\label{Sec:Lefschetz}}

We will use the ``Lefschetz principle'' (see e.g. \cite{Eklof}) to translate results about complex $K3$ surfaces into corresponding results about $K3$ surfaces over an arbitrary field of characteristic zero (of course, we have the Novikov field $\Lambda$ in mind). 
The results we collect here are taken from \cite{Huybrechts:K3book}, see also \cite[Proposition 5.4]{Huybrechts:Chow}, and will be standard in the relevant community. We include a discussion since the results may be less familiar to symplectic topologists.

Let $\fK$ be a field of characteristic zero, and $X$ a $K3$ surface over $\fK$. 
We can define $X$ using only a finite number of elements of $\fK$, so there exists a finitely-generated field $\Q \subset \fk_0 \subset \fK$ and a variety $X_0$ over $\fk_0$ such that $X = X_0 \times_{\fk_0} \fK$. 
We can embed $\fk_0$ in $\C$, so we obtain a variety $X_\C = X_0 \times_{\fk_0} \C$ (which depends on the choice of embedding). 
Using the fact that flat base change commutes with coherent cohomology, one can show that $X_0$, and therefore $X_\C$, are also $K3$ surfaces.  
We call $X_\C$ a ``complex model'' of $X$; the basic idea of the Lefschetz principle is to translate results about the complex $K3$ surface  $X_\C$ into results about $X$.

\begin{lem} \label{lem:picardiso}
Let $X$ be a $K3$ surface over an algebraically closed field $\fk$, $\fK/\fk$ a field extension, and $X_\fK := X \times_\fk \fK$.
Then the pull-back map $\Pic(X) \rightarrow \Pic(X_\fK)$ is a bijection.
\end{lem}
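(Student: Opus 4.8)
The plan is to treat injectivity and surjectivity separately, both resting on the vanishing $H^1(X,\mathcal{O}_X)=0$ built into the definition of a $K3$ surface over $\fk$ (and on $X$ being smooth, projective and geometrically integral over the algebraically closed field $\fk$).

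For injectivity I would argue that if $\mathcal{L}\in\Pic(X)$ has trivial pullback to $X_\fK$, then since $\fK$ is flat over $\fk$ cohomology commutes with the base change, so $H^0(X,\mathcal{L})\otimes_\fk\fK\cong H^0(X_\fK,\mathcal{L}_\fK)\cong\fK$ and likewise for $\mathcal{L}^{-1}$; hence $\mathcal{L}$ and $\mathcal{L}^{-1}$ both have a nonzero global section, whose product is a nonzero element of $H^0(X,\mathcal{O}_X)=\fk$, forcing the section of $\mathcal{L}$ to vanish nowhere and $\mathcal{L}\cong\mathcal{O}_X$. This step is routine.

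For surjectivity the idea is a spreading-out argument combined with the fact that line bundles on a $K3$ do not deform. Given a line bundle on $X_\fK$, I would first descend it to a line bundle $\mathcal{M}$ on $X\times_\fk U$ for some integral affine $\fk$-variety $U$ whose function field is a finitely generated subextension $\fk\subseteq\fK_0\subseteq\fK$, after shrinking $U$. Next I would invoke Grothendieck's theorem that the relative Picard functor is represented by a group scheme $\underline{\Pic}_{X/\fk}$ locally of finite type over $\fk$; since its tangent space at any point is $H^1(X,\mathcal{O}_X)=0$, it is étale over $\fk$, hence --- as $\fk$ is algebraically closed --- a disjoint union of copies of $\Spec\fk$. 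Using a $\fk$-point of $X$ to rigidify (so that $\underline{\Pic}_{X/\fk}$ represents $T\mapsto\Pic(X\times_\fk T)/\Pic(T)$), the bundle $\mathcal{M}$ gives a morphism $U\to\underline{\Pic}_{X/\fk}$, which is constant because $U$ is connected and the target a disjoint union of points. Evaluating this constant map at the generic point of $U$ returns the class of the original bundle on $X_\fK$, while evaluating at a $\fk$-rational point $u\in U$ (which exists) returns the class of $\mathcal{M}|_{X\times\{u\}}$, a line bundle on $X$ itself; the two agree, which is exactly surjectivity.

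The main obstacle is the surjectivity direction, and the one genuinely non-formal input there is the étaleness of $\underline{\Pic}_{X/\fk}$ --- equivalently, the deformation-invariance of line bundles on a $K3$ --- which is precisely where $H^1(X,\mathcal{O}_X)=0$ enters. Everything else (representability of the Picard scheme, the passage from the Picard functor to the actual Picard group via a rational point, and the elementary spreading-out) is standard and can be cited from \cite{Huybrechts:K3book}. One could also phrase the last step without Picard schemes, using the seesaw principle to see directly that $\mathcal{M}|_{X\times\{u\}}$ is independent of $u\in U(\fk)$ when $H^1(\mathcal{O}_X)=0$, but that is the same content in different language.
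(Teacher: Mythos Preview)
Your proof is correct and follows essentially the same route as the paper's: both treat injectivity via flat base change of sections (the paper phrases it as nonvanishing of the pairing $\Hom(L,\mathcal{O}_X)\otimes\Hom(\mathcal{O}_X,L)\to\fk$, which is your argument in different words), and both handle surjectivity by spreading out to a finite-type base and using that the Picard scheme of a $K3$ over an algebraically closed field is a disjoint union of reduced points because $H^1(X,\mathcal{O}_X)=0$, so the classifying map is constant. Your write-up is slightly more careful about rigidification and the \'etaleness formulation, but there is no substantive difference.
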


\begin{proof}[Sketch] This is \cite[Chapter 17, Lemma 2.2]{Huybrechts:K3book}.  To prove injectivity of the pull-back map, we observe that a line bundle $L \in \Pic(X)$ is trivial if and only if the composition map
\[ \Hom(L,\mathcal{O}_X)  \otimes \Hom(\mathcal{O}_X, L) \to \Hom(\mathcal{O}_X,\mathcal{O}_X) \cong \fk\]
is non-zero. 
Since coherent cohomology commutes with flat base change, this holds for $L$ if and only if it holds for the pull-back of $L$ to $X_\fK$ (note that this is true even if $\fk$ is not algebraically closed).

Surjectivity relies on the fact that $\fk$ is algebraically closed. 
Any line bundle on $X_K$ can be defined using finitely many elements of $\fK$, hence is defined over  some finitely-generated extension of $\fk$, which is the quotient field of a finitely-generated $\fk$-algebra $A$. 
Localizing $A$ with respect to finitely many denominators, we may assume that the line bundle can in fact be defined over $A$, so it can be viewed as a family of line bundles on $X$ parametrized by $\Spec (A)$. 
This family is classified by a morphism $\Spec(A) \to \Pic(X)$. 
The Picard scheme $\Pic(X)$ is reduced and has dimension equal to the rank of $H^1(X,\mathcal{O}_X)$, which for a $K3$ surface is $0$.
Since $\fk$ is algebraically closed, it follows that $\Pic(X)$ is simply a disjoint union of points: so the classifying morphism must be constant, which means that the line bundle is pulled back from $X$ as required.
\end{proof}

\begin{rmk}
A crucial point in the proof of surjectivity in Lemma \ref{lem:picardiso} was that line bundles on a $K3$ surface are \emph{rigid}: they do not admit non-trivial deformations.
\end{rmk}

\begin{cor}\label{cor:picard}
Let $X$ be a $K3$ surface over an algebraically closed field of characteristic zero, and $X_\C$ a complex model. 
Then $\Pic(X) \cong \Pic(X_\C)$.
\end{cor}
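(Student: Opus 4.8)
The plan is to reduce the statement to Lemma \ref{lem:picardiso}, which describes how $\Pic$ behaves under a field extension but requires the base field to be algebraically closed. The field $\fk_0$ over which the model $X_0$ lives need not be algebraically closed, so the key move is to interpose an algebraic closure of $\fk_0$ that can be seen simultaneously inside $\fk$ and inside $\C$.

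First I would fix an algebraic closure $\bar{\fk_0}$ of $\fk_0$. Since $\fk$ is algebraically closed and contains $\fk_0$, I can take $\bar{\fk_0}$ to be the algebraic closure of $\fk_0$ \emph{inside} $\fk$; in particular $\fk_0 \subseteq \bar{\fk_0} \subseteq \fk$. By the universal property of algebraic closure, the chosen embedding $\fk_0 \hookrightarrow \C$ extends to an embedding $\bar{\fk_0}\hookrightarrow \C$, so $\bar{\fk_0}$ may also be regarded as a subfield of $\C$ compatibly with the original data. Set $\bar{X}_0 \coloneqq X_0 \times_{\fk_0} \bar{\fk_0}$. Since $X_0$ is a $K3$ surface over $\fk_0$ (established just above) and the conditions $H^1(X_0,\mathcal{O}_{X_0}) = 0$ and $K_{X_0}\cong \mathcal{O}_{X_0}$ are preserved under the flat base change $\fk_0 \to \bar{\fk_0}$, the surface $\bar{X}_0$ is a $K3$ surface over the algebraically closed field $\bar{\fk_0}$.

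Next, by transitivity of base change, $X = X_0 \times_{\fk_0}\fk \cong \bar{X}_0 \times_{\bar{\fk_0}} \fk$ and $X_\C = X_0 \times_{\fk_0}\C \cong \bar{X}_0 \times_{\bar{\fk_0}}\C$. Applying Lemma \ref{lem:picardiso} to $\bar{X}_0$ and to the two field extensions $\bar{\fk_0}\subseteq \fk$ and $\bar{\fk_0}\subseteq \C$ gives bijections $\Pic(\bar{X}_0)\xrightarrow{\sim}\Pic(X)$ and $\Pic(\bar{X}_0)\xrightarrow{\sim}\Pic(X_\C)$. Both are pullback maps, hence group homomorphisms and compatible with intersection pairings (intersection numbers are Euler characteristics, which commute with flat base change), so composing them yields the asserted isomorphism $\Pic(X)\cong\Pic(X_\C)$.

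There is no substantive obstacle here: the only points needing a moment's care are the bookkeeping with the embeddings — checking that a single copy of $\bar{\fk_0}$ can be placed inside both $\fk$ and $\C$ in a way compatible with $X_0$ — and verifying that the $K3$ conditions ascend along $\fk_0 \to \bar{\fk_0}$, both of which are routine consequences of flat base change.
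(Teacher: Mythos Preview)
Your argument is correct and follows essentially the same route as the paper: interpose the algebraic closure $\bar{\fk}_0$ of $\fk_0$ inside both $\fK$ and $\C$, then apply Lemma~\ref{lem:picardiso} to each of the two extensions $\bar{\fk}_0 \subset \fK$ and $\bar{\fk}_0 \subset \C$. Your write-up is a bit more explicit about the bookkeeping (direction of pullback, preservation of the $K3$ conditions and intersection pairing), but the strategy is identical.
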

\begin{proof}
We observe that the algebraic closure $\bar{\fk}_0$ of $\fk_0$ embeds into $\C$ and $\fK$, so the restriction maps 
\[ \Pic(X) \to \Pic(X_0 \times_{\fk_0} \bar{\fk}_0) \leftarrow \Pic(X_\C)\]
are isomorphisms by Lemma \ref{lem:picardiso}.
\end{proof}

\begin{rmk}
It follows immediately that any $K3$ surface over an algebraically closed field of characteristic zero has Picard rank $\le 20$, since this is true of complex $K3$ surfaces. 
This is not true for $K3$ surfaces in finite characteristic.
\end{rmk}

\begin{rmk}
If $X_\C$ and $X'_\C$ are different complex models for $X$, then Corollary \ref{cor:picard} shows that they have isomorphic Picard lattices. However the embeddings $\Pic(X_\C) \hookrightarrow H^2(X_\C,\Z)$ and $\Pic(X'_\C) \hookrightarrow H^2(X'_\C,\Z)$ need not be isomorphic.
\end{rmk}

The following two results are discussed in \cite[Chapter 16, Section 4.2]{Huybrechts:K3book}:

\begin{lem}\label{lem:lef_spheres}
Let $X$ be a $K3$ surface over an algebraically closed field of characteristic zero, and $X_\C$ a complex model. 
Then the set of isomorphism classes of spherical objects of $\Db(X)$ is in bijection with the set of isomorphism classes of spherical objects of $\Db(X_\C)$.
\end{lem}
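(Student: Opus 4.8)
The strategy follows the template of Corollary~\ref{cor:picard}, the new input being the \emph{rigidity} of spherical objects on a $K3$ surface. Writing $X = X_0\times_{\fk_0}\fK$ and $X_\C = X_0\times_{\fk_0}\C$ as there, and noting that $\bar{\fk}_0$ embeds into both $\fK$ and $\C$, one sees that $X$ and $X_\C$ are both obtained by extension of scalars from the $K3$ surface $X' := X_0\times_{\fk_0}\bar{\fk}_0$ over the algebraically closed field $\bar{\fk}_0$. It therefore suffices to show: \emph{for any extension $\fk\hookrightarrow\fk'$ of algebraically closed fields of characteristic zero and any $K3$ surface $Z$ over $\fk$, the base-change functor $F\mapsto F_{\fk'}$ induces a bijection between the sets of isomorphism classes of spherical objects of $\Db(Z)$ and of $\Db(Z_{\fk'})$} (note that $F_{\fk'}$ is indeed spherical when $F$ is, by flat base change on $\operatorname{Hom}$-spaces). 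Injectivity is soft: for perfect complexes $E,F$ on $Z$, flat base change identifies $\operatorname{Hom}_{\Db(Z_{\fk'})}(E_{\fk'},F_{\fk'}) = \operatorname{Hom}_{\Db(Z)}(E,F)\otimes_\fk\fk'$ compatibly with composition, so the bilinear equations $v\circ w=\id_E$, $w\circ v=\id_F$ define a closed subscheme $\mathcal{I}$ of an affine space over $\fk$ whose $\fk$-points (resp.\ $\fk'$-points) are exactly the isomorphisms $E\xrightarrow{\sim}F$ in $\Db(Z)$ (resp.\ $E_{\fk'}\xrightarrow{\sim}F_{\fk'}$ in $\Db(Z_{\fk'})$); if $E_{\fk'}\cong F_{\fk'}$ then $\mathcal{I}\times_\fk\fk'\neq\varnothing$, hence $\mathcal{I}\neq\varnothing$, hence (as $\fk$ is algebraically closed) $\mathcal{I}$ has a $\fk$-point, so $E\cong F$.

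For surjectivity, let $E$ be a spherical object of $\Db(Z_{\fk'})$. By a standard spreading-out argument for perfect complexes (of the kind used for line bundles in the proof of Lemma~\ref{lem:picardiso}; see also \cite[Ch.~17]{Huybrechts:K3book}), there is a finitely generated $\fk$-subalgebra $A\subset\fk'$, which we may shrink so that $A$ is a regular domain, and a perfect complex $\mathcal{E}$ on $Z_A := Z\times_\fk\Spec A$, perfect relative to $A$, whose derived base change to $\fk'$ is $E$. The functions $t\mapsto\dim_{\kappa(t)}\Ext^i_{Z_{\kappa(t)}}(\mathcal{E}_t,\mathcal{E}_t)$ on $\Spec A$ are upper semicontinuous, and $\Ext^2_{Z_{\kappa(t)}}(\mathcal{E}_t,\mathcal{E}_t)\cong\Ext^0_{Z_{\kappa(t)}}(\mathcal{E}_t,\mathcal{E}_t)^\vee$ fibrewise by Serre duality (using $K_Z\cong\mathcal{O}_Z$); since flat base change along $\operatorname{Frac}(A)\hookrightarrow\fk'$ shows the generic fibre is spherical, a further shrinking makes \emph{every} derived fibre $\mathcal{E}_t$ a spherical object of $\Db(Z_{\kappa(t)})$, while $U := \Spec A$ remains an integral $\fk$-variety. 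Now $\mathcal{E}$ is classified by a morphism $c\colon U\to\mathfrak{M}$ to the moduli stack of (universally gluable) objects of $\Db(Z)$, which is an algebraic stack locally of finite type over $\fk$ with tangent space $\Ext^1_{Z_{\kappa(t)}}(\mathcal{E}_t,\mathcal{E}_t)$ at $[\mathcal{E}_t]$. Sphericality forces this tangent space to vanish at every point of $U$, so $dc\equiv 0$; since $U$ is reduced and connected and $\fk$ has characteristic zero, $c$ is constant (reduce to schemes via an \'etale presentation of $\mathfrak{M}$ and invoke generic smoothness). Choosing a $\fk$-point $t_0\in U(\fk)$ (which exists as $\fk$ is algebraically closed), constancy of $c$ gives $\mathcal{E}_\eta\cong(\mathcal{E}_{t_0})_{\operatorname{Frac}(A)}$ in $\Db(Z_{\operatorname{Frac}(A)})$ (up to a harmless twist by a line bundle from $U$), and base-changing to $\fk'$ yields $E\cong(\mathcal{E}_{t_0})_{\fk'}$. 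Thus $E$ is the base change of the spherical object $\mathcal{E}_{t_0}$ of $\Db(Z)$, as required.

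The one genuinely geometric point — everything else being formal properties of perfect complexes under base change — is the last step: a flat family of objects on a $K3$ surface over an integral base, all of whose members are \emph{rigid}, is isotrivial. I expect this to be the crux, and the cleanest route is through the deformation theory of the moduli stack $\mathfrak{M}$ of complexes (Lieblich; Toën--Vaquié), whose tangent space at $[E]$ is $\Ext^1(E,E)$, together with the characteristic-zero fact that a morphism of finite-type schemes (or stacks) with everywhere-vanishing differential is locally constant; one also has to carry along the routine bookkeeping that the spread-out complex may be taken relatively perfect with gluable fibres and that gluability survives the base changes involved. The reduction to algebraically closed base fields, and the injectivity statement, are entirely parallel to Corollary~\ref{cor:picard}.
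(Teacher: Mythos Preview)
Your proposal is correct and follows essentially the same approach as the paper, which simply says ``the proof follows that of Corollary~\ref{cor:picard}, using the fact that spherical objects $\EuE$ are rigid because $\Ext^1(\EuE,\EuE)=0$ by definition.'' You have faithfully unpacked this: the reduction via $\bar{\fk}_0$ to an extension of algebraically closed fields, injectivity via flat base change on $\Hom$-spaces, and surjectivity by spreading out and using rigidity to force the classifying map to a moduli space of objects to be constant, are exactly the steps implicit in the paper's reference to Lemma~\ref{lem:picardiso} and Corollary~\ref{cor:picard}.
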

\begin{proof}
The proof follows that of Corollary \ref{cor:picard}, using the fact that spherical objects $\EuE$ are rigid because $\Ext^1(\EuE,\EuE) = 0$ by definition.
\end{proof}

\begin{lem}
\label{lem:lef_auteqs}
Let $X$ be a $K3$ surface over an algebraically closed field of characteristic zero, and $X_\C$ a complex model. 
Then there is an isomorphism $\Auteq \Db(X) \cong \Auteq \Db(X_\C)$, inducing isomorphisms:
\begin{equation}
\label{eqn:basechangeses}
\xymatrix{
1 \ar[r] & \Auteq^0\Db(X^\circ_\C)/[2] \ar[r] \ar@{<->}[d]^{\rotatebox[origin=c]{90}{$\sim$}} & \Auteq_{CY}\Db(X^\circ_\C)/[2] \ar[r] \ar@{<->}[d]^{\rotatebox[origin=c]{90}{$\sim$}} & \Aut HH_*(X^\circ_\C) \ar@{<-->}[d] \\
1 \ar[r] & \Auteq^0\Db(X^\circ)/[2] \ar[r] & \Auteq_{CY}\Db(X^\circ)/[2] \ar[r] & \Aut HH_*(X^\circ). }
\end{equation}
The dashed vertical arrow signifies an isomorphism between the images of the rightmost horizontal arrows. 
In other words, this diagram can be completed to an isomorphism of short exact sequences, by replacing the rightmost terms by the images of the rightmost horizontal arrows.
\end{lem}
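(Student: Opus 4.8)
The plan is to deduce everything from a single base-change statement: \emph{for a $K3$ surface $X'$ over an algebraically closed field $\fk'$ of characteristic zero and any field extension $\fK'/\fk'$, pull-back along $\fk'\hookrightarrow\fK'$ induces an isomorphism $\Auteq\Db(X')\xrightarrow{\sim}\Auteq\Db(X'_{\fK'})$}; this is \cite[Chapter~16, Section~4.2]{Huybrechts:K3book}. Granting it, the isomorphism $\Auteq\Db(X)\cong\Auteq\Db(X_\C)$ (and likewise with $X^\circ$ in place of $X$) is obtained exactly as in the proof of Corollary~\ref{cor:picard}: the algebraic closure $\bar\fk_0$ of the field of definition $\fk_0$ embeds into both $\C$ and $\fK$, so the two base-change maps $\Auteq\Db(X_0\times_{\fk_0}\bar\fk_0)\to\Auteq\Db(X_\C)$ and $\Auteq\Db(X_0\times_{\fk_0}\bar\fk_0)\to\Auteq\Db(X)$ are isomorphisms, and the desired one is the composite of one with the inverse of the other.

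\textbf{Proving the base-change statement.} Here I would follow the template of Lemmas~\ref{lem:picardiso} and \ref{lem:lef_spheres}. By Orlov's theorem \cite{Orl} every autoequivalence is of Fourier--Mukai type, with a kernel $P\in\Db(X'\times X')$. For injectivity: if $\Phi$ becomes isomorphic to the identity after base change, then its kernel becomes isomorphic to $\mathcal{O}_\Delta$ after base change, and since coherent cohomology -- hence the $\Hom$-spaces witnessing this isomorphism -- commutes with flat base change, one gets $P\cong\mathcal{O}_\Delta$ already, so $\Phi\cong\id$. For surjectivity: any Fourier--Mukai kernel over $\fK'$ is defined over a finitely generated $\fk'$-subalgebra, i.e. spreads out to a family of objects over $\Spec A$; one specialises to a closed point (whose residue field is $\fk'$, as $\fk'$ is algebraically closed), observes that the locus over which the family induces an equivalence is constructible and contains the generic point -- using that for $K3$ surfaces an equivalence kernel is recognised by simplicity together with numerical invariants, and that fully faithful implies an equivalence in the Calabi--Yau setting -- and uses rigidity (spherical and equivalence-type kernels do not deform, paralleling the rigidity of line bundles in Lemma~\ref{lem:picardiso}) to conclude that the autoequivalence descends.

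\textbf{Carrying across the decoration.} It remains to check that this isomorphism respects everything in the displayed diagram. The shift functor $[2]$ is preserved on the nose, so the isomorphism descends to the quotients by $[2]$. The action of any of these autoequivalences on Hochschild homology is the one induced by its Fourier--Mukai kernel (Section~\ref{sec:enh}), so it is compatible with base change because $HH_*$ of a variety commutes with flat base change; hence the subgroup acting trivially on $HH_*$ (namely $\Auteq^0$) and the subgroup acting trivially on $HH_2$ (namely $\Auteq_{CY}$, by \cite[Appendix~A]{BB}) are matched, giving the two solid vertical isomorphisms. For the dashed arrow: by the first paragraph every autoequivalence of $\Db(X^\circ)$ is already defined over $\bar\fk_0$ (indeed over a finite extension of $\fk_0$), so its action on $HH_*(X^\circ_0)$ is defined there; embedding this field into $\C$ and into $\fK$ base-changes that action compatibly, which identifies the image of $\Auteq_{CY}\Db(X^\circ_\C)/[2]$ in $\Aut HH_*(X^\circ_\C)$ with the image of $\Auteq_{CY}\Db(X^\circ)/[2]$ in $\Aut HH_*(X^\circ)$ -- precisely the assertion of the dashed arrow -- and completes the diagram to an isomorphism of short exact sequences.

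\textbf{Main obstacle.} The delicate point is surjectivity in the base-change statement: unlike the $\Pic$ case, where the relevant moduli is literally zero-dimensional, one genuinely has a positive-dimensional family of Fourier--Mukai kernels to control, and must argue both that specialisation preserves the property of defining an equivalence and that every autoequivalence over the larger field arises in this way. This is where the rigidity inputs and the criteria identifying equivalence kernels on $K3$ surfaces do the real work; everything else is formal bookkeeping around flat base change.
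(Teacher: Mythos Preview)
Your approach is essentially the same as the paper's. The paper's proof is a one-paragraph sketch: it says the argument follows that of Corollary~\ref{cor:picard}, with the key input being that Fourier--Mukai kernels defining autoequivalences are rigid (because $H^1(X,\mathcal{O}_X)=0$), that the action on Hochschild homology is compatible with flat base change (hence $\Auteq^0$ and $\Auteq_{CY}$ are preserved), and that the dashed arrow is induced by the base-change inclusions $HH_*(X_\C)\hookleftarrow HH_*(X_0\times_{\fk_0}\bar\fk_0)\hookrightarrow HH_*(X)$. You have reproduced all of this, and filled in the spreading-out details that the paper simply attributes to \cite[Chapter~16, Section~4.2]{Huybrechts:K3book}.

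One small sharpening: your rigidity justification is a bit diffuse (``spherical and equivalence-type kernels do not deform''), whereas the clean statement is that for an equivalence kernel $P$ one has $\Ext^1(P,P)\cong HH^1(X)$, and for a $K3$ this vanishes since both $H^1(\mathcal{O}_X)$ and $H^0(T_X)$ are zero. With that in hand your constructibility detour is unnecessary: rigidity at the generic point already forces the spread-out family of kernels to be (\'etale-)locally constant, so specialising to a closed $\fk'$-point recovers the same kernel and surjectivity follows directly. This is what the paper's terse ``rigid (because $H^1(X,\mathcal{O}_X)=0$)'' is pointing at, and is elaborated in the remark immediately following the lemma.
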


\begin{proof}
The proof follows that of Corollary \ref{cor:picard}, using the fact that the Fourier--Mukai kernels defining autoequivalences are rigid (because $H^1(X,\mathcal{O}_X) = 0$). 
The action of autoequivalences on Hochschild homology is compatible with flat base change, so these isomorphisms preserve $\Auteq^0$ (the subgroup acting trivially on $HH_*$) and $\Auteq_{CY}$ (the subgroup acting trivially on $HH_2$). 
The dashed vertical arrow is induced by the inclusions
\[ HH_*(X_\C) \hookleftarrow HH_*(X_0 \times_{\fk_0} \bar{\fk}_0) \hookrightarrow HH_*(X)\]
which are induced by base change by the inclusions $\C \hookleftarrow \bar{\fk}_0 \hookrightarrow \fK$.
\end{proof}

\begin{rmk} The previous result does not hold for general varieties, since if $H^1(X;\mathcal{O}_X)$ is non-zero then one can tensor by (flat) line bundles with different structure group after extending scalars; the result relies on the fact that the Picard group is discrete.
\end{rmk}   

Now we will consider point-like objects $\EuE$ of $\Db(X)$, i.e. objects satisfying $\Ext^*(\EuE,\EuE) \cong \wedge^* (\fK^{\oplus 2})$. 
Given such an $\EuE$, we may choose a finitely-generated $\Q \subset \fk_0 \subset \fK$ such that $X$ and the object $\EuE$ are defined over $\fk_0$ (indeed, for any finite set of objects we can choose a finitely-generated field $\fk_0$ over which they are all defined). 
Thus we have an object $\EuE_0$ of $\Db(X_0 \times_{\fk_0} \bar{\fk}_0)$ which pulls back to $\EuE$ on $X$ and to $\EuE_\C$ on the complex model $X_\C$. 
Since coherent cohomology commutes with flat base change, $\EuE_0$ and $\EuE_\C$ are also point-like.

\begin{lem}
\label{lem:pointsnonvan}
Let $X$ be a $K3$ surface over an algebraically closed field of characteristic zero, with $\rho(X) = 1$. Then any point-like object $\EuE$ of $\Db(X)$ has non-zero Chern character $\Ch(\EuE) \neq 0 \in HH_0(X)$.
\end{lem}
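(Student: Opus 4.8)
The plan is to reduce to a complex $K3$ surface via the Lefschetz principle, translate the vanishing of $\Ch(\EuE)$ into vanishing of a Mukai vector, and then rule the latter out using the structure theory of stability conditions (with a more hands-on fallback through moduli of sheaves). First I would run the Lefschetz reduction of Section~\ref{Sec:Lefschetz}: choose a finitely generated field $\Q \subset \fk_0 \subset \fK$ over which both $X$ and the object $\EuE$ are defined, fix an embedding $\bar{\fk}_0 \hookrightarrow \C$, and pass to a complex model $X_\C$ carrying a point-like object $\EuE_\C$, exactly as in the paragraph preceding the statement. Since the Chern character and the identifications of Lemma~\ref{lem:lef_auteqs} are compatible with flat base change, $\Ch(\EuE) = 0$ in $HH_0(X)$ if and only if $\Ch(\EuE_\C) = 0$ in $HH_0(X_\C)$, and $\rho(X_\C) = 1$ by Corollary~\ref{cor:picard}. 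So we may assume $X$ is a complex $K3$ with $\rho(X) = 1$. Under the identification \eqref{eq:idhh} (for $X$ in place of $X^\circ$), $\Ch(\EuE)$ corresponds, up to the invertible operator $\sqrt{\mathrm{td}(X)}\wedge(-)$, to the Mukai vector $v(\EuE) \in \N(X)$, so it suffices to prove $v(\EuE) \neq 0$. Note $v(\EuE)^2 = -\chi(\EuE,\EuE) = 0$ by Riemann--Roch and point-likeness, so in any case $v(\EuE)$ would be isotropic.

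Now suppose for contradiction that $v(\EuE) = 0$, equivalently $[\EuE] = 0$ in $K_{num}(\Db(X))$, so that $Z(\EuE) = 0$ for the central charge $Z$ of \emph{every} numerical stability condition on $\Db(X)$. The key point is that this is impossible for a point-like object when $\rho(X) = 1$: such an object is simple, indecomposable, and satisfies $\Ext^{<0}(\EuE,\EuE) = 0$, and using the Picard-rank-one hypothesis together with the Bayer--Bridgeland analysis of $\Stab^\dag(X)$ (Theorem~\ref{thm:bb} and \cite{BB}) --- which pins down the spherical and isotropic classes and the wall-and-chamber structure --- one should be able to show that it behaves like a skyscraper sheaf: for instance that it is a shift of a sheaf that is Gieseker-stable with respect to the ample generator, or that it lies in the orbit of some $\mathcal{O}_x$ under $\Auteq\Db(X)$ (in which case $v(\EuE) = \varpi_K(\Phi)\bigl(v(\mathcal{O}_x)\bigr) \neq 0$ at once, $\varpi_K(\Phi)$ being an isometry), and in particular that it is $\sigma$-semistable for some $\sigma \in \Stab^\dag(X)$. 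A $\sigma$-semistable object lies in some $\P(\phi)$, and hence $Z(\EuE) \in \R_{>0}\cdot e^{i\pi\phi}$, in particular $Z(\EuE) \neq 0$; this contradicts $Z(\EuE) = 0$.

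As a more elementary alternative route, one can argue directly with cohomology sheaves. On a Picard-rank-one $K3$ every nonzero coherent sheaf has nonzero Mukai vector: a torsion sheaf is supported on curves numerically proportional to the ample generator, and a torsion-free sheaf has positive rank. Hence $v(\EuE) = 0$ forces $\EuE$ to be a genuine complex, with nonzero cohomology sheaves spanning at least two degrees $a < b$; one would then feed the truncation maps $\mathcal{H}^a(\EuE)[-a] \to \EuE \to \mathcal{H}^b(\EuE)[-b]$ and Serre duality into the hyper-$\Ext$ spectral sequence for $R\Hom(\EuE,\EuE)$ to produce a nonzero class in $\Ext^{<0}(\EuE,\EuE)$, again contradicting point-likeness.

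In either route I expect the crux, and the main obstacle, to be precisely this last step: showing that a point-like object on a Picard-rank-one $K3$ surface cannot have vanishing Mukai vector --- that is, that point-like objects there are forced to be ``close to'' skyscraper sheaves (shifted stable sheaves, or images of $\mathcal{O}_x$ under an autoequivalence). By contrast, the Lefschetz reduction and the passage from $\Ch$ to $v$ are routine, and it is exactly this rigidity step where the Picard-rank-one hypothesis (and hence the applicability of Bayer--Bridgeland) is used.
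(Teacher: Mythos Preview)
Your main route is essentially the paper's own argument: reduce to $\C$ by the Lefschetz principle, then use Bayer--Bridgeland to show that any point-like object on a Picard-rank-one complex $K3$ is semistable (in fact quasi-stable) for some Bridgeland stability condition, hence has nonzero Mukai vector. The paper is simply more precise where you are vague: the specific inputs are \cite[Lemma~6.3]{BB}, which says that a point-like object has $\sigma$-width zero for some $\sigma$, and \cite[Proposition~3.15]{BB}, which upgrades width zero to quasi-stability (possibly after a small perturbation of $\sigma$). You do not need the stronger statements you float (that $\EuE$ is a shifted Gieseker-stable sheaf, or that $\EuE$ lies in the $\Auteq$-orbit of a skyscraper); semistability alone already gives $Z(\EuE)\neq 0$.

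Your ``elementary alternative route'' has a genuine gap. You correctly observe that on a Picard-rank-one $K3$ every nonzero coherent sheaf has nonzero Mukai vector, so $v(\EuE)=0$ forces $\EuE$ to have standard cohomology in at least two degrees. But the step from there to a nonzero class in $\Ext^{<0}(\EuE,\EuE)$ does not follow from the truncation maps and the spectral sequence as you suggest. Indeed, any object lying in the heart of a nonstandard bounded $t$-structure (e.g.\ a tilt of $\mathrm{Coh}(X)$) automatically has $\Hom^{<0}(\EuE,\EuE)=0$, yet typically has standard cohomology sheaves spread over two degrees; so ``cohomology in several degrees'' is simply not enough to manufacture negative self-Exts. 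Making this route work would require exactly the kind of control over point-like objects that the Bayer--Bridgeland machinery provides, so it is not a genuine shortcut.
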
 

\begin{proof}
First we prove the result assuming $\fK = \C$. 
Bayer and Bridgeland show that any point-like (or spherical) object $\EuE$ of $\Db(X)$ is quasi-stable for some stability condition, meaning that it is semistable and all its stable factors have positively proportional Mukai vector. Indeed, for a stability condition $\sigma$, let the  $\sigma$-width of a point-like object be the difference between the phases of the maximal and minimal semistable factors in its Harder--Narasimhan filtration. Then \cite[Lemma 6.3]{BB} shows that there exists a stability condition $\sigma$ such that $\EuE$ has $\sigma$-width $0$, and \cite[Proposition 3.15]{BB} shows that either $\EuE$ is $\sigma$-quasi-stable, or it is $\sigma'$-quasi-stable for a stability condition $\sigma'$ near $\sigma$.
A quasi-stable object has non-trivial Mukai vector $v(\EuE) \neq 0 \in \N(X)$, and therefore non-trivial Chern character $\Ch(\EuE) \neq 0 \in HH_0(X)$ as required.

Now we address the general case. 
Let $\EuE$ be a point-like object of $\Db(X)$, and let us choose a complex model $X_\C$ for $X$ over which $\EuE$ is defined. 
Taking Hochschild homology and Chern characters commute with flat base change, so $\EuE$ has non-zero Chern character if and only if $\EuE_\C$ does: since $\rho(X_\C) = \rho(X) = 1$ by Corollary \ref{cor:picard}, the result follows from the case $\fK=\C$ proved above.
\end{proof}

\begin{lem}
\label{lem:pointsprim}
Let $X$ be a $K3$ surface over an algebraically closed field of characteristic zero, with $\Pic(X) \cong \langle 2n \rangle$ where $n$ is square-free. 
Then for any point-like object $\EuE$ of $\Db(X)$, there exists a spherical object $S$ with $\chi(\EuE,S) = 1$.
\end{lem}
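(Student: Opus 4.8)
The plan is to reduce the statement to a computation in the Mukai lattice $\N(X)\cong U\oplus\langle 2n\rangle$, using two facts: that the Mukai vector $v := v(\EuE)$ is a \emph{primitive} isotropic class, and that every class in $\Delta(\N(X))$ is the Mukai vector of some spherical object of $\Db(X)$. Granting these, it is enough to produce $w\in\Delta(\N(X))$ with $(v,w)=-1$; then for a spherical $S$ with $v(S)=w$, Riemann--Roch (Remark~\ref{rmk:Nrelevance}) gives $\chi(\EuE,S) = -(v(\EuE),v(S)) = -(v,w) = 1$.

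Isotropy of $v$ is immediate: $\Ext^*(\EuE,\EuE)\cong\wedge^*(\fK^{\oplus 2})$ gives $\chi(\EuE,\EuE) = 1-2+1 = 0$, hence $v^2 = -\chi(\EuE,\EuE) = 0$. For primitivity I would use, as in the proof of Lemma~\ref{lem:pointsnonvan}, that $\EuE$ is $\sigma$-quasi-stable for some stability condition $\sigma$: its Jordan--H\"older factors are then $\sigma$-stable with Mukai vectors positive multiples of a single primitive class $v_0$, necessarily isotropic since $v^2=0$. Because $\Hom$, $\Ext^2$, and hence $\Ext^1$, all vanish between non-isomorphic stable objects of the same phase with proportional isotropic Mukai vectors, $\EuE$ can involve only one isomorphism class of factor (otherwise it splits as a direct sum, contradicting $\dim\Hom(\EuE,\EuE)=1$), so it is an iterated self-extension of a single stable object $A$; and a self-extension of length $\ge 2$ carries a non-scalar endomorphism, so in fact $\EuE=A$ is stable. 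The Mukai vector of a stable object with isotropic Mukai vector is primitive (this is standard, and for $\rho(X)=1$ is implicit in \cite{BB}), so $v$ is primitive. I expect this primitivity step to be the most delicate point to write out carefully.

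For the lattice computation, fix a basis of $\N(X)\cong U\oplus\langle 2n\rangle$ with isotropic generators $e,f$ of $U$, $(e,f)=1$, and $h$ spanning $\langle 2n\rangle$, and write $v = ae+bf+ch$. Isotropy reads $ab = -nc^2$, and the divisor $\operatorname{div}(v) := \gcd\{(v,x):x\in\N(X)\}$ equals $\gcd(a,b,2nc)$. I claim this is $1$: if a prime $p$ divides $a$, $b$ and $2nc$, then $p^2\mid ab = -nc^2$; primitivity forbids $p\mid c$ (else $p\mid\gcd(a,b,c)$), so $p^2\mid n$, contradicting that $n$ is square-free. (This is the sole use of the hypothesis, and it is essential: for $n=4$ the primitive isotropic class $2e-2f+h$ has divisor $2$.) Hence some $w_0\in\N(X)$ satisfies $(v,w_0)=-1$; since $\N(X)$ is even, $w_0^2$ is even, and $w := w_0 + \bigl(1+\tfrac12 w_0^2\bigr)v$ satisfies, by a one-line computation using $(v,v)=0$, $(v,w) = (v,w_0) = -1$ and $w^2 = w_0^2 - 2\bigl(1+\tfrac12 w_0^2\bigr) = -2$, so $w\in\Delta(\N(X))$.

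Finally, every $(-2)$-class $w\in\N(X)$ is the Mukai vector of a spherical object. Over $\C$ this is classical: $w$ is realised by a $\sigma$-stable object in a Bridgeland moduli space of dimension $w^2+2 = 0$, which is rigid, hence spherical. The general case follows from the Lefschetz principle, using $\N(X)\cong\N(X_\C)$ compatibly (Corollary~\ref{cor:picard}) and the Mukai-vector-preserving bijection between spherical objects of $\Db(X)$ and of $\Db(X_\C)$ (Lemma~\ref{lem:lef_spheres}). Combined with the lattice computation, this produces the required $S$, and $\chi(\EuE,S)=1$ as above.
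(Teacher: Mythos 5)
Your proof is correct, but it takes a genuinely different route from the paper's. The paper also first reduces to showing $v(\EuE)$ is primitive isotropic (via the Bayer--Macr\`i identification $\scrM(v)\cong \Sym^m(\scrM(v_0))$ and upper semicontinuity of $\Ext^1$, rather than your splitting/self-extension argument), but then proceeds geometrically: the moduli space $\scrM(v)$ is a twisted $K3$ surface $(Y,\alpha)$ with $\Db(X)\simeq\Db(Y,\alpha)$ sending $\EuE$ to a skyscraper sheaf, Ma's counting formula for twisted Fourier--Mukai partners shows that square-freeness of $n$ forces $\alpha=0$, and then $S=\eta^{-1}(\mathcal{O}_Y)$ does the job since $\chi(\mathcal{O}_y,\mathcal{O}_Y)=1$. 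Your lattice-theoretic argument is the arithmetic shadow of this: the order of the Brauer class $\alpha$ is exactly the divisibility $\operatorname{div}(v)$ of $v$ in $\N(X)$, and your gcd computation in $U\oplus\langle 2n\rangle$ is precisely what makes Ma's formula give a trivial Brauer class. What your route buys is self-containedness -- no twisted sheaves and no classification of twisted FM partners, only an elementary divisibility argument plus the fact (already used elsewhere in the paper, via \cite[Proposition 2.3]{BB}) that every class in $\Delta(\N(X))$ is the Mukai vector of a spherical object; what the paper's route buys is that the primitivity of $v$ and the existence of the pairing-one spherical object come out of a single geometric package. The one place you lean on an unproved assertion is ``the Mukai vector of a stable object with isotropic Mukai vector is primitive'': this is true, but the clean justification is essentially the paper's own step (deform to a nearby generic stability condition, where Bayer--Macr\`i show that every semistable object of class $mv_0$ with $m\ge 2$ is S-equivalent to a direct sum, hence not stable), or equivalently the $\Sym^m$ plus semicontinuity argument -- so in a full write-up you would end up importing that ingredient anyway, and could in fact skip your stability reduction and use semicontinuity directly to get $m=1$.
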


\begin{proof}
First we prove the result for $\fK = \C$. 
Let $\EuE$ be a point-like object of $\Db(X)$: we will start by showing that there exists a $K3$ surface $Y$ and an equivalence $\eta: \Db(X) \xrightarrow{\sim} \Db(Y)$ taking $\EuE$ to the skyscraper sheaf of a point. 
The set of stability conditions making $\EuE$ quasi-stable is open \cite[Proposition 2.10]{BB}, and non-empty since $\EuE$ is point-like, cf. the proof of Lemma \ref{lem:nonzero_in_homology}. Thus we can pick a stability condition $\sigma$ which makes $\EuE$ quasi-stable and which is generic in the sense of \cite{Bayer-Macri}.  Suppose the Mukai vector of $\EuE$ is $v = m \cdot v_0$ where $v_0$ is primitive and $m \in \bZ_+$.  The Mukai pairing $(v_0,v_0)=0$, since $\chi(\EuE,\EuE) = 0$ using the fact that $\EuE$ is point-like.  Now, in \cite[Section 6 and Proof of Lemma 7.2]{Bayer-Macri}, Bayer and Macr\`i show that there is a non-empty projective moduli stack of $\sigma$-semistable objects with the same Mukai vector as $\EuE$. Let $\scrM(v)$ be its coarse moduli space; then the coarse moduli space of $\scrM(v_0)$ is again a $K3$ surface, which has a distinguished Brauer class $\alpha$.  There is a Fourier--Mukai equivalence
\[
\eta: \Db(X) \xrightarrow {\sim}\Db(\scrM(v_0), \alpha)
\]
(where the right-hand side denotes the derived category of twisted sheaves), which takes any complex in $\Db(X)$ defining a point of $\scrM(v)$ to a torsion sheaf on the twisted $K3$ surface $\scrM(v_0)$ of dimension $0$ and length $m$. \cite{Bayer-Macri} proves that $\eta$ identifies $\scrM(v)$ with the $m$-th symmetric product $\Sym^m(\scrM(v_0))$. It follows that the general point of $\scrM(v)$ corresponds, under this identification, to the skyscraper sheaf of an $m$-tuple of pairwise distinct points, which has $\Ext^1$ of rank $2m$. Since $\Ext^1$ varies upper semicontinuously,  it follows that $\Ext^1(\EuE,\EuE)$ has rank at least $2m$; since $\EuE$ is point-like, $m=1$ and $v=v_0$.  The derived equivalence $\eta$ then takes $\EuE$ to the skyscraper sheaf of a point on the twisted $K3$ surface $(\scrM(v), \alpha)$. 

Recall that a $K3$ surface $Y$ equipped with a Brauer class $\beta$ is called a \emph{twisted Fourier--Mukai partner} of $X$ if there is an equivalence $\Db(X) \simeq \Db(Y,\beta)$: so $(Y,\beta) \coloneqq (\scrM(v),\alpha)$ is a twisted Fourier--Mukai partner of $X$. 
Ma gives a formula for the number of twisted Fourier--Mukai partners of $X$ with Brauer class of a given order \cite[Proposition 5.1]{MaShouhei}. 
His result shows that if $\Pic(X) \cong \langle 2n \rangle$ with $n$ square-free, then any twisted Fourier--Mukai partner of $X$ has trivial Brauer class; since this holds by hypothesis in our case, we must have $\beta=0$.
Now $\mathcal{O}_Y$ is a spherical object of $\Db(Y)$ with $\chi(\mathcal{O}_y,\mathcal{O}_Y)=1$ for any $y \in Y$, so if we set  $S\coloneqq \eta^{-1}(\mathcal{O}_{Y})$ then $\chi(\EuE,S) = 1$ as required.

Now we address the general case. 
Let $\EuE$ be a point-like object, and let us choose a complex model $X_\C$ for $X$ over which $\EuE$ is defined. 
Then $\EuE_\C$ is a point-like object of $\Db(X_\C)$, and $\Pic(X_\C) \cong \Pic(X)$ by Corollary \ref{cor:picard}, so there exists a spherical object $S_\C$ of $\Db(X_\C)$ for which $\chi(\EuE_\C,S_\C)=1$ by the previous argument. 
The spherical object descends to $X_0 \times_{\fk_0} \bar{\fk}_0$ by Lemma \ref{lem:lef_spheres}, so we obtain a spherical object $S$ of $\Db(X)$ with $\chi(\EuE,S) = 1$ as required.  
\end{proof}

\subsection{Obtaining a Picard rank one mirror}

Let $K$ be an algebraically closed field of characteristic zero.  Let $p: \EuX \rightarrow \cM$ be a family of $K3$ surfaces over $K$, i.e. a proper smooth morphism of relative dimension $2$ with both the relative dualising sheaf and $R^1p_*\mathcal{O}_{\EuX}$ being trivial.  

\begin{thm}\label{thm:MP}
Let $\rho_0$ be the Picard rank of the generic fibre $\EuX_{\eta}$. The locus $\{t \in \cM \, | \, \rho(\EuX_t) > \rho_0 \}$ (called the \emph{Noether--Lefschetz locus}) is a countable union of positive-codimension algebraic subvarieties.
\end{thm}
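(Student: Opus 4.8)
The plan is to reduce the statement over an arbitrary algebraically closed field $K$ of characteristic zero to the classical complex case via the Lefschetz principle developed in Section~\ref{Sec:Lefschetz}, and then invoke the Noether--Lefschetz theorem for families of $K3$ surfaces over $\C$ (as found in \cite{VoisinI} or \cite{Huybrechts:K3book}). The first step is to spread out: the family $p : \EuX \to \cM$ is defined by finitely many equations, so there is a finitely generated field $\Q \subset \fk_0 \subset K$ and a family $p_0 : \EuX_0 \to \cM_0$ over $\fk_0$ whose base change to $K$ recovers $p : \EuX \to \cM$. Choosing an embedding $\fk_0 \hookrightarrow \C$ (equivalently extending to $\bar{\fk}_0 \hookrightarrow \C$) we obtain a complex family $p_\C : \EuX_\C \to \cM_\C$. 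Since the relevant cohomological conditions defining a $K3$ surface commute with flat base change, the fibres of $p_\C$ are again $K3$ surfaces.

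**Next I would** compare Picard ranks fibrewise. For a $K$-point $t \in \cM(K)$, after enlarging $\fk_0$ so that $t$ is defined over $\fk_0$, Corollary~\ref{cor:picard} (and its underlying Lemma~\ref{lem:picardiso}) gives $\Pic(\EuX_t) \cong \Pic((\EuX_0)_t \times_{\fk_0} \bar{\fk}_0) \cong \Pic((\EuX_\C)_{t_\C})$, and in particular $\rho(\EuX_t) = \rho((\EuX_\C)_{t_\C})$. The same argument applied to the generic fibre $\EuX_\eta$ shows that $\rho_0$ equals the Picard rank of the generic fibre of $p_\C$. Therefore the Noether--Lefschetz locus of $p : \EuX \to \cM$ is, set-theoretically, the preimage of the Noether--Lefschetz locus of $p_\C : \EuX_\C \to \cM_\C$ under the base-change map on points. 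By the classical Noether--Lefschetz theorem over $\C$, the latter is a countable union of closed algebraic subvarieties $Z_j \subset \cM_\C$, each of positive codimension: this follows from Hodge theory, since the locus where a given primitive class of type $(1,1)$ persists is cut out by countably many Hodge-theoretic conditions, each carving out a proper analytic (hence, by GAGA over the projective base, algebraic) subvariety. Each $Z_j$ is defined over a finitely generated extension of $\Q$, so descends to a subvariety $Z_{j,0} \subset \cM_0$ after possibly enlarging $\fk_0$, and we set $W_j \coloneqq Z_{j,0} \times_{\fk_0} K \subset \cM$; these are positive-codimension algebraic subvarieties of $\cM$ whose union is the Noether--Lefschetz locus.

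**The main obstacle** is making the descent of the countable family $\{Z_j\}$ precise: one must check that the $\bar{\fk}_0$-points lying in the Picard-jumping locus are exactly those whose image in $\cM_\C$ lands in some $Z_j$, and that this containment is insensitive to which embedding $\bar{\fk}_0 \hookrightarrow \C$ was chosen. The key point is that whether $\rho(\EuX_t)$ exceeds $\rho_0$ is detected by the existence of a line bundle, and line bundles on $K3$ surfaces are rigid (they have no nontrivial deformations, since $H^1(\mathcal{O}) = 0$), so the Picard-jumping condition is already a condition over $\bar{\fk}_0$ and does not depend on the complex embedding --- this is precisely the content of Lemma~\ref{lem:picardiso}. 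With that rigidity in hand, the set of $t$ with $\rho(\EuX_t) > \rho_0$ is a countable union of algebraic subvarieties over $\bar{\fk}_0$, each of positive codimension because its base change to $\C$ is, and base-changing up to $K$ completes the proof. (Alternatively, one can avoid the descent bookkeeping entirely by working directly with the arithmetic/Hodge-theoretic description of the Noether--Lefschetz locus, but the complex-model reduction is the most economical route given the machinery already assembled in Section~\ref{Sec:Lefschetz}.)
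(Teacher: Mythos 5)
The paper does not actually prove this statement: it records it as classical and cites \cite{Ciliberto_et_al} and \cite{Maulik-Poonen}, the latter of which treats general algebraically closed fields and, in characteristic zero, argues exactly as you do --- spread out to a finitely generated field, embed in $\C$, and invoke the Hodge-theoretic description of the Noether--Lefschetz locus. So your overall strategy is the right one and consistent with the paper's intended source. Two steps in your sketch, however, need repair.

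First, the parenthetical ``proper analytic (hence, by GAGA over the projective base, algebraic)'' is wrong as stated: $\cM$ is not assumed projective, and in the paper's application it is affine ($\cM = \bA^{\Xi_0}$), so GAGA does not apply. Algebraicity of the components of the Noether--Lefschetz locus over $\C$ is genuinely nontrivial in general (Cattani--Deligne--Kaplan); for divisor classes one can instead argue via the Lefschetz $(1,1)$-theorem together with the relative Picard (or Hilbert) scheme: the jumping locus is a union of images of components of $\Pic_{\EuX_\C/\cM_\C}$, each quasi-finite over $\cM_\C$ since $H^1(\mathcal{O})=0$, and closedness follows from upper semicontinuity of the Picard number under specialization in characteristic zero. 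Second, the assertion that ``each $Z_j$ is defined over a finitely generated extension of $\Q$, so descends to $\cM_0$ after possibly enlarging $\fk_0$'' is the crux of the descent and is left unjustified; as written it is also delicate, since one cannot enlarge $\fk_0$ once to accommodate infinitely many $Z_j$ unless each is already algebraic over the original $\fk_0$. The clean fix is either (i) to note that the union $\bigcup_j Z_j$ is stable under $\Aut(\C/\bar{\fk}_0)$ (Picard ranks are preserved by abstract field automorphisms) and has only countably many irreducible components, which forces each component to be defined over $\bar{\fk}_0$; or (ii) to take the $Z_j$ to be images of components of the relative Picard scheme, which is defined over $\fk_0$ from the outset, so no descent is needed. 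A last minor point: $\rho_0$ should be compared via the \emph{geometric} generic fibres, since Lemma \ref{lem:picardiso} requires an algebraically closed base field. With these patches the argument goes through.
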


\begin{proof} This is classical; references which explicitly deal with general algebraically closed fields include \cite{Ciliberto_et_al} and \cite{Maulik-Poonen}.
\end{proof}

Now we consider the family $\EuX^\circ \xrightarrow{d} \cM = \bA^{\Xi_0}$ of hypersurfaces in $\bar Y^\circ$ that is defined by \eqref{eqn:B-side} (it is defined over $\Z$, hence over any field). 
Suppose that $\bar Y^\circ$ is smooth as a scheme, away from its toric fixed points, so that the generic fibre of $\EuX^\circ$ is a smooth $K3$ surface. 
Suppose furthermore that, after base changing to $\C$, the generic fibre has Picard rank $\rho_0$.

\begin{prop}\label{prop:picard_rk_one_exists}
There exists a set $\Upsilon \subset \R^{\Xi_0}$, a countable union of hyperplanes of rational slope passing through the origin, such that if $d \in \Lambda^{\Xi_0}$ has valuation $\val(d) \notin \Upsilon$, then $X^{\circ}_d$ is smooth with Picard rank $\rho_0$.
\end{prop}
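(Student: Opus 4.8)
The plan is to exhibit the ``bad'' locus in $\bA^{\Xi_0}$ --- the set of parameters $d$ for which $X^\circ_d$ is singular, or is smooth with Picard rank exceeding $\rho_0$ --- as a countable union of \emph{proper} closed subvarieties which are \emph{defined over $\bar\Q$}, and then to exploit the fact that a subvariety defined over $\bar\Q$ is cut out by (Laurent) polynomials whose coefficients lie in $\bar\Q^{*}\subset\Lambda^{*}$ and hence have valuation zero. The tropicalisation of such a polynomial is a fan, and in particular is contained in a finite union of rational linear hyperplanes through the origin. (Note that $\val(d)\notin\Upsilon\subset\R^{\Xi_0}$ forces $\val(d)$ to have all coordinates finite, i.e.\ $d$ to lie in the big torus $(\Lambda^{*})^{\Xi_0}$, so we may restrict attention to that torus throughout.)

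Concretely, I would first note that the locus $Z_{\mathrm{sing}}$ of singular fibres of $\EuX^\circ\to\bA^{\Xi_0}$ is a proper closed subvariety defined over $\Z$ --- it is contained in the discriminant hypersurface $\{\Delta=0\}$ for some $\Delta\in\Z[d_\kappa:\kappa\in\Xi_0]$, and $\Delta$ is nonzero because $\bar Y^\circ$ is smooth away from its toric fixed points and the generic fibre is a smooth $K3$. Next I would pass to $\bar\Q$: by the Lefschetz principle the generic fibre of $\EuX^\circ_{\bar\Q}\to\bA^{\Xi_0}_{\bar\Q}$ still has Picard rank $\rho_0$, since it and the generic fibre over $\C$ --- which has Picard rank $\rho_0$ by hypothesis --- are related by an extension of algebraically closed fields, and Picard rank is insensitive to such extensions (Lemma \ref{lem:picardiso}). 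Applying Theorem \ref{thm:MP} to the smooth family over $\bA^{\Xi_0}_{\bar\Q}\setminus Z_{\mathrm{sing}}$ then presents the Noether--Lefschetz locus as a countable union $\bigcup_j Z_j$ of positive-codimension $\bar\Q$-subvarieties. Finally, for $d\in(\Lambda^{*})^{\Xi_0}$ I would invoke Lemma \ref{lem:picardiso} once more --- over a finitely generated subfield of $\Lambda$ containing all the $d_\kappa$, together with its algebraic closure inside $\Lambda$ --- to identify $\rho(X^\circ_d)$ with the Picard rank of the corresponding fibre over $\bar\Q$; it follows that $X^\circ_d$ is smooth with Picard rank $\rho_0$ \emph{unless} $d$ is a $\Lambda$-point of $Z_{\mathrm{sing}}$ or of some $Z_j$.

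It then remains to bound the valuations of the $\Lambda$-points of $Z_{\mathrm{sing}}\cup\bigcup_j Z_j$. Each of these subvarieties meets the torus $(\Lambda^{*})^{\Xi_0}$ in a proper closed subset (or not at all, in which case it is irrelevant), hence is contained in the vanishing locus of some Laurent polynomial $g\in\bar\Q[d_\kappa^{\pm1}]$ whose support $A\subset\Z^{\Xi_0}$ has at least two elements. Writing $g=\sum_{\alpha\in A}c_\alpha d^\alpha$ with each $\val(c_\alpha)=0$, the ultrametric inequality gives $\val(g(d))\ge\min_{\alpha\in A}\langle\alpha,\val(d)\rangle$, with equality whenever the minimum is attained exactly once; so if $g(d)=0$ then the minimum is attained at least twice, i.e.\ $\langle\alpha-\beta,\val(d)\rangle=0$ for some distinct $\alpha,\beta\in A$. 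Thus the valuations of the $\Lambda$-points of $\{g=0\}$ lie in the finite union $\bigcup_{\alpha\ne\beta\in A}(\alpha-\beta)^{\perp}$ of rational hyperplanes through the origin, where $(\alpha-\beta)^{\perp}=\{v\in\R^{\Xi_0}:\langle\alpha-\beta,v\rangle=0\}$. Letting $\Upsilon$ be the union, over $Z_{\mathrm{sing}}$ and all the $Z_j$, of these finite families of hyperplanes produces a countable union of rational hyperplanes through the origin with the required property.

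I expect the main obstacle to lie in the descent bookkeeping of the second paragraph rather than in the elementary tropical estimate of the third: one must ensure that the Noether--Lefschetz locus is genuinely defined over $\bar\Q$ and that jumps in the Picard rank of the Novikov-field fibres $X^\circ_d$ are detected by it --- equivalently, that the formation of the Noether--Lefschetz locus is compatible with the field extensions $\Q\subset\bar\Q\subset\Lambda$. This is exactly what the Lefschetz-principle results of Section \ref{Sec:Lefschetz} are designed to supply, via the rigidity of line bundles on $K3$ surfaces, but it is the step that needs care.
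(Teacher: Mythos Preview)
Your argument is correct, but the paper takes a different and somewhat slicker route to the same conclusion. Both proofs identify the bad locus (discriminant plus Noether--Lefschetz) as a countable union of proper subvarieties and then control its image under valuation via tropical geometry; the difference lies in how one shows that the resulting rational hyperplanes pass through the origin.

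You arrange this by descending to $\bar\Q$: since the family is defined over $\Z$, the discriminant and the Noether--Lefschetz components can be cut out by polynomials with coefficients in $\bar\Q^*$, hence of valuation zero, so their tropicalisations are automatically fans. The paper instead works directly over $\Lambda$ throughout: it applies Theorem~\ref{thm:MP} over $\Lambda$ to get the Noether--Lefschetz locus as a countable union of $\Lambda$-hypersurfaces, whose tropical amoebae are a priori only contained in finitely many \emph{affine} rational hyperplanes. To upgrade ``affine'' to ``linear'' the paper observes that for each $a\in\R_{>0}$ the field automorphism $\psi_a\colon q\mapsto q^a$ of $\Lambda$ induces $\Psi_a$ on $\Lambda^{\Xi_0}$ with $\val\circ\Psi_a = a\cdot\val$ and $X^\circ_{\Psi_a(d)}\cong\psi_a^*X^\circ_d$; hence the bad locus is $\Psi_a$-invariant, its tropical amoeba is invariant under scaling by every $a>0$, and the hyperplanes must pass through the origin.

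Your approach trades this rescaling trick for the base-change bookkeeping you flag in your final paragraph (that the Noether--Lefschetz locus over $\Lambda$ is the base change of the one over $\bar\Q$, which follows from \'etaleness of the relative Picard scheme). The paper's approach avoids that bookkeeping entirely at the cost of the extra symmetry argument; yours has the advantage that the ``through the origin'' conclusion is immediate once descent is in hand.
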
 

\begin{proof}
Let $\Delta \subset \cM$ denote the discriminant locus of the family $\EuX^\circ$. 
By Lemma \ref{lem:picardiso}, the generic fibre of the family after base changing to $\Lambda$ has Picard rank $\rho_0$. 
 By Theorem \ref{thm:MP}, the hypersurfaces of higher Picard rank are contained in a countable family of algebraic hypersurfaces in $\Lambda^{\Xi_0} \setminus \Delta$.
 
We now restrict the family to $(\Lambda^*)^{\Xi_0} \setminus \Delta$. 
The valuation image of any algebraic hypersurface in $(\Lambda^*)^{\Xi_0}$ is a polyhedral complex of dimension $|\Xi_0|-1$ called the \emph{tropical amoeba} (see, e.g., \cite[Proposition 3.1.6]{Maclagan2007}). 
In particular it is contained in a finite union of \emph{affine} hyperplanes of rational slope. 

Now we observe that, for any $a \in \R_{>0}$, the map $q \mapsto q^a$ extends to an automorphism $\psi_a$ of $\Lambda$. 
We define a corresponding automorphism $\Psi_a = (\psi_a,\ldots,\psi_a)$ of $\Lambda^{\Xi_0}$, which satisfies $\val(\Psi_a(d)) = a \cdot \val(d)$ and $X^\circ_{\Psi_a(d)} \cong \psi_a^* X^\circ_d$. 
It follows that the discriminant and Noether--Lefschetz loci are invariant under $\Psi_a$, hence that their tropical amoebae are invariant under scaling by $a$. 
Since $a \in \R_{>0}$ was arbitrary, it follows that the affine hyperplanes making up the tropical amoebae pass through the origin. 
Therefore we can take $\Upsilon$ to be the union of linear hyperplanes containing the tropical amoebae of the discriminant and Noether--Lefschetz loci. \end{proof}

\begin{prop}
\label{prop:irratmeansgen}
In the situation of Theorem \ref{Thm:HMS}, suppose that $\lambda \in (\R_{>0})^{\Xi_0}$ is irrational, i.e. does not lie on any rational hyperplane. 
Then  $X^{\circ}_{d}$ has minimal Picard rank $\rho_0$.
\end{prop}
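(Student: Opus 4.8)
The plan is to obtain this essentially for free from Proposition~\ref{prop:picard_rk_one_exists}, so that the only real content is valuation bookkeeping together with unwinding what ``irrational'' means. First I would recall the data supplied by Theorem~\ref{Thm:HMS}: to the pair $(\pol,\lambda)$ it attaches a class $d \in \Lambda^{\Xi_0}$ with $\val(d) = \lambda$ such that $(X,\omega_\lambda)$ is homologically mirror to the hypersurface $X^\circ_d \subset \bar Y^\circ$ of \eqref{eqn:B-side}, and by the remark following Theorem~\ref{Thm:HMS} the hypothesis that $\lambda$ satisfies $(\ast)$ already forces $X^\circ_d$ to be a smooth $K3$ surface. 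In particular the family $\EuX^\circ \to \cM = \bA^{\Xi_0}$ of \eqref{eqn:B-side} has smooth generic fibre; I would write $\rho_0$ for the Picard rank of that generic fibre after base change to $\C$, so that ``minimal Picard rank'' means Picard rank $\rho_0$. One should note here that we are in a situation to which Proposition~\ref{prop:picard_rk_one_exists} applies, i.e. that $\bar Y^\circ$ is smooth as a scheme away from its torus fixed points (as happens for the mirror quartic, $\bar Y^\circ = \bP^3$, and the mirror double plane, $\bar Y^\circ = \bP(1,1,1,3)$), so that the hypotheses of that proposition are met and $X^\circ_d$ is genuinely a smooth $K3$ scheme rather than a hypersurface in a toric stack.

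Next I would invoke Proposition~\ref{prop:picard_rk_one_exists} itself: it produces a set $\Upsilon \subset \R^{\Xi_0}$, a countable union of hyperplanes of rational slope through the origin, with the property that $X^\circ_{d'}$ is smooth of Picard rank $\rho_0$ for every $d' \in \Lambda^{\Xi_0}$ with $\val(d') \notin \Upsilon$. Finally I would check that our $\lambda$ avoids $\Upsilon$: each component of $\Upsilon$ is a linear hyperplane of rational slope, hence is cut out by a linear form with integer coefficients, i.e. is a rational hyperplane; since by hypothesis $\lambda$ lies on no rational hyperplane, we get $\lambda \notin \Upsilon$. Because $\val(d) = \lambda$, Proposition~\ref{prop:picard_rk_one_exists} then applies to $d$ and yields that $X^\circ_d$ is smooth of Picard rank $\rho_0$, as desired.

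None of these steps is deep. If there is an ``obstacle'' it is the purely bookkeeping one of confirming that the notion of irrationality in the hypothesis (``$\lambda$ lies on no rational hyperplane'') is strong enough to guarantee $\lambda \notin \Upsilon$; this holds precisely because the hyperplanes making up $\Upsilon$ pass through the origin and have rational slope, so they are rational hyperplanes in the same sense. The only other item meriting a moment's attention is the one flagged in the first paragraph, namely that we really are in the smooth-scheme setting covered by Proposition~\ref{prop:picard_rk_one_exists}, which is immediate from $(\ast)$ via the remark after Theorem~\ref{Thm:HMS}.
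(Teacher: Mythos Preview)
Your proposal is correct and follows exactly the paper's approach: the paper's proof is the one-line observation that since the hyperplanes making up $\Upsilon$ have rational slope (and pass through the origin), they cannot contain the irrational vector $\lambda = \val(d)$, so Proposition~\ref{prop:picard_rk_one_exists} applies. Your write-up simply unpacks this in more detail and flags the standing smoothness hypothesis on $\bar Y^\circ$ needed to invoke that proposition, which the paper treats as ambient in the subsection.
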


\begin{proof}
This follows from Proposition \ref{prop:picard_rk_one_exists}: 
since the hyperplanes making up $\Upsilon$ have rational slope, they cannot contain $\lambda = \val(d)$. \end{proof}

\section{Symplectic consequences}
\label{sec:symp_cons}

Let $(X,\omega)$ be a symplectic $K3$, and $X^\circ$ an algebraic $K3$ surface over $\Lambda$. 
Our standing assumptions for this section are as follows. 
We assume that $(X,\omega)$ is homologically mirror to $X^\circ$:
\begin{equation}
\label{eqn:HMSass}
\Dpinf\EuF(X,\omega) \simeq \Dbdg(X^{\circ});
\end{equation}
and furthermore that they satisfy the analogue of Conjecture \ref{conj:lattmirr}:
\begin{equation}
\label{eq:lattmirr}
 \bL(X,\omega) \cong \N(X^\circ) \coloneqq U \oplus \Pic(X^\circ).
 \end{equation}

\begin{example}
\label{eg:GPirrat}
These assumptions are satisfied if $X$ is a mirror quartic or mirror double plane, $\omega_\lambda$ is an ambient-irrational K\"ahler form, and $X^\circ = X^\circ_d$ is the Greene--Plesser mirror. 
Firstly, \eqref{eqn:HMSass} holds by Theorem \ref{Thm:HMS}. 
Ambient-irrationality of $\omega_\lambda$ allows us to verify \eqref{eq:lattmirr} as follows. 
Firstly, because $\omega_\lambda$ is ambient-irrational, we may choose $\lambda = \val(d)$ to be irrational, so that $\Pic(X^\circ_{d}) = \langle 2n \rangle$ by Proposition \ref{prop:irratmeansgen} (where $n=2$ for the quartic, $n=1$ for the double plane). 
Secondly, ambient-irrationality implies that $\bL(X,\omega_\lambda) \cong U \oplus \langle 2n \rangle$ (see Examples \ref{Ex:mirror_quartic} and \ref{Ex:mirror_double_plane}). Thus \eqref{eq:lattmirr} also holds.
\end{example}

For the purposes of this section we will pick a complex model $X^\circ_\C$ of $X^\circ$. 
We will abbreviate $\Po \coloneqq \Pic(X^\circ)$.

\subsection{Spherical objects}

The homological mirror equivalence \eqref{eqn:HMSass} gives us an isomorphism of numerical Grothendieck groups
\begin{equation}
\label{eq:knum_iso}
K_{num}(\EuD\EuF(X)) \simeq K_{num}(\EuD(X^\circ)).
\end{equation}
The Mukai vector defines an isomorphism
\begin{equation}
\label{eq:knum_muk}
K_{num}(\Dbdg(X^\circ))^- \simeq \N(X^\circ) = U \oplus \Po,
\end{equation}
see \cite[Section 16.2.4]{Huybrechts:K3book} and Remark \ref{rmk:Nrelevance} (recall the `$-$' denotes negation of the pairing). 
Composing, we obtain an isomorphism
\begin{equation}\label{eq:knum_fuk_id}
K_{num}(\EuD\EuF(X))^- \simeq U \oplus \Po.
\end{equation}

We now consider spherical objects in $\EuD\EuF(X)$. Their classes in the numerical Grothendieck group correspond, under \eqref{eq:knum_fuk_id}, to elements of $\Delta(U \oplus \Po)$. One example of a spherical object is a Lagrangian sphere, and one example of a Lagrangian sphere is a vanishing cycle. 
Our aim in this subsection is to prove the following:

\begin{lem}\label{lem:sph_cl_vc}
Any class in $\Delta(U \oplus \Po) $ is represented by a vanishing cycle in $X$.
\end{lem}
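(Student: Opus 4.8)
The plan is to deduce Lemma~\ref{lem:sph_cl_vc} formally from the monodromy construction of Section~\ref{sec:k3mods}. Remark~\ref{rmk:dehntwist} already tells us that every element of $\Delta(\bL(X,\omega))$ is the homology class of a vanishing cycle in $X$, so all that is needed is to transport this statement along the identifications $\Delta(U\oplus\Po)\leftrightarrow\Delta(K_{num}(\EuD\EuF(X))^{-})\leftrightarrow\Delta(\bL(X,\omega))$, the first being \eqref{eq:knum_fuk_id} and the second coming from the homology-class map on the Fukaya category.

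I would first observe that sending a Lagrangian brane to its homology class extends additively to a homomorphism $K_0(\EuD\EuF(X))\to\bL(X,\omega)$ which, by \eqref{eq:euler_fuk}, carries the Euler pairing to minus the intersection form. Its image contains $\Delta(\bL(X,\omega))$ by Remark~\ref{rmk:dehntwist}, and in our situation $\bL(X,\omega)\cong U\oplus\langle 2n\rangle$ is a lattice of rank three in which the $(-2)$-classes span rationally (e.g.\ $e-f$, $e-(n+1)f+g$, $e-(n+1)f-g$ are three independent roots). Hence the intersection form is nondegenerate on that image, the homomorphism kills the radical of the Euler pairing, and it descends to an injective isometry
\[ c\colon K_{num}(\EuD\EuF(X))^{-}\hookrightarrow\bL(X,\omega),\qquad [L]\longmapsto[L].\]
Setting $\Lambda'\coloneqq c\bigl(K_{num}(\EuD\EuF(X))^{-}\bigr)$, the inclusion $\Delta(\bL(X,\omega))\subseteq\Lambda'$ (again from Remark~\ref{rmk:dehntwist}) gives $\Delta(\Lambda')=\Delta(\bL(X,\omega))$, so $c$ restricts to a bijection $\Delta(K_{num}(\EuD\EuF(X))^{-})\xrightarrow{\sim}\Delta(\bL(X,\omega))$.

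With this in hand the conclusion is immediate: given $\delta\in\Delta(U\oplus\Po)$, regard it via \eqref{eq:knum_fuk_id} as a $(-2)$-class $w\in K_{num}(\EuD\EuF(X))^{-}$; then $c(w)\in\Delta(\bL(X,\omega))$, so by Remark~\ref{rmk:dehntwist} there is a vanishing cycle $V\subset X$ with $[V]=c(w)$. As $V$ is a Lagrangian sphere, hence an object of $\EuD\EuF(X)$, its numerical class satisfies $c([V])=[V]=c(w)$, whence $[V]=w$ by injectivity of $c$; thus $\delta$ is represented by the vanishing cycle $V$.

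The statement is therefore essentially formal once Remark~\ref{rmk:dehntwist} is available, and the only place where care is needed is to resist an unnecessary detour: one does \emph{not} have to verify that the mirror-symmetric identification \eqref{eq:knum_fuk_id} coincides on the nose with the geometric homology-class map $\EuO\EuC\circ\Ch$ of Lemma~\ref{lem:oc_geom}, a check that would require chasing \eqref{eqn:oc}, \eqref{eqn:HHdbdg_hh}, \eqref{eqn:HKR}, \eqref{eq:idhh}, the $\sqrt{\mathrm{td}}$-twist of Remark~\ref{rmk:MStell}, and the sign conventions relating the Euler, Mukai and intersection pairings. The argument above only uses that \eqref{eq:knum_fuk_id} and $c$ each identify the relevant sets of $(-2)$-classes, not that they agree.
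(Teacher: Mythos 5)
Your overall strategy --- compare the mirror identification \eqref{eq:knum_fuk_id} with the geometric homology-class map only at the level of $(-2)$-classes, using that an isometry of lattices preserves the set of roots rather than trying to match the two maps on the nose --- is exactly the paper's. The gap is in the construction of $c$. You assert that sending a Lagrangian brane to its homology class "extends additively to a homomorphism $K_0(\EuD\EuF(X))\to\bL(X,\omega)$". But $\EuD\EuF(X)$ is the \emph{split-closure} of the category of twisted complexes, and $K_0$ of a split-closed category is not generated by the classes of the original objects: idempotent summands contribute new classes, on which "extend additively from Lagrangian branes" gives no recipe. The only available extension is $\mathcal{OC}\circ\Ch$ of Lemma \ref{lem:oc_geom}, and on the class of an idempotent summand this lands a priori only in $H^2(X,\Lambda)$; it is not known to take values in $H^2(X,\Z)$, let alone in $\bL(X,\omega)$ --- this is precisely the issue the authors flag in Remark \ref{rmk:knum_not_quite}. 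Without integrality of $\Lambda'=c\bigl(K_{num}(\EuD\EuF(X))^{-}\bigr)$ you cannot conclude $\Delta(\Lambda')\subseteq\Delta(\bL(X,\omega))$, so the claimed bijection on $(-2)$-classes, and hence the final step "$c(w)\in\Delta(\bL(X,\omega))$", is unjustified. (Relatedly, \eqref{eq:euler_fuk} is only established for Lagrangian branes, so the isometry property of $c$ on all of $K_0$, which you need in order to kill the radical of the Euler form, has the same problem.)

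The issue is not cosmetic. Restricting $c$ to the subgroup $\mathbb{S}\subseteq K_{num}(\EuD\EuF(X))$ generated by vanishing-cycle classes --- the locus where it genuinely is an integral isometry into $\bL(X,\omega)$ --- one finds that \eqref{eq:knum_fuk_id} carries $\mathbb{S}$ into $\mathbb{T}$, the subgroup generated by $\Delta(U\oplus\Po)$, and $\mathbb{T}$ is a \emph{proper} finite-index sublattice of $U\oplus\Po$ (for instance every root of $U\oplus\langle 2\rangle$ has even coordinate sum), so your $\Lambda'$ cannot be all of $\bL(X,\omega)$ and the "lattice of the same discriminant, hence equal" shortcut is unavailable. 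What the lemma actually requires is that each $\delta\in\Delta(U\oplus\Po)$ lie in the image of the \emph{set} of vanishing-cycle classes, and the substantive step --- carried out in Lemmas \ref{lem:S_OC} and \ref{lem:S_T} --- is to prove $\mathbb{S}=\mathbb{T}$ by combining the two isometric identifications of $\mathbb{S}$ (one embedding it into $\mathbb{T}$, the other identifying it abstractly with $\mathbb{T}$ via Remark \ref{rmk:dehntwist}) with the fact that a lattice admits no proper finite-index isometric embedding into itself, by discriminant considerations; only then does the induced self-isometry of $\mathbb{T}$ permute $\Delta(U\oplus\Po)$ and give the conclusion. Your write-up silently replaces this step with the unproved assertion that $c$ is defined, integral and isometric on all of $K_{num}(\EuD\EuF(X))$.
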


Before proving Lemma \ref{lem:sph_cl_vc} we will prove some preliminary results, the first of which concerns the composition
\begin{equation}\label{eq:OC_ch}
K(\EuD\EuF(X)) \xrightarrow{\Ch} HH_0(\EuD\EuF(X)) \xrightarrow{\mathcal{OC}} H^n(X,\Lambda).
\end{equation}

\begin{lem}
The map \eqref{eq:OC_ch} descends to an injective map
\begin{equation}\label{eq:OC_knum}
K_{num}(\EuD\EuF(X)) \hookrightarrow H^n(X,\Lambda).
\end{equation}
\end{lem}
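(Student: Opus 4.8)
The plan is to recognise the composition \eqref{eq:OC_ch}, under the homological mirror equivalence \eqref{eqn:HMSass} together with the identifications assembled in Sections~\ref{subsec:nc_chern}, \ref{subsec:ainfauteq} and~\ref{sec:enh}, as the Mukai vector on the $B$-side, for which both assertions (descent to $K_{num}$, and injectivity) are classical.

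First I would use that the noncommutative Chern character is functorial under $A_\infty$ quasi-equivalences, so that \eqref{eqn:HMSass} intertwines $\Ch\colon K(\EuD\EuF(X))\to HH_0(\EuD\EuF(X))$ with $\Ch\colon K(\Dbdg(X^\circ))\to HH_0(\Dbdg(X^\circ))$, and simultaneously intertwines the two Euler pairings $\chi$ (the Euler pairing being an invariant of the category, preserved by any quasi-equivalence); in particular it matches up their kernels, which is how \eqref{eq:knum_iso} arises. On the $B$-side, \eqref{eq:idhh} identifies $\Ch$ with the Mukai vector $[E]\mapsto v(E)\in\bigoplus_p H^p(\Omega^p_{X^\circ})$, as recalled at the end of Section~\ref{subsec:nc_chern}. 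The classical input is then precisely \eqref{eq:knum_muk}: the Mukai vector annihilates the kernel of $\chi$ and descends to an \emph{isometry} $K_{num}(\Dbdg(X^\circ))^-\simeq\N(X^\circ)$, in particular to an injection $K_{num}(\Dbdg(X^\circ))\hookrightarrow HH_0(\Dbdg(X^\circ))$. (This is valid over $\Lambda$, as are \eqref{eqn:HKR} and \eqref{eqn:HHdbdg_hh}; alternatively one may pass to the complex model $X^\circ_\C$ via Lemma~\ref{lem:picardiso} and Corollary~\ref{cor:picard}.)

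Transporting back along \eqref{eqn:HMSass}, the $A$-side Chern character $\Ch\colon K(\EuD\EuF(X))\to HH_0(\EuD\EuF(X))$ likewise annihilates the kernel of $\chi$ and descends to an injection $K_{num}(\EuD\EuF(X))\hookrightarrow HH_0(\EuD\EuF(X))$. Finally, since $X^\circ$ is a smooth algebraic surface, the open--closed map $\mathcal{OC}\colon HH_*(\EuD\EuF(X))\to H^{*+n}(X,\Lambda)$ is an isomorphism by the discussion following \eqref{eqn:oc}; in particular $\mathcal{OC}\colon HH_0(\EuD\EuF(X))\xrightarrow{\sim}H^n(X,\Lambda)$. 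Composing the descended injection with this isomorphism yields exactly the factorisation of \eqref{eq:OC_ch} through an injective map \eqref{eq:OC_knum}.

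I do not expect a serious obstacle; the one point requiring care is the bookkeeping through the chain of identifications --- in particular verifying that the Euler pairing being divided out on $K_{num}(\EuD\EuF(X))$ is indeed the one matched, via $\mathcal{OC}\circ\Ch$, with the intersection form on $H^n(X,\Lambda)$, so that ``kernel of $\chi$'' corresponds on the two sides of mirror symmetry. This is formal given the earlier sections, and is sanity-checked by Lemma~\ref{lem:oc_geom} together with \eqref{eq:euler_fuk} and Remark~\ref{rmk:Lrelevance}: on objects represented by Lagrangians, \eqref{eq:OC_ch} computes Poincaré duals of homology classes in $\bL(X,\omega)\otimes\Lambda$, compatibly with $\chi(K,L)=-[K]\cdot[L]$.
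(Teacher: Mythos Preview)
Your proposal is correct and follows essentially the same route as the paper: use the homological mirror equivalence to transport the Chern character to the $B$-side, invoke the classical fact that the Mukai vector gives an injection $K_{num}(\Dbdg(X^\circ))\hookrightarrow HH_0(\Dbdg(X^\circ))$, transport back, and compose with the open--closed isomorphism. The paper's proof is a two-sentence version of your argument; your additional bookkeeping (functoriality of $\Ch$, compatibility of Euler pairings) is correct but not strictly needed, since descent and injectivity are categorical properties preserved automatically under any $A_\infty$ quasi-equivalence.
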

\begin{proof}
Because the Chern character map $\Ch: K(\EuD(X^\circ)) \to HH_0(\EuD(X^\circ))$ descends to an injection $K_{num}(\EuD(X^\circ)) \hookrightarrow HH_0(\EuD(X^\circ))$, the same holds true for $\EuD\EuF(X)$ by our homological mirror symmetry assumption. 
Composing with $\mathcal{OC}$, which is an isomorphism in this case, gives the result.
\end{proof}

We now define $\mathcal{S} \subset K_{num}(\EuD\EuF(X))$ to be the set of classes represented by vanishing cycles; $\mathbb{S} \subset K_{num}(\EuD\EuF(X))$ the subgroup they generate; and $\mathbb{T} \subset U \oplus \Po$ the subgroup generated by $\Delta(U \oplus \Po)$. 

\begin{lem}\label{lem:S_OC}
The map \eqref{eq:OC_knum} identifies $\mathcal{S}$ with $\Delta(\bL(X,\omega))$, and $\mathbb{S}^-$ isometrically with the subgroup of $\bL(X,\omega) \subset H^n(X,\Lambda)$  generated by $\Delta(\bL(X,\omega))$.
\end{lem}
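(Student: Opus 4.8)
The plan is to combine the geometric computation of the open--closed map in Lemma~\ref{lem:oc_geom}, the description of vanishing-cycle homology classes in Remark~\ref{rmk:dehntwist}, and the index formula \eqref{eq:euler_fuk}. Throughout I use Poincar\'e duality to identify $H_2(X,\Z)\cong H^2(X,\Z)$, so that the homological intersection form becomes the cup-product form and $\bL(X,\omega)=[\omega]^\perp\cap H^2(X,\Z)$ sits inside $H^n(X,\Lambda)$ ($n=2$) in the evident way.

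First I would identify the image of $\mathcal{S}$ under \eqref{eq:OC_knum}. Any vanishing cycle $L\subset X$ is a Lagrangian $2$-sphere, hence an object of $\EuD\EuF(X)$; since $L$ is Lagrangian we have $PD([L])\cup[\omega]=\int_L\omega=0$, so $PD([L])\in\bL(X,\omega)$, and $PD([L])^2=[L]\cdot[L]=-2$ because the normal bundle of a Lagrangian $S^2$ is $T^*S^2$, of Euler number $-\chi(S^2)=-2$; thus $PD([L])\in\Delta(\bL(X,\omega))$. By Lemma~\ref{lem:oc_geom}, the composition \eqref{eq:OC_ch}, hence also its descent \eqref{eq:OC_knum}, sends the class of $L$ in $K_{num}(\EuD\EuF(X))$ to the Poincar\'e dual $PD([L])$ of its homology class; so \eqref{eq:OC_knum} maps $\mathcal{S}$ into $\Delta(\bL(X,\omega))$. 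For the reverse inclusion, Remark~\ref{rmk:dehntwist} shows that every class of $\Delta(\bL(X,\omega))$ is the homology class of a vanishing cycle, hence lies in the image of $\mathcal{S}$. Since \eqref{eq:OC_knum} is injective, it therefore restricts to a bijection $\mathcal{S}\xrightarrow{\sim}\Delta(\bL(X,\omega))$.

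Next I would pass to the generated subgroups. Write $f$ for the injective homomorphism \eqref{eq:OC_knum}; then $f$ carries $\mathbb{S}$ bijectively onto the subgroup of $\bL(X,\omega)$ generated by $\Delta(\bL(X,\omega))$. It remains to check this map is an isometry once $\mathbb{S}$ is given the negated Euler form. The form on $K_{num}(\EuD\EuF(X))$ is the Euler form $\chi$, and \eqref{eq:euler_fuk} gives $\chi(K,L)=-[K]\cdot[L]$ for all objects $K,L$ of the Fukaya category; expanding arbitrary elements of $\mathbb{S}$ as integral combinations of vanishing-cycle classes and using bilinearity, $\chi(a,b)=-f(a)\cdot f(b)$ for all $a,b\in\mathbb{S}$, where $\cdot$ is the cup-product (intersection) form on $\bL(X,\omega)$. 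Thus $f$ intertwines the negated Euler form on $\mathbb{S}$ with the intersection form on its image, which is exactly the assertion that $\mathbb{S}^-$ maps isometrically onto the subgroup of $\bL(X,\omega)$ generated by $\Delta(\bL(X,\omega))$.

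This is essentially bookkeeping; the only points requiring care are the sign in \eqref{eq:euler_fuk} --- which is precisely what makes $\mathbb{S}^-$ (and not $\mathbb{S}$) the lattice that embeds isometrically into $\bL(X,\omega)$ (and not $\bL(X,\omega)^-$) --- and the two-sided identification of $\Delta(\bL(X,\omega))$ with the set of homology classes of vanishing cycles, one inclusion being the self-intersection computation for Lagrangian $2$-spheres and the other being Remark~\ref{rmk:dehntwist}.
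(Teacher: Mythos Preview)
Your proof is correct and follows essentially the same approach as the paper's own proof, which likewise invokes Lemma~\ref{lem:oc_geom} for the image of a Lagrangian, Remark~\ref{rmk:dehntwist} for surjectivity onto $\Delta(\bL(X,\omega))$, and Remark~\ref{rmk:Lrelevance} (your \eqref{eq:euler_fuk}) for compatibility of pairings. You simply expand each of these citations into an explicit computation.
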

\begin{proof}
The map \eqref{eq:OC_knum} sends a Lagrangian sphere to its homology class in $\bL(X,\omega)$, by Lemma \ref{lem:oc_geom}. 
This homology class must lie in $\Delta(\bL(X,\omega))$, and in fact all such classes are realized by vanishing cycles, by Remark \ref{rmk:dehntwist}. 
The identification respects pairings, by Remark \ref{rmk:Lrelevance}. 
\end{proof}

\begin{lem}\label{lem:S_T}
The isomorphism \eqref{eq:knum_fuk_id} identifies $\mathbb{S}$ with $\mathbb{T}$.
\end{lem}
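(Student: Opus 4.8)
Set $j$ for the injection \eqref{eq:OC_knum} of $K_{num}(\EuD\EuF(X))$ into $H^n(X,\Lambda)$, and $\iota$ for the isometry \eqref{eq:knum_fuk_id} of $K_{num}(\EuD\EuF(X))^-$ onto $U\oplus\Po$; the plan is to play these two maps off against each other. First I would dispatch the inclusion $\iota(\mathbb{S})\subseteq\mathbb{T}$. A vanishing cycle is a Lagrangian sphere, hence a spherical object of $\EuD\EuF(X)$, so by the homological mirror equivalence \eqref{eqn:HMSass} together with Remark~\ref{rmk:Nrelevance} its class in $K_{num}(\EuD\EuF(X))$ is carried by $\iota$ into $\Delta(U\oplus\Po)$. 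Thus $\iota(\mathcal{S})\subseteq\Delta(U\oplus\Po)$, and since $\mathbb{S}$ is generated by $\mathcal{S}$ we get $\iota(\mathbb{S})\subseteq\langle\Delta(U\oplus\Po)\rangle=\mathbb{T}$.

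For the reverse inclusion the key observation is that $\iota(\mathbb{S})$ is abstractly \emph{isometric} to $\mathbb{T}$. Indeed, Lemma~\ref{lem:S_OC} identifies $\mathbb{S}^-$ isometrically with $\langle\Delta(\bL(X,\omega))\rangle$; the lattice isometry $\bL(X,\omega)\cong U\oplus\Po$ of \eqref{eq:lattmirr} carries $(-2)$-classes to $(-2)$-classes, hence restricts to an isometry $\langle\Delta(\bL(X,\omega))\rangle\cong\mathbb{T}$; and $\iota$, being an isometry onto $U\oplus\Po$, restricts to an isometry $\mathbb{S}^-\cong\iota(\mathbb{S})$. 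Composing these three (all taken with the ``$-$'' convention, so the signs match) yields $\iota(\mathbb{S})\cong\mathbb{T}$.

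It remains to invoke a rigidity property of lattices: a sublattice of finite index $N$ in a non-degenerate lattice of finite rank has discriminant $N^2$ times that of the ambient lattice, so if it is moreover isometric to the ambient lattice then $N=1$, i.e.\ it is everything. Now $\iota(\mathbb{S})$ is isometric to $\mathbb{T}$, hence of the same rank, hence of finite index in $\mathbb{T}$; and $\mathbb{T}=\langle\Delta(U\oplus\Po)\rangle$ is non-degenerate --- when $\Po$ has rank one (as in all our applications) $\Delta(U\oplus\Po)$ even spans a finite-index, hence non-degenerate, sublattice of $U\oplus\Po$, while for general even $\Po$ of signature $(1,\rho-1)$ a short computation with the hyperbolic summand shows that no nonzero vector is orthogonal to all of $\Delta(U\oplus\Po)$. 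Therefore $\iota(\mathbb{S})=\mathbb{T}$, as required.

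The obstacle here is conceptual rather than computational. A priori the map $j$ and the composite of $\iota$ with the abstract lattice-mirror isometry \eqref{eq:lattmirr} are unrelated, so one cannot simply transport $\Delta(\bL(X,\omega))$ across the mirror and read off the statement; the device that makes it work is to forget the explicit identifications entirely and retain only that $\iota(\mathbb{S})$ is \emph{some} finite-index sublattice of $\mathbb{T}$ isometric to $\mathbb{T}$, after which rigidity of non-degenerate lattices does the rest. (It is worth noting that it is Remark~\ref{rmk:dehntwist} --- every class in $\Delta(\bL(X,\omega))$ is realised by a vanishing cycle --- that makes $\mathcal{S}$ large enough for the identification $\mathbb{S}^-\cong\langle\Delta(\bL(X,\omega))\rangle$ of Lemma~\ref{lem:S_OC} to hold, and hence for this argument to get off the ground.)
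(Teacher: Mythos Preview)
Your argument is correct and follows essentially the same route as the paper: show $\iota(\mathbb{S})\subseteq\mathbb{T}$ because spherical objects have $(-2)$-classes, use Lemma~\ref{lem:S_OC} together with the lattice isomorphism \eqref{eq:lattmirr} to see that $\iota(\mathbb{S})$ is abstractly isometric to $\mathbb{T}$, and conclude by the discriminant rigidity argument. The only cosmetic difference is that where you sketch the non-degeneracy of $\mathbb{T}$ directly, the paper instead cites an external lemma (\cite[Lemma~A.4]{SS:k3cob}) for the equivalent fact that $\mathbb{T}$ has full rank in $U\oplus\Po$.
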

\begin{proof}
We start by observing that the subgroup $\mathbb{T} \subset U \oplus \Po$ has full rank by an elementary argument \cite[Lemma A.4]{SS:k3cob}. 
In particular, the pairing restricted to it is nondegenerate, so $\mathbb{T}$ is a sub\emph{lattice}.

Now the isomorphism \eqref{eq:knum_fuk_id} clearly identifies $\mathbb{S}$ with a subgroup of $\mathbb{T}$. 
On the other hand, Lemma \ref{lem:S_OC} identifies $\mathbb{S}^-$ with the subgroup of $\bL(X,\omega)$ generated by $\Delta(\bL(X,\omega))$, which the isomorphism \eqref{eq:lattmirr} identifies with $\mathbb{T}^-$. 
The result now follows because it is impossible to embed the lattice $\mathbb{T}$ properly inside itself, by discriminant considerations.
\end{proof}

\begin{proof}[Proof of Lemma \ref{lem:sph_cl_vc}] Follows by combining Lemmas \ref{lem:S_OC} and \ref{lem:S_T}.
\end{proof}

\begin{rmk}\label{rmk:knum_not_quite}
Of course one expects \eqref{eq:knum_fuk_id} to identify $K_{num}(\EuD\EuF(X))^-$ with $\bL(X,\omega)$, however we have not proved this. 
The issue is that a general object of $\EuD\EuF(X)$ is an idempotent summand of a twisted complex, and while Lemma \ref{lem:oc_geom} tells us what the image under \eqref{eq:knum_fuk_id} of the twisted complex is, it does not tell us anything about the image of the idempotent summand (e.g., whether it lies in $H^n(X,\Z)$).
\end{rmk}

\subsection{Symplectic mapping class groups via HMS}

Suppose $\rk(\Po) = 1$ (as in Example \ref{eg:GPirrat}).
We compose the various morphisms of short exact sequences of groups that we have constructed: the maps on the central terms are
\begin{multline}
\label{eqn:composeses}
\pi_1(\cM_0(U \oplus \Po)) \xrightarrow{\eqref{eqn:sesmonodromy2}}  G(X,\omega) \xrightarrow{\eqref{eqn:sessympmcgacts}}  \Auteq_{CY}\Dpinf\EuF(X,\omega)/[2] \simxleftrightarrow{\eqref{eqn:hms_ses_morph}} \Auteq_{CY} \Dbdg(X^\circ)/[2] \ldots \\
\ldots \simxleftrightarrow{\eqref{eqn:H0sesdg}} \Auteq_{CY} \Db(X^\circ)/[2] \simxleftrightarrow{\eqref{eqn:basechangeses}} \Auteq_{CY} \Db(X^\circ_\C)/[2] \simxleftrightarrow{\eqref{eqn:dbcohses}} \pi_1(\cM_0(U \oplus \Po)).
\end{multline}
Note that in some cases we wrote exact sequences without a `$\to 1$' on the right: we turn these into short exact sequences in the canonical way by replacing the rightmost group with the image of the rightmost homomorphism in the exact sequence, so that all of the morphisms in \eqref{eqn:composeses} are morphisms of short exact sequences.

We recall the meanings of these morphisms:
\begin{itemize}
\item \eqref{eqn:sesmonodromy2} comes from symplectic monodromy.
\item \eqref{eqn:sessympmcgacts} comes from the action of the symplectic mapping class group on the Fukaya category; it relies on the open--closed map being an isomorphism, which is a consequence of homological mirror symmetry.
\item \eqref{eqn:hms_ses_morph} exists by our assumption that $(X,\omega)$ and $X^\circ$ are homologically mirror. 
\item \eqref{eqn:H0sesdg} maps Calabi--Yau autoequivalences of $\Dbdg(X^\circ)$ to Calabi--Yau autoequivalences of $\Db(X^\circ)$ by taking $H^0$.
\item \eqref{eqn:basechangeses} identifies the derived autoequivalence group of a $K3$ over $\Lambda$ with that of a $K3$ over $\C$.
\item \eqref{eqn:dbcohses} exists  by the theorem of Bayer--Bridgeland, because $\Pic(X^\circ_\C) \cong \Po$ has rank $1$.  
We have also replaced $HH_*(X^\circ_\C)$ with $H^*(X^\circ_\C,\C)$ via \eqref{eq:idhh}, which is valid by Remark \ref{rmk:MStell}.
\end{itemize}
 
\begin{rmk}
Considering the right-most terms in our exact sequences, we observe that \eqref{eqn:sesmonodromy2} maps to $\Aut H^2(X,\Z)$ whereas \eqref{eqn:sessympmcgacts} maps from $\Aut H^*(X,\Z)$. 
In order to be able to turn the composition into a morphism of short exact sequences, we use the fact that any symplectomorphism acts trivially on $H^0(X,\Z) \oplus H^4(X,\Z)$.
 \end{rmk}
 
\begin{prop}
\label{prop:sescompat}
Suppose $\rk(\Po) = 1$. Then the composition of morphisms of short exact sequences \eqref{eqn:composeses} is an isomorphism.
\end{prop}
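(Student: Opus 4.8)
The plan is to show that the big composition \eqref{eqn:composeses} is an isomorphism by analysing it on the three-term filtration of each short exact sequence: the monodromy/subgroup piece (fundamental group of $\Omega^+_0$), the full group (fundamental group of $\cM_0$), and the quotient $\Gamma^+$. By the five-lemma-type argument for short exact sequences, it suffices to check that the composition is an isomorphism on the left-hand (kernel) terms and on the right-hand (quotient) terms; both the source and target of the whole composition are literally the same group $\pi_1(\cM_0(U\oplus\Po))$, so what must be verified is that the endomorphism we obtain is the identity, or at least an isomorphism.

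First I would track the right-hand terms. Every morphism of short exact sequences in \eqref{eqn:composeses} induces a map on the $\Gamma^+$-quotients: \eqref{eqn:sesmonodromy2} and \eqref{eqn:dbcohses} have $\Gamma^+(X,\omega)=\Gamma^+(U\oplus\Po)$ as their right-hand term by definition (using \eqref{eq:lattmirr} to identify them), while \eqref{eqn:sessympmcgacts}, \eqref{eqn:hms_ses_morph}, \eqref{eqn:H0sesdg}, \eqref{eqn:basechangeses} all have identity maps on the right-hand terms (they are isomorphisms of the ambient Hodge-isometry / Hochschild-homology data, compatible with the action on $\N(X^\circ)$). The key input here is that the composite endomorphism of $\Gamma^+(U\oplus\Po)$ is the identity: this follows because the action of $G(X,\omega)$ on $H^2(X,\Z)$ and the action of $\Auteq_{CY}\Db(X^\circ)$ on $\N(X^\circ)$ are intertwined by the Mukai-vector identification of numerical Grothendieck groups, i.e. by \eqref{eq:knum_fuk_id}; this is exactly the content encoded in the fact that all of these morphisms of SES's commute with the maps $\varpi_K$, $\EuO\EuC\circ\Ch$, etc. One should remark that the compatibility of \eqref{eq:knum_fuk_id} with the two actions on homology is what makes the square on the right commute — this is the analogue on the level of lattices of Lemma \ref{lem:oc_geom} and Remark \ref{rmk:MStell}.

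Next I would track the left-hand terms, i.e. the induced endomorphism of $\pi_1(\Omega^+_0(U\oplus\Po))$. Here the crucial point is Lemma \ref{lem:free_prod}: $\pi_1(\cM_0)\cong \Z\ast\Z/p$ and $\pi_1(\Omega^+_0)$ is the (infinitely generated, free) kernel of the projection to $\Z/2\ast\Z/p$, generated by loops around the points $\delta^\perp$, $\delta\in\Delta$. I claim the composite endomorphism sends each such loop to (a conjugate of) itself. The reason is that a loop around $\delta^\perp$ is the squared Dehn twist $\tau_L^2$ in the corresponding vanishing cycle $L$ (Remark \ref{rmk:dehntwist}), that $L$ is a Lagrangian sphere hence a spherical object of the Fukaya category with the prescribed class in $\Delta$ by Lemma \ref{lem:sph_cl_vc}, and that the symplectic Dehn twist acts on $\Dpinf\EuF(X)$ as the algebraic spherical twist (this is Seidel's theorem, used implicitly via \eqref{eqn:sympactsFuk}); under HMS this corresponds to a spherical twist in $\Db(X^\circ)$, which by Bayer–Bridgeland corresponds, via \eqref{eqn:dbcohses}, to a loop around the appropriate $(-2)$-hyperplane in $\EuP^+_0$, and by Lemma \ref{lem:oneorbit} all $(-2)$-classes are $\Gamma^+$-conjugate, so the orbit structure matches up. Thus the composite endomorphism permutes (up to conjugacy) the standard generators of the free group $\pi_1(\Omega^+_0)$, hence is an isomorphism; combined with the previous paragraph and the short exact sequences, \eqref{eqn:composeses} is an isomorphism.

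The main obstacle is the left-hand term: one needs to be genuinely careful that the composite endomorphism of $\pi_1(\Omega^+_0)$ is surjective, not merely injective — a priori a free group of infinite rank can embed properly in itself, so matching up generators up to conjugacy is essential, and this is where Lemma \ref{lem:sph_cl_vc} (every $(-2)$-class is realised by a vanishing cycle) together with Lemma \ref{lem:oneorbit} (transitivity of $\Gamma^+$ on $\Delta/\pm$) does the real work. A secondary subtlety is bookkeeping the shifts: the whole diagram is phrased modulo $[2]$, and one must check the various identifications of Calabi–Yau autoequivalence groups modulo even shifts are compatible — but this is built into \eqref{eqn:dbcohses}, \eqref{eqn:H0sesdg}, \eqref{eqn:basechangeses} and the splitting of $\Symp^{gr}/[2]$ used in \eqref{eqn:sympmcgacts}, so it is routine. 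I would also note that once \eqref{eqn:composeses} is an isomorphism on central terms it is automatically an isomorphism on all three terms, by the snake/five lemma applied to the morphism of short exact sequences.
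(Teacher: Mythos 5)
Your overall strategy is the paper's: reduce via the five lemma to the outer terms of the short exact sequences, note that the maps on the $\Gamma^+$ quotients are isomorphisms, and then analyse the induced endomorphism of the free group $\pi_1(\Omega^+_0(U\oplus\Po))$ by tracking a loop around a puncture $\delta^\perp$ through the chain: symplectic monodromy sends it to $\tau_L^2$, Seidel's theorem to the algebraic twist $\mathrm{Tw}_L^2$, and mirror symmetry to $\mathrm{Tw}_S^2$ for the mirror spherical object $S$. Your use of Lemma \ref{lem:sph_cl_vc} to get surjectivity of the assignment $\delta\mapsto\epsilon=v(S)$ on generators also matches the paper.

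There is, however, a genuine gap at the crucial step where you assert that $\mathrm{Tw}_S^2$ ``by Bayer--Bridgeland corresponds, via \eqref{eqn:dbcohses}, to a loop around the appropriate $(-2)$-hyperplane.'' Corollary \ref{cor:BBses} only provides an abstract isomorphism $\Auteq^0\Db(X^\circ_\C)/[2]\cong\pi_1(\Omega^+_0)$; it does not tell you which element of the free group an arbitrary squared spherical twist is. The object $S$ mirror to a vanishing cycle is an a priori unknown spherical object (not a sheaf you control), so without further input $\mathrm{Tw}_S^2$ could be any word in the free group and the generator-matching argument collapses. The paper closes this with two specific facts from \cite{BB}: \cite[Proposition 2.3]{BB}, producing for each $\epsilon\in\Delta(U\oplus\Po)$ a spherical vector bundle $S_\epsilon$ whose squared twist is a loop enclosing $\epsilon^\perp$, and \cite[Remark 6.10]{BB}, that $\Auteq^0\Db(X^\circ_\C)$ acts transitively on spherical objects with a fixed Mukai vector, so that $\mathrm{Tw}_S^2$ is conjugate in $\Auteq^0$ to $\mathrm{Tw}_{S_{v(S)}}^2$ and hence is indeed a loop enclosing $v(S)^\perp$. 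You instead invoke Lemma \ref{lem:oneorbit}, which concerns the $\Gamma^+$-action on $\Delta/\pm\id$: this is a different statement (it says nothing about the $\Auteq^0$-orbit of spherical objects with a given Mukai vector), and it is moreover only proved for $N=U\oplus\langle 2\rangle$ and $U\oplus\langle 4\rangle$, whereas the proposition is stated for arbitrary rank-one $\Po$. With the two cited facts from \cite{BB} substituted for Lemma \ref{lem:oneorbit}, your argument becomes the paper's.
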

\begin{proof}
By the 5-lemma, it suffices to prove the result on the initial and final terms of the short exact sequences. 
The maps between final terms are all isomorphisms, so it remains to prove the result for the initial terms.
The composition of morphisms between initial terms has the form
\begin{multline} 
\label{eqn:comp_init}
\pi_1(\Omega^+_0(U \oplus \Po)) \to I(X,\omega) \to \Auteq^0 \Dpinf\EuF(X,\omega)/[2] \cong  \Auteq^0 \Dbdg(X^\circ)/[2] \ldots \\ \ldots \cong \Auteq^0\Db(X^\circ)/[2] \cong \Auteq^0 \Db(X^\circ_\C)/[2] \cong \pi_1(\Omega^+_0(U \oplus \Po)).
\end{multline}
Because $\rk(\Po)=1$, Section \ref{sec:pic1eg} gives an identification
\[ \Omega^+_0(U \oplus \Po) \cong \mathfrak{h} \setminus \bigcup_{\delta \in \Delta(U \oplus \Po)/\pm \id} \{p_{\delta}\}.\]
Therefore $\pi_1(\Omega^+_0(U \oplus \Po))$ is a free group with generators indexed by $\Delta(U \oplus \Po)/\pm \id$.
 
Choose paths from a basepoint in $\Omega^+_0(U \oplus \Po)$ to each point $p_\delta$: these specify generators for the free group. 
The monodromy around $p_\delta$ is $\tau_L^2 \in I(X,\omega)$, a squared Dehn twist in the corresponding vanishing cycle $L$ (cf. Remark \ref{rmk:dehntwist}).
The corresponding autoequivalence of $\Dpinf\EuF(X,\omega)$ is $\mathrm{Tw}^2_L$, the squared algebraic twist in the corresponding spherical object by \cite{Seidel:FCPLT}, which gets sent to the autoequivalence $\mathrm{Tw}^2_S$ for the corresponding spherical object $S$ of $\Db(X^\circ_\C)$. 

Corollary \ref{cor:BBses} says that $\Auteq^0 \Db(X^\circ_\C)/[2] \cong \pi_1(\Omega^+_0(U \oplus \Po))$.
By \cite[Proposition 2.3]{BB}, for every $\epsilon \in \Delta(U \oplus \Po)$ there exists a spherical vector bundle $S_\epsilon$, so that the spherical twist $\mathrm{Tw}^2_{S_\epsilon}$ corresponds to a loop enclosing the hole $p_\epsilon$ under this isomorphism.  
Furthermore, by \cite[Remark 6.10]{BB}, $\Auteq^0 \Db(X^\circ_\C)$ acts transitively on the set of spherical objects $S$ with fixed Mukai vector $v(S) = \epsilon$. 
It follows that \emph{every} squared spherical twist in a spherical object $S$ with Mukai vector $v(S) = \epsilon$ corresponds to a loop in $\pi_1(\Omega^+_0(U \oplus \Po))$ enclosing the hole $p_\epsilon$.

In particular, the composition \eqref{eqn:comp_init} sends a loop enclosing $p_\delta$ to a loop enclosing $p_\epsilon$, where $\delta$ is the image of $[L]$ under the map \eqref{eq:knum_fuk_id} and $\epsilon$ is the Mukai vector of $S$, and the spherical objects $L$ and $S$ correspond under mirror symmetry. 
The map $\delta \mapsto \epsilon$ is bijective by Lemma \ref{lem:sph_cl_vc}. 
Therefore the composition \eqref{eqn:comp_init} sends one set of generators for the free group bijectively to another set of generators for the free group, so the map is an isomorphism as required.
\end{proof}

\begin{rmk}
Let us briefly mention a minor generalization of Proposition \ref{prop:sescompat}: it continues to hold if we replace $\cM_0(U \oplus \Po)$ with $\cL_0(U \oplus \Po)$, the symplectic mapping class group $G(X,\omega)$ with the graded symplectic mapping class group $\pi_0\Symp^{gr}(X,\omega)$, and remove all of the quotients by even shifts in \eqref{eqn:composeses}. The only point that requires extra explanation is the construction of the symplectic monodromy map \eqref{eqn:sesmonodromy2}. The crucial points are that a choice of fibrewise holomorphic volume form on the family $\EuX \to B$ induces a lift of the symplectic monodromy map to the graded symplectic mapping class group, and the fibre of the $\C^*$-bundle $\cL_0(U \oplus \Po) \to \cM_0(U \oplus \Po)$ can be identified with the space of holomorphic volume forms on the corresponding fibre of the family $\EuX(U \oplus \Po,\kappa) \to \cM_0(U \oplus \Po)$. The details are left to the reader.
\end{rmk}

Now recall that
\[Z(X,\omega) \coloneqq  \ker \, \left[G(X,\omega) \to \Auteq\Dpinf\EuF(X,\omega)/[2] \right]\]
denotes the ``Floer-theoretically trivial'' subgroup of the symplectic mapping class group (it is contained in $I(X,\omega)$, when the open--closed map is an isomorphism). 
We have the following immediate corollary of Proposition \ref{prop:sescompat}:

\begin{cor}
\label{cor:floertrivialsemi}
Suppose $\rk(\Po) = 1$.
Then we have
\begin{align*}
G(X,\omega) & \cong Z(X,\omega)  \rtimes \pi_1(\cM_{\text{cpx}}(X,\omega))  \quad \text{and}\\
I(X,\omega) & \cong Z(X,\omega) \rtimes \pi_1(\Omega^+_0(X,\omega)) .
\end{align*}
\end{cor}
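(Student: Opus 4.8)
The plan is to deduce both semidirect-product decompositions formally from Proposition~\ref{prop:sescompat} via the splitting lemma for group extensions; essentially all the substantive work has already been done there. Recall the elementary fact: if $f\colon A\to B$ and $g\colon B\to A$ are group homomorphisms whose composite $g\circ f$ is an isomorphism, then $f$ is injective, $g$ is surjective, the homomorphism $h\coloneqq (g\circ f)^{-1}\circ g$ satisfies $h\circ f=\mathrm{id}_A$ and $\ker h=\ker g$, so that $1\to\ker g\to B\xrightarrow{h}A\to 1$ is a short exact sequence split by $f$; hence $B\cong(\ker g)\rtimes A$, with $A$ acting on $\ker g$ by conjugation inside $B$ through the section $f$.

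First I would apply this with $A=\pi_1(\cM_{\text{cpx}}(X,\omega))$, which is identified with $\pi_1(\cM_0(U\oplus\Po))$ via \eqref{eq:lattmirr} and Definition~\ref{defn:mcpx}, with $B=G(X,\omega)$, taking $f$ to be the symplectic monodromy map \eqref{eqn:sesmonodromy2} on central terms and $g$ the composite of all the subsequent maps in \eqref{eqn:composeses}: the action on the Fukaya category \eqref{eqn:sessympmcgacts}, the homological mirror identification \eqref{eqn:hms_ses_morph}, passage to $H^0$ \eqref{eqn:H0sesdg}, base change to $\C$ \eqref{eqn:basechangeses}, and the Bayer--Bridgeland identification \eqref{eqn:dbcohses}. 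Proposition~\ref{prop:sescompat} is precisely the assertion that $g\circ f$ is an isomorphism, so the splitting lemma yields $G(X,\omega)\cong(\ker g)\rtimes\pi_1(\cM_{\text{cpx}}(X,\omega))$. It then remains to identify $\ker g$ with $Z(X,\omega)$: every map comprising $g$ after the first is an isomorphism, so $\ker g$ equals the kernel of the map $G(X,\omega)\to\Auteq_{CY}\Dpinf\EuF(X,\omega)/[2]$ of \eqref{eqn:sessympmcgacts}; and since the image of $G(X,\omega)\to\Auteq\Dpinf\EuF(X,\omega)/[2]$ lies in the Calabi--Yau subgroup (Section~\ref{sec:sympgracts}), this kernel is exactly $Z(X,\omega)$.

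Next I would run the identical argument one level down on the kernels of the short exact sequences, i.e. on the composition \eqref{eqn:comp_init} of initial terms: here $A=\pi_1(\Omega^+_0(X,\omega))\cong\pi_1(\Omega^+_0(U\oplus\Po))$, $B=I(X,\omega)$, $f$ is the restriction of the monodromy map and $g$ the composite of the remaining (iso)morphisms, which lands in $\Auteq^0\Dpinf\EuF(X,\omega)/[2]$. The proof of Proposition~\ref{prop:sescompat} establishes precisely that this composition of initial terms is an isomorphism, so the splitting lemma gives $I(X,\omega)\cong(\ker g)\rtimes\pi_1(\Omega^+_0(X,\omega))$. Finally $\ker g=\ker[I(X,\omega)\to\Auteq_{CY}\Dpinf\EuF(X,\omega)/[2]]=I(X,\omega)\cap Z(X,\omega)=Z(X,\omega)$, the last equality because $Z(X,\omega)\subseteq I(X,\omega)$ (a Floer-theoretically trivial symplectomorphism acts trivially on $HH_*\cong H^{*+n}(X,\Lambda)$, hence on $H^*(X,\Z)$, since $\EuO\EuC$ is an isomorphism in this setting).

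I do not anticipate a genuine obstacle here: the only points requiring care are bookkeeping — verifying that the ``$\ker g$'' arising in the two applications is literally the same subgroup $Z(X,\omega)$, and that the action of $\pi_1(\Omega^+_0(X,\omega))$ in the second decomposition is the restriction of the action of $\pi_1(\cM_{\text{cpx}}(X,\omega))$ in the first. Both are automatic, since the two splittings and the two conjugation actions are all induced from a single morphism of short exact sequences, namely \eqref{eqn:sesmonodromy2} together with its restriction to kernels.
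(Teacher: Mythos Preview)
Your proposal is correct and is exactly the argument the paper has in mind: the paper states the result as an ``immediate corollary'' of Proposition~\ref{prop:sescompat} with no further proof, and what you have written is precisely the standard splitting-lemma unpacking of that statement. Your identifications of $\ker g$ with $Z(X,\omega)$ in both cases, and the observation $Z(X,\omega)\subseteq I(X,\omega)$ via $\mathcal{OC}$, are accurate and match the paper's remarks surrounding the corollary.
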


We also recall that
\[ K(X,\omega) \coloneqq  \ker \, \left[\pi_0\Symp(X,\omega) \to \pi_0\Diff(X)\right]\]
denotes the smoothly trivial symplectic mapping class group. 
It is contained in $I(X,\omega)$.

\begin{cor} \label{cor:smoothly_trivial_inf_generated}
Suppose $\rk(\Po) = 1$. 
Then we have homomorphisms
\begin{equation}
\label{eqn:smooth_comp_id}
 \pi_1(\Omega^+_0(X,\omega)) \to K(X,\omega) \to \pi_1(\Omega^+_0(X,\omega))
 \end{equation}
 whose composition is an isomorphism. 
In particular, $K(X,\omega)$ is infinitely generated.
\end{cor}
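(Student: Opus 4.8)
The plan is to upgrade Corollary \ref{cor:floertrivialsemi} by observing that the monodromy map $\pi_1(\Omega^+_0(X,\omega)) \to I(X,\omega)$ actually lands in the smaller subgroup $K(X,\omega)$, while the second map (the one splitting off $\pi_1(\Omega^+_0(X,\omega))$) can be taken to be the composition $K(X,\omega) \hookrightarrow I(X,\omega) \to \pi_1(\Omega^+_0(X,\omega))$ coming from \eqref{eqn:comp_init}. The composition of these two is then exactly the composition \eqref{eqn:comp_init} (restricted to the image), which Proposition \ref{prop:sescompat} — via the $5$-lemma argument in its proof — shows to be an isomorphism on the kernel terms. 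So the only genuinely new point to establish is that the image of symplectic monodromy lies in $K(X,\omega)$.

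First I would recall that the monodromy homomorphism \eqref{eqn:sesmonodromy2} is built from parallel transport in the family $\EuX(N_\kappa,\kappa)\to \Omega^+_0(N_\kappa)$, which by Lemma \ref{lem:defret} is the restriction of a family over a contractible parameter space: indeed $\Omega^+_\amp(\kappa)$ (the base of the universal family of marked K\"ahler $K3$ surfaces with K\"ahler class $\kappa$) retracts onto $\Omega^+_\amp(N_\kappa)$, but before deleting the discriminant hyperplanes the relevant base $\Omega^+(\kappa)$ is simply connected. The key geometric input is that parallel transport in a \emph{smooth} family of $K3$ surfaces produces diffeomorphisms that are smoothly isotopic to the identity: the underlying smooth manifolds in the family $\EuX(\kappa)\to \Omega^+(\kappa)$ (ignoring the symplectic/complex structure) form a smoothly trivial fibre bundle over the simply-connected base $\Omega^+(\kappa)$, so any loop in $\Omega^+_0(\kappa)$ — which becomes null-homotopic after including into $\Omega^+(\kappa)$ — has smooth monodromy isotopic to the identity. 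Hence the image of $\pi_1(\Omega^+_0(\kappa)) \to I(X,\omega)$ lies in $K(X,\omega)$; restricting along the deformation retract $\Omega^+_0(N_\kappa)\hookrightarrow \Omega^+_0(\kappa)$ and using Proposition \ref{prop:shrunkmon} and the identification $\bL(X,\omega)\cong U\oplus\Po$ of Example \ref{eg:GPirrat}, the same holds for the map $\pi_1(\Omega^+_0(X,\omega)) \to I(X,\omega)$. Concretely, each generator is a squared Dehn twist $\tau_L^2$ around a single discriminant point, and it is classical (cf. Remark \ref{rmk:dehntwist}, or \cite{Seidel:LecturesDehn}) that the squared Dehn twist in a Lagrangian sphere in a $K3$ surface is smoothly isotopic to the identity — this already gives the conclusion generator-by-generator.

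Granting this, the two homomorphisms in \eqref{eqn:smooth_comp_id} are: the corestricted monodromy map $\pi_1(\Omega^+_0(X,\omega)) \to K(X,\omega)$, and the restriction to $K(X,\omega)$ of the map $I(X,\omega) \to \Auteq^0\Dpinf\EuF(X,\omega)/[2] \cong \cdots \cong \pi_1(\Omega^+_0(X,\omega))$ appearing at the left-hand end of \eqref{eqn:comp_init}. Their composite is precisely \eqref{eqn:comp_init}, which is an isomorphism by Proposition \ref{prop:sescompat}. Since $\pi_1(\Omega^+_0(X,\omega))$ is a free group of countably infinite rank (it is the fundamental group of $\mathfrak h$ minus a countably infinite discrete set of points, by Section \ref{sec:pic1eg} and \eqref{eq:lattmirr}), and it is a retract of $K(X,\omega)$, the group $K(X,\omega)$ surjects onto an infinite-rank free group and is therefore infinitely generated. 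The main obstacle is the smooth-triviality claim: one must be careful that ``smoothly isotopic to the identity'' is exactly the statement that the monodromy factors through $\pi_0$ of the smooth diffeomorphism group being trivial along these loops, which follows either from the simple-connectivity of the ambient parameter space of the smooth (as opposed to K\"ahler) family, or from the classical fact about squared Dehn twists; I would present the latter, shorter route.
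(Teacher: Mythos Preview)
Your proposal is correct and follows essentially the same approach as the paper: use the composition \eqref{eqn:comp_init}, which Proposition \ref{prop:sescompat} shows is an isomorphism, and observe that the first map factors through $K(X,\omega)$ because its image is generated by squared Dehn twists, which are smoothly trivial. The paper's proof is precisely your ``shorter route'' and omits the alternative simple-connectivity argument you sketch.
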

\begin{proof}
We consider the composition $\pi_1(\Omega^+_0(X,\omega)) \to I(X,\omega) \to \pi_1(\Omega^+_0(X,\omega))$ appearing in \eqref{eqn:comp_init}, which is shown to be an isomorphism in Proposition \ref{prop:sescompat}. 
Observe that the image of the map $\pi_1(\Omega^+_0(X,\omega)) \to I(X,\omega)$ is generated by squared Dehn twists, which are known to be smoothly trivial: so we can replace $I(X,\omega)$ with the subgroup $K(X,\omega)$. 
\end{proof}

Now we can refine Remark \ref{rmk:space_of_symp}, which we recall concerned the space $\Omega$ of symplectic forms on $X$ cohomologous to $\omega$. 
It essentially consisted of the observation that the composition
\begin{equation}
\label{eqn:conn_hom}
\pi_1(\Omega) \twoheadrightarrow K(X,\omega) \twoheadrightarrow \pi_1(\Omega^+_0(X,\omega))
\end{equation}
is surjective, where the first map is the connecting map in the long exact sequence associated to a Serre fibration, and the second was constructed in the proof of Corollary \ref{cor:smoothly_trivial_inf_generated}; this shows that $\pi_1(\Omega)$ is infinitely generated.

\begin{lem}
\label{lem:symp_forms}
There exists a map $i:\Omega^+_0(X,\omega) \to \Omega$, such that the composition
\begin{equation}
\label{eqn:comp_symp_forms}
 \pi_1(\Omega^+_0(X,\omega)) \xrightarrow{i_*} \pi_1(\Omega) \xrightarrow{\eqref{eqn:conn_hom}} \pi_1(\Omega^+_0(X,\omega))
 \end{equation}
is an isomorphism.
\end{lem}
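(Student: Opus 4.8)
The plan is to construct $i$ by trivialising, as a \emph{smooth} fibre bundle, the family $p\colon \EuX(N,\kappa)\to\Omega^+_0(X,\omega)$ of marked K\"ahler $K3$ surfaces whose symplectic monodromy furnishes the left-hand vertical arrow of \eqref{eqn:sesmonodromy2} (here $N=\bL(X,\omega)\cong U\oplus\langle 2n\rangle$, using the standing hypothesis $\rk\Po=1$). By Ehresmann's theorem $p$ is a smooth fibre bundle with fibre diffeomorphic to $X$, and since the marking is locally constant its structure group reduces to the subgroup $\Diff^H(X)\subset\Diff(X)$ of diffeomorphisms acting trivially on $H^2(X;\Z)$. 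The base $\Omega^+_0(X,\omega)=\Omega^+_0(U\oplus\langle 2n\rangle)$ is, by Section~\ref{sec:pic1eg}, the upper half-plane with a countable discrete subset removed, hence homotopy equivalent to a wedge of circles, with $\pi_1$ free on loops around the punctures; by Remark~\ref{rmk:dehntwist} the monodromy of $p$ around each such loop is a squared Dehn twist $\tau_L^2$. Since squared Dehn twists in Lagrangian spheres are smoothly isotopic to the identity (as used in the proof of Corollary~\ref{cor:smoothly_trivial_inf_generated}), they are trivial in $\pi_0\Diff^H(X)$; as the base is one-dimensional, it follows that $p$ is trivial as a $\Diff^H(X)$-bundle.

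Next I would fix a basepoint $b_0$, choose a K\"ahler form $\omega_{b_0}$ in the distinguished class on the fibre $X_{b_0}$, and fix a marked symplectomorphism $(X_{b_0},\omega_{b_0})\cong(X,\omega)$, through which we regard the fibre over $b_0$ as $(X,\omega)$; the monodromy construction of Section~\ref{sec:monod} is insensitive to these choices by Moser's theorem. Choosing a smooth marking-preserving trivialisation $\Psi\colon\Omega^+_0(X,\omega)\times X\xrightarrow{\sim}\EuX(N,\kappa)$ with $\Psi_{b_0}=\id$, I would set
\[
i\colon\Omega^+_0(X,\omega)\longrightarrow\Omega,\qquad i(b)\coloneqq \Psi_b^*\,\omega_b,
\]
where $\omega_b$ is the (smoothly varying) K\"ahler form on the fibre $X_b$ and $\Psi_b\colon X\to X_b$. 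Because $\Psi$ is marking-preserving one has $[\Psi_b^*\omega_b]=[\omega]$ for every $b$, so $i$ does land in $\Omega$; moreover $i(b_0)=\omega$, so $i$ is based and induces $i_*\colon\pi_1(\Omega^+_0(X,\omega))\to\pi_1(\Omega)$.

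The remaining step is to identify the composite of $i_*$ with the map \eqref{eqn:conn_hom}. Transporting $p$ through $\Psi$ presents it as the product bundle $\Omega^+_0(X,\omega)\times X$ carrying the family of cohomologous symplectic forms $b\mapsto\Psi_b^*\omega_b=i(b)$. In this presentation the symplectic monodromy of \eqref{eqn:sesmonodromy2} around a loop $\ell$ is obtained by parametric Moser-straightening of the loop of forms $t\mapsto i(\ell(t))$, and the diffeomorphism it produces at $t=1$ is precisely the endpoint of the lift of the loop $i\circ\ell$ along the Serre fibration $\Symp_0(X)\to\Diff_0(X)\to\Omega$ that computes the connecting homomorphism $\pi_1(\Omega)\to K(X,\omega)$. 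Unwinding these two descriptions (keeping the Moser and parallel-transport conventions consistent) yields that $(\text{connecting map})\circ i_*$ equals the monodromy homomorphism $m\colon\pi_1(\Omega^+_0(X,\omega))\to I(X,\omega)$ of \eqref{eqn:sesmonodromy2}, whose image lies in $K(X,\omega)$. Hence $\eqref{eqn:conn_hom}\circ i_*$ is the composite $\pi_1(\Omega^+_0(X,\omega))\xrightarrow{m}K(X,\omega)\to\pi_1(\Omega^+_0(X,\omega))$ of \eqref{eqn:smooth_comp_id}, which is an isomorphism by Corollary~\ref{cor:smoothly_trivial_inf_generated} (equivalently, Proposition~\ref{prop:sescompat}), proving the lemma.

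I expect the crux to be the smooth trivialisation in the first step. It rests on two inputs: that squared Dehn twists in Lagrangian $2$-spheres are smoothly isotopic to the identity, and that, under $\rk\Po=1$, the base $\Omega^+_0(X,\omega)$ is homotopy equivalent to a one-dimensional complex, so that a bundle over it is detected by its monodromy alone. The remaining work — matching the symplectic monodromy with the connecting homomorphism — is a routine but somewhat fiddly comparison of Moser isotopies, and I do not anticipate genuine difficulty there.
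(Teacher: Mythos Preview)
Your proposal is correct and follows essentially the same approach as the paper: smoothly trivialise the family $\EuX(\bL(X,\omega),[\omega])\to\Omega^+_0(X,\omega)$ using that the monodromy consists of squared Dehn twists (hence is smoothly trivial) together with the fact that the base is an Eilenberg--MacLane space, then define $i$ by pulling back the fibrewise K\"ahler forms through the trivialisation, and finally observe that the resulting composition \eqref{eqn:comp_symp_forms} coincides with \eqref{eqn:smooth_comp_id}. The paper's proof is a terser version of exactly what you wrote; your additional detail on the Moser comparison and the structure-group reduction is sound but not strictly needed.
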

\begin{proof}
Consider the family of marked K\"ahler $K3$ surfaces $\EuX(\bL(X,\omega),[\omega]) \to \Omega^+_0(X,\omega)$. 
We saw in the proof of Corollary \ref{cor:smoothly_trivial_inf_generated} that the monodromy homomorphisms are smoothly trivial; since $\Omega^+_0(X,\omega)$ is an Eilenberg--MacLane space, it follows that the family is smoothly trivial. 
Pulling back a choice of fibrewise K\"ahler form via a smooth trivialization induces a map $\Omega^+_0(X,\omega) \to \Omega$, and it is clear from the definitions that the composition 
\[ \pi_1(\Omega^+_0(X,\omega)) \xrightarrow{i_*} \pi_1(\Omega) \to K(X,\omega)\]
is precisely the symplectic monodromy homomorphism. 
It follows that the composition \eqref{eqn:comp_symp_forms} is equal to the composition \eqref{eqn:smooth_comp_id}, which is an isomorphism by Corollary \ref{cor:smoothly_trivial_inf_generated}.
\end{proof}

\subsection{Lagrangian spheres}

We now consider Lagrangian spheres $L \subset (X,\omega)$. 
We declare two Lagrangian spheres to be ``Fukaya-isomorphic'' if they can be equipped with a grading and Pin structure such that the corresponding objects of $\EuF(X,\omega)$ are quasi-isomorphic. 

\begin{lem}[= Theorem \ref{Thm:spheres} (1)]
\label{lem:lag_sphere_vc}
Suppose $\rk(\Po) = 1$. 
Then any Lagrangian sphere $L \subset (X,\omega)$ is Fukaya-isomorphic to a vanishing cycle.
\end{lem}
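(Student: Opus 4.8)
The plan is to carry $L$ across homological mirror symmetry to a spherical object of $\Db(X^\circ_\C)$, move it by a derived autoequivalence supplied by the Bayer--Bridgeland analysis so that it matches the mirror of a vanishing cycle, and then recognise that autoequivalence---back on the symplectic side---as the action of a product of squared Dehn twists in vanishing cycles, which carries the class of vanishing cycles to itself. Concretely, equipping $L$ with a grading and Pin structure makes it a spherical object of $\Dpinf\EuF(X,\omega)$ whose class under the isometry \eqref{eq:knum_fuk_id} is some $\delta \in \Delta(U\oplus\Po)$. By Lemma \ref{lem:sph_cl_vc} there is a vanishing cycle $V\subset X$ with the same class $\delta$. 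Transporting $L$ and $V$ through the mirror equivalence \eqref{eqn:HMSass}, passing to $H^0$, and descending to the complex model $X^\circ_\C$ via Lemma \ref{lem:lef_spheres}, I obtain spherical objects $E_L,E_V$ of $\Db(X^\circ_\C)$ with one and the same Mukai vector $\epsilon$ (the image of $\delta$). Since $\rho(X^\circ_\C)=1$, the Bayer--Bridgeland results apply: by \cite[Remark 6.10]{BB} the group $\Auteq^0\Db(X^\circ_\C)$ acts transitively on spherical objects of a fixed Mukai vector, so there is $\Phi\in\Auteq^0\Db(X^\circ_\C)$ with $\Phi(E_L)\simeq E_V$.

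Next I would run the chain of isomorphisms \eqref{eqn:basechangeses}, \eqref{eqn:H0sesdg}, \eqref{eqn:hms_ses_morph}, each of which is compatible with the action on objects (for \eqref{eqn:basechangeses} this is the compatibility with base change of the Lefschetz-principle identifications of Lemmas \ref{lem:lef_spheres} and \ref{lem:lef_auteqs}; for the others it is built into the construction), to produce $\Psi\in\Auteq^0\Dpinf\EuF(X,\omega)$ with $\Psi(L)\simeq V$, hence $L\simeq\Psi^{-1}(V)$ in $\EuD\EuF(X)$. By Proposition \ref{prop:sescompat} the group $\Auteq^0\Dpinf\EuF(X,\omega)/[2]$ is generated by the classes of squared Dehn twists $\tau_{L_i}^2$ in vanishing cycles. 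Hence, modulo an even shift, $\Psi^{-1}$ is induced by a graded symplectomorphism $f=\tau_{L_1}^{\pm2}\circ\cdots\circ\tau_{L_k}^{\pm2}$, a composition of (inverse) squared Dehn twists in vanishing cycles; here one uses that a Dehn twist acts on $\Dpinf\EuF(X)$ by the corresponding algebraic twist \cite{Seidel:FCPLT}. Since a graded symplectomorphism acts on $\EuF(X,\omega)$ by sending a Lagrangian brane to its geometric image, we get $\Psi^{-1}(V)\simeq f(V)$ up to regrading, so $L$ is Fukaya-isomorphic to the Lagrangian sphere $f(V)$.

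Finally, $f(V)$ is itself a vanishing cycle: each $\tau_{L_i}^{\pm2}$ arises as the symplectic monodromy of a loop in $\Omega^+_0(X,\omega)$ about one of the removed points $\delta_i^\perp$ (cf.\ Remark \ref{rmk:dehntwist}), and parallel transport along such a loop carries the vanishing cycle of a nodal degeneration of the family $\EuX(\bL(X,\omega),[\omega])\to\Omega^+_0(X,\omega)$ to another such vanishing cycle; composing, $f$ maps vanishing cycles to vanishing cycles, which finishes the argument. I expect the main obstacle to be the bookkeeping of the middle paragraph: verifying that the abstract group isomorphisms of Sections \ref{sec:enh} and \ref{Sec:Lefschetz} genuinely track the isomorphism class of the \emph{object} $L$, and not merely its class in $K_{num}$, and that the even-shift ambiguities in \eqref{eqn:sympmcgacts} and in the passage to $\Symp^{gr}$ are harmless precisely because a vanishing cycle may be regraded at will.
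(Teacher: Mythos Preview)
Your proposal is correct and follows essentially the same route as the paper's own proof: match $L$ with a vanishing cycle $V$ having the same numerical class via Lemma \ref{lem:sph_cl_vc}, invoke \cite[Remark 6.10]{BB} to obtain $\Phi\in\Auteq^0\Db(X^\circ_\C)$ sending one to the other, lift $\Phi$ to the symplectic monodromy group using Proposition \ref{prop:sescompat}, and conclude that the resulting monodromy carries $V$ to a vanishing cycle Fukaya-isomorphic to $L$. The paper phrases the lift more economically: rather than pulling $\Phi$ back step by step through \eqref{eqn:basechangeses}, \eqref{eqn:H0sesdg}, \eqref{eqn:hms_ses_morph} and then decomposing the resulting autoequivalence into squared Dehn twists, it simply uses surjectivity of the composite $\pi_1(\Omega^+_0(X,\omega))\to\Auteq^0\Db(X^\circ_\C)$ to find a loop $a$ mapping to $\Phi$, and observes directly that $a\cdot V$ is the vanishing cycle obtained by concatenating the vanishing path of $V$ with $a$---thereby sidestepping your bookkeeping worries about object-level compatibility and grading ambiguities in one stroke.
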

\begin{proof}
Let $L$ be a Lagrangian sphere, and $\EuE_L$ the mirror spherical object of $\Db(X^\circ_\C)$. 
By Lemma \ref{lem:sph_cl_vc}, there exists a vanishing cycle $V$ with $v(\EuE_V) = v(\EuE_L)$.  
We can apply \cite[Remark 6.10]{BB} (because $\rho(X^\circ_\C)=1$), which says that there exists $\Phi \in \Auteq^0 \Db(X^\circ_\C)$ with $\Phi(\EuE_V) \cong \EuE_L$.  
We have a homomorphism $\pi_1(\Omega^+_0(X,\omega)) \to \Auteq^0 \Db(X^\circ_\C)$ from the composition of short exact sequences \eqref{eqn:composeses}, which is surjective by Proposition \ref{prop:sescompat}. 
Therefore, there exists $a \in \pi_1(\Omega^+_0(X,\omega))$ such that $a \cdot V \cong L$. 
It is clear that $a \cdot V$ is the vanishing cycle whose vanishing path is the concatenation of the vanishing path of $V$ with the loop $a$, so we are done.
\end{proof}

\begin{lem}
\label{lem:orbitshom}
Suppose $\rk(\Po) = 1$. 
Then the orbits of $G(X,\omega)$ on the set of Fukaya-isomorphism classes of Lagrangian spheres in $(X,\omega)$ are in bijection with the orbits of $\Gamma^+(U \oplus \Po)$ on $\Delta(U \oplus \Po)/\pm \id$.
\end{lem}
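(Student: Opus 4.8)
The plan is to produce a $G(X,\omega)$-equivariant surjection $\Theta$ from the set of Fukaya-isomorphism classes of Lagrangian spheres onto $\Delta(U\oplus\Po)/\pm\id$, and then to show that the induced map on orbit sets is also injective. Throughout I will transport statements about Lagrangian spheres to statements about spherical objects of $\Db(X^\circ_\C)$ using the homological mirror equivalence \eqref{eqn:HMSass} and the Lefschetz-principle identifications of Section \ref{Sec:Lefschetz}, so that the Bayer--Bridgeland results (in particular \cite[Remark 6.10]{BB}, valid since $\rho(X^\circ_\C)=\rk\Po=1$) become available. Note first that $G(X,\omega)=\pi_0\Symp(X,\omega)$ does act on Fukaya-isomorphism classes of Lagrangian spheres, since a symplectomorphism carries Lagrangian spheres to Lagrangian spheres and Fukaya-isomorphism classes to Fukaya-isomorphism classes, and Hamiltonian-isotopic symplectomorphisms induce the same map.

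To define $\Theta$: given a Lagrangian sphere $L$, choose a grading, view $L$ as a spherical object of $\Dpinf\EuF(X,\omega)$, and let $\EuE_L$ be the corresponding spherical object of $\Db(X^\circ_\C)$. Its Mukai vector lies in $\Delta(\N(X^\circ))=\Delta(U\oplus\Po)$ by Remark \ref{rmk:Nrelevance}. Changing the grading of $L$ by an odd shift replaces $\EuE_L$ by $\EuE_L[1]$ and negates $v(\EuE_L)$, and no other choice is involved (the Pin structure on $S^2$ is unique, as $H^1(S^2;\Z/2)=0$), so the class $\Theta(L)$ of $v(\EuE_L)$ in $\Delta(U\oplus\Po)/\pm\id$ is well defined and depends only on the Fukaya-isomorphism class of $L$. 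For equivariance, note that $g\in G(X,\omega)$ acts on $\Dpinf\EuF(X,\omega)$ by an autoequivalence, hence (via mirror symmetry and Section \ref{Sec:Lefschetz}) by an autoequivalence of $\Db(X^\circ_\C)$ acting on $\N(X^\circ)$ through $\varpi_K$; since $\EuE_{g\cdot L}\cong g\cdot\EuE_L$, this gives $\Theta(g\cdot L)=\bar g\cdot\Theta(L)$, where $\bar g\in\Gamma^+(U\oplus\Po)$ is the image of $g$ under the surjection $G(X,\omega)\twoheadrightarrow\Gamma^+(U\oplus\Po)$ of \eqref{eqn:sesmonodromy2} (which agrees with $\varpi_K$ by \eqref{eqn:composeses}).

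Surjectivity of $\Theta$ is immediate from Lemma \ref{lem:sph_cl_vc}: every class in $\Delta(U\oplus\Po)$ is the class of a vanishing cycle, and vanishing cycles are Lagrangian spheres. Hence $\Theta$ descends to a surjection from the set of $G(X,\omega)$-orbits of Fukaya-isomorphism classes onto the set of $\Gamma^+(U\oplus\Po)$-orbits on $\Delta(U\oplus\Po)/\pm\id$; it remains to see this is injective. So suppose $\Theta(L_1)$ and $\Theta(L_2)$ lie in the same $\Gamma^+(U\oplus\Po)$-orbit. Lifting the relating isometry to $G(X,\omega)$ and replacing $L_1$ by its image, we may assume $\Theta(L_1)=\Theta(L_2)$, and after a further shift of the grading of $L_2$ we may assume $v(\EuE_{L_1})=v(\EuE_{L_2})=\delta$ for a single $\delta\in\Delta(U\oplus\Po)$. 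By \cite[Remark 6.10]{BB}, $\Auteq^0\Db(X^\circ_\C)$ acts transitively on spherical objects with Mukai vector $\delta$, so there is $\Psi\in\Auteq^0\Db(X^\circ_\C)$ with $\Psi(\EuE_{L_1})\cong\EuE_{L_2}$. By Proposition \ref{prop:sescompat} the composite $\pi_1(\Omega^+_0(X,\omega))\to\Auteq^0\Db(X^\circ_\C)/[2]$ of \eqref{eqn:comp_init} is an isomorphism, and it factors through $I(X,\omega)$, so $\Psi$ is realized modulo even shifts by some $h\in I(X,\omega)$. Then $h\cdot L_1$ and $L_2$ are mirror to objects of $\Db(X^\circ_\C)$ that are isomorphic up to an even shift, so they underlie the same Lagrangian sphere up to a shift of grading and are therefore Fukaya-isomorphic; hence $L_1$ and $L_2$ lie in one $G(X,\omega)$-orbit, as required.

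The substantive content is confined to the injectivity step, and is precisely the combination of the transitivity statement \cite[Remark 6.10]{BB} with the surjectivity of symplectic monodromy onto $\Auteq^0\Db(X^\circ_\C)/[2]$ supplied by Proposition \ref{prop:sescompat}; without these one obtains only a surjection of orbit sets. The remaining points are bookkeeping: checking that $\Theta$ is well defined and equivariant, and keeping track of the two independent $\pm$-ambiguities — the non-canonical grading of a Lagrangian sphere and even shifts on the derived side — which is why both sides of the bijection are quotiented by $\pm\id$.
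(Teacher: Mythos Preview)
Your argument is correct and uses the same essential ingredients as the paper: surjectivity via Lemma~\ref{lem:sph_cl_vc}, and injectivity via the transitivity statement \cite[Remark 6.10]{BB} combined with the surjectivity of $\pi_1(\Omega^+_0(X,\omega))\to\Auteq^0\Db(X^\circ_\C)/[2]$ from Proposition~\ref{prop:sescompat}.

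The route differs slightly from the paper's. The paper defines the map via the homology class $[L]\in\Delta(\bL(X,\omega))$ (identified with $\Delta(U\oplus\Po)$ by \eqref{eq:lattmirr}), and for injectivity first invokes Lemma~\ref{lem:lag_sphere_vc} to replace each $L_i$ by a vanishing cycle, then uses the geometric observation that two vanishing cycles emanating from the same point $\delta^\perp$ differ by an element of $\pi_1(\Omega^+_0(U\oplus\Po))$ acting through $I(X,\omega)$. You instead keep $L_1,L_2$ as arbitrary Lagrangian spheres and apply \cite[Remark 6.10]{BB} and Proposition~\ref{prop:sescompat} directly. Since the paper's proof of Lemma~\ref{lem:lag_sphere_vc} is itself exactly \cite[Remark 6.10]{BB} plus Proposition~\ref{prop:sescompat}, the two arguments have the same content; the paper's version is more modular (it isolates the ``every Lagrangian sphere is Fukaya-isomorphic to a vanishing cycle'' statement, which is of independent interest), while yours is marginally more streamlined for this particular lemma.
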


\begin{proof} 
Recall that the image of $G(X,\omega) \to \aut H^2(X,\Z)$ is precisely $\Gamma^+(U \oplus \Po)$ (cf. Proposition \ref{prop:shrunkmon}). Thus there is a well-defined map
\begin{equation}
\label{eqn:Lhomclass}
 \{\text{Lagrangian spheres up to Fukaya-isomorphism}\}/G(X,\omega) \to (\Delta(U \oplus \Po)/\pm \id)/\Gamma^+(U \oplus \Po)
 \end{equation}
sending $L$ to its homology class (the choice of orientation of $L$ does not matter since we quotient by $\pm \id$). 
This map is surjective by Lemma \ref{lem:sph_cl_vc}.

Suppose that $L_1$ and $L_2$ have the same image under \eqref{eqn:Lhomclass}. 
By Lemma \ref{lem:lag_sphere_vc}, we may assume that $L_1$ and $L_2$ are vanishing cycles. 
Because $G(X,\omega)\to \Gamma^+(U \oplus \Po)$ is surjective, we may assume that $[L_1] = [L_2] = \delta$ in $H_2(X,\Z)$; so $L_1$ and $L_2$ are vanishing cycles from the same point $p_\delta$. 
It follows that their vanishing paths differ by an element of $\pi_1(\Omega^+_0(U \oplus \Po))$, so the vanishing cycles differ by its image under $\pi_1(\Omega^+_0(U \oplus \Po)) \to I(X,\omega) \subset G(X,\omega)$. 
In particular, $L_1$ and $L_2$ represent the same class on the left-hand side of \eqref{eqn:Lhomclass}, so the map is injective as required.
\end{proof}

\begin{cor}
If $\rk(\Po) = 1$, then $G(X,\omega)$ has finitely many orbits on the set of Fukaya-isomorphism classes of Lagrangian spheres.
\end{cor}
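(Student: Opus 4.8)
The plan is to reduce the statement to a finiteness property of an arithmetic group acting on a lattice, and then to invoke classical results on indefinite quadratic forms.

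The first step is to apply Lemma~\ref{lem:orbitshom}, which puts the set of $G(X,\omega)$-orbits on Fukaya-isomorphism classes of Lagrangian spheres in bijection with the set of $\Gamma^+(U \oplus \Po)$-orbits on $\Delta(U \oplus \Po)/\pm\id$. It therefore suffices to prove that the latter set is finite. Since $\rk(\Po) = 1$ and $\Po$ is an even lattice of signature $(1,0)$, we may write $\Po \cong \langle 2n \rangle$ for some integer $n \ge 1$, so that $N \coloneqq U \oplus \Po$ is an even indefinite lattice of rank $3$ and signature $(2,1)$. One records the elementary observation that every $\delta \in \Delta(N)$ is primitive: writing $\delta = k\delta_0$ with $\delta_0$ primitive gives $-2 = k^2\delta_0^2$ with $\delta_0^2$ an even integer, which forces $k = 1$.

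The second step is the finiteness itself. Here I would invoke the classical fact, going back to Eichler's work on indefinite quadratic forms of rank $\ge 3$ (and equivalent to the finiteness of the number of classes in a genus), that $\Aut(N)$ acts on the set of primitive vectors of a fixed square-length with only finitely many orbits — the orbit of such a $v$ being determined, up to finitely many choices, by $v^2$ together with the image of $v$ in the \emph{finite} discriminant group $N^*/N$. Since $\Gamma(N) \subset \Aut(N)$ is the kernel of the map to the finite group $\Aut(N^*/N)$, it has finite index in $\Aut(N)$, and $\Gamma^+(N) \subset \Gamma(N)$ has index at most two; a finite-index subgroup of a group acting with finitely many orbits again acts with finitely many orbits. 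Hence $\Gamma^+(N)$ acts with finitely many orbits on $\Delta(N)$, and a fortiori on $\Delta(N)/\pm\id$, as required.

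The only delicate point is the precise invocation of Eichler's theorem at rank exactly $3$, and the fact that one wants orbits of the full isometry group $\Aut(N)$ rather than spinor-genus classes; this is entirely standard, but is where a reference should be pinned down. For the two cases of primary interest — the mirror quartic ($n=2$) and the mirror double plane ($n=1$) — this subtlety can be bypassed altogether, since Lemma~\ref{lem:oneorbit} shows that $\Gamma^+(N)$ acts \emph{transitively} on $\Delta(N)/\pm\id$; there is then exactly one orbit, recovering the transitivity asserted in Theorem~\ref{Thm:spheres}(2).
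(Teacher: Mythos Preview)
Your argument is correct and follows the same overall shape as the paper's proof: both reduce via Lemma~\ref{lem:orbitshom} to showing that $\Gamma^+(U \oplus \Po)$ has finitely many orbits on $\Delta(U \oplus \Po)$. The difference lies only in how that finiteness is established. You appeal to the classical theory of indefinite quadratic forms (Eichler, finiteness of classes in a genus, discriminant-group invariants), together with the finite-index passage from $\Aut(N)$ to $\Gamma^+(N)$. The paper instead invokes the general finiteness theorem of Borel and Harish-Chandra: for a reductive $\Q$-group $G$ acting on a rational representation $V$, the integral points of a closed $G$-orbit fall into finitely many $G_\Z$-orbits. Both routes are valid; the Borel--Harish-Chandra approach has the advantage of being a single clean citation with no rank-$3$ caveats, while your approach is more hands-on and yields sharper information (the discriminant-class invariant) when one wants to actually enumerate orbits, as in the $n=1,2$ cases you mention at the end. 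Your acknowledged worry about pinning down Eichler at rank exactly $3$ is well-placed but not fatal: since $N = U \oplus \langle 2n\rangle$ contains a hyperbolic plane, the relevant version of Eichler's criterion does apply.
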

\begin{proof}
By Lemma \ref{lem:orbitshom}, it suffices to check that $\Gamma^+(U \oplus \Po)$ has finitely many orbits on $\Delta(U \oplus \Po)$. 
This follows from the following general result of Borel and Harish-Chandra \cite[Theorem 6.9]{BHC}.  Let $G$ be a reductive algebraic group over $\Q$, $V$ a rational finite-dimensional representation of $G$, $\Gamma \subset V_{\Q}$ a $G_{\Z}$-invariant lattice, and $Q$ a closed orbit of $G$. Then $Q\cap \Gamma$ consists of finitely many $G_{\Z}$-orbits.
\end{proof}

\begin{cor}[= Theorem \ref{Thm:spheres} (2)] \label{cor:unique_sphere}
If $\Po = \langle 2 \rangle$ or $\langle 4 \rangle$, then $G(X,\omega)$ acts transitively on the set of  Fukaya-isomorphism classes of Lagrangian spheres. 
\end{cor}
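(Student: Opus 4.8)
The plan is to deduce this at once from the two results already established, namely Lemma~\ref{lem:orbitshom} and Lemma~\ref{lem:oneorbit}. First I would note that the hypothesis $\Po = \langle 2 \rangle$ or $\langle 4 \rangle$ in particular forces $\rk(\Po) = 1$, so Lemma~\ref{lem:orbitshom} applies: it identifies the set of $G(X,\omega)$-orbits on Fukaya-isomorphism classes of Lagrangian spheres in $(X,\omega)$ with the set of $\Gamma^+(U \oplus \Po)$-orbits on $\Delta(U \oplus \Po)/\pm\id$. Since $U \oplus \Po$ is precisely $U \oplus \langle 2 \rangle$ or $U \oplus \langle 4 \rangle$, Lemma~\ref{lem:oneorbit} tells us that $\Gamma^+(U \oplus \Po)$ acts transitively on $\Delta(U \oplus \Po)/\pm\id$; hence there is a single orbit on the right-hand side, and therefore a single orbit on the left. (If $X$ happened to contain no Lagrangian sphere the statement would be vacuous, but in fact $\Delta(U \oplus \Po) \neq \emptyset$ and Remark~\ref{rmk:dehntwist} exhibits vanishing cycles realizing such classes, so the set in question is non-empty.)

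I expect there to be no genuine obstacle in this step: all of the substantive work has already been carried out upstream. The transition from Lagrangian spheres to $(-2)$-classes, together with the fact that homologous (hence Mukai-vector-equal) vanishing cycles are related by an element of $\pi_1(\Omega^+_0(U \oplus \Po)) \to I(X,\omega)$, is the content of Lemma~\ref{lem:orbitshom}, which in turn rests on homological mirror symmetry, the Bayer--Bridgeland description of $\Auteq^0 \Db(X^\circ_\C)$, and the classification of vanishing-cycle classes in Lemma~\ref{lem:sph_cl_vc}. The transitivity of $\Gamma^+$ on $\Delta/\pm\id$ in Lemma~\ref{lem:oneorbit} is where the specific arithmetic of the lattices $U \oplus \langle 2 \rangle$ and $U \oplus \langle 4 \rangle$ enters, via the explicit fact recorded in Examples~\ref{Ex:mirror_quartic} and~\ref{Ex:mirror_double_plane} that $\cM(U \oplus \langle 2n \rangle)$ contains a \emph{unique} nodal point for $n = 1, 2$.

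If one wanted a marginally more self-contained argument, one could bypass the bijection-with-nodal-points reasoning of Lemma~\ref{lem:oneorbit} and instead verify directly, by a Nikulin-style lattice computation in the spirit of Dolgachev's analysis, that $\Gamma^+(U \oplus \langle 2n \rangle)$ acts transitively on $\Delta(U \oplus \langle 2n \rangle)/\pm\id$ for $n = 1, 2$; but invoking Lemma~\ref{lem:oneorbit} as stated is cleaner and keeps the proof to two lines.
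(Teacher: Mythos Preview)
Your proposal is correct and matches the paper's proof exactly: the paper's argument is simply ``Follows by Lemma~\ref{lem:orbitshom} and Lemma~\ref{lem:oneorbit}.'' Your additional remarks on non-emptiness and alternative lattice arguments are sound but go beyond what the paper records.
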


\begin{proof}
Follows by Lemma \ref{lem:orbitshom} and Lemma \ref{lem:oneorbit}.
\end{proof}

\begin{cor} \label{cor:all_conj}
Suppose $\Po = \langle 2 \rangle$ or $\langle 4 \rangle$.
 If $L$ and $L'$ are Lagrangian spheres in $X$, then the Dehn twists $\tau_L$ and $\tau_{L'}$ are conjugate in $\Auteq_{CY} \Dpinf\EuF(X)$.
\end{cor}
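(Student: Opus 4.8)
The plan is to deduce this from the preceding results about transitivity of the symplectic mapping class group on Lagrangian spheres, together with the fact that conjugating a Dehn twist by a symplectomorphism gives the Dehn twist in the image sphere. First I would recall that by Corollary \ref{cor:unique_sphere}, since $\Po = \langle 2 \rangle$ or $\langle 4 \rangle$, the group $G(X,\omega)$ acts transitively on the set of Fukaya-isomorphism classes of Lagrangian spheres; so there exists a symplectomorphism $\phi$ with $\phi(L)$ Fukaya-isomorphic to $L'$. Lifting $\phi$ to a graded symplectomorphism and passing through the homomorphism \eqref{eqn:sympmcgacts} to $\Auteq_{CY}\Dpinf\EuF(X)/[2]$ (or its graded version), we obtain an autoequivalence $\Phi$. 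The key naturality statement is that $\Phi \circ \Tw_L \circ \Phi^{-1} = \Tw_{\Phi(L)}$, because the algebraic twist functor $\Tw_L$ depends only on the quasi-isomorphism class of the object $L$ in the Fukaya category, and $\Phi$ sends $L$ to an object quasi-isomorphic to $L'$ (after a choice of grading/Pin structure, which only shifts $\Tw$ by an even shift, and even shifts are central).

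The main steps in order: (1) invoke Corollary \ref{cor:unique_sphere} to get a symplectomorphism taking the Fukaya-isomorphism class of $L$ to that of $L'$; (2) promote this to an element $\Phi \in \Auteq_{CY}\Dpinf\EuF(X)$ (mod even shift) via Section \ref{sec:sympgracts}, noting that $\Tw_L$ and $\Tw_{L'}$ are themselves Calabi--Yau autoequivalences so the conjugation takes place inside $\Auteq_{CY}$; (3) apply the general fact that $\Tw$ is a functor of the object, so an autoequivalence carrying $L$ to (something isomorphic to) $L'$ conjugates $\Tw_L$ to $\Tw_{L'}$ — this is standard for spherical twists, cf. \cite{Seidel:FCPLT}; (4) observe that Fukaya-isomorphism allows a change of grading, which alters the twist only by composition with an even shift, and even shifts are central in $\Auteq_{CY}\Dpinf\EuF(X)$, so the conjugacy statement is unaffected by this ambiguity.

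The only real subtlety — and hence the main obstacle — is the book-keeping around gradings, Pin structures, and even shifts: ``Fukaya-isomorphic'' as defined in the excerpt allows one to re-equip $L$ with a possibly different grading, and changing the grading of a Lagrangian shifts the associated object, hence conjugates $\Tw_L$ by a shift. One must check that the resulting discrepancy is by an \emph{even} shift (or otherwise that it lies in the center), so that the equality $\Phi \Tw_L \Phi^{-1} = \Tw_{L'}$ genuinely holds in $\Auteq_{CY}\Dpinf\EuF(X)$ rather than merely up to a central element; since $\Tw_{L[1]} = [1]\circ \Tw_L \circ [-1]$ and a single shift is not central, one should phrase the argument either in $\Auteq_{CY}\Dpinf\EuF(X)/[2]$ and then note the lift is unambiguous because $\Tw_L^2$ (which is what was shown to be monodromy) is insensitive to the shift, or argue directly that the two Lagrangian \emph{branes} can be matched on the nose. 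Granting that, the conjugacy is immediate, and I would keep the write-up to three or four lines.
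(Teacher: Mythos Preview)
Your approach is correct and is exactly the paper's: use Corollary \ref{cor:unique_sphere} to put $L$ and $L'$ in the same orbit of $\Auteq_{CY}\Dpinf\EuF(X)$, then apply the conjugation formula for spherical twists \cite[Lemma 5.6]{Seidel:FCPLT}. The ``main obstacle'' you flag is not one: for any spherical object $S$ one has $\Tw_{S[k]} = \Tw_S$ (the shifts on $\Hom^\bullet(S[k],-)$ and on $S[k]$ cancel in the evaluation map), and in any case shifts \emph{are} central in $\Auteq$ of an $A_\infty$ category, contrary to what you wrote; so the grading/Pin ambiguity in ``Fukaya-isomorphic'' is harmless and no further book-keeping is needed.
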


\begin{proof}
It is sufficient to prove that the algebraic twist functors $\mathrm{Tw}_L$ and $\mathrm{Tw}_{L'}$ are conjugate in $\Auteq_{CY}\Dpinf\EuF(X)$.  The twist functors are conjugate provided the underlying spherical objects lie in the same orbit of the autoequivalence group, by \cite[Lemma 5.6]{Seidel:FCPLT}. 
\end{proof}

The Dehn twist in a simple non-separating curve on a closed surface of genus $g\geq 2$ has non-trivial roots in the mapping class group \cite{Margalit-Schleimer}.  For $g \geq 2$, the image of a Dehn twist  in the symplectic group $Sp(2g,\Z)$ has a cube root (this fails when $g=1$).

\begin{prop}\label{prop:roots}
Suppose $\Po = \langle 2 \rangle$ or $\langle 4 \rangle$.
Let $L\subset X$ be a Lagrangian sphere.  The Dehn twist $\tau_L$ admits no cube root in $G(X,\omega)$.
\end{prop}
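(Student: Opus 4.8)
The plan is to locate the obstruction inside the mirror-symmetric presentation of $G(X,\omega)$. By Corollary~\ref{cor:floertrivialsemi} (which applies since $\rk\Po=1$) we have $I(X,\omega)\cong Z(X,\omega)\rtimes\pi_1(\Omega^+_0(X,\omega))$, and by Section~\ref{sec:pic1eg} together with the standing assumption~\eqref{eq:lattmirr} (so that $\bL(X,\omega)=U\oplus\Po$ with $\Po=\langle 2n\rangle$, $n\in\{1,2\}$) the group $F\coloneqq\pi_1(\Omega^+_0(X,\omega))=\pi_1(\Omega^+_0(U\oplus\Po))$ is free of countably infinite rank, with a free basis given by the small loops around the punctures $\delta^\perp\in\mathfrak h$, $\delta\in\Delta(U\oplus\Po)/\pm\id$. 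The key point to extract is that $\tau_L^2$ projects, under the canonical retraction $q\colon I(X,\omega)\to F$, to a \emph{free generator} of $F$ --- and a free generator of a free group is never a cube.

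First I would reduce to an element acting on cohomology by an order-two reflection. Suppose for contradiction that $\gamma\in G(X,\omega)$ satisfies $\gamma^3=\tau_L$, and let $\bar\gamma$ be its image under $G(X,\omega)\to\Aut H^2(X,\Z)$, whose image is $\Gamma^+(U\oplus\Po)\cong\Z/2\ast\Z/p$, with $p=3$ if $\Po=\langle 2\rangle$ and $p=4$ if $\Po=\langle 4\rangle$, by Proposition~\ref{prop:shrunkmon} and Lemma~\ref{lem:free_prod}. As in the proof of Corollary~\ref{cor:not_generated_by_dehn_twists} (via Lemma~\ref{lem:oneorbit}), $\overline{\tau_L}$ is conjugate to the generator of the $\Z/2$ factor, hence has order two. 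Therefore $\bar\gamma^6=\overline{\tau_L}^2=1$ while $\bar\gamma^3\neq 1$, so $\bar\gamma$ has finite order dividing $6$ but not $3$, i.e.\ order $2$ or $6$; since torsion elements of a free product $\Z/2\ast\Z/p$ have order dividing $2$ or $p\in\{3,4\}$, order $6$ is impossible and $\bar\gamma$ has order exactly $2$. In particular $\gamma^2\in I(X,\omega)$, and of course $\gamma^6=\tau_L^2\in I(X,\omega)$ as well.

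Next I would identify $q(\tau_L^2)\in F$. By construction $q$ is the second half of the composition~\eqref{eqn:comp_init} (whose isomorphism property is Proposition~\ref{prop:sescompat}), and in particular it factors through the action map $I(X,\omega)\to\Auteq^0\Dpinf\EuF(X,\omega)/[2]$. By Lemma~\ref{lem:lag_sphere_vc} the Lagrangian sphere $L$ is Fukaya-isomorphic to a vanishing cycle $V$; since the algebraic twist of a spherical object depends only on its quasi-isomorphism class, $\tau_L$ and $\tau_V$ have the same image in $\Auteq^0\Dpinf\EuF(X,\omega)/[2]$ after squaring, so $q(\tau_L^2)=q(\tau_V^2)$. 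Now $\tau_V^2$ is the symplectic monodromy around the puncture $\delta^\perp$ with $\delta=[V]$ (Remark~\ref{rmk:dehntwist}), and the analysis in the proof of Proposition~\ref{prop:sescompat} shows precisely that $q$ carries these monodromy generators bijectively onto a free basis of $F$. Hence $q(\tau_L^2)$ is a free generator of $F$.

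Finally I would combine the two computations: since $\gamma^2\in I(X,\omega)$, inside $F$ we get $q(\tau_L^2)=q(\gamma^6)=q(\gamma^2)^3$, exhibiting the free generator $q(\tau_L^2)$ as a cube. This is impossible --- abelianizing $F$, a basis element maps to a primitive vector of $\bigoplus\Z$, which is not divisible by $3$. The contradiction shows $\tau_L$ admits no cube root in $G(X,\omega)$. The only delicate step is the middle one: verifying that $q$ sends $\tau_L^2$ to a genuine free generator rather than merely to a conjugate, or a proper power, of one. This rests on carefully tracking the identifications through Proposition~\ref{prop:sescompat} and on the fact that a Lagrangian sphere need only be Fukaya-isomorphic --- not Hamiltonian-isotopic --- to a vanishing cycle; the remaining ingredients (the order bookkeeping in $\Z/2\ast\Z/p$ and non-divisibility in a free abelian group) are elementary.
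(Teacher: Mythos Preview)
Your argument is correct, but it is more roundabout than the paper's. The paper works directly with the retraction of the full group $G(X,\omega)$ onto $\pi_1(\cM_0(U\oplus\Po))\cong\Z\ast\Z/p$ furnished by Proposition~\ref{prop:sescompat} (middle terms of~\eqref{eqn:composeses}), rather than first descending to $I(X,\omega)$ and the free group $F$. Using Corollary~\ref{cor:all_conj}, every Dehn twist maps to an element conjugate to $1\in\Z$; one then simply abelianizes to $\Z\oplus\Z/p$, where every Dehn twist lands on $(1,0)$, which has no cube root. This avoids your detour through the order analysis in $\Z/2\ast\Z/p$ and the reduction to $\gamma^2\in I(X,\omega)$.

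A minor imprecision in your write-up: the vanishing cycle $V$ produced by Lemma~\ref{lem:lag_sphere_vc} need not have $\tau_V^2$ equal to one of the chosen free generators of $F$, only to a \emph{conjugate} of one (since the vanishing path of $V$ differs from the reference path by an element of $\pi_1(\Omega^+_0)$). But this is harmless: your final step passes to the abelianization of $F$, where conjugates coincide, so the conclusion that $q(\tau_L^2)$ abelianizes to a primitive vector (hence is not a cube) still holds. You flagged this as the delicate point; in fact your own abelianization step already absorbs it, so no further tracking of identifications is needed.
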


\begin{proof}
By Proposition \ref{prop:sescompat} and Lemma \ref{lem:free_prod}, we have a surjection $G(X,\omega) \twoheadrightarrow \pi_1(\cM_0(U \oplus \Po)) \cong \Z \ast \Z/p$. 
Furthermore the Dehn twist in a given vanishing cycle can be arranged to correspond to $1 \in \Z$ (cf. Remark \ref{rmk:dehntwist}). 
By Corollary \ref{cor:all_conj}, it follows that all Dehn twists map to elements conjugate to $1 \in \Z$. 
Now we pass to the abelianization $(\Z \ast \Z/p)^{ab} \cong \Z \oplus \Z/p$: all Dehn twists map to $(1,0)$, and therefore have no cube root.
\end{proof}

Note that since $\tau_L^2$ is smoothly trivial, $\tau_L$ is its own cube root in $\pi_0\mathrm{Diff}(X)$, so this is a symplectic and not smooth phenomenon.

\subsection{Lagrangian tori}

We now consider embedded Lagrangian tori $L \subset (X,\omega)$ with Maslov class zero. 

\begin{lem}\label{lem:primitive}
Suppose that $\Po = \langle 2n \rangle$ where $n$ is square-free (observe that this is true in the situation of Example \ref{eg:GPirrat}, where $n = 1$ or $2$). Then a Maslov-zero Lagrangian torus $L \subset (X,\omega)$ represents a primitive homology class.
\end{lem}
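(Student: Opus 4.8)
The plan is to transport the statement across homological mirror symmetry and then invoke the Bayer--Bridgeland machinery, in the form already packaged in Lemma~\ref{lem:pointsprim}. First I would recall that the $A_\infty$ quasi-equivalence \eqref{eqn:HMSass} identifies the object of $\EuF(X,\omega)$ attached to the Maslov-zero Lagrangian torus $L$ (equipped with any spin structure and grading, and any flat $\Lambda^*$-local system, to make it unobstructed/nonzero) with an object $\EuE_L$ of $\Db(X^\circ)$. The key point to check is that $\EuE_L$ is \emph{point-like}, i.e. $\Ext^*(\EuE_L,\EuE_L)\cong \wedge^*(\Lambda^{\oplus 2})$: this follows because $\Ext^*_{\EuF}(L,L)\cong H^*(L;\Lambda)\cong H^*(T^2;\Lambda)$, the Maslov-zero condition ensuring the grading is well-defined and the Floer cohomology is the ordinary cohomology of the torus (there are no disc bubbles contributing to the differential, by the standard argument that a strictly unobstructed brane can be chosen and that Maslov-zero tori in a CY surface bound no nonconstant discs of the relevant index). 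Then, passing to a complex model $X^\circ_\C$ via Lemma~\ref{lem:lef_spheres} and the base-change results of Section~\ref{Sec:Lefschetz} (point-like objects are rigid, $\Ext^1=0$ is false but $\Ext^2\cong\Lambda$, however the relevant rigidity for descending the object is that it is determined by finitely many coefficients — so $\EuE_L$ descends to $X_0\times_{\fk_0}\bar\fk_0$ and to $X^\circ_\C$), we obtain a point-like object $\EuE_{L,\C}$ on a complex $K3$ with $\Pic\cong\langle 2n\rangle$, $n$ square-free.

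Next I would apply Lemma~\ref{lem:pointsprim}: there is a spherical object $S$ on $X^\circ$ (equivalently $X^\circ_\C$) with $\chi(\EuE_L,S)=1$. By Lemma~\ref{lem:sph_cl_vc}, the class of $S$ in $K_{num}$ corresponds under \eqref{eq:knum_fuk_id} to a class in $\Delta(U\oplus\Po)$ realized by a vanishing cycle, hence by a Lagrangian sphere $V\subset X$ with $[V]\in\Delta(\bL(X,\omega))$. Transporting the pairing identity $\chi(\EuE_L,S)=1$ back to the Fukaya side via \eqref{eq:euler_fuk} (and Remark~\ref{rmk:Lrelevance}, which says $\chi(K,L)=-[K]\cdot[L]$ up to the fixed sign), we get $[L]\cdot[V]=\pm 1$ in $\bL(X,\omega)$. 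Since $[V]\in H_2(X,\Z)$ is a lattice element and $[L]\cdot[V]=\pm1$, it follows immediately that $[L]$ is primitive: any proper divisor $d>1$ of $[L]$ in $H_2(X,\Z)$ would force $d\mid [L]\cdot[V]=\pm1$, a contradiction.

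The main obstacle, and the step deserving the most care, is the identification of $\EuE_L$ as a point-like object and its descent to a complex model. The point-like condition requires knowing that $HF^*(L,L)\cong H^*(T^2)$ as a ring (not merely as a graded vector space) and, crucially, that the object is nonzero and behaves well under the base-change arguments of Section~\ref{Sec:Lefschetz}. Unlike spherical objects, point-like objects have $\Ext^1\ne 0$, so one cannot cite rigidity verbatim; instead one must argue that $\EuE_L$, being a specific object defined over $\Lambda$, is defined over a finitely generated subfield and hence over $\bar\fk_0$, and that point-likeness is preserved under flat base change (which it is, since $\Ext$-groups commute with flat base change for a proper smooth family). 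I would also need to confirm that a grading on $L$ exists precisely because the Maslov class vanishes — this is exactly where the hypothesis enters — and that some choice of brane structure makes $L$ define a nonzero object (else the mirror object could be zero and all pairings trivial). Once these foundational points are nailed down, the rest is a short lattice-theoretic deduction.
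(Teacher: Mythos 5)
Your proposal is correct and follows essentially the same route as the paper: identify the mirror of $L$ as a point-like object, invoke Lemma~\ref{lem:pointsprim} to get a spherical object $S$ with $\chi(\EuE,S)=1$, use Lemma~\ref{lem:sph_cl_vc} to realize its class by a vanishing cycle $V$, and conclude from $[L]\cdot[V]=\pm1$. The only superfluous step is the explicit descent to a complex model, which is already handled inside the proof of Lemma~\ref{lem:pointsprim} (stated over any algebraically closed field of characteristic zero) and need not be repeated here.
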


\begin{proof}
The object $L$ of $\Dpinf\EuF(X,\omega)$ corresponds to a point-like object $\EuE$ of $\Db(X^\circ)$. 
Therefore there exists a spherical object $S$ of $\Db(X^\circ)$ with $\chi(\EuE,S) = 1$, by Lemma \ref{lem:pointsprim}. 
By Lemma \ref{lem:sph_cl_vc} there exists a vanishing cycle $V$ whose class in the numerical Grothendieck group corresponds to that of $S$ under mirror symmetry, so we have
\[-[L] \cdot [V] = \chi(L,V) = \chi(\EuE,S) = 1.\]
It follows that $[L]$ is primitive. 
\end{proof}

\begin{rmk}
It would be nice to prove Lemma \ref{lem:primitive} by arguing that the Chern character of the mirror object $\EuE$ to $L$ is primitive, and that therefore the Chern character $[L]$ of $L$ is primitive. 
However this would require us to prove that the map \eqref{eq:knum_fuk_id} identifies $K_{num}(\EuD\EuF(X))$ with a primitive sublattice of $H^2(X,\Z)$, which we have not done (cf. Remark \ref{rmk:knum_not_quite}): that is why we circumvent this issue by appealing 
to Lemma \ref{lem:sph_cl_vc}.
\end{rmk}

For our final result, we change our assumptions on the symplectic $K3$ surface $(X,\omega)$. 
Rather than assuming that $(X,\omega)$ itself has a homological mirror satisfying certain hypotheses, we assume that $\omega$ is a \emph{limit of $\rho^\circ=1$ classes}. 
We define this to mean that there exists a smooth one-parameter family of K\"ahler forms $\omega(t)$ with $\omega(0) = \omega$, having the following property: there exist times $t_1,t_2,\ldots$ converging to $0$ such that for all $j$, $(X,\omega(t_j))$ admits a homological mirror $X^\circ_j$ of Picard rank $\rho(X^\circ_j) = 1$.
 
\begin{lem} \label{lem:nonzero_in_homology}
Suppose $\omega$ is a limit of $\rho^\circ=1$ classes. 
If $L \subset (X,\omega)$ is a Maslov-zero Lagrangian torus, then $[L] \neq 0 \in H_2(X,\Z)$.
\end{lem}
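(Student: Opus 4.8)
The plan is to argue by contradiction. Suppose $[L] = 0 \in H_2(X,\Z)$; I will derive a contradiction using a single time $t_j$ at which $(X,\omega(t_j))$ has a Picard-rank-one mirror. First I would carry $L$ along the deformation by a relative Moser argument. Because $[L]=0$, every closed $2$-form on $X$ integrates to zero over $L$; in particular, on a Weinstein neighbourhood $U \cong T^*L$ of $L$ in $(X,\omega)$ the form $\omega(t) - \omega$ represents a class in $H^2(U;\R) \cong H^2(L;\R)$ pairing trivially with the zero-section, hence the zero class, so $\omega(t) - \omega = d\theta_t$ on $U$ for some $1$-form $\theta_t$ depending smoothly on $t$ with $\theta_0 = 0$. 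For $|t|$ small, Moser's trick then produces an isotopy $\psi^t$, defined near $L$ and with $\psi^0 = \mathrm{id}$, such that $(\psi^t)^*\omega(t) = \omega$; setting $L_t \coloneqq \psi^t(L)$ gives an embedded Lagrangian $2$-torus in $(X,\omega(t))$ with $[L_t] = [L] = 0$. Its Maslov class is an integral cohomology class varying continuously with $(\omega(t),J_t)$ along a continuous family of compatible almost-complex structures, hence is locally constant and equal to $\mu_{L_0} = 0$.

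Next I would fix $j$ large enough that $L' \coloneqq L_{t_j}$ is defined, write $\omega' \coloneqq \omega(t_j)$, and realise $L'$ as a point-like object of the Fukaya category. Since $L'$ is a Maslov-zero Lagrangian torus in the $K3$ surface $(X,\omega')$, the dimension-counting argument recalled in the footnote to Section~\ref{Subsec:HMS} (no somewhere-injective holomorphic discs for generic $\omega'$-compatible $J$, hence no non-constant holomorphic discs at all by Kwon--Oh/Lazzarini, and no holomorphic sphere meeting $L'$) provides a $J_{L'}$ turning $(L',J_{L'})$ into a strictly unobstructed Lagrangian brane, once $L'$ is equipped with a grading (possible as $\mu_{L'}=0$) and a Pin structure. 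Then $L'$ is an object of $\EuF(X,\omega')$, and because it bounds no non-constant holomorphic disc its self-Floer cohomology is $H^*(T^2;\Lambda) \cong \wedge^*(\Lambda^{\oplus 2})$; that is, $L'$ is point-like.

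Finally I would run the Chern-character comparison. By hypothesis $(X,\omega')$ is homologically mirror to a $K3$ surface $X^\circ_j$ over $\Lambda$ with $\rho(X^\circ_j) = 1$; since $X^\circ_j$ is smooth, the open--closed map $\mathcal{OC}\colon HH_*(\Dpinf\EuF(X,\omega')) \to H^{*+2}(X;\Lambda)$ is an isomorphism by the discussion following \eqref{eqn:oc}. By Lemma~\ref{lem:oc_geom}, $\mathcal{OC}(\Ch[L']) = PD([L']) = 0$, and injectivity of $\mathcal{OC}$ gives $\Ch[L'] = 0 \in HH_0(\Dpinf\EuF(X,\omega'))$. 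The homological mirror equivalence sends $L'$ to a point-like object $\EuE$ of $\Dbdg(X^\circ_j)$ and intertwines the noncommutative Chern characters, so $\Ch(\EuE) = 0 \in HH_0(\Dbdg(X^\circ_j)) \cong HH_0(X^\circ_j)$ by \eqref{eqn:HHdbdg_hh}. But $\EuE$ is a point-like object of the bounded derived category of a $K3$ surface of Picard rank one over the algebraically closed field $\Lambda$ of characteristic zero, so $\Ch(\EuE) \neq 0$ by Lemma~\ref{lem:pointsnonvan} --- a contradiction, whence $[L] \neq 0$.

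The step I expect to be the main obstacle is the first one: the only place the hypothesis $[L]=0$ enters is there, and the real content is the relative Moser persistence, i.e. checking that vanishing of $[L]$ (equivalently, of the flux-type obstruction) genuinely allows $L$ to be dragged along as a Lagrangian submanifold under an arbitrary small deformation of the symplectic form. Everything afterwards is formal, granted the standard fact --- already invoked elsewhere in the paper --- that a Maslov-zero Lagrangian torus in a symplectic $K3$ can be made strictly unobstructed and hence defines a point-like object. Note also that only a single $j$ is used; the role of the limiting hypothesis is merely to guarantee that some $\rho^\circ = 1$ class lies close enough to $\omega$ for the Moser argument to apply.
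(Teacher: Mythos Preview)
Your argument is correct and follows essentially the same route as the paper's proof: contradiction via a Moser-type deformation of $L$ to some nearby $(X,\omega(t_j))$ with a Picard-rank-one mirror, then passing through the open--closed map and Lemma~\ref{lem:oc_geom} to conclude that the mirror point-like object has vanishing Chern character, contradicting Lemma~\ref{lem:pointsnonvan}. The paper is more terse (it just says ``a Moser-type argument'' and omits the explicit verification that $L'$ is point-like), but you have filled in precisely the details the paper leaves implicit, and your closing remark that only a single $t_j$ is needed is exactly right.
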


\begin{proof}
Suppose $L\subset (X,\omega)$ is a Maslov-zero Lagrangian torus with vanishing homology class.  A Moser-type argument shows that $L$ deforms to give a Maslov-zero Lagrangian torus $L(t) \subset (X,\omega(t))$ for $t \in [0,\epsilon)$ sufficiently small that $L(t)$ remains embedded. 
Thus there exists $\tau \in (0,\epsilon)$ such that $L(\tau) \subset (X,\omega(\tau))$ is embedded and $(X,\omega(\tau))$ admits a homological mirror $X^\circ$ of Picard rank $1$.  
$L(\tau)$ defines a point-like object of $\Dpinf\EuF(X,\omega(\tau))$, and we denote the mirror point-like object by $\EuE \in \Dbdg(X^{\circ})$.  Under the isomorphism
\[
H^2(X;\Lambda) \cong HH_0(\Dpinf\EuF(X,\omega(\tau))) \cong HH_0(\Dbdg(X^{\circ})) \cong HH_0(X^\circ)
\]
we see that $\EuE$ has trivial Chern character. 
However $X^\circ$ has Picard rank $1$, so this contradicts Lemma \ref{lem:pointsnonvan}.
\end{proof}

It is obvious that, if $(X,\omega)$ is mirror to a $K3$ surface of Picard rank $1$, then $\omega$ is a limit of $\rho^\circ=1$ classes. 
However there are more examples, as we now describe.
 
Recall that we constructed the mirror quartic and mirror double plane as hypersurfaces $X \subset Y$ depending on data $(\pol,\lambda)$. 
The vector $\lambda \in (\R_{>0})^{\Xi_0}$ determines a refinement $\Sigma$ of $\bar{\Sigma}$, and hence a toric resolution of singularities $Y \to \bar{Y}$; and $X \subset Y$ is the proper transform of $\bar{X} \subset \bar{Y}$. 
The vector $\lambda$ also determines a K\"ahler class $[\omega_\lambda]$ on $X$.

The $K3$ surface $X$ does not depend on the refinement $\Sigma$, so long as the rays of $\Sigma$ are generated by $\Xi_0$: changing $\Sigma$ corresponds to changing $Y$ by a birational modification in a region disjoint from $X$ (the same is not true in higher dimensions).
A convenient way of organizing the different refinements is provided by the ``secondary fan'' or ``Gelfand--Kapranov--Zelevinskij decomposition'' associated to $\Xi_0 \subset \R^3$, whose cones $\text{cpl}(\Sigma)$ are indexed by fans $\Sigma$ whose rays are generated by subsets of $\Xi_0$ (see \cite{OP} or \cite[Section 6.2.3]{CoxKatz}). 
Our prescription for determining $\Sigma$ from $\lambda$ is equivalent to defining $\Sigma$ to be the unique fan such that $\lambda$ is contained in the interior of $\text{cpl}(\Sigma)$.

For any $\lambda$ lying in the interior of a cone $\text{cpl}(\Sigma)$, where $\Sigma$ is a refinement of $\bar{\Sigma}$ whose rays are generated by $\Xi_0$, we obtain a K\"ahler class $\omega_\lambda$ on $X$. 
We call these K\"ahler classes the \emph{ambient} K\"ahler classes on $X$.
Theorem \ref{Thm:HMS} only provides a homological mirror to $(X,\omega_\lambda)$ if $\Sigma$ is furthermore \emph{simplicial} (the corresponding cones $\text{cpl}(\Sigma)$ are the top-dimensional ones).
Nevertheless we have:

 \begin{lem}
 \label{lem:kahler_limit}
Let $X$ be the mirror quartic or mirror double plane, and $\omega$ an ambient K\"aher class. 
Then $\omega$ is a limit of $\rho^\circ=1$ classes. 
\end{lem}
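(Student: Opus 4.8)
The plan is to reduce the general ambient class to the case of ambient-irrational classes associated to \emph{simplicial} refinements, for which a homological mirror of Picard rank one is supplied by Theorem~\ref{Thm:HMS} together with Example~\ref{eg:GPirrat}. First I would recall the combinatorial setup: the ambient K\"ahler cone of $X$ decomposes according to the secondary fan of $\Xi_0 \subset \R^3$, and by the preceding discussion a given $\omega$ lies in the relative interior of some cone $\mathrm{cpl}(\Sigma)$, where $\Sigma$ is a (not necessarily simplicial) refinement of $\bar\Sigma$ with rays generated by $\Xi_0$. The key point is that the top-dimensional cones of the secondary fan, which correspond exactly to the \emph{simplicial} refinements $\Sigma'$, are dense: any cone $\mathrm{cpl}(\Sigma)$ lies in the closure of the union of the top-dimensional cones, and every cone is full-dimensional in its affine span inside $(\R_{>0})^{\Xi_0}$ modulo the linear relations (i.e. the secondary fan is a genuine complete fan in the relevant quotient space). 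Hence one can find a sequence $\lambda_j \to \lambda = [\omega]$ with each $\lambda_j$ in the interior of a top-dimensional cone $\mathrm{cpl}(\Sigma_j)$ and, after a further small perturbation within that open cone, with $\lambda_j$ irrational (not lying on any rational hyperplane).

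Next I would set up the Moser family. Choosing such a sequence $\lambda_j \to \lambda$ gives a smooth path $\omega(t)$ of ambient K\"ahler forms on the fixed $K3$ surface $X$ (the underlying $X$ does not depend on $\Sigma$, only on $\Xi_0$) with $\omega(0) = \omega$ and $\omega(t_j) = \omega_{\lambda_j}$ for a sequence $t_j \to 0$; concretely one can just interpolate the cohomology classes linearly in a neighbourhood and apply the fact that the ambient K\"ahler cone is open and convex, then promote classes to forms by \cite[Section~9.3.3]{VoisinI}. For each $j$, since $\lambda_j$ is irrational and satisfies $(\ast)$ (being in the interior of a top-dimensional secondary cone), Example~\ref{eg:GPirrat} shows that $(X,\omega_{\lambda_j})$ is homologically mirror to a Greene--Plesser mirror $X^\circ_{d_j}$ of Picard rank $\rho(X^\circ_{d_j}) = 1$ by Proposition~\ref{prop:irratmeansgen}. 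Thus $(X,\omega(t_j))$ admits a homological mirror of Picard rank one for all $j$, which is precisely the definition of $\omega$ being a limit of $\rho^\circ = 1$ classes.

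The main obstacle is the combinatorial claim that irrational points in top-dimensional secondary cones accumulate at every ambient class $\lambda$. This is where one must be slightly careful: the secondary fan lives in $(\R^{\Xi_0})/(\text{linear relations among } \Xi_0)$, and what is really needed is that $\lambda$ lies in the closure of the union of the open top-dimensional cones \emph{and} that one may perturb within such a cone to avoid the countable union of rational hyperplanes. The first follows from completeness of the secondary fan; the second is immediate since a rational hyperplane has empty interior, so a nonempty open cone is not contained in any countable union of them, and the irrational points are dense in it. I would also remark that one does not actually need $\lambda_j \to \lambda$ to stay inside the original cone $\mathrm{cpl}(\Sigma)$ --- any sequence of ambient irrational classes converging to $\lambda$ suffices, since the path $\omega(t)$ is only required to be a smooth family of K\"ahler forms on $X$, and all ambient classes give K\"ahler forms on the \emph{same} $X$. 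This completes the proof modulo the (standard) properties of the secondary fan recalled in \cite{OP} and \cite[Section~6.2.3]{CoxKatz}.
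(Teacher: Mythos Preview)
Your proposal is correct and follows essentially the same approach as the paper: reduce to ambient-irrational classes in a top-dimensional secondary cone, then invoke Theorem~\ref{Thm:HMS} and Proposition~\ref{prop:irratmeansgen}. The paper's execution is slightly more streamlined: rather than picking a sequence of irrational $\lambda_j$ and then threading a smooth path through them, it chooses from the outset a single \emph{linear} path $\lambda(t)$ into a fixed top-dimensional cone $\mathrm{cpl}(\Sigma')$ along a line of irrational slope, so that smoothness is automatic and the path necessarily meets infinitely many irrational points as $t \to 0$.
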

\begin{proof}
Suppose $\omega = \omega_\lambda$; then $\lambda$ lies in the closure of a top-dimensional cone $\text{cpl}(\Sigma')$, where $\Sigma'$ is a simplicial refinement of $\bar{\Sigma}$ whose rays are generated by $\Xi_0$. 
Therefore we can choose a path $\lambda(t)$ converging linearly to $\lambda(0) = \lambda$ along a line of irrational slope, such that $\lambda(t)$ lies in the interior of $\text{cpl}(\Sigma')$ for $t \neq 0$. 
Then Theorem \ref{Thm:HMS} provides a mirror $X^\circ_{d(t)}$ to $(X,\omega_{\lambda(t)})$ for all $t \neq 0$. 
Because $\lambda(t)$ moves linearly along a line of irrational slope, there is a sequence of times $t_j$ converging to $0$ at which $\lambda(t_j)$ is irrational, so that the mirror $X^\circ_{d(t_j)}$ has Picard rank $1$ by Proposition \ref{prop:irratmeansgen}.
\end{proof}

Lemmas \ref{lem:kahler_limit} and \ref{lem:nonzero_in_homology} combine to prove Theorem \ref{Thm:Lag_torus}.

\bibliographystyle{amsalpha}
\bibliography{mybib}

\end{document}